\def\@sect#1#2#3#4#5#6[#7]#8{\ifnum #2>\c@secnumdepth
     \let\@svsec\@empty\else
     \refstepcounter{#1}\edef\@svsec{\csname the#1\endcsname.}\fi
     \@tempskipa #5\relax
      \ifdim \@tempskipa>\z@
        \begingroup #6\relax
          \@hangfrom{\hskip #3\relax\bfseries\@svsec}{\bfseries\mathversion{bold} \interlinepenalty \@M #8\par}%
        \endgroup
       \csname #1mark\endcsname{#7}\addcontentsline
         {toc}{#1}{\ifnum #2>\c@secnumdepth \else
                      \protect\numberline{\csname the#1\endcsname}\fi
                    #7}\else
        \def\@svsechd{#6\hskip #3\relax  %% \relax added 2 May 90
                   \@svsec #8\csname #1mark\endcsname
                      {#7}\addcontentsline
                           {toc}{#1}{\ifnum #2>\c@secnumdepth \else
                             \protect\numberline{\csname the#1\endcsname}\fi
                       #7}}\fi
     \@xsect{#5}
}
\def\subsection{\@startsection{subsection}{2}%
  \z@{.5\linespacing\@plus.7\linespacing}{-.5em}%
  {\normalfont\bfseries\mathversion{bold}}}
\theoremstyle{plain}
\newtheorem{theorem}{Theorem}[section]
\newtheorem{proposition}[theorem]{Proposition}
\newtheorem{corollary}[theorem]{Corollary}
\newtheorem{lemma}[theorem]{Lemma}
\newtheorem{rem}[theorem]{Remark}
\newenvironment{remark}{\begin{rem}\normalfont}{\end{rem}}
\newtheorem{ex}[theorem]{Example}
\newenvironment{example}{\begin{ex}\normalfont}{\end{ex}}
\newcommand{\lt}{\left}
\newcommand{\rt}{\right}
\newcommand{\tbf}[1]{\textbf{#1}}
\newcommand{\rint}{\mathbb{Z}}
\newcommand{\rat}{\mathbb{Q}}
\newcommand{\ringint}[1]{\mathfrak{O}_{#1}}
\newcommand{\prim}[1]{\mathfrak{#1}}
\newcommand{\primes}{\mathscr{P}}
\newcommand{\gp}{\mathscr{X}}
\newcommand{\closure}[1]{\overline{#1}}
\newcommand{\eqcls}[1]{\overline{#1}}
\newcommand{\ellunit}{\mathcal{O}}
\newcommand{\gext}{K}
\newcommand{\dual}[1]{\widehat{#1}}
\newcommand{\iclsgp}{\mathrm{Cl}}
\newcommand{\Ker}{\mathrm{Ker}\,}
\newcommand{\Image}{\mathrm{Im}\,}
\newcommand{\Gal}{\mathrm{Gal}}
\newcommand{\Hom}{{\rm Hom}}
\newcommand{\rank}{\mathrm{rank}}
\newcommand{\res}{\mathrm{res}}
\newcommand{\GL}{\mathrm{GL}}
\newcommand{\PGL}{\mathrm{PGL}}
\newcommand{\Norm}[1]{\mathrm{N}_{#1}}
\newcommand{\ideal}[1]{\mathfrak{#1}}
\newcommand{\ord}{{\rm ord}}
\def\mod{\;\mathrm{mod}\;}
\def\rnum#1{\expandafter{\romannumeral #1}}
\def\Rnum#1{\uppercase\expandafter{\romannumeral #1}}
\font\tencyr=wncyr10	%10pt
\font\sevencyr=wncyr7	%7pt
\font\fivecyr=wncyr5	%5pt
\def\cyr{\fam\cyrfam\tencyr\cyracc}
\def\vsp{\vspace{-6mm}}
\address{Graduate school of Mathematics, Kyushu University, Fukuoka 812-8581, Japan}
\email{rinrin@math.kyushu-u.ac.jp}
\begin{document}
\thispagestyle{empty}

\topskip=3cm
\begin{center}
\tbf{\Large Elliptic curves related to cyclic cubic extensions}\\
\vspace{8mm}
Rintaro Kozuma
\end{center}
\vspace{6mm}

\noindent
{\normalfont\scshape\bfseries Abstract.}\quad
The aim of this paper is to study certain family of elliptic curves $\{\gp_H\}_H$ defined over a number field $F$
 arising from hyperplane sections of some cubic surface $\gp/F$ associated to a cyclic cubic extension $\gext/F$.
We show that each $\gp_H$ admits a $3$-isogeny $\phi$ over $F$
 and the dual Selmer group $S^{(\dual{\phi})}(\dual{\gp_H}/F)$
 is bounded by a kind of unit/class groups attached to $\gext/F$.
This is proven via certain rational function on the elliptic curve $\gp_H$ with nice property.
We also prove that the Shafarevich-Tate group $\text{\cyr X}(\dual{\gp_H}/\rat)[\dual{\phi}]$
 coincides with a class group of $\gext$ as a special case.

\section{Introduction}
\thispagestyle{empty}

The principal objects we are going to investigate are defined as follows.
Let $\gext/F$ be a cyclic cubic extension over a number field $F$
 with Galois group $G=\langle\sigma\rangle \simeq \rint/3\rint$,
 and let $\{1,\,\alpha_1,\,\alpha_2\}$ be a $F$-basis of $\gext$.
Define a projective variety $\gp$ in the projective space $\mathbb{P}^3$ associated to the extension $\gext/F$
 with the $F$-basis $\{1,\,\alpha_1,\,\alpha_2\}$ of $\gext$ by the homogeneous polynomial
\begin{align}\label{defpoly}
f(x,\,y,\,z,\,w):=(x+y\alpha_1+z\alpha_2)(x+y\alpha_1^{\sigma}+z\alpha_2^{\sigma})
(x+y\alpha_1^{\sigma^2}+z\alpha_2^{\sigma^2})-w^3
\end{align}
 with coefficients in $F$, which is a singular cubic hypersurface and rational over $\gext$.
The elliptic curves we consider are hyperplane sections $\{\gp \cap H\}_H$ of $\gp$
 where $H/F$ varies through all hyperplanes in $\mathbb{P}^3$ satisfying $(\gp \cap H)(F) \ne \emptyset$.
We shall simply denote by $\gp_H$ the section $\gp \cap H$.
For such a cubic curve $\gp_H/F$ we can prove (in Lemma~\ref{singularity}) that
$$\begin{cases}
\text{$\gp_H/F$ is birationally equivalent over $F$ to $\mathbb{P}^1$ if $H/F$ is tangent to $\gp$},\\
\text{$\gp_H/F$ is an elliptic curve if $H/F$ is not tangent to $\gp$},
\end{cases}$$
and it also turns out (in Proposition~\ref{normalization}) that there is a hyperplane $H_0/F$ on which the point
 $\ellunit:=[1,\,0,\,0,\,1] \in \gp(F)$ lies such that the cubic curve $\gp_H$ is isomorphic over $F$
 to $\gp_{H_0}$. Therefore we may assume $\ellunit \in H$ and fix $\ellunit$ as a base point on $\gp_H$
 without loss of generality,
 namely the defining equation of $H$ has the form $a(x-w)+by+cz=0$ with $[a,\,b,\,c] \in \mathbb{P}^2(\ringint{F})$
 where $\ringint{F}$ is the ring of integers of $F$.

Under the situation as above, our interest is the arithmetic nature of the elliptic curves $\{\gp_H\}_H$
 and how that relates to the arithmetic of the cyclic cubic extension $\gext/F$.
Main results are stated as follows.

\begin{theorem}\label{WE}
\quad
For any cyclic cubic extension $\gext/F$ with a fixed $F$-basis $\{1,\,\alpha_1,\,\alpha_2\}$ of $\gext$
 and the Galois group $\Gal(\gext/F)=\langle \sigma \rangle$,
 let $\gp \subset \mathbb{P}^3$ be a cubic hypersurface over $F$ defined by the polynomial \eqref{defpoly}.
For a hyperplane $H:a(x-w)+by+cz=0$ in $\mathbb{P}^3$ with coefficients $[a,\,b,\,c] \in \mathbb{P}^2(\ringint{F})$
 which is not tangent to $\gp$,
 there is an isomorphism $\vartheta:\gp_H \stackrel{\sim}{\to} \mathcal{W}(\gp_H)$ over $F$
 mapping $\ellunit$ to $[0,\,1,\,0]$ with an elliptic curve $\mathcal{W}(\gp_H) \subset \mathbb{P}^2$ over $F$
 given by the Weierstrass form
\begin{align}\label{we}
y^2+a\,\delta(\gp)xy-\Norm{\gext/F}\bigl(\gamma(\gp_H)\bigr)y=x^3
\end{align}
where $\Norm{\gext/F}:\gext^{*} \to F^{*}$ is the norm map
 and the constants $\delta(\gp) \in F^{*},\,\gamma(\gp_H) \in \gext^{*}$ are defined by determinants
$$
\delta(\gp)=
\begin{vmatrix}
1	& \alpha_1		& \alpha_2\\
1	& \alpha_1^{\sigma}	& \alpha_2^{\sigma}\\
1	& \alpha_1^{\sigma^2}	& \alpha_2^{\sigma^2}
\end{vmatrix}
, \quad
\gamma(\gp_H)=
\begin{vmatrix}
1	& \alpha_1		& \alpha_2\\
1	& \alpha_1^{\sigma}	& \alpha_2^{\sigma}\\
a	& b			& c
\end{vmatrix}.
$$
\end{theorem}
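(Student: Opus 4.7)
The plan is to exhibit an explicit birational isomorphism $\vartheta : \gp_H \to \mathcal{W}(\gp_H)$ sending $\ellunit$ to $[0:1:0]$ and verify it by direct substitution into the Weierstrass equation.

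I begin by writing $\gp_H$ as a plane cubic in coordinates intrinsic to $H$. Assuming $a\neq 0$ (the case $a=0$ is handled by permuting roles of $(x,y,z)$), the hyperplane equation lets one eliminate $x$, and each linear factor $L_i = x + y\alpha_1^{\sigma^i} + z\alpha_2^{\sigma^i}$ restricted to $H$ takes the form $w + v_i$, where $v_i = y\beta_1^{\sigma^i} + z\beta_2^{\sigma^i}$ with $\beta_1 = \alpha_1 - b/a$ and $\beta_2 = \alpha_2 - c/a$. The defining equation of $\gp_H$ in projective coordinates $(y:z:w)$ on $H$ becomes
\[
w^2\,\Trace{\gext/F}(u) + w\,\Trace{\gext/F}(uu^\sigma) + \Norm{\gext/F}(u) = 0, \qquad u := y\beta_1 + z\beta_2 \in \gext,
\]
and the base point $\ellunit$ sits at $(0:0:1)$ with tangent line $\Trace{\gext/F}(u) = 0$.

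The next step is to locate the rational $3$-torsion on $\gp_H$ corresponding to $(0,0)\in\mathcal{W}(\gp_H)$; this is needed because the shape of the Weierstrass equation forces $(0,0)$ to be a flex (hence $3$-torsion). On $\gp_H\otimes\gext$, the line $L_i=0$ meets $\gp_H$ with multiplicity $3$ at the single point $R_i = [-\beta_2^{\sigma^i}:\beta_1^{\sigma^i}:0]$, so $R_0,R_1,R_2$ are three Galois-conjugate flexes lying on $w=0$. From $R_0+R_1+R_2 \sim 3R_i$ (both being hyperplane sections on $\gp_H$) and $\ellunit$ as identity in the induced group law, one derives $R_0+R_2 = 2R_1$, hence $R_0-R_1 = R_1-R_2$, so $T := R_0-R_1$ is $\sigma$-fixed, i.e., $F$-rational, and is a $3$-torsion point. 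Using $T$ one constructs $F$-rational functions $\tilde X,\tilde Y \in F(\gp_H)$ with divisors $\mathrm{div}(\tilde X) = T + (-T) - 2\ellunit$ and $\mathrm{div}(\tilde Y) = 3T - 3\ellunit$, built from Galois-symmetric combinations of the $v_i$ and $w$.

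Verification amounts to substituting $\tilde X,\tilde Y$ into $\tilde Y^2 + a\delta(\gp)\tilde X\tilde Y - \Norm{\gext/F}(\gamma(\gp_H))\,\tilde Y - \tilde X^3$ and reducing modulo the cubic relation of the first step. The Vandermonde $\delta(\gp)$ appears as the determinant of the linear change of basis between the $F$-basis $(1,\alpha_1,\alpha_2)$ and the $\gext$-basis for which $(L_0,L_1,L_2)$ are standard coordinate forms --- precisely the linear algebra that straightens the tangent to $\gp_H$ at $\ellunit$ into the Weierstrass $x$-axis. The constant $\Norm{\gext/F}(\gamma(\gp_H))$ emerges as the product over $\sigma^i$ of the $2\times 2$ minors measuring how $H$ sits relative to each of the three singular lines of $\gp$ (the lines $L_j=L_k=w=0$, $j\neq k$); its non-vanishing is equivalent to $H$ not being tangent to $\gp$ via Lemma~\ref{singularity}, which also guarantees that $\mathcal{W}(\gp_H)$ has non-zero discriminant. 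Smoothness of both curves together with birationality of $\vartheta$ then upgrade it to an isomorphism of elliptic curves over $F$.

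The principal obstacle is the second step: a mechanical Nagell-type algorithm for converting a plane cubic with a rational point into Weierstrass form produces \emph{some} equation, but matching the resulting coefficients to the determinantal forms $a\delta(\gp)$ and $\Norm{\gext/F}(\gamma(\gp_H))$ requires exploiting the intrinsic $\sigma$-symmetry among $v_0,v_1,v_2$ rather than treating them as opaque linear forms in $(y,z)$. The key insight is that the three $\gext$-rational flexes on $w=0$, together with the identity $\sigma(R_0-R_1) = R_0-R_1$, furnish a Galois-equivariant construction of the $F$-rational $3$-torsion that pins down the Weierstrass coefficients exactly as stated.
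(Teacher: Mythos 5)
Your route is genuinely different from the paper's and in several respects more conceptual. The paper reduces to the ``normalized'' surface $\gp^0$ built from the generic polynomial $\mathfrak{g}(x;t)$ with basis $\{1,\alpha,\tfrac{1}{1-\alpha}\}$, splits into three cases depending on $at+3c$ and $at+3b$, and carries out explicit Nagell-type transformations (Lemmas~\ref{embed1}--\ref{embed3}) involving a large stack of auxiliary constants; it then transfers to general $\gp$ by a change-of-basis matrix, and only afterwards (Proposition~\ref{T}) identifies $(0,0)$ with the Galois-built $3$-torsion. You work directly with general $\gp_H$, rewrite it in the trace-norm shape $w^2\,\Trace{\gext/F}(u)+w\,\Trace{\gext/F}(uu^\sigma)+\Norm{\gext/F}(u)=0$, spot the three $\gext$-rational flexes $R_i$ on $w=0$, and deduce structurally that $T=R_0-R_1$ is $F$-rational $3$-torsion; these steps are correct and reproduce Lemma~\ref{F-point} and the content of Proposition~\ref{T} up front rather than after the fact. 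The real question is whether this framework yields a proof, and there it falls short.

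The gap is that the verification you call ``the principal obstacle'' is exactly the substance of Theorem~\ref{WE}, and you leave it undone. The divisor conditions $\mathrm{div}(\tilde X)=(T)+(-T)-2(\ellunit)$, $\mathrm{div}(\tilde Y)=3(T)-3(\ellunit)$ pin $\tilde X,\tilde Y$ down only up to two independent scalars; any choice gives a Weierstrass equation of the shape $y^2+a_1xy+a_3y=x^3$, but the theorem asserts the very specific identification $a_1=a\,\delta(\gp)$, $a_3=-\Norm{\gext/F}(\gamma(\gp_H))$. That is a normalization statement, and the paragraphs in which you say the Vandermonde ``appears as the determinant of the linear change of basis'' and the norm ``emerges as the product of the $2\times2$ minors'' are plausible heuristics, not a computation that pins the scalars. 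In the paper this is precisely what the explicit $u_0,u_1,c_0,\dots,c_7$ and the substitutions $\Theta,l$ are doing. Without an actual construction of $\tilde X,\tilde Y$ from the $v_i$ and $w$ followed by a check against the cubic relation, the theorem is asserted rather than proved. A secondary issue: your parametrization $v_i=y\beta_1^{\sigma^i}+z\beta_2^{\sigma^i}$ eliminates $x$ and requires $a\neq0$; dismissing $a=0$ by ``permuting roles of $(x,y,z)$'' is not literally available since the defining polynomial is not symmetric in $x,y,z$ — one must instead eliminate $z$ (say) and redo the reduction with $\gamma^{\sigma^i}:=\alpha_1^{\sigma^i}-(b/c)\alpha_2^{\sigma^i}$, and the target equation degenerates to $y^2-\Norm{\gext/F}(\gamma(\gp_H))y=x^3$, so this case deserves its own (short) treatment rather than a gesture.
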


Note that the isomorphism class of $\mathcal{W}(\gp_H)$ is independent of the choice of the coordinate
 $(a,\,b,\,c) \in \mathbb{A}^3(\ringint{F})$ and the generator $\sigma$ of $\Gal(\gext/F)$.

We retain the notation, and assume $H$ is not tangent to $\gp$.
By the above theorem, there is the $F$-rational $3$-torsion point $(0,\,0)$ on $\mathcal{W}(\gp_H)$,
 thus the elliptic curve $\gp_H/F$ always admits a $3$-isogeny $\phi:\gp_H \to \dual{\gp_H}$ defined over $F$
 where $\dual{\gp_H}/F$ stands for the dual elliptic curve $\gp_H/\langle \vartheta^{-1}(0,\,0) \rangle$.
Let $\dual{\phi}$ denote its dual isogeny.
For any set $S$ of finite primes of $F$, we denote by $\ringint{L,S}^{*},\,\iclsgp_{L,S}$
 the group of $S$-units and the $S$-ideal class group of a finite extension $L/F$ respectively
(The precise definition will be given in \S\ref{sec_SI}).
We make the general notation:

\begin{center}
\begin{tabular}{ll}
$\eqcls{\Omega}$ &: an equivalence class represented by $\Omega$, or an algebraic closure of $\Omega$.\\
$\langle \varepsilon \rangle$ &: a group generated by an element $\varepsilon$.\\
$\Gamma[\varphi]$ &: the kernel of a homomorphism $\varphi:\Gamma \to \Gamma'$.\\
$\nu_{\prim{p}}$ &: the $\prim{p}$-adic normalized valuation of a local field $L_{\prim{p}}$.
 $\bigl(\text{i.e. } \nu_{\prim{p}}(L_{\prim{p}})=\rint\bigr)$\\
$\mu_3(L)$ &: the group of all cube roots of unity in a field $L$.
\end{tabular}
\end{center}

\noindent
As for the Selmer groups $S^{(\dual{\phi})}(\dual{\gp_H}/F),\,S^{(\phi)}(\gp_H/\rat)$, we have the following results.

\begin{theorem}[Partial result of Theorem~\ref{bound}]
\quad
$$\dim_{\mathbb{F}_3} S^{(\dual{\phi})}(\dual{\gp_H}/F) \leq
 \dim_{\mathbb{F}_3} {_\mathrm{N}\mathfrak{U}_{\gext,S'}} \oplus \mu_3(F)
+\dim_{\mathbb{F}_3} {_\mathrm{N}\iclsgp_{\gext,S'}^G[3]}-\dim_{\mathbb{F}_3} \text{\footnotesize
 $\!\frac{\Norm{\gext/F}(\gext_{S'}) \cap \ringint{F,S'}^{*}}{\Norm{\gext/F}(\ringint{\gext,S'}^{*})}$}$$
where ${_\mathrm{N}\mathfrak{U}_{\gext,S'}}$, ${_\mathrm{N}\iclsgp_{\gext,S'}[3]}$, $\gext_{S'}$
 are subgroups of $\ringint{\gext,S'}^{*}/\ringint{\gext,S'}^{*3}$, $\iclsgp_{\gext,S'}[3]$, $\gext^{*}$
 respectively, which are defined in \S\ref{sec_SI}.
The set $S'$ is a subset of all finite primes $\primes_F$ of $F$, defined by
$$S':=\lt\{\prim{p} \in \primes_F \;\Big|\;
 \text{$\prim{p}$ splits completely in $\gext$ and \mbox{\parbox{35ex}{\small
 $3\nu_{\prim{p}}\bigl(a\delta(\gp)\bigr)<\nu_{\prim{p}}\bigl(\Norm{\gext/F}(\gamma(\gp_H))\bigr)$\\
 \quad or \; $\nu_{\prim{p}}\bigl(\Norm{\gext/F}(\gamma(\gp_H))\bigr) \not\equiv 0\!\!\!\!\pmod{3}$}}}\rt\}.$$
Furthermore if $\iclsgp_{\gext,S'}$ is $G$-invariant then
 $\gp_H(F)/\dual{\phi}\bigl(\dual{\gp_H}(F)\bigr) \hookrightarrow
 {_\mathrm{N}\mathfrak{U}_{\gext,S'}} \oplus \mu_3(F)$.
\end{theorem}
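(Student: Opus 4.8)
The plan is to run a descent by the isogeny $\dual{\phi}$ and then rewrite the everywhere-local Selmer condition in terms of the $S'$-unit and $S'$-class groups of $\gext$, using the Weierstrass model \eqref{we}. Since $\ker\phi$ is generated by the $F$-rational point $\vartheta^{-1}(0,0)$ of exact order $3$, the group scheme $\gp_H[\phi]$ is the constant group $\rint/3\rint$, hence its Cartier dual $\dual{\gp_H}[\dual{\phi}]$ is $\mu_3$; the Kummer sequence attached to $\dual{\phi}$ then yields
$$\gp_H(F)/\dual{\phi}\bigl(\dual{\gp_H}(F)\bigr)\hookrightarrow H^1(F,\mu_3)=F^{*}/F^{*3},\qquad S^{(\dual{\phi})}(\dual{\gp_H}/F)\hookrightarrow F^{*}/F^{*3},$$
the image of the second map consisting of those classes which lie everywhere locally in the image of the local connecting maps. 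The first step is to make this connecting homomorphism explicit: from the identity $(x+y\alpha_1+z\alpha_2)(x+y\alpha_1^{\sigma}+z\alpha_2^{\sigma})(x+y\alpha_1^{\sigma^2}+z\alpha_2^{\sigma^2})=w^{3}$ holding on $\gp_H$, together with $\vartheta$, one constructs the rational function with the ``nice property'' mentioned in the abstract and reads off that the class of $P=[x_0,y_0,z_0,w_0]\in\gp_H(F)$ is represented modulo cubes by $\xi_P:=x_0+y_0\alpha_1+z_0\alpha_2\in\gext^{*}$, whose norm $\Norm{\gext/F}(\xi_P)=w_0^{3}$ is a cube in $F^{*}$. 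In this way the descent map, hence $S^{(\dual{\phi})}(\dual{\gp_H}/F)$, is carried into $\Ker\bigl(\Norm{\gext/F}:\gext^{*}/\gext^{*3}\to F^{*}/F^{*3}\bigr)$, and identifying classes there with classes of the subgroup $\gext_{S'}$ defined in \S\ref{sec_SI} is what brings the arithmetic of $\gext$ into play.

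Next I would run through the places of $F$. For a prime $\prim{p}\notin S'$ --- i.e. $\prim{p}$ does not split completely in $\gext$, or it does and $\nu_{\prim{p}}(\Norm{\gext/F}(\gamma(\gp_H)))\equiv0\pmod{3}$ and $3\nu_{\prim{p}}(a\delta(\gp))\geq\nu_{\prim{p}}(\Norm{\gext/F}(\gamma(\gp_H)))$ --- one shows that the local image of $\dual{\phi}$-descent is the unramified subgroup (the $\prim{p}$-component of $\dual{\phi}$ being \'etale after a suitable integral model), and this forces $\nu_{\prim{Q}}(\xi_P)\equiv0\pmod{3}$ at every prime $\prim{Q}\mid\prim{p}$ of $\gext$. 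Hence every Selmer class is represented by an element of $\gext^{*}$ that is an $S'$-unit up to cubes, i.e. by a class in $\ringint{\gext,S'}^{*}/\ringint{\gext,S'}^{*3}$; the remaining conditions at the primes above $3$ and at the archimedean places, together with the norm constraint $\Norm{\gext/F}(\xi)\in F^{*3}$, cut this down to $_\mathrm{N}\mathfrak{U}_{\gext,S'}\oplus\mu_3(F)$, the $\mu_3(F)$-summand recording the freedom to twist by a cube root of unity of $F$ inside $\gext^{*}/\gext^{*3}$.

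The third term arises because a Selmer class need not be represented by an actual $S'$-unit. An element $\xi\in\gext^{*}$ with $\nu_{\prim{Q}}(\xi)\equiv0\pmod{3}$ outside $S'$ and $\Norm{\gext/F}(\xi)\in F^{*3}$ determines a fractional ideal $\prim{b}$ with $\prim{b}^{3}=(\xi)\cdot\prim{s}$ for an ideal $\prim{s}$ supported in $S'$; the class of $\prim{b}$ lies in $\iclsgp_{\gext,S'}[3]$, is $G$-invariant because $\Norm{\gext/F}(\xi)$ is a cube in $F^{*}$, depends only on the class of $\xi$ modulo cubes, and vanishes exactly when $\xi$ is an $S'$-unit up to cubes. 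This produces a short exact sequence relating $_\mathrm{N}\mathfrak{U}_{\gext,S'}\oplus\mu_3(F)$, $S^{(\dual{\phi})}(\dual{\gp_H}/F)$ and $_\mathrm{N}\iclsgp_{\gext,S'}^{G}[3]$; identifying the discrepancy between its subobject and $_\mathrm{N}\mathfrak{U}_{\gext,S'}\oplus\mu_3(F)$ with $\Norm{\gext/F}(\gext_{S'})\cap\ringint{F,S'}^{*}$ modulo $\Norm{\gext/F}(\ringint{\gext,S'}^{*})$, and taking $\mathbb{F}_3$-dimensions, then gives precisely the stated inequality. For the addendum, note that when $P\in\gp_H(F)$ is a genuine point the ideal $(\xi_P)$ is principal, so the associated class of $\prim{b}$ lies in $\iclsgp_{\gext,S'}[3]$ and is $G$-fixed; if in addition $\iclsgp_{\gext,S'}$ is $G$-invariant one checks this class is forced to vanish, so that $\xi_P$ is an $S'$-unit up to cubes and the descent map lands in $_\mathrm{N}\mathfrak{U}_{\gext,S'}\oplus\mu_3(F)$.

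I expect the principal obstacle to be the bookkeeping at the ``boundary'' places --- the primes in $S'$ (where both $\nu_{\prim{p}}(a\delta(\gp))$ and $\nu_{\prim{p}}(\Norm{\gext/F}(\gamma(\gp_H)))$ govern the size of the local image of $\dual{\phi}$-descent), the primes above $3$, and the real places --- and, relatedly, pinning down the subtracted correction (the index of $\Norm{\gext/F}(\ringint{\gext,S'}^{*})$ in $\Norm{\gext/F}(\gext_{S'})\cap\ringint{F,S'}^{*}$) precisely enough that the inequality is sharp. By contrast the explicit form of the connecting homomorphism via the distinguished rational function, and the identification $\gp_H[\phi]\cong\rint/3\rint$, should be routine once Theorem~\ref{WE} is available.
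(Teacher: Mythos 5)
Your plan runs parallel to the paper's — bound $S^{(\dual{\phi})}(\dual{\gp_H}/F)$ inside a subgroup of $\gext^{*}/\gext^{*3}$ cut out by valuation and norm conditions, then split that subgroup into an $S'$-unit piece and an $S'$-class-group piece and absorb the defect — but there are several concrete gaps in the execution.

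First, the explicit descent function is not the one you wrote. The class of $P=[x_0,y_0,z_0,w_0]$ in $\gext^{*}/\gext^{*3}$ is represented by $\eta(P)=\dfrac{x_0+y_0\alpha_1^{\sigma}+z_0\alpha_2^{\sigma}}{x_0+y_0\alpha_1+z_0\alpha_2}=\xi_P^{\sigma-1}$, not by $\xi_P:=x_0+y_0\alpha_1+z_0\alpha_2$ itself (this is the content of Proposition~\ref{connect}, built in \S\ref{sec_const} from Lemma~\ref{div}). The distinction is not cosmetic: on $\gp_H$ the function $\xi_P/w$ has divisor $2(\ellunit')-(T')-(T'^{\sigma})$, which is not divisible by $3$; only the cross-ratio $\eta=\xi^{\sigma}/\xi$ has divisor $3(T')-3(\ellunit')$ and hence can serve as the ``$f_T$'' of $3$-descent. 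Moreover the Selmer group itself lives in $F^{*}/F^{*3}$; what you want is the image of $\dual{\delta}_F(P)\in F^{*}/F^{*3}$ under the (non-injective, by Lemma~\ref{kernel}) natural map to $\gext^{*}/\gext^{*3}$, and that image is $\eta(P)\gext^{*3}$.

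Second, this mis-identification propagates into the $G$-invariance step. You assert that the class of the ideal $\ideal{b}$ with $\ideal{b}^3=(\xi)\ideal{s}$ is $G$-invariant ``because $\Norm{\gext/F}(\xi)$ is a cube,'' but that is not a valid implication: having a cube norm says nothing about whether $\ideal{b}^{\sigma}$ and $\ideal{b}$ are in the same class. In the paper, $G$-invariance comes for free because the relevant classes in $\gext^{*}/\gext^{*3}$ are the images of classes in $F^{*}/F^{*3}$, hence lie in ${_\mathrm{N}\mathfrak{R}_{\gext,S'}^G}$, and the map to $\iclsgp_{\gext,S'}[3]$ is a $G$-module homomorphism.

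Third, the short exact sequence you invoke does not exist as stated. The Selmer group is not the middle term of a sequence between ${_\mathrm{N}\mathfrak{U}_{\gext,S'}}\oplus\mu_3(F)$ and ${_\mathrm{N}\iclsgp_{\gext,S'}^G[3]}$; it is only embedded (up to a $\mu_3$ bookkeeping) into the ambient group ${_\mathrm{N}\mathfrak{R}_{\gext,S'}^G}$, which is why the conclusion is an inequality. The dimension of that ambient group is computed in the paper by a genuine four-term exact sequence (Proposition~\ref{exact}), obtained by taking $G$-invariants of the surjection $\mathfrak{R}_{\gext,S'}\twoheadrightarrow\iclsgp_{\gext,S'}[3]$; the defect term $\tfrac{\Norm{\gext/F}(\gext_{S'})\cap\ringint{F,S'}^{*}}{\Norm{\gext/F}(\ringint{\gext,S'}^{*})}$ appears as the image of ${_\mathrm{N}\iclsgp_{\gext,S'}^G[3]}$ in $H^1(\gext/F,\Psi)$, not as an ad hoc ``discrepancy,'' and you would need to carry out that cohomological computation (the paper does it by an explicit cocycle argument using Lemma~\ref{norm}) rather than just asserting the identification. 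Finally, your paragraph 2 claim that ``every Selmer class is represented by an element that is an $S'$-unit up to cubes'' is simply false — the valuation condition only says the divisor is a cube of a fractional ideal, which need not be principal — and your own paragraph 3 contradicts it. As written the proposal has the right shape but would not compile into a proof without fixing (at least) the descent function, the source of $G$-invariance, and the exact-sequence and cohomology bookkeeping.
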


As a special case, we can state more precise result.

\begin{theorem}\label{equality}
\quad
For a cyclic cubic field $\gext/\rat$, let $\{1,\,\alpha,\,\alpha^2\}$ be a $\rat$-basis of $\gext$
 that $\gp$ is defined by the polynomial \eqref{defpoly} with.
Here $\alpha \in \gext$ denotes a root of the generic polynomial $\mathfrak{g}(x;t)$
 in \S\ref{genpoly} for some $t \in \rat$.
Let $H$ be the hyperplane $z=0$.
Assume $t \in \rint$, $\pm\Norm{\gext/F}(\alpha-\alpha^{\sigma})=t^2+3t+9 \notin \rat_3^{*3}$.
Then $S'=\emptyset$ and
$$\dim_{\mathbb{F}_3} S^{(\phi)}(\gp_H/\rat)=\dim_{\mathbb{F}_3} S^{(\dual{\phi})}(\dual{\gp_H}/\rat)
=1+\dim_{\mathbb{F}_3} {_\mathrm{N}\iclsgp_{\gext}^G[3]},$$
$$1 \leq \rank_{\rint} \gp_H(\rat) \leq 1+2\dim_{\mathbb{F}_3} {_\mathrm{N}\iclsgp_{\gext}^G[3]}.$$
Furthermore, we also assume that $\iclsgp_{\gext}$ is $G$-invariant. Then
\begin{align*}
\gp_H(\rat)/\dual{\phi}\bigl(\dual{\gp_H}(\rat)\bigr)
 &\simeq {_\mathrm{N}\mathfrak{U}_{\gext}} \simeq \rint/3\rint,\\
\text{\cyr X}(\dual{\gp_H}/\rat)[\dual{\phi}] &\simeq {_\mathrm{N}\iclsgp_{\gext}[3]}.
\end{align*}
Here $\text{\cyr X}$ stands for the Shafarevich-Tate group.
\end{theorem}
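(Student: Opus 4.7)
The strategy is to reduce Theorem~\ref{equality} to Theorem~\ref{bound}: first show that the auxiliary set $S'$ is empty; then collapse the upper bound to $1+\dim{_\mathrm{N}\iclsgp_\gext^G[3]}$; match it with a lower bound by constructing Selmer cocycles explicitly; and finally exhibit a rational point of infinite order to pin down the rank, the Mordell--Weil quotient, and $\text{\cyr X}$.

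I would first check $S'=\emptyset$. For $H:z=0$ we have $(a,b,c)=(0,0,1)$, so $a\delta(\gp)=0$ and the first defining condition of $S'$ is vacuous. Expanding the determinant gives $\gamma(\gp_H)=\alpha^{\sigma}-\alpha$, whence $\Norm{\gext/\rat}(\gamma(\gp_H))=\pm(t^2+3t+9)$, the signed square root of the discriminant of $\mathfrak{g}(x;t)$. Any prime $p$ splitting completely in $\gext$ is unramified and therefore does not divide $t^2+3t+9$, so $\nu_p(\Norm{\gext/\rat}(\gamma))=0\equiv 0\pmod{3}$ and the second condition also fails. Substituting $S'=\emptyset$ and $\mu_3(\rat)=\{1\}$ into Theorem~\ref{bound}, and noting that the correction quotient vanishes because $\gext/\rat$ is totally real and $-1=\Norm{\gext/\rat}(-1)$, the bound reads $\dim S^{(\dual\phi)}\leq \dim{_\mathrm{N}\mathfrak{U}_\gext}+\dim{_\mathrm{N}\iclsgp_\gext^G[3]}$. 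A direct computation using Dirichlet's theorem, $\mu_3(\gext)=\{1\}$, and Hilbert~90 along $\gext/\rat$ yields $\dim_{\mathbb{F}_3}{_\mathrm{N}\mathfrak{U}_\gext}=1$; the symmetric argument for the dual isogeny bounds $\dim S^{(\phi)}$ in the same way.

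The main obstacle is the matching lower bound: one must show that every element of ${_\mathrm{N}\mathfrak{U}_\gext}\oplus{_\mathrm{N}\iclsgp_\gext^G[3]}$ lifts to a global Selmer cocycle. I would use the rational function on $\gp_H$ constructed earlier in the paper to write these cocycles explicitly in Kummer form and then verify each local condition. Outside $S'\cup\{3\}$ the cocycles are automatically unramified because the primes either split in $\gext$ or avoid the discriminant of the Weierstrass model. At the prime $3$, the hypothesis $t^2+3t+9\notin\rat_3^{*3}$ is used precisely: it forces the Kummer image attached to $\gamma(\gp_H)$ to remain nontrivial in $\rat_3^{*}/\rat_3^{*3}$, so no global obstruction arises. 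Combining with the upper bound gives the Selmer equalities $\dim S^{(\phi)}=\dim S^{(\dual\phi)}=1+\dim{_\mathrm{N}\iclsgp_\gext^G[3]}$.

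For the rank, the identity $\Norm{\gext/\rat}(\alpha)=1$ produces the $\rat$-rational point $P=[0:1:0:1]\in\gp_H(\rat)$. The cube non-residue hypothesis at $3$ excludes $\rat$-rational $2$-torsion and any further rational $3$-torsion on the Weierstrass model $y^2-(t^2+3t+9)y=x^3$, so $\gp_H(\rat)_{\mathrm{tors}}=\langle\vartheta^{-1}(0,0)\rangle\simeq\rint/3\rint$, and a chord-tangent computation confirms $\vartheta(P)\neq(0,0)$. Hence $P$ has infinite order and $\rank\gp_H(\rat)\geq 1$. The upper bound $\rank\gp_H(\rat)\leq 1+2\dim{_\mathrm{N}\iclsgp_\gext^G[3]}$ follows from the standard rank formula together with $\dim\gp_H(\rat)[\phi]=1$, $\dim\dual{\gp_H}(\rat)[\dual\phi]=0$, and the Selmer equality. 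Finally, under the $G$-invariance of $\iclsgp_\gext$, Theorem~\ref{bound} provides an injection $\gp_H(\rat)/\dual\phi(\dual{\gp_H}(\rat))\hookrightarrow{_\mathrm{N}\mathfrak{U}_\gext}\simeq\rint/3\rint$, which the nontriviality of $P$ forces to be an isomorphism; the descent exact sequence
$$0\longrightarrow \gp_H(\rat)/\dual\phi(\dual{\gp_H}(\rat))\longrightarrow S^{(\dual\phi)}(\dual{\gp_H}/\rat)\longrightarrow \text{\cyr X}(\dual{\gp_H}/\rat)[\dual\phi]\longrightarrow 0$$
then identifies $\text{\cyr X}(\dual{\gp_H}/\rat)[\dual\phi]$ with ${_\mathrm{N}\iclsgp_\gext[3]}$.
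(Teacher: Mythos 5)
Your opening reduction is on the right track: the computation $S'=\emptyset$, the vanishing of the correction term via $-1=\Norm_{\gext/\rat}(-1)$, and the computation $\dim_{\mathbb{F}_3}{_\mathrm{N}\mathfrak{U}_{\gext}}=1$ are all consistent with the paper (which packages these as Remark~\ref{vanish}, Lemma~\ref{kerG}, and Lemma~\ref{G-units}, routed through Theorem~\ref{equality'}). However, from that point on there are several genuine gaps.

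\textbf{The rational point is wrong.} You claim $P=[0:1:0:1]\in\gp_H(\rat)$ because $\Norm_{\gext/\rat}(\alpha)=1$, but with $\mathfrak{g}(x;t)=x^3+tx^2-(t+3)x+1$ the constant term forces $\Norm_{\gext/\rat}(\alpha)=-1$, so $f(0,1,0,1)=\Norm(\alpha)-1=-2\ne0$ and this point is not on the curve. The paper's lower bound on the rank is established entirely differently: Lemma~\ref{MW} shows that $\eqcls{T}$ is nontrivial in $\gp_H(\rat)/\dual\phi(\dual{\gp_H}(\rat))$ because $\dual{\delta}_{F}(T)$ generates $\langle\delta(\mathfrak{g})\rat^{*3}\rangle$ and $\delta(\mathfrak{g})$ is not a cube (Corollary~\ref{disc}); combined with Remark~\ref{K} for the $\phi$-side, the rank bound then falls out of formula~\eqref{rank}. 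No torsion computation is needed or attempted, and your sketch of one (cube-nonresidue excludes further torsion) is not actually established by anything in the paper.

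\textbf{The Selmer lower bound is under-justified.} Your claim that ``outside $S'\cup\{3\}$ the cocycles are automatically unramified because the primes either split in $\gext$ or avoid the discriminant'' does not capture the actual content. The paper (Proposition~\ref{bound_R}) has to verify, prime by prime, that elements of ${_\mathrm{N}\mathfrak{R}_{F,S}}$ lie in $\Image\dual{\delta}_{F_{\prim{p}}}$, and this is where the real work is: at primes with $3\nmid v_{\prim{p}}$ it invokes Lemma~\ref{coincide} (local class field theory plus \cite{RK} Theorem~4.1 to pin down $\Image\dual{\delta}_{F_{\prim{p}}}=\Norm_{\gext_{\prim{P}}/F_{\prim{p}}}(\gext_{\prim{P}}^{*})/F_{\prim{p}}^{*3}$), and at $\prim{p}\mid3$ with $3\mid v_{\prim{p}}$ it uses a Legendre-symbol criterion $\bigl(\frac{\mathscr{D}'}{\prim{p}}\bigr)\ne1$ derived from the hypothesis $\delta(\mathfrak{g})\notin\rat_3^{*3}$. ``Nontriviality of the Kummer image'' is not the mechanism; containment of a specific local image is.

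\textbf{The equality $\dim S^{(\phi)}=\dim S^{(\dual{\phi})}$ is not a symmetry.} Theorem~\ref{bound} is asymmetric (it bounds the $\dual{\phi}$-Selmer group only), and there is no ``symmetric argument for the dual isogeny.'' The paper instead invokes Cassels's duality formula through Proposition~\ref{duality}, and verifies $d_{+}=0$ and $d_{-}=d_{\infty}$ under the hypotheses (cf.\ the derivation of equality~\eqref{SS} in the proof of Theorem~\ref{equality'}). Your proposal would need this, or some substitute for it, and currently has neither.
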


The present paper is organized as follows.
First, we will give basic properties of the cubic surface $\gp/F$
 and the family of its hyperplane sections in \S\ref{sec_def}.
Using these properties and a generic polynomial for cyclic cubic extensions (in \S\ref{genpoly}),
 we obtain a Weierstrass form of the cubic curve $\gp_H$ in \S\ref{sec_WE}.
We will discuss, in \S\ref{sec_descent}, descent theory for $\gp_H$ via $3$-isogeny,
 and give some relation between Selmer groups and unit/class groups in \S\ref{sec_SI}.
We also refer corollaries and remarks of the above theorems to the corresponding sections.

\section{First properties of $\gp$ and its hyperplane sections}\label{sec_def}

Fix a cyclic cubic extension $\gext$ over a number field $F$ with Galois group
 $G=\langle \sigma \rangle \simeq \rint/3\rint$, once and for all.
For any $F$-basis $\{1,\,\alpha_1,\,\alpha_2\}$ of $\gext$,
 let $\gp/F$ be a cubic surface in $\mathbb{P}^3$ defined by the polynomial \eqref{defpoly}.
It is easy to verify that the cubic surface $\gp/F$ has three singular points on the hyperplane $w=0$,
 and is birationally equivalent over $\gext$ to the projective plane $\mathbb{P}^2$.

Let $\mathcal{L}(\gp)$ be a set of all those hyperplanes $\{H\}$ defined over $F$ in $\mathbb{P}^3$
 which satisfy $\gp_H(F) \ne \emptyset$. From the definition, we observe the following.

\begin{lemma}\label{basis}
\quad
The family $\mathcal{L}(\gp)$ is invariant under the change of $F$-basis of $\gext$.
\end{lemma}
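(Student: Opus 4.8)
The plan is to show that changing the $F$-basis of $\gext$ induces a linear change of coordinates on $\mathbb{P}^3$ that transforms the defining polynomial \eqref{defpoly} into the analogous polynomial for the new basis, up to a nonzero scalar. Since such a linear automorphism of $\mathbb{P}^3$ is defined over $F$ and carries $F$-points to $F$-points, it maps the cubic surface for one basis isomorphically (over $F$) onto the cubic surface for the other, and hence induces a bijection on hyperplane sections preserving the condition $\gp_H(F) \ne \emptyset$.

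First I would set up the change of basis explicitly. Let $\{1,\,\alpha_1,\,\alpha_2\}$ and $\{1,\,\beta_1,\,\beta_2\}$ be two $F$-bases of $\gext$; here I use that both contain $1$, which is harmless since any $F$-basis can be scaled so that (after an $F$-linear adjustment) the constant $1$ appears — more carefully, one reduces to the case where the first basis vector is $1$ by absorbing the general linear change into the argument. Write $\beta_j = c_{j0} + c_{j1}\alpha_1 + c_{j2}\alpha_2$ with $c_{jk} \in F$, so that the matrix
\[
M =
\begin{pmatrix}
1 & 0 & 0\\
c_{10} & c_{11} & c_{12}\\
c_{20} & c_{21} & c_{22}
\end{pmatrix}
\in \GL_3(F)
\]
expresses the new basis in terms of the old. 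Then for the generic element, $x + y\beta_1 + z\beta_2 = x' + y'\alpha_1 + z'\alpha_2$ where $(x',y',z')^{t} = M^{t}(x,y,z)^{t}$ (appropriately transposed), and this identity is stable under applying $\sigma$ since $M$ has entries in $F$ and $\sigma$ fixes $F$. The $w$-coordinate is left unchanged.

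Next I would conclude the computation: under the linear automorphism $\psi$ of $\mathbb{P}^3$ given by $(x,y,z,w) \mapsto (x',y',z',w)$ on the coordinate level, the product of the three conjugate linear forms for the $\beta$-basis pulls back to the product of the three conjugate linear forms for the $\alpha$-basis, and $w^3$ is unchanged; hence $f_{\beta}\circ\psi = f_{\alpha}$ exactly (no scalar is even needed, since the $w^3$ term pins the normalization). Therefore $\psi$ restricts to an $F$-isomorphism between the two cubic surfaces, and it carries a hyperplane $H$ to a hyperplane $H'$ with $\gp^{(\beta)}_{H'} = \psi(\gp^{(\alpha)}_{H})$; since $\psi$ and $\psi^{-1}$ are defined over $F$, $\gp^{(\alpha)}_{H}(F)\ne\emptyset$ if and only if $\gp^{(\beta)}_{H'}(F)\ne\emptyset$. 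This gives the claimed bijection $\mathcal{L}(\gp^{(\alpha)}) \to \mathcal{L}(\gp^{(\beta)})$.

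I do not expect a serious obstacle here; the only point requiring a little care is the reduction to bases containing $1$, i.e.\ checking that the general $F$-linear change of basis (not fixing the vector $1$) still yields a linear automorphism of $\mathbb{P}^3$ compatible with the Galois conjugation — this works because conjugation acts trivially on the coefficient field $F$ and on the $w$-coordinate, so any $M\in\GL_3(F)$ does the job, and the first column of $M$ being $(1,0,0)^t$ was only a convenient normalization reflecting the paper's convention that $1$ is always the first basis element. Everything else is a direct substitution.
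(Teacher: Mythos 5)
Your proposal is correct and follows the same approach the paper takes; the paper simply compresses it into a one-line remark that any basis change is a linear transformation defined over $F$, and you have filled in the routine verification (that the induced linear automorphism of $\mathbb{P}^3$ pulls $f_\beta$ back to $f_\alpha$ because the change-of-basis matrix has entries fixed by $\sigma$). The concern you raise at the end about bases containing $1$ is moot given the paper's standing convention that every $F$-basis of $\gext$ has $1$ as its first element, but your observation that the argument works for an arbitrary $M \in \GL_3(F)$ is accurate.
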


\vsp

\begin{proof}
\quad
Since any basis transformation for a vector space is linear and defined over a ground field,
 the lemma follows immediately.
\end{proof}

From now on, we choose any $F$-basis of $\gext$ and fix it without loss of generality.
From the definition of $\gp$, the affine piece $\gp_{w \ne 0}:=\gp \cap \{w \ne 0\}$ of the surface $\gp$
 has a structure of linear algebraic groups by a natural correspondence between the group of $F$-rational points
 $\gp_{w \ne 0}(F)=\gp(F)$ and the kernel of the norm map:
\begin{align*}
\gp_{w \ne 0}(F) \quad &\stackrel{\sim}{\longrightarrow} \quad
 \Ker[\gext^* \stackrel{\Norm{\gext/F}}{\longrightarrow} F^*]\\
[x,\,y,\,z,\,1] \quad &\longmapsto \quad x+y\alpha_1+z\alpha_2.
\end{align*}
The rational function $x+y\alpha_1+z\alpha_2$ on $\gp_{w \ne 0}$ also induces an isomorphism
$$\gp_{w \ne 0}(\ringint{F}) \quad \stackrel{\sim}{\longrightarrow} \quad
 \Ker[\ringint{\gext}^* \stackrel{\Norm{\gext/F}}{\longrightarrow} \ringint{F}^*]$$
if the set $\{1,\,\alpha_1,\,\alpha_2\}$ forms an $\ringint{F}$-basis of $\ringint{\gext}$.
For instance this holds for the case that the class number of the ground field $F$ is equal to $1$.
Particularly in the case $F=\rat$, we have $\gp(\rint) \simeq \ringint{\gext}^*/\{\pm 1\}$
 for an integral basis of $\gext$.
Thus, in this case, we can regard the group of units $\ringint{\gext}^{*}$
 as the group of $\rint$-valued points on the variety $\gp$.
This identification is one of the reason why we adopt the algebraic variety $\gp$ in view of the analogy
 between Mordell-Weil groups of elliptic curves and groups of units.
Accordingly it might be of some interest to deduce the arithmetic of the elliptic curves $\{\gp_H\}_H$
 from the cyclic cubic extension $\gext/F$, and that is exactly what we propose to carry out in the present paper.
Note that the above isomorphism can be naturally generalized to the group $\gp(\ringint{F,S})$
 where $\ringint{F,S}$ is a ring of $S$-integers of $F$.
We will see a considerable aspect of the function $x+y\alpha_1+z\alpha_2$ in \S\ref{sec_const},
 which is the most essential part of the paper to obtain an arithmetic relation between the section $\gp_H/F$
 and the ambient variety $\gp/F$ reflecting the cyclic cubic extension $\gext/F$.

For any point $P \in \gp_{w \ne 0}(F)$,
 the left translation $\gp_{w \ne 0}(F) \to \gp_{w \ne 0}(F)\,;\,Q \mapsto P \cdot Q$
 causes the faithful regular representation $\rho$ of degree $3$ and the canonical enlargement $\tilde{\rho}$
\begin{align*}
\tilde{\rho}:\gp_{w \ne 0}(F) \quad \stackrel{\rho}{\longrightarrow} \quad \GL_3(F)
 \quad &\longrightarrow \quad \PGL_4(F)\\
\rho(P) \quad &\longmapsto \quad
\begin{pmatrix}
\rho(P) & 0\\
0 & 1
\end{pmatrix}.
\end{align*}
The representation $\tilde{\rho}$ induces a linear action of the group $\gp_{w \ne 0}(F)$
 on the set $\gp(\closure{F})$
\begin{align*}
\gp_{w \ne 0}(F) \times \gp(\closure{F}) \quad &\longrightarrow \quad \gp(\closure{F})\\
(P,\,[x,\,y,\,z,\,w]) \quad &\longmapsto \quad [x,\,y,\,z,\,w]\cdot\tilde{\rho}(P).
\end{align*}
Note that $\gp(\closure{F})$ is only a set, not a group.
Thus each element in $\gp_{w \ne 0}(F)$ gives an isomorphism on $\gp$,
 and hence we obtain an action of $\gp_{w \ne 0}(F)$ on the family $\mathcal{L}(\gp)$
 defined by $P \cdot \gp_H:=\gp_{\tilde{\rho}(P)(H)}$.
This action makes matter somewhat simple as follows.

\begin{proposition}\label{normalization}
\quad
For arbitrary hyperplane $H \in \mathcal{L}(\gp)$,
 there is a hyperplane $H_0 \in \mathcal{L}(\gp)$ on which the point $\ellunit$ lies
 so as to be $\gp_H \simeq \gp_{H_0}$ over $F$.
\end{proposition}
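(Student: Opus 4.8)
The plan is to exploit the action of the group $\gp_{w \ne 0}(F)$ on the family $\mathcal{L}(\gp)$ described above: each $P \in \gp_{w \ne 0}(F)$ induces, via $\tilde{\rho}(P) \in \PGL_4(F)$, an automorphism of $\gp$ defined over $F$ that carries the section $\gp_H$ isomorphically onto $P \cdot \gp_H = \gp_{\tilde{\rho}(P)(H)}$. So it is enough to produce some $P$ for which the hyperplane $\tilde{\rho}(P)(H)$ passes through the base point $\ellunit$.

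First I would pick a point $Q \in \gp_H(F)$, which exists precisely because $H \in \mathcal{L}(\gp)$. Since $\gp(F) = \gp_{w \ne 0}(F)$ (each of the three $G$-conjugate lines comprising $\gp \cap \{w = 0\}$ carries no $F$-rational point, as $\{1,\,\alpha_1,\,\alpha_2\}$ is an $F$-basis of $\gext$), the point $Q$ lies in the affine group, i.e. corresponds under the identification $\gp_{w \ne 0}(F) \stackrel{\sim}{\to} \Ker[\gext^* \stackrel{\Norm{\gext/F}}{\longrightarrow} F^*]$ to some $\xi \in \Ker \Norm{\gext/F}$. Taking $P$ to be the point corresponding to $\xi^{-1}$, the restriction of $\tilde{\rho}(P)$ to the affine piece is translation by $P$ on $\gp_{w \ne 0}(F)$, hence it sends $Q$ to the identity element $\ellunit = [1,\,0,\,0,\,1]$.

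Then I would set $H_0 := \tilde{\rho}(P)(H)$. Being the image of a hyperplane under an element of $\PGL_4(F)$, it is again a hyperplane defined over $F$; and since $\tilde{\rho}(P)$ stabilises $\gp$, it restricts to an $F$-isomorphism $\gp_H = \gp \cap H \stackrel{\sim}{\to} \gp \cap H_0 = \gp_{H_0}$. Finally, $Q \in H$ forces its image $\ellunit = \tilde{\rho}(P)(Q)$ to lie on $H_0$; in particular $\gp_{H_0}(F) \ne \emptyset$, so $H_0 \in \mathcal{L}(\gp)$, and this $H_0$ has all the required properties.

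Every step relies only on facts already in hand — the identification $\gp(F) = \gp_{w \ne 0}(F) \simeq \Ker \Norm{\gext/F}$ and the $\PGL_4(F)$-action on $\mathcal{L}(\gp)$ — together with elementary linear algebra, so I do not expect a genuine obstacle. The one point that must be checked with some care is that the action of $\gp_{w \ne 0}(F)$ on $\gp_{w \ne 0}(F)$ is simply transitive, equivalently that $\tilde{\rho}$ restricts on the affine piece to the regular representation; this is exactly what makes it possible to translate the given $Q$ onto $\ellunit$.
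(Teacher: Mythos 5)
Your proof is correct and is essentially the paper's: both take an $F$-rational point of $\gp_H$ (which automatically lies in $\gp_{w\ne 0}(F)$) and use the left-translation action via $\tilde\rho$ to move it to $\ellunit$, setting $H_0$ to be the image hyperplane. You spell out the inverse and the identification with $\Ker\Norm_{\gext/F}$ more explicitly, but the argument is the same.
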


\vsp

\begin{proof}
\quad
This is almost clear.
For any $H \in \mathcal{L}(\gp)$ with a $F$-rational point $P \in \gp_H(F) \subset \gp_{w \ne 0}(F)$,
 the hyperplane $H_0:=\tilde{\rho}(P^{-1})(H)$ satisfies the property of the statement.
\end{proof}

By Proposition~\ref{normalization},
 the study of the cubic curves $\{\gp_H\}_H$ where $H/F$ varies through $\mathcal{L}(\gp)$
 is reduced to the family of curves $\gp_H$ which possess the point $\ellunit \in H$.
We shall denote by $\mathcal{L}(\ellunit)$ the set of all hyperplanes in $\mathbb{P}^3$ defined over $F$
 on which the point $\ellunit$ lies.
The following lemma gives a criterion for $\gp_H$ being an elliptic curve.

\begin{lemma}\label{singularity}
\quad
The following conditions are equivalent for any $H/F \in \mathcal{L}(\gp)$.
\begin{list}{$\diamond$}{}
\item The cubic curve $\gp_H/F$ is singular.
\item The hyperplane $H/F$ is tangent to the surface $\gp/F$.
\end{list}
Under these conditions, the cubic curve $\gp_H/F$ is birational over $F$ to the projective line.
\end{lemma}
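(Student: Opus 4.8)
The plan is to analyze the geometry of $\gp$ along a hyperplane $H$ directly, exploiting the very explicit defining equation \eqref{defpoly}. First I would set up the tangency condition concretely: a hyperplane $H$ is tangent to $\gp$ at a smooth point $P \in \gp(\closure F)$ precisely when $H$ contains the (projective) tangent plane $T_P\gp$, equivalently when the restriction $f|_H$ has a singular point at $P$. Since $\gp$ has exactly the three singular points lying on $\{w=0\}$ (noted just after Lemma~\ref{basis}), I would treat two cases according to whether $H$ passes through one of these singular points or not. If $H$ avoids all three singular points of $\gp$, then $\gp_H = \gp \cap H$ is singular if and only if $H$ meets $\gp$ non-transversally somewhere, which is exactly the classical tangency condition; this gives the equivalence of the two bullet points in that case. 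If $H$ does pass through a singular point of $\gp$, one checks that $H$ is automatically ``tangent'' in the relevant sense and that $\gp_H$ acquires a singular point there, so the equivalence persists — here I would simply compute the local intersection using the factored form of $f$.

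For the final assertion, suppose the equivalent conditions hold, so $\gp_H$ is a singular plane cubic over $F$. A plane cubic curve with a singular point is automatically rational (of geometric genus $0$): projection away from the singular point gives a birational map to $\mathbb{P}^1$. The only subtlety is that this must be done \emph{over $F$}, which requires the singular point itself to be $F$-rational. I would establish this by noting that by Proposition~\ref{normalization} we may assume $\ellunit = [1,0,0,1] \in H$, and then show that the singular point of $\gp_H$ is forced to be defined over $F$: either it is one of the three singular points of $\gp$ (but those are permuted transitively by $G$ and lie on $w=0$, so a single one of them is $F$-rational only under special circumstances — this case needs a short separate argument, perhaps showing $H$ then cannot lie in $\mathcal{L}(\gp)$ with $\ellunit \in H$, or that the singular locus is still $F$-rational), or it is a genuine point of tangency $P$, and then $P$ is the unique singular point of the geometrically irreducible cubic $\gp_H$, hence fixed by $\Gal(\closure F/F)$ and therefore $F$-rational. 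Once the singular point is known to be $F$-rational, projection from it yields the birational equivalence $\gp_H \sim_F \mathbb{P}^1$.

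The main obstacle I anticipate is the bookkeeping in the case where $H$ passes through one (or more) of the three singular points of $\gp$: there one must verify both that such an $H$ is ``tangent'' in whatever precise sense makes the equivalence true, and that the resulting singular point of $\gp_H$ is $F$-rational (or else that this case does not actually occur for $H \in \mathcal{L}(\gp)$). Everything else is either the standard fact that a singular plane cubic is rational, or a direct computation with the norm-form $f$, which factors completely over $\gext$ and is therefore very tractable. I would organize the write-up so that the generic (non-tangent, smooth-away-from-$\mathrm{Sing}\,\gp$) case is dispatched first by the transversality dictionary, and the special configurations are handled afterward by explicit local analysis.
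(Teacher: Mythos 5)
Your overall strategy is sound and runs parallel to the paper's, but you can eliminate the case split you flag as the ``main obstacle.'' After reducing to $H\in\mathcal{L}(\ellunit)$ via Proposition~\ref{normalization} (which the paper also does, noting a linear automorphism of $\gp$ carries tangent planes to tangent planes), one can verify directly from~\eqref{new} that $\gp_H\cap\{w=0\}=\{\ellunit',\ellunit'^\sigma,\ellunit'^{\sigma^2}\}$, and that none of these is a singular point of $\gp$: indeed $\ellunit'$ is characterized by $\iota(\ellunit')=0$, $w(\ellunit')=0$ together with $H$, while the singular points of $\gp$ require \emph{two} of $\iota,\iota^\sigma,\iota^{\sigma^2}$ to vanish simultaneously, and you can check (using linear independence of $1,\alpha_1,\alpha_2$ over $F$ and $(a,b,c)\ne 0$) that the three $\ellunit'^{\sigma^i}$ are pairwise distinct, so $\iota^\sigma(\ellunit')\ne 0$. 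Hence, in the normalized situation, $H$ never meets $\mathrm{Sing}\,\gp$, and your ``special configuration'' case is vacuous. The paper exploits exactly this: it shows the three $w=0$ points of $\gp_H$ are smooth by computing $\partial f_H/\partial x(\ellunit')$ explicitly (it is a unit times $\iota^\sigma(\ellunit')\iota^{\sigma^2}(\ellunit')\ne 0$), so any singularity must lie in the affine chart $w\ne 0$ where $\gp$ itself is smooth; there the Jacobian matrix of $\gp_H$ drops rank precisely when $(a,b,c)$ is proportional to $\bigl(\partial_xf,\partial_yf,\partial_zf\bigr)$, which is literally the tangency condition. Your ``transversality dictionary'' says the same thing in different words.

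For the final assertion (rationality over $F$), your argument via projection from the singular point, with $F$-rationality of that point deduced from Galois-invariance of the (unique) singularity of a geometrically irreducible plane cubic, is a clean abstract route and is \emph{not} what the paper does: the paper defers this claim to the explicit parametrization in Lemma~\ref{embed3}/Remark~\ref{tangent}, where concrete birational maps to $\mathbb{P}^1$ are written out after identifying the singular point (in the three sub-cases $at+3c\ne 0$, $at+3c=0\ne at+3b$, $at+3c=at+3b=0$). Your approach buys brevity but leaves one point you should address: the projection argument presupposes that $\gp_H$ is geometrically irreducible, since otherwise the singular locus may consist of several conjugate intersection points and the birationality claim itself needs reinterpretation. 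The paper's explicit parametrization sidesteps that worry by construction. If you keep the abstract argument, add a line ruling out geometric reducibility of the singular $\gp_H$ (or observe that the $F$-rational node/cusp produced in Remark~\ref{tangent} still yields the parametrization in the reducible degenerations that occur).
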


\vsp

\begin{proof}
\quad
Since a linear transformation sends a tangent plane to a tangent plane,
 it suffices to prove the lemma for any hyperplane $H \in \mathcal{L}(\ellunit)$.
We first show that the points on $\gp_H \cap \{w=0\}$ are non-singular.
Let $\iota(x,\,y,\,z,\,w)$ be the rational function $x+y\alpha_1+z\alpha_2 \in \gext[\gp_H]$.
The defining equation of $\gp_H$ is written by
$$f_H(x,\,y,\,z,\,w):=\iota(x,\,y,\,z,\,w)\,\iota^{\sigma}\!(x,\,y,\,z,\,w)\,\iota^{\sigma^2}\!\!(x,\,y,\,z,\,w)-w^3
 \mod a(x-w)+by+cz.$$
It is easily verified that the unique zero of the function $\iota$ on $\gp_H$ is
\begin{align}\label{new}
\ellunit':=[c\alpha_1-b\alpha_2,\,a\alpha_2-c,\,-a\alpha_1+b,\,0]
\end{align}
and hence the set $\gp_H \cap \{w=0\}$ consists of three points
 $\bigl\{\ellunit',\,\ellunit'^{\sigma},\,\ellunit'^{\sigma^2}\bigr\}$.
Thus we have
$$\frac{\partial f_H}{\partial x}(\ellunit')
=\lt(1+\frac{\partial y}{\partial x}(\ellunit')\,\alpha_1+\frac{\partial z}{\partial x}(\ellunit')\,\alpha_2\rt)
\iota^{\sigma}\!(\ellunit')\iota^{\sigma^2}\!\!(\ellunit').$$
Here the partial derivatives are given by
\begin{align*}
\frac{\partial y}{\partial x}(\ellunit')=
\begin{cases}
-\frac{a}{b}	&\text{if $b \ne 0$}\\
0		&\text{if $b=0$},
\end{cases}
\quad\quad
\frac{\partial z}{\partial x}(\ellunit')=
\begin{cases}
-\frac{a}{c}	&\text{if $c \ne 0$}\\
0		&\text{if $c=0$}.
\end{cases}
\end{align*}
Using this yields the non-singularity of the point $\ellunit'$ on $\gp_H$.
Acting the Galois group $\Gal(\gext/F)=\langle \sigma \rangle$ yields
 non-singularity of the points $\ellunit'^{\sigma}$ and $\ellunit'^{\sigma^2}$ at once.

Now we consider the affine piece $\gp_H \cap \{w \ne 0\}$.
Since the Jacobian matrix for $\gp_H$ at a point $P$ may be written by
$$
J_{P}=
\begin{pmatrix}
\frac{\partial f}{\partial x}(P)	&\frac{\partial f}{\partial y}(P)	&\frac{\partial f}{\partial z}(P)\\
a					&b					&c
\end{pmatrix},
$$
the point $P \in \gp_H$ is singular if and only if there is some constant
 $\lambda \in F(P)^{*}$ satisfying
$$(a,\,b,\,c)=\lambda\lt(\frac{\partial f}{\partial x}(P),\,\frac{\partial f}{\partial y}(P),\,
\frac{\partial f}{\partial x}(P)\rt).$$
This implies the first equivalence.
\end{proof}

Recall we have fixed the point $\ellunit$ as a base point of the cubic curve $\gp_H$.
In the proof of Lemma~\ref{singularity}, we observe
$$\gp_H \cap \{w=0\}=\bigl\{\ellunit',\,\ellunit'^{\sigma},\,\ellunit'^{\sigma^2}\bigr\}
 \text{ where $\ellunit'$ is the point \eqref{new}}.$$
We end this section with the following fact.

\begin{lemma}\label{F-point}
\quad
Assume $H \in \mathcal{L}(\ellunit)$ is not tangent to $\gp$. Then $3\ellunit' \in \gp_H(F)$.
Moreover $\ellunit'^{\sigma}-\ellunit'=-(\ellunit'^{\sigma^2}-\ellunit') \in \gp_H(F)[3] \setminus \{\ellunit\}$.
\end{lemma}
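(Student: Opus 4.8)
The plan is to exploit the group structure on the ambient surface $\gp_{w\ne0}$ together with the known description of $\gp_H\cap\{w=0\}$. Recall from the proof of Lemma~\ref{singularity} that this intersection is the Galois orbit $\{\ellunit',\ellunit'^{\sigma},\ellunit'^{\sigma^2}\}$ of the point $\ellunit'$ in \eqref{new}, and that the rational function $\iota=x+y\alpha_1+z\alpha_2\in\gext[\gp_H]$ has $\ellunit'$ as its unique zero on $\gp_H$. The first step is to observe that the hyperplane $w=0$ meets the cubic curve $\gp_H$ in exactly the divisor $(\ellunit')+(\ellunit'^{\sigma})+(\ellunit'^{\sigma^2})$; since for the group law on an elliptic curve three points cut out by a line sum to zero (with respect to an inflexion base point), and since $\ellunit=[1,0,0,1]$ is an inflexion point of $\gp_H$ (this is where $w=0$ is tangent to... — more carefully, $\ellunit$ lies on $\gp_H$ and the hyperplane section structure makes it a flex, which is implicit in the choice of base point and confirmed by the Weierstrass form of Theorem~\ref{WE} where $\ellunit\mapsto[0,1,0]$), we get the relation $\ellunit'+\ellunit'^{\sigma}+\ellunit'^{\sigma^2}=\ellunit$ in $\gp_H(F)$.

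Next I would use Galois descent: the divisor class, hence the point $\ellunit'+\ellunit'^{\sigma}+\ellunit'^{\sigma^2}$, is manifestly $\Gal(\gext/F)$-invariant (the set $\{\ellunit',\ellunit'^{\sigma},\ellunit'^{\sigma^2}\}$ is permuted by $\sigma$), so this sum lies in $\gp_H(F)$ — but in fact it equals $\ellunit\in\gp_H(F)$ by the flex relation, which re-proves nothing but sets up the real point. The key move is to consider instead $3\ellunit'=\ellunit'+\ellunit'+\ellunit'$ and compare it with $\ellunit'+\ellunit'^{\sigma}+\ellunit'^{\sigma^2}=\ellunit$. Subtracting gives $3\ellunit'-\ellunit = (\ellunit'-\ellunit'^{\sigma})+(\ellunit'-\ellunit'^{\sigma^2})$. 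To show $3\ellunit'\in\gp_H(F)$ it then suffices to show that $(\ellunit'-\ellunit'^{\sigma})+(\ellunit'-\ellunit'^{\sigma^2})$ is $F$-rational, and for the second assertion that $\ellunit'^{\sigma}-\ellunit'$ is a $3$-torsion point.

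For the torsion claim, set $T:=\ellunit'^{\sigma}-\ellunit'$. Applying $\sigma$ repeatedly and using $\ellunit'+\ellunit'^{\sigma}+\ellunit'^{\sigma^2}=\ellunit$, one computes $T+T^{\sigma}+T^{\sigma^2}=(\ellunit'^{\sigma}-\ellunit')+(\ellunit'^{\sigma^2}-\ellunit'^{\sigma})+(\ellunit'-\ellunit'^{\sigma^2})=\ellunit$, which is automatic; the substantive point is rather that $\{\ellunit',\ellunit'^{\sigma},\ellunit'^{\sigma^2}\}$ being collinear on the flex-based curve forces, after a short computation with the group law, the relations $\ellunit'^{\sigma}-\ellunit'=-(\ellunit'^{\sigma^2}-\ellunit')$ and $3(\ellunit'^{\sigma}-\ellunit')=\ellunit$. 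Concretely: from $\ellunit'^{\sigma^2}=\ellunit-\ellunit'-\ellunit'^{\sigma}$ we get $\ellunit'^{\sigma^2}-\ellunit' = -(\ellunit'^{\sigma}-\ellunit') + (\ellunit - 2\ellunit' - \text{...})$ — so I would instead argue via the divisor of $\iota$: the function $\iota^{\sigma}/\iota$ on $\gp_H$ has divisor $(\ellunit'^{\sigma})-(\ellunit')$, hence $3\bigl[(\ellunit'^{\sigma})-(\ellunit')\bigr]$ is principal (being the divisor of $\iota^{\sigma}/\iota$ cubed, or rather one checks $N_{\gext/F}$-type relations kill it), giving $3T=\ellunit$; and the relation $T = -(\ellunit'^{\sigma^2}-\ellunit')$ follows because $(\ellunit'^{\sigma})-(\ellunit') + (\ellunit'^{\sigma^2})-(\ellunit') = (\ellunit'^{\sigma})+(\ellunit'^{\sigma^2})-2(\ellunit') \sim (\ellunit)-... $, which collapses to $\ellunit$ on the group. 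Finally $T\ne\ellunit$ since $\ellunit'\ne\ellunit'^{\sigma}$ (the three points are distinct when $H$ is not tangent, as established in Lemma~\ref{singularity}), and $3\ellunit'=\ellunit+2T$... — here I must be careful: $3\ellunit' = (\ellunit'+\ellunit'^{\sigma}+\ellunit'^{\sigma^2}) - (\ellunit'^{\sigma}-\ellunit') - (\ellunit'^{\sigma^2}-\ellunit') = \ellunit - T - (-T) = \ellunit$? That would say $3\ellunit'=\ellunit$, which is too strong; so the correct bookkeeping is $3\ellunit' = \ellunit + (\ellunit' - \ellunit'^{\sigma}) + (\ellunit'-\ellunit'^{\sigma^2}) = \ellunit - T + T$ only if the last two cancel, which they do \emph{not} in general. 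I expect the main obstacle to be exactly this: pinning down the precise group-law identity relating $3\ellunit'$ to $T$ and confirming $F$-rationality. The clean route, which I would adopt in the writeup, is: $3\ellunit'\in\gp_H(F)$ because $3\ellunit'$ is fixed by $\Gal(\gext/F)$ — indeed $(3\ellunit')^{\sigma}=3\ellunit'^{\sigma}$, and $3\ellunit'^{\sigma}-3\ellunit'=3T=\ellunit$, so $3\ellunit'$ is Galois-stable, hence $F$-rational; and $T\in\gp_H(F)[3]\setminus\{\ellunit\}$ because $3T=\ellunit$ (shown via $\div(\iota^{\sigma}/\iota)$) while $T\ne\ellunit$, with $T^{\sigma}=\ellunit'^{\sigma^2}-\ellunit'^{\sigma}=-2T-\,(\text{flex relation correction})$ reducing to $-T = \ellunit'^{\sigma^2}-\ellunit'$, the asserted equality.
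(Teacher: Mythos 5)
The proposal has two genuine gaps, both of which the paper's own (admittedly terse) proof addresses.

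First, you assume that $\ellunit$ is an inflection point of the plane cubic $\gp_H\subset H\simeq\mathbb{P}^2$, so that three collinear points sum to $\ellunit$. This is false in general: in the proof of Lemma~\ref{embed1} the tangent at $\ellunit$ (identified with $(1,0)$ in the affine chart) meets $i(\gp_H^0)$ again at $(1,c_0)$ with $c_0=3(B^2+B+1)/c_1$, which is generically nonzero, so $\ellunit$ is not a flex. The map $\vartheta$ of Theorem~\ref{WE} sending $\ellunit$ to $[0,1,0]$ is only birational, not projective-linear, so it does not transport the flex property. Consequently three collinear points on $\gp_H$ sum not to $\ellunit$ but to $T_0$, the third intersection of the tangent at $\ellunit$ with $\gp_H$; the paper identifies $T_0=3\ellunit'$ by showing that $\ellunit'$ is a flex, and thereby obtains the correct relation $\ellunit'+\ellunit'^{\sigma}+\ellunit'^{\sigma^2}=3\ellunit'$, from which both assertions fall out by rearranging and applying $\sigma$. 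You in fact notice the inconsistency yourself ("$3\ellunit'=\ellunit$, which is too strong") but never trace it to the mistaken flex hypothesis on $\ellunit$, so the bookkeeping of your argument never resolves.

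Second, your divisor count is off by a factor of $3$: the function $\iota^{\sigma}/\iota$ on $\gp_H$ has divisor $3(\ellunit'^{\sigma})-3(\ellunit')$, not $(\ellunit'^{\sigma})-(\ellunit')$, precisely because the line $\{\iota=0\}\cap H$ meets the cubic only at $\ellunit'$ and hence with multiplicity three — i.e.\ $\ellunit'$ is a flex, which is the paper's key computational step (the tangent at $\ellunit'$ meets $\gp_H$ again only at $\ellunit'$) and the one your proposal never supplies; compare Lemma~\ref{div}. With the divisor as you wrote it you would get $T=\ellunit$, not $3T=\ellunit$. With the corrected count $3T=\ellunit$ does follow, and then your Galois-invariance argument for $3\ellunit'\in\gp_H(F)$ is sound. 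But the relation $T=-(\ellunit'^{\sigma^2}-\ellunit')$ and the $F$-rationality of $T$ both require combining the flex property of $\ellunit'$ with the collinearity of $\{\ellunit',\ellunit'^{\sigma},\ellunit'^{\sigma^2}\}$ to reach $\ellunit'+\ellunit'^{\sigma}+\ellunit'^{\sigma^2}=3\ellunit'$; your "flex relation correction" gestures at this but never states or proves it, so the second assertion of the lemma is not actually established.
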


\vsp

\begin{proof}
\quad
This follows from direct calculation by the ``chord and tangent processes".
It can be verified that the tangent line at $\ellunit'$ meets $\gp_H$ again at $\ellunit'$,
 namely the intersection is an inflection point.
This implies the first statement of the lemma.
As for the latter, the points $\ellunit',\,\ellunit'^{\sigma},\,\ellunit'^{\sigma^2}$ are collinear
 and the tangent line at $\ellunit$ meets $\gp_H$ again at $3\ellunit'$ by the former proof.
It thus turns out that $\ellunit'+\ellunit'^{\sigma}+\ellunit^{\sigma^2}=3\ellunit'$,
 which yields the required result.
\end{proof}

\section{Weierstrass form of the cubic curve $\gp_H$ over $F$}\label{sec_WE}

Proposition~\ref{normalization} in the previous section allows us to calculate
 a Weierstrass form of the cubic curve $\gp_H$ systematically.
As the $F$-rational point $\ellunit$ on $\gp_H$ is given explicitly,
 the defining equation of $\gp_H/F$ can be transformed into Weierstrass form.
For instance, see Cassels's book \cite{LEC} as for this method.
The proof of Theorem~\ref{WE} sits in the end of this section.
The corollaries below are immediate consequences read off from the Weierstrass form \eqref{we}.

\begin{corollary}[of Theorem~\ref{WE}]\label{basic_inv}
\quad
The discriminant of the Weierstrass form \eqref{we} is
\begin{align*}
\Delta(\gp_H)&=-\Norm{\gext/F}\bigl(\gamma(\gp_H)\bigr)^3\lt(a^3\delta(\gp)^3+3^3\Norm{\gext/F}(\gamma(\gp_H))\rt).\\
\intertext{Assume $H \in \mathcal{L}(\ellunit)$ is not tangent to $\gp$. Then the $j$-invariant is}
j(\gp_H)&=-\frac{a^3\delta(\gp)^3\lt(a^3\delta(\gp)^3+24\,\Norm{\gext/F}(\gamma(\gp_H))\rt)^3}
{\Norm{\gext/F}\bigl(\gamma(\gp_H)\bigr)^3\lt(a^3\delta(\gp)^3+3^3\Norm{\gext/F}(\gamma(\gp_H))\rt)}.
\end{align*}
Especially the family $\{\gp_H/F\}_{H \in \mathcal{L}(\ellunit)}$ contains
 infinitely many isomorphism class of elliptic curves.
\end{corollary}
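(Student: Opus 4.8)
The plan is to read off both invariants directly from the standard Weierstrass formulae applied to \eqref{we}. Write the equation \eqref{we} in the form $y^2 + a_1 xy + a_3 y = x^3 + a_2 x^2 + a_4 x + a_6$ with
$$a_1 = a\,\delta(\gp), \quad a_3 = -\Norm{\gext/F}\bigl(\gamma(\gp_H)\bigr), \quad a_2 = a_4 = a_6 = 0.$$
First I would compute the auxiliary quantities $b_2 = a_1^2 + 4a_2 = a^2\delta(\gp)^2$, $b_4 = 2a_4 + a_1 a_3 = -a\,\delta(\gp)\Norm{\gext/F}(\gamma(\gp_H))$, $b_6 = a_3^2 + 4a_6 = \Norm{\gext/F}(\gamma(\gp_H))^2$, and $b_8 = a_1^2 a_6 + 4a_2 a_6 - a_1 a_3 a_4 + a_2 a_3^2 - a_4^2 = 0$. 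Next, substitute into $c_4 = b_2^2 - 24 b_4 = a^4\delta(\gp)^4 + 24\,a\,\delta(\gp)\Norm{\gext/F}(\gamma(\gp_H)) = a\,\delta(\gp)\bigl(a^3\delta(\gp)^3 + 24\,\Norm{\gext/F}(\gamma(\gp_H))\bigr)$ and into the discriminant formula $\Delta = -b_2^2 b_8 - 8 b_4^3 - 27 b_6^2 + 9 b_2 b_4 b_6$. Since $b_8 = 0$ the first term drops, and a short expansion of the remaining three terms collapses to the claimed closed form; I expect the factor $-\Norm{\gext/F}(\gamma(\gp_H))^3$ to come out of all three surviving monomials, leaving $-8(-a\delta)^3 - 27\Norm{}^{\,2}/\Norm{}^{\,\cdot} + 9(a^2\delta^2)(-a\delta)$ assembled as $-\Norm{\gext/F}(\gamma(\gp_H))^3\bigl(a^3\delta(\gp)^3 + 27\,\Norm{\gext/F}(\gamma(\gp_H))\bigr)$, which is the stated $\Delta(\gp_H)$. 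The $j$-invariant then follows from $j = c_4^3/\Delta$; here one uses that $H$ not tangent to $\gp$ forces $\gp_H$ to be nonsingular (Lemma~\ref{singularity}), hence $\Delta(\gp_H) \ne 0$ and in particular $\Norm{\gext/F}(\gamma(\gp_H)) \ne 0$, so the division is legitimate and yields the displayed rational function in $a^3\delta(\gp)^3$ and $\Norm{\gext/F}(\gamma(\gp_H))$.

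For the final assertion, I would argue that the family $\{\gp_H\}_{H \in \mathcal{L}(\ellunit)}$ contains infinitely many isomorphism classes by exhibiting infinitely many distinct values of $j(\gp_H)$. Set $u := a^3\delta(\gp)^3$ and $v := \Norm{\gext/F}(\gamma(\gp_H))$; then $j = -u(u + 24v)^3 / \bigl(v^3(u + 27v)\bigr)$, which for fixed $\gext/F$ (hence fixed $\delta(\gp)$) depends only on the ratio $u/v$ through $j = -(u/v)\bigl((u/v) + 24\bigr)^3 / \bigl((u/v) + 27\bigr)$ after clearing a common factor $v^4$. As $[a,b,c]$ ranges over $\mathbb{P}^2(\ringint{F})$, the pair $\bigl(a^3\delta(\gp)^3,\ \Norm{\gext/F}(\gamma(\gp_H))\bigr)$ takes infinitely many values whose ratio is not eventually constant — for instance one may fix $b, c$ and vary $a$, so that $u$ grows cubically in $a$ while $v = \Norm{\gext/F}(\gamma(\gp_H))$ is a fixed polynomial of degree $3$ in $a$ with leading coefficient governed by $\Norm{\gext/F}$ of the relevant minor, giving infinitely many distinct ratios $u/v$ and hence (since the rational function $s \mapsto -s(s+24)^3/(s+27)$ is non-constant) infinitely many distinct $j$-values.

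The only genuinely delicate point is the last one: one must rule out that the non-constant rational map $s \mapsto -s(s+24)^3/(s+27)$ is nevertheless constant along the particular one-parameter subfamily chosen, and that $s = u/v$ genuinely assumes infinitely many values there. I expect this to be routine: fixing $b=1, c=0$ (or any admissible choice keeping $H$ non-tangent for all but finitely many $a$), $\gamma(\gp_H)$ is an explicit $\gext$-linear function of $a$, so $v$ is a fixed cubic in $a$ and $u = \delta(\gp)^3 a^3$, whence $u/v \to \delta(\gp)^3 / (\text{leading coeff})$ is \emph{not} what we want — rather one should instead fix $a = 1$ and vary $(b,c)$, or vary $a$ against a \emph{bounded} numerator; in any case infinitely many ratios arise because $u/v$ is a non-constant rational function of the projective parameters, and a non-constant rational function over an infinite field takes infinitely many values. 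Composing with the non-constant $j(s)$ and discarding the finitely many tangent hyperplanes and finitely many coincidences then gives infinitely many isomorphism classes, completing the proof.
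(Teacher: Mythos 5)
Your computation of $b_2,b_4,b_6,b_8,c_4,\Delta$ and hence of $j$ is exactly the ``direct calculation'' the paper delegates to Silverman, and it is correct (with $b_8=0$ the discriminant collapses to $-\beta^3(\alpha^3+27\beta)$ where $\alpha=a\,\delta(\gp)$, $\beta=\Norm{\gext/F}(\gamma(\gp_H))$, and $j=c_4^3/\Delta$ then gives the stated formula). For the ``infinitely many isomorphism classes'' assertion your argument is also sound once you commit to the cleaner choice you arrive at mid-paragraph: fix $a$ so $u=a^3\delta(\gp)^3$ is constant, vary $b$ (say with $c$ fixed), note $v=\Norm{\gext/F}(\gamma(\gp_H))$ is a cubic in $b$ with $b^3$-coefficient $-\Norm{\gext/F}(\alpha_2^\sigma-\alpha_2)\neq 0$, hence $u/v$ is a non-constant rational function of $b$ and composing with the non-constant $s\mapsto -s(s+24)^3/(s+27)$ gives infinitely many $j$-values after discarding the finitely many $b$ making $H$ tangent. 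The hedging about ``fix $b,c$ and vary $a$'' being ``not what we want'' is unnecessary — that choice also works, since $u$ and $v$ are non-proportional cubics in $a$ — but the proof as resolved is correct and matches the paper's (unstated) approach.
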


\vsp

\begin{proof}
\quad
A direct calculation. For example, see \cite{AEC}.
\end{proof}

\begin{corollary}[of Theorem~\ref{WE}]\label{tors}
\quad
Assume $H \in \mathcal{L}(\ellunit)$ is not tangent to $\gp$.
Then $(0,\,0)=-\bigl(0,\,\Norm{\gext/F}(\gamma(\gp_H))\bigr) \in \mathcal{W}(\gp_H)(F)[3]
 \setminus \bigl\{[0,\,1,\,0]\bigr\}$.
\end{corollary}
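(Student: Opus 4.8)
The plan is to read off everything directly from the Weierstrass equation \eqref{we} produced by Theorem~\ref{WE}, using the standard description of the group law at points of the form $(0,\,y_0)$ on a Weierstrass cubic. First I would set $b:=\Norm{\gext/F}\bigl(\gamma(\gp_H)\bigr)$ and $A:=a\,\delta(\gp)$, so that $\mathcal{W}(\gp_H)$ is $y^2+Axy-by=x^3$, and note that substituting $x=0$ gives $y^2-by=0$, i.e. $y=0$ or $y=b$; since $H$ is not tangent to $\gp$, Lemma~\ref{singularity} guarantees $\mathcal{W}(\gp_H)$ is genuinely an elliptic curve, and $b=\Norm{\gext/F}(\gamma(\gp_H))\ne 0$ (otherwise the discriminant in Corollary~\ref{basic_inv} vanishes), so the two points $(0,\,0)$ and $(0,\,b)$ are distinct and distinct from the base point $[0,\,1,\,0]$.

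Next I would compute the negative of $(0,\,0)$. For a Weierstrass form $y^2+a_1xy+a_3y=x^3+\dots$ the negation formula is $-(x_0,\,y_0)=(x_0,\,-y_0-a_1x_0-a_3)$; here $a_1=A$, $a_3=-b$, so $-(0,\,0)=(0,\,0-0+b)=(0,\,b)=\bigl(0,\,\Norm{\gext/F}(\gamma(\gp_H))\bigr)$, which is the asserted identity. To see that $(0,\,0)$ has order $3$, I would observe that the line $x=0$ meets the cubic exactly at the two affine points $(0,\,0)$, $(0,\,b)$ together with the point at infinity $[0,\,1,\,0]$ (the tangent direction of the cubic at infinity), so by the collinearity definition of the group law $(0,\,0)+(0,\,b)+[0,\,1,\,0]=[0,\,1,\,0]$, i.e. $(0,\,b)=-(0,\,0)$ again, consistently; to pin down the order I would instead note $-(0,\,0)=(0,\,b)\ne(0,\,0)$ shows the point is not $2$-torsion, while Corollary~\ref{tors} is exactly the claim that it is $3$-torsion, so the substantive content is that $2(0,\,0)=-(0,\,0)$. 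That identity follows because the tangent line to $\mathcal{W}(\gp_H)$ at $(0,\,0)$ has, after implicit differentiation of $y^2+Axy-by=x^3$ at $(0,\,0)$, slope determined by $2y\,y'+A(y+xy')-by'=3x^2$ evaluated at $(0,\,0)$, giving $-by'=0$ hence $y'=0$; so the tangent is the horizontal line $y=0$, which meets the cubic where $0=x^3-0$, i.e. only at $x=0$ with multiplicity $3$ — so the tangent at $(0,\,0)$ meets the curve again only at $(0,\,0)$ itself, meaning $2(0,\,0)=-(0,\,0)$, equivalently $3(0,\,0)=[0,\,1,\,0]$. Combined with $(0,\,0)\ne[0,\,1,\,0]$ this gives $(0,\,0)\in\mathcal{W}(\gp_H)(F)[3]\setminus\{[0,\,1,\,0]\}$, and $F$-rationality is clear since all coordinates $A,\,b$ lie in $F$.

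The only mild subtlety — and the one place I would be careful rather than the "main obstacle" — is bookkeeping with the non-standard sign conventions in the equation \eqref{we}, where the coefficient of $y$ is $-b$ rather than $+b$; a sign slip there would swap $(0,\,0)$ and $(0,\,b)$ in the negation formula, so I would double-check the formula $-(x_0,y_0)=(x_0,-y_0-a_1x_0-a_3)$ against \cite{AEC} with $a_3=-\Norm{\gext/F}(\gamma(\gp_H))$. Everything else is a routine computation on a Weierstrass cubic and, as the paper says, can be found in \cite{AEC}; there is no real obstacle here, the corollary being an immediate unpacking of Theorem~\ref{WE} parallel to the inflection-point argument already used for $\ellunit'$ in Lemma~\ref{F-point}.
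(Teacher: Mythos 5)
Your proof is correct and takes essentially the same approach as the paper's one-line proof ("Since these points are inflection points, the statement follows"): you verify that the tangent to $\mathcal{W}(\gp_H)$ at $(0,\,0)$ is $y=0$, which meets the cubic only at $(0,\,0)$ with multiplicity three, so $(0,\,0)$ is an inflection point and hence lies in the $3$-torsion; the negation formula and the nonsingularity check via Corollary~\ref{basic_inv} and Lemma~\ref{singularity} are all handled accurately, including the sign bookkeeping with $a_3=-\Norm{\gext/F}(\gamma(\gp_H))$.
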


\vsp

\begin{proof}
\quad
Since these points are inflection points, the statement follows.
\end{proof}

\begin{remark}
\quad
By Corollary~\ref{tors}, there is a dual elliptic curve $\dual{\gp_H}$ defined over $F$ which is isomorphic to
 $\mathcal{W}(\gp_H)/\langle (0,\,0) \rangle$ given by the Weierstrass form $($For example, see {\rm\cite{RK}}$)$
$$y^2+a\,\delta(\gp)xy+3^2\Norm{\gext/F}\bigl(\gamma(\gp_H)\bigr)y
=x^3+\Norm{\gext/F}\bigl(\gamma(\gp_H)\bigr)\lt(a^3\delta(\gp)^3-3^3\Norm{\gext/F}(\gamma(\gp_H))\rt).$$
\end{remark}

\begin{remark}
\quad
The family $\{\gp_H/F\}_{H \in \mathcal{L}(\ellunit)}$ is independent of the choice of $F$-basis of $\gext$,
 but each curve $\gp_H/F$ depends on.
\end{remark}

\begin{remark}
\quad
For arbitrary $(a,\,b,\,c) \in \mathbb{A}^3(F)$,
 define $\gamma(a,\,b,\,c)$ to be the discriminant of the same matrix as $\gamma(\gp_H)$ in Theorem~\ref{WE}.
Then, as a matter of fact, the quantity $\Norm{\gext/F}\bigl(\gamma(a,\,b,\,c)\bigr)$ is
 $\gp_{w \ne 0}(F)$-invariant, namely $\Norm{\gext/F}\bigl(\gamma((a,\,b,\,c)\rho(P))\bigr)
=\Norm{\gext/F}\bigl(\gamma(a,\,b,\,c)\bigr)$ for arbitrary $P \in \gp_{w \ne 0}(F)$.
Especially we have
 $\Norm{\gext/F}\bigl(\gamma(\gp_H)\bigr)=\Norm{\gext/F}\bigl(\gamma(\gp_{\tilde{\rho}(P)(H)})\bigr)$
 for any $P \in \gp_{w \ne 0}(F)$ by taking suitable coefficients of hyperplanes as follows.
Let $[a,\,b,\,c] \in \mathbb{P}^2(\ringint{F})$
 be coefficients of any hyperplane $H/F \in \mathcal{L}(\ellunit)$.
Take $(a',\,b',\,c'):=(a,\,b,\,c)\rho(P)$ for arbitrary $P \in \gp_{w \ne 0}(F)$.
Then the hyperplane $H':=\tilde{\rho}(P)(H)$ has the defining equation $a'(x-x_0w)+b'(y-y_0w)+c'(z-z_0w)=0$
 where $(x_0,\,y_0,\,z_0):=(1,\,0,\,0)^t\!\rho(P)^{-1}$.
Further if $P$ lies on $\gp_H(F)$ then one can observe that $a=a'$.
This fact implies that for any $P \in \gp_H(F)$ the section $\gp_{H'}$ by the hyperplane
 $H':=\tilde{\rho}(P^{-1})(H) \in \mathcal{L}(\ellunit)$ has the Weierstrass form \eqref{we}.
In connection with Proposition~\ref{normalization}, it thus turns out that
 the Weierstrass form \eqref{we} of $\gp_H$ is invariant for the choice of a point on $\gp_H(F)$.
\end{remark}

From now on we shall show necessary lemmas for the proof of Theorem~\ref{WE}.
First, we choose a generic polynomial for $C_3 \simeq \rint/3\rint$-extension over $F$,
$$\bigl(\mathfrak{g}(x)=\bigr)\;\mathfrak{g}(x;t)=x^3+t\,x^2-(t+3)x+1
 \quad \bigl(t \in F,\,\delta(\mathfrak{g})=t^2+3t+9\bigr).$$
We recall basic properties of this polynomial in \S\ref{genpoly} briefly.
By the property, for any cyclic cubic extension $\gext/F$, there is a constant $t \in F$
 such that the roots of $\mathfrak{g}(x;t)$ generates $\gext$ over $F$.
Fix $t \in F$ for $\gext/F$ and $\alpha \in \gext$ as one of the roots.
Let $\gp^0/F$ be a cubic hypersurface in $\mathbb{P}^3$ associated to $\gext/F$ with the $F$-basis
 $\{1,\,\alpha,\,\frac{1}{1-\alpha}\}$ of $\gext$ defined by the same polynomial as \eqref{defpoly}.

For this $\gp^0/F$ and any hyperplane $H \in \mathcal{L}(\ellunit)$,
 we prove as follows that the curve $\gp_H^0$ is birational over $F$ to
 either the cubic curve $\mathcal{W}(\gp_H^0)$ defined by the same Weierstrass form as \eqref{we}
 or $\mathbb{P}^1$ by dividing it into $3$-cases according to embeddings of $\gp_H^0$ into $\mathbb{P}^2$
 as plane cubic curves.

\begin{lemma}\label{embed1}
\quad
Let $H:a(x-w)+by+cz=0$ be a hyperplane in $\mathbb{P}^3$ defined over $F$.
Assume $at+3c \ne 0$.
Then there is a birational map given by
\begin{align*}
\theta:\gp_H^0 \quad &\stackrel{\sim}{\longrightarrow} \quad \mathcal{W}(\gp_H^0);\\
[x,\,y,\,z,\,w] \quad &\longmapsto \quad [\theta_X,\,\theta_Y,\,\theta_Z],\\
[\theta^{(-1)}_X,\,\theta^{(-1)}_Y,\,\theta^{(-1)}_Z,\,\theta^{(-1)}_W]
 \quad &\text{\,\reflectbox{$\longmapsto$}\:\,} \quad [x,\,y,\,z],
\end{align*}
$$
\begin{cases}
\theta_X
=u_0\,u_1^2\lt(\frac{y-3\,c_0\,w}{h_0(x,y,z,w)w}+\frac{3A^2\delta(\mathfrak{g})+c_7}{c_1}\rt)\!w,\\
\theta_Y
=u_0\,u_1^3\lt(\frac{2}{3}\,g_3\bigl[\frac{y-3\,c_0\,w}{h_0(x,y,z,w)w}\bigr]\,h_0(x,y,z,w)
+g_2\bigl[\frac{y-3\,c_0\,w}{h_0(x,y,z,w)w}\bigr]\rt)w
-\frac{a\,\delta(\mathfrak{g})\theta_X-\Norm{\gext/F}(\gamma(\gp_H^0))w}{2},\\
\theta_Z=w,
\end{cases}
$$
$$
\begin{cases}
\theta^{(-1)}_X=\bigl(1+t(-A-B\,h_1(x,\,z)+h_1(x,\,z))\bigr)\,h_2(x,\,y,\,z)z+\bigl(1+t\,c_0(1-B)\bigr)z,\\
\theta^{(-1)}_Y=3\bigl(h_1(x,\,z)\,h_2(x,\,y,\,z)+c_0\bigr)z,\\
\theta^{(-1)}_Z=3\bigl(-(A+B\,h_1(x,\,z))\,h_2(x,\,y,\,z)-B\,c_0\bigr)z,\\
\theta^{(-1)}_W=z.
\end{cases}
$$
Here the functions $h_0(x,\,y,\,z,\,w),\,h_1(x,\,z),\,h_2(x,\,y,\,z) \in F(x,\,y,\,z,\,w)$ are
$$h_0(x,\,y,\,z,\,w)=3\bigl(\frac{x}{w}-1\bigr)-t\frac{y}{w}-t\frac{z}{w}, \quad h_1(x,\,z)=\frac{x}{u_0\,u_1^2\,z}
-\frac{3A^2\delta(\mathfrak{g})+c_7}{c_1},$$
$$h_2(x,\,y,\,z)=
\frac{\bigl(y+2^{-1}(a\,\delta(\mathfrak{g})x-\Norm{\gext/F}(\gamma(\gp_H^0))z)\bigr)-g_2[h_1(x,\,z)]u_0\,u_1^3z}
{2\,g_3[h_1(x,\,z)]u_0\,u_1^3z},$$
and the functions $g_2[X],\,g_3[X]$ are elements in the ring of polynomials $F[X]$ defined by
\begin{align*}
g_3[X]&=c_1\delta(\mathfrak{g})X^3+3\,c_2\,\delta(\mathfrak{g})X^2+3A\,c_3\,\delta(\mathfrak{g})X+c_4,\\
g_2[X]&=6(B^2+B+1)\delta(\mathfrak{g})X^2+3\frac{c_5}{c_1}\delta(\mathfrak{g})X
+3A\frac{c_6}{c_1}\delta(\mathfrak{g})+3,
\end{align*}
where $(A,\,B)=(\frac{a}{at+3c},\,\frac{at+3b}{at+3c})$ and the other constants are as follows.
$$c_0=\frac{3(B^2+B+1)}{c_1}, \quad u_0=-12\delta(\mathfrak{g})c_1, \quad u_1=\frac{at+3c}{18},$$
\begin{align*}
c_1&=(B+2)^3\mathfrak{g}\!\lt(\frac{-B+1}{B+2}\rt),\\
c_2&=A\lt(B^2(2\,t+3)+2B(t+6)-t+3\rt)-(B^2+B+1),\\
c_3&=A\bigl(B(2\,t+3)+t+6\bigr)-2B-1,\\
c_4&=(3A)^3\mathfrak{g}\!\lt(\frac{1-At}{3A}\rt),\\
c_5&=\text{\footnotesize $A\lt(4B^4(2\,t+3)+B^3(16\,t+51)+3B^2(7\,t+24)+B(13\,t+87)-4\,t+21\rt)-6(B^2+B+1)^2$},\\
c_6&=A\lt(2B^3(2\,t+3)+3B^2(2\,t+3)+6B(2\,t+3)+5\,t+21\rt)-3\lt((B+1)^3+B^3\rt),\\
c_7&=A\lt(B^2(2\,t+3)+2B(t+6)-t+3\rt)-2(B^2+B+1).
\end{align*}
\end{lemma}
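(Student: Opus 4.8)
The plan is to produce the two rational maps by the classical recipe (see \cite{LEC}) that puts a smooth plane cubic carrying a rational point into Weierstrass form, made completely explicit for our curve and base point, and then to verify by direct substitution that the displayed formulas are mutually inverse and compatible with \eqref{we}.

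First I would fix concrete coordinates. Since $\ellunit=[1,0,0,1]$ lies on $H$, the hyperplane $H$ is a copy of $\mathbb{P}^2$, and the hypothesis $at+3c\ne0$ lets one solve the linear relation of $H$ so that $\gp_H^0$ becomes an explicit plane cubic in which the base point $\ellunit$, the three points $\ellunit',\ellunit'^{\sigma},\ellunit'^{\sigma^2}$ lying on $\{w=0\}$ (formula \eqref{new}), and the inflectional data of Lemma~\ref{singularity} and Lemma~\ref{F-point} are all visible. The point of the special basis $\{1,\alpha,\frac{1}{1-\alpha}\}=\{1,\alpha,\alpha^{\sigma}\}$ coming from the generic polynomial $\mathfrak{g}(x;t)$ is that it keeps all of this finite: the conjugates $\iota^{\sigma},\iota^{\sigma^2}$ of $\iota=x+y\alpha+z\alpha^{\sigma}$ are cyclic permutations of $\iota$, and the relation $\alpha^{\sigma}=\frac{1}{1-\alpha}$ is what produces the shapes of the constants $A,B,c_0,\dots,c_7,u_0,u_1$. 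A one-line gradient computation, using $\Trace{\gext/F}(\alpha)=-t$, shows that $h_0(x,y,z,w)=0$ is precisely the tangent plane to $\gp^0$ at $\ellunit$; hence on $\gp_H^0$ the function $h_0$ vanishes to order $2$ at $\ellunit$ (and simply at the third intersection point $3\ellunit'$ of Lemma~\ref{F-point}), while $\iota/w$ has divisor supported on $\{w=0\}$ by the proof of Lemma~\ref{singularity}.

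Next I would run the reduction. Out of $\iota/w$, the tangent-line function $h_0$, the affine coordinates, and the $F$-rational point $3\ellunit'$, one builds step by step an $F$-rational function with a pole of exact order $2$ at $\ellunit$ and no other pole — this is $\theta_X/w$, where the inner quantity $X=\frac{y-3c_0w}{h_0w}$ carries the double pole and the additive constant $\frac{3A^2\delta(\mathfrak{g})+c_7}{c_1}$ is a normalisation — and an $F$-rational function with a pole of exact order $3$ at $\ellunit$ and no other pole, namely $\theta_Y/w$. The value $c_0=\frac{3(B^2+B+1)}{c_1}$ is forced precisely by the requirement that the would-be order-$4$ part of $\frac{2}{3}g_3[X]h_0+g_2[X]$ at $\ellunit$ cancel, and the remaining constants and the scalings $u_0,u_1$ are pinned down by demanding that the cubic relation between $\theta_X/w$ and $\theta_Y/w$ be exactly the normalised form \eqref{we} rather than some other Weierstrass model. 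The inverse map is then the algebraic inversion of this substitution: $h_1(x,z)$ recovers $X$ from the affine Weierstrass $x$-coordinate, $h_2(x,y,z)$ recovers (a constant multiple of) the tangent-line function $h_0$ from the Weierstrass $y$-coordinate, and $x,y,z$ are recovered from $X$, $h_0$ and the linear equation of $H$, giving $\theta^{(-1)}_X,\theta^{(-1)}_Y,\theta^{(-1)}_Z,\theta^{(-1)}_W$.

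Finally I would verify three things: that $\theta(\ellunit)=[0,1,0]$, which is immediate once the pole orders are as claimed; that $\theta$ maps $\gp_H^0$ into $\mathcal{W}(\gp_H^0)$, i.e.\ that $\theta_X,\theta_Y,\theta_Z$ satisfy \eqref{we} identically modulo the defining polynomial of $\gp_H^0$ and the linear relation of $H$; and that $\theta^{(-1)}\circ\theta$ and $\theta\circ\theta^{(-1)}$ are the respective identities as rational maps, which with the previous point yields birationality. The main obstacle is precisely this bookkeeping: one must identify correctly the divisors of $\iota,\iota^{\sigma},\iota^{\sigma^2},w$ and of the coordinate functions at $\ellunit$ and along $\{w=0\}$ so as to set up the right Riemann--Roch spaces, then carry the many explicit constants through two simultaneous reductions (modulo the cubic and modulo the hyperplane) while checking the polynomial identities above, and finally confirm that $at+3c\ne0$ is exactly what keeps $A$, $B$, $u_1$ and the denominators $h_0$, $g_3[\,\cdot\,]$, $u_0u_1^3z$ non-vanishing; the complementary case $at+3c=0$ is handled by the two further cases mentioned before the lemma, with a different choice of coordinates on $H$.
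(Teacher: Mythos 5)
Your plan is essentially the same as the paper's: the paper applies the linear substitution $i$ sending $\ellunit$ to itself and the tangent plane $3(x-w)-ty-tz=0$ to the coordinate hyperplane $x'=w'$, projects $i(\gp_H^0)$ into $\mathbb{P}^2$ via the linear relation $z=-A(x-w)-By$ (possible precisely because $at+3c\ne 0$), runs Cassels's tangent construction at $(1,0)$ with third intersection point $(1,c_0)$ to get the quartic model, and then rescales by $l$; your divisor-theoretic framing of these same steps (tangent plane $h_0=0$, pole of order $2$ for $X=\frac{y-3c_0w}{h_0w}$, pole of order $3$ for $\theta_Y/w$, $c_0$ pinned down by cancellation of the leading pole, $u_0,u_1$ pinned down by matching \eqref{we}) is an accurate description of why the recipe works. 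Both you and the paper defer the actual symbol-pushing that confirms the displayed formulas, so the content is the same; your emphasis on the $\Trace{\gext/F}(\alpha)=-t$ gradient computation and on which poles must cancel is a useful conceptual complement to the paper's purely coordinate-change presentation.
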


\vsp

\begin{proof}
\quad
On account of technical reason, we first make the substitution
\begin{align*}
[x,\,y,\,z,\,w] \quad \stackrel{i}{\longmapsto} \quad [x,\,y,\,z,\,w] \cdot
\begin{pmatrix}
3  & 0 & 0 & 0 \\
-t & 1 & 0 & 0 \\
-t & 0 & 1 & 0 \\
0  & 0 & 0 & 3
\end{pmatrix}
\end{align*}
which maps the point $\ellunit$ to $\ellunit$.
The substitution $i$ maps the hyperplane $H:a(x-w)+by+cz=0$ to
 the hyperplane $i(H):a(x-w)+(at+3b)y+(at+3c)z=0$.
We denote the polynomial $f \circ i^{-1}$ defining $i(\gp^0)$ by ${^i\!f}$
 where $f$ denotes the polynomial defining $\gp^0$.
From the assumption $at+3c \ne 0$, we have the cubic form ${^i\!f_H}(x,\,y,\,w):={^i\!f}(x,\,y,\,-A(x-w)-By,\,w)$
which defines the plane cubic curve $i(\gp_H^0)=i(\gp^0) \cap i(H) \hookrightarrow \mathbb{P}^2$
 isomorphic to $\gp_H^0$.
We may regard a point $[x,\,y,\,w]$ on $\mathbb{P}^2$ as an affine point $(x/w,\,y/w) \in \mathbb{A}^2$ if $w \ne 0$.
Then $\ellunit$ may be identified with $(1,\,0)$.

The tangent at $(1,\,0) \in i(\gp_H^0)$ meets $i(\gp_H^0)$ again at a $F$-rational point $P$, say.
It is easily calculated that $P=(1,\,c_0) \in i(\gp_H^0)$. Let $g_H(x,\,y):={^i\!f_H}(x+1,\,y+c_0,\,1)$.
Using the method in Cassels's book \cite{LEC}, there is a birational map $\Theta$ over $F$ from
 the cubic curve $i(\gp_H^0)$ to a curve $\mathcal{C}$ whose defining equation of the affine piece $\{w \ne 0\}$
 is given by
$$y^2=g_2[x]^2-4\,g_1[x]\,g_3[x]$$
where $g_d[x]$ denotes the homogeneous part of degree $d$ of the polynomial $g_H(1,\,x)$.
The map $\Theta$ is given by
\begin{align*}
\Theta:i(\gp_H^0) \quad &\stackrel{\sim}{\longmapsto} \quad \mathcal{C}\\
(x,\,y) \quad &\longmapsto \quad \text{\small $\lt(\frac{y-c_0}{x-1},\,2\,g_3\!\!\lt[\frac{y-c_0}{x-1}\rt](x-1)
+g_2\!\!\lt[\frac{y-c_0}{x-1}\rt]\rt)$},\\
\text{\small $\lt(\frac{y-g_2[x]}{2\,g_3[x]}+1,\,\frac{y-g_2[x]}{2\,g_3[x]}x+c_0\rt)$}
 \quad &\text{\,\reflectbox{$\longmapsto$}\:\,} \quad (x,\,y).
\end{align*}
Having done it, by the substitution
$$(x,\,y) \quad \stackrel{l}{\longmapsto} \quad
(l_x,\,l_y):=\text{\footnotesize $\lt(u_0\,u_1^2\lt(x+\frac{3A^2\delta(\mathfrak{g})+c_7}{c_1}\rt),\,
u_0\,u_1^3\,y-\frac{a\,\delta(\mathfrak{g})l_x-\Norm{\gext/F}\bigl(\gamma(\gp_H^0)\bigr)}{2}\rt)$},$$
we achieve the Weierstrass form defining $\mathcal{W}(\gp_H^0)$.
Thus the composed map $\theta:=l \circ \Theta \circ i;\,\gp_H^0 \stackrel{\sim}{\to} \mathcal{W}(\gp_H^0)$
 is the desired one.
Since the all is standard manner, the precise proof is omitted.
\end{proof}

\begin{lemma}\label{embed2}
\quad
Let $H:a(x-w)+by+cz=0$ be a hyperplane in $\mathbb{P}^3$ defined over $F$.
Assume $at+3c=0$ and $at+3b \ne 0$.
Then there is a birational map given by
\begin{align*}
\theta:\gp_H^0 \quad &\stackrel{\sim}{\longrightarrow} \quad \mathcal{W}(\gp_H^0);\\
[x,\,y,\,z,\,w] \quad &\longmapsto \quad [\theta_X,\,\theta_Y,\,\theta_Z],\\
[\theta^{(-1)}_X,\,\theta^{(-1)}_Y,\,\theta^{(-1)}_Z,\,\theta^{(-1)}_W]
 \quad &\text{\,\reflectbox{$\longmapsto$}\:\,} \quad [x,\,y,\,z],
\end{align*}
$$
\begin{cases}
\theta_X
=u_0\,u_1^2\lt(\frac{z-3\,c_0\,w}{h_0(x,y,z,w)w}+\frac{3A^2\delta(\mathfrak{g})+c_7}{c_1}\rt)\!w,\\
\theta_Y
=u_0\,u_1^3\lt(\frac{2}{3}\,g_3\bigl[\frac{z-3\,c_0\,w}{h_0(x,y,z,w)w}\bigr]\,h_0(x,y,z,w)
+g_2\bigl[\frac{z-3\,c_0\,w}{h_0(x,y,z,w)w}\bigr]\rt)w
-\frac{a\,\delta(\mathfrak{g})\theta_X-\Norm{\gext/F}(\gamma(\gp_H^0))w}{2},\\
\theta_Z=w,
\end{cases}
$$
$$
\begin{cases}
\theta^{(-1)}_X=\bigl(1+t(-A+h_1(x,\,z))\bigr)\,h_2(x,\,y,\,z)z+(1+t\,c_0)z,\\
\theta^{(-1)}_Y=-3A\,h_2(x,\,y,\,z)z,\\
\theta^{(-1)}_Z=3\bigl(h_1(x,\,z)\,h_2(x,\,y,\,z)+c_0\bigr)z,\\
\theta^{(-1)}_W=z.
\end{cases}
$$
Here the functions $h_0(x,\,y,\,z,\,w),\,h_1(x,\,z),\,h_2(x,\,y,\,z) \in F(x,\,y,\,z,\,w)$
 and $g_3[X] \in F[X]$ have the same notation as in Lemma~\ref{embed1}, but
 $g_2[X]=6\,\delta(\mathfrak{g})X^2+3\frac{c_5}{c_1}\delta(\mathfrak{g})X+3A\frac{c_6}{c_1}\delta(\mathfrak{g})+3$
 and the constants are redefined as follows.
$$A=\frac{a}{at+3b}, \quad c_0=\frac{3}{c_1}, \quad u_0=-12\delta(\mathfrak{g})c_1, \quad u_1=\frac{at+3b}{18},$$
$$c_1=-2\,t-3, \quad c_2=-A(t+6)-1, \quad c_3=A(t-3)-1, \quad c_4=(3A)^3\mathfrak{g}\!\lt(\frac{1-At}{3A}\rt),$$
$$c_5=-A(4\,t+33)-6, \quad c_6=A(5\,t-6)-3, \quad c_7=-A(t+6)-2.$$
\end{lemma}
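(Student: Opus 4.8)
The plan is to carry out the argument from the proof of Lemma~\ref{embed1} essentially verbatim, with the coordinates $y$ and $z$ interchanged: under the present hypotheses it is $y$, rather than $z$, that the hyperplane equation eliminates. First I would apply the same linear substitution $i$ as in Lemma~\ref{embed1}, which fixes $\ellunit$ and sends $H$ to $i(H):a(x-w)+(at+3b)y+(at+3c)z=0$. Since $at+3c=0$, this reads $i(H):a(x-w)+(at+3b)y=0$, and as $at+3b\ne0$ it is equivalent to $y=-A(x-w)$ with $A=\frac{a}{at+3b}$. Substituting into the cubic form ${^i\!f}$ defining $i(\gp^0)$, I obtain the cubic form ${^i\!f_H}(x,\,z,\,w):={^i\!f}(x,\,-A(x-w),\,z,\,w)$, whose zero locus is a plane model $i(\gp_H^0)\hookrightarrow\mathbb{P}^2$ in the homogeneous coordinates $[x,\,z,\,w]$, isomorphic over $F$ to $\gp_H^0$; on it the base point $\ellunit$ becomes the affine point $(1,\,0)$ with respect to the coordinates $(x/w,\,z/w)$.

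Next I would compute the tangent line to $i(\gp_H^0)$ at $(1,\,0)$; as in Lemma~\ref{embed1} it meets the cubic in exactly one further point $P$, and one checks $P=(1,\,c_0)$ with $c_0=\frac{3}{c_1}$, $c_1=-2t-3$ — this is the one place the new normalisation constants enter essentially. Translating $P$ to the origin by setting $g_H(x,\,z):={^i\!f_H}(x+1,\,z+c_0,\,1)$ and then applying Cassels's reduction to Weierstrass form \cite{LEC} — the very same map $\Theta$ as in Lemma~\ref{embed1}, with $y$ replaced throughout by $z$ — produces a curve $\mathcal{C}:y^2=g_2[x]^2-4\,g_1[x]\,g_3[x]$, where $g_d[x]$ is the homogeneous part of degree $d$ of $g_H(1,\,x)$; expanding, one finds that $g_3[X]$ retains the form displayed in Lemma~\ref{embed1}, while $g_2[X]$ collapses to $6\,\delta(\mathfrak{g})X^2+3\frac{c_5}{c_1}\delta(\mathfrak{g})X+3A\frac{c_6}{c_1}\delta(\mathfrak{g})+3$, with the constants $c_1,\dots,c_7$ taking the values listed — informally, the whole package is the ``$B=0$'' degeneration of the formulas of Lemma~\ref{embed1}. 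Finally the linear substitution $l$ of the same shape, now with $u_0=-12\,\delta(\mathfrak{g})c_1$ and $u_1=\frac{at+3b}{18}$, carries $\mathcal{C}$ onto the Weierstrass form \eqref{we} defining $\mathcal{W}(\gp_H^0)$, and $\theta:=l\circ\Theta\circ i$ is the asserted map; since $A,\,c_0,\dots,c_7,\,u_0,\,u_1\in F$ it is defined over $F$, and expanding the composite $\theta$ and its inverse yields the explicit formulas in the statement.

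As with Lemma~\ref{embed1}, the substance of the work is a long but entirely mechanical polynomial manipulation, so I would record only the outcome and omit the computation. The main labor is simply verifying that the displayed formulas for $\theta^{(-1)}$ are genuinely inverse to those for $\theta$; the one subtle point I would flag is that the auxiliary quantities on which this particular birational model depends — above all $c_1=-2t-3$ and the leading coefficient $c_1\delta(\mathfrak{g})$ of $g_3[X]$ — must be nonzero for $\Theta$ and $l$ to be birational, and the finitely many excluded values of $t$ (and any boundary choices of $[a,\,b,\,c]$) are dealt with exactly as in the proof of Lemma~\ref{embed1}. Since the isomorphism class of $\mathcal{W}(\gp_H^0)$ is insensitive to such choices by the note following Theorem~\ref{WE}, no new Weierstrass form arises in these cases.
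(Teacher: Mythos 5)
Your proposal follows the paper's own proof essentially verbatim: apply the same substitution $i$; use $at+3c=0$, $at+3b\ne0$ to eliminate $y$ (rather than $z$) and obtain the plane cubic ${}^i\!f_H(x,z,w)={}^i\!f(x,-A(x-w),z,w)$; take $\ellunit\mapsto(1,0)$ in the affine coordinates $(x/w,z/w)$; run Cassels's reduction exactly as in Lemma~\ref{embed1} with $y$ replaced by $z$; then close with the linear substitution $l$ to reach $\mathcal{W}(\gp_H^0)$. The paper's proof says precisely this and nothing more, so there is no gap in the core argument.

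Two asides in your write-up are not accurate, though neither is load-bearing. First, the parenthetical claim that the Lemma~\ref{embed2} constants are the ``$B=0$'' degeneration of those in Lemma~\ref{embed1} is false: the Lemma~\ref{embed1} parameters have $A=\frac{a}{at+3c}$ and $B=\frac{at+3b}{at+3c}$, so $B=0$ there would mean $at+3b=0$ (still in the $at+3c\ne0$ regime and with a different $A$), whereas the present lemma has $at+3c=0$, where the old $A,B$ are not even defined. One can check directly that the claim fails numerically: for instance, setting $B=0$ in the $c_2$ of Lemma~\ref{embed1} gives $A(3-t)-1$, but here $c_2=-A(t+6)-1$; likewise $c_3,c_5,c_6$ do not match. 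Since you explicitly say you would derive the constants by expansion, this misstatement does not undermine the proof, but it should not be trusted as a shortcut. Second, the closing appeal to the note after Theorem~\ref{WE} does not actually address degenerate values such as $c_1=-2t-3=0$ (where $c_0$ and several other displayed quantities become undefined); that note concerns independence from the choice of $(a,b,c)$ and of $\sigma$, not from $t$. The paper does not treat this either, so this is at most a shared loose end rather than a defect in your argument.
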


\vsp

\begin{proof}
\quad
This is almost similar to the previous lemma.
The substitution $i$ defined in the previous lemma maps the hyperplane $H:a(x-w)+by+cz=0$ to
 the hyperplane $i(H):a(x-w)+(at+3b)y+(at+3c)z=0$.
From the assumption $at+3c=0,\,at+3b \ne 0$, we have the cubic form
 ${^i\!f_H}(x,\,z,\,w):={^i\!f}(x,\,-A(x-w),\,z,\,w)$ defining $i(\gp_H^0) \hookrightarrow \mathbb{P}^2$.
Applying the same method yields the birational map
\begin{align*}
\Theta:i(\gp_H^0) \quad &\stackrel{\sim}{\longmapsto} \quad \mathcal{C}\\
(x,\,z) \quad &\longmapsto \quad \text{\small $\lt(\frac{z-c_0}{x-1},\,2\,g_3\!\!\lt[\frac{z-c_0}{x-1}\rt](x-1)
+g_2\!\!\lt[\frac{z-c_0}{x-1}\rt]\rt)$},\\
\text{\small $\lt(\frac{y-g_2[x]}{2\,g_3[x]}+1,\,\frac{y-g_2[x]}{2\,g_3[x]}x+c_0\rt)$}
 \quad &\text{\,\reflectbox{$\longmapsto$}\:\,} \quad (x,\,y).
\end{align*}
where the curve $\mathcal{C}$ is defined by the equation
$$y^2=g_2[x]^2-4\,g_1[x]\,g_3[x].$$
Here $g_d[x]$ denotes the homogeneous part of degree $d$ of the polynomial $g_H(1,\,x)$
 where $g_H(x,\,z):={^i\!f_H}(x+1,\,z+c_0,\,1)$.
In order to obtain the required Weierstrass form, we make the substitution
$$(x,\,y) \quad  \stackrel{l}{\longmapsto} \quad
(l_x,\,l_y):=\text{\footnotesize $\lt(u_0\,u_1^2\lt(x+\frac{3A^2\delta(\mathfrak{g})+c_7}{c_1}\rt),\,
u_0\,u_1^3\,y-\frac{a\,\delta(\mathfrak{g})l_x-\Norm{\gext/F}\bigl(\gamma(\gp_H^0)\bigr)}{2}\rt)$}.$$
This yields the desired birational map
 $\theta:=l \circ \Theta \circ i;\,\gp_H^0 \stackrel{\sim}{\to} \mathcal{W}(\gp_H^0)$.
\end{proof}

\begin{remark}
\quad
As for the precise proof of the above lemmas, it might be required to use the explicit calculation
\begin{align*}
&\Norm{\gext/F}\bigl(\gamma(\gp_H^0)\bigr)\\
&=\text{\footnotesize $\frac{\delta(\mathfrak{g})}{3^3}\Bigl(-(at+3c)^3\mathfrak{g}(-B)-a^3\delta(\mathfrak{g})^2
+a\bigl((at+3c)^2+(at+3c)(at+3b)+(at+3b)^2\bigr)\delta(\mathfrak{g})\Bigr)$}\\
&=\text{\footnotesize $\delta(\mathfrak{g})\Bigl(-c^3\mathfrak{g}\bigl(\frac{-b}{c}\bigr)-a^3(2t+3)+a^2t(bt+3b+3c)
+a\bigl(b^2(2t+3)-bc(t^2+t-3)-c^2(t-3)\bigr)\Bigr)$}.
\end{align*}
\end{remark}

\begin{lemma}\label{embed3}
\quad
Let $H:a(x-w)+by+cz=0$ be a hyperplane in $\mathbb{P}^3$ defined over $F$.
Assume $at+3c=at+3b=0$. Then the hyperplane $H$ is tangent to $\gp^0$ at $\ellunit$
 $($i.e. $\gp_H^0$ is singular from Lemma~\ref{singularity}$)$,
 and there is a birational map given by
\begin{align*}
\theta:\gp_H^0 \quad &\stackrel{\sim}{\longrightarrow} \quad \mathbb{P}^1;\\
[x,\,y,\,z,\,w] \quad &\longmapsto \quad [y,\,z],\\
\lt[-\frac{3t(x^3+y^3)}{(x+y)^3\mathfrak{g}\bigl(\frac{2y-x}{x+y}\bigr)}+1,\,
-\frac{9x(x^2-xy+y^2)}{(x+y)^3\mathfrak{g}\bigl(\frac{2y-x}{x+y}\bigr)},\,
-\frac{9y(x^2-xy+y^2)}{(x+y)^3\mathfrak{g}\bigl(\frac{2y-x}{x+y}\bigr)},\,1\rt]
 \quad &\text{\,\reflectbox{$\longmapsto$}\:\,} \quad [x,\,y].
\end{align*}
\end{lemma}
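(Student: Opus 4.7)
The plan has two parts: verify that $H$ is tangent to $\gp^0$ at $\ellunit$, and then construct the birational map to $\mathbb{P}^1$ by projecting from the singular point. For tangency, I compute the tangent plane of $\gp^0$ at $\ellunit = [1,0,0,1]$ directly. Writing $\iota := x + y\alpha + z/(1-\alpha)$, one has $\iota(\ellunit) = \iota^\sigma(\ellunit) = \iota^{\sigma^2}(\ellunit) = 1$, so the gradient of $f = \iota\iota^\sigma\iota^{\sigma^2} - w^3$ at $\ellunit$ is $\bigl(3,\; \Trace{\gext/F}(\alpha),\; \Trace{\gext/F}(1/(1-\alpha)),\; -3\bigr)$. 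A direct substitution shows that $1/(1-\alpha)$ is itself a root of $\mathfrak{g}(x;t)$, so both traces equal $-t$, and the tangent plane at $\ellunit$ is $3(x-w) - ty - tz = 0$, which coincides (up to scalar) with $H$ under the hypothesis $at+3b=at+3c=0$. By Lemma~\ref{singularity}, $\gp_H^0$ is singular at $\ellunit$.

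For the second part, the principle is that a singular plane cubic is rational and is birationally parameterized by projection from its singular point. Since $\ellunit$ has $(y,z)=(0,0)$, the map $[x,y,z,w]\mapsto[y,z]$ is exactly projection from $\ellunit$ in the plane $H$. To construct the inverse, I parameterize a line through $\ellunit$ in $H$ with direction $[u,v]$ as
\[
[\,1+s\,t(u+v)/3,\; su,\; sv,\; 1\,],\qquad s\in\mathbb{P}^1,
\]
the first coordinate being forced by the defining equation of $H$. Substituting into $f$ and using $\iota(\ellunit)=1$ yields
\[
f = s\,\Trace{\gext/F}(\eta) + s^2\,\Trace{\gext/F}(\eta\eta^\sigma) + s^3\,\Norm{\gext/F}(\eta),\quad \eta := u\alpha + v/(1-\alpha) + t(u+v)/3.
\]
The identity $\Trace{\gext/F}(\eta)=0$ follows from the trace computation above and reflects that $s=0$ (i.e.\ $\ellunit$) is a double root; the residual intersection occurs at $s_0 = -\Trace{\gext/F}(\eta\eta^\sigma)/\Norm{\gext/F}(\eta)$, and substituting $s_0$ into the parameterization gives the inverse map.

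It remains to identify $\Trace{\gext/F}(\eta\eta^\sigma)$ and $\Norm{\gext/F}(\eta)$ as the concrete rational functions appearing in the statement. The key identities, provable by standard symmetric-function manipulations on the roots of $\mathfrak{g}$, are
\[
\Trace{\gext/F}(\eta\eta^\sigma) = -\tfrac{1}{3}\,\delta(\mathfrak{g})\,(u^2-uv+v^2),
\]
\[
\Norm{\gext/F}(\eta) = -\tfrac{1}{27}\,\delta(\mathfrak{g})\,(u+v)^3\,\mathfrak{g}\!\left(\tfrac{2v-u}{u+v}\right);
\]
plugging these into $s_0$ gives $s_0 = -9(u^2-uv+v^2)/[(u+v)^3\mathfrak{g}((2v-u)/(u+v))]$, and one reads off the formulas of the lemma directly (using $(u+v)(u^2-uv+v^2)=u^3+v^3$ to simplify $\theta^{(-1)}_X$). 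The main obstacle is the algebraic bookkeeping for $\Norm{\gext/F}(\eta)$: the rewriting $3\eta = (2u-v)\alpha + (2v-u)\alpha^\sigma - (u+v)\alpha^{\sigma^2}$, using $\alpha+\alpha^\sigma+\alpha^{\sigma^2}=-t$ together with the Galois action $\sigma(\alpha)=1/(1-\alpha)$ of $\mathfrak{g}$, puts the norm into cyclically symmetric form, and the coefficient can then be pinned down by specialization (e.g.\ at $v=0$, using $\mathfrak{g}(-t/3)=(2t+3)\delta(\mathfrak{g})/27$) so as to identify the product with the claimed expression in $\mathfrak{g}$. Following the style of Lemmas~\ref{embed1} and \ref{embed2}, I would state these identities and omit the bulk of the manipulation as ``standard manner''.
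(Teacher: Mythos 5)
Your proof is correct and takes essentially the same approach as the paper: the paper also computes the gradient $[3,-t,-t,-3]$ at $\ellunit$ to identify $H$ as the tangent plane, and then verifies the birational parametrization directly, hinting at the key relation $(y+z)^3\mathfrak{g}\bigl(\tfrac{2z-y}{y+z}\bigr)=-9(y^2-yz+z^2)$ on $\gp_H^0$. Your derivation of the inverse via projection from the singular point and the residual intersection $s_0=-\Trace_{\gext/F}(\eta\eta^\sigma)/\Norm_{\gext/F}(\eta)$ is precisely what makes that identity transparent, so you have simply supplied the conceptual route behind the paper's terse ``directly verified.''
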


\vsp

\begin{proof}
\quad
Since $a \ne 0$ by assumption, the defining equation of $H$ is also written as $3(x-w)-ty-tz=0$.
But we have $\bigl[\frac{\partial f}{\partial x}(\ellunit),
\,\frac{\partial f}{\partial y}(\ellunit),\,\frac{\partial f}{\partial z}(\ellunit),
\,\frac{\partial f}{\partial w}(\ellunit)\bigr]=[3,\,-t,\,-t,\,-3]$,
 which implies that $H$ is tangent to $\gp^0$ at $\ellunit$.
It is directly verified the birationality of the map.
Note that the equality $(y+z)^3\mathfrak{g}\bigl(\frac{2z-y}{y+z}\bigr)=-9(y^2-yz+z^2)$
 holds for any $[x,\,y,\,z,\,w] \in \gp_H^0$.
\end{proof}

\begin{remark}
\quad
As for the above, the unique singular point on the cubic curve $\gp_H^0$ is $\ellunit$.
\end{remark}

\begin{remark}\label{tangent}
\quad
From Corollary~\ref{basic_inv}, the cubic curve $\mathcal{W}(\gp_H^0)/F$ is singular
 if and only if $\Delta(\gp_H^0)=0$
 which is equivalent to $A=\frac{\mathfrak{g}(-B)}{(B^2+B+1)\delta(\mathfrak{g})}$ in the case $at+3c \ne 0$.
Thus the equation defining $H$ is described as
$$\frac{\mathfrak{g}(-B)}{(B^2+B+1)\delta(\mathfrak{g})}\bigl(3(x-w)-ty-tz\bigr)+By+z=0.$$
By direct calculation, one can find that the hyperplane $H$ coincides with the tangent plane to $\gp^0$ at the point
$$\text{\footnotesize $^t\!\!
\begin{pmatrix}
\bigl(-B^6(t+2)-3B^5(t+3)-6B^4(t+4)+B^3(t^2+t-23)+3B^2(t-3)-3B-2\bigr)\mathfrak{g}(-B)^{-2}\\
(B^2+B+1)\bigl(-B^4-2B^3-6B^2+2B(t-1)+t+2\bigr)\mathfrak{g}(-B)^{-2}\\
(B^2+B+1)\bigl(B^4(t+1)+2B^3(t+4)+6B^2+2B+1\bigr)\mathfrak{g}(-B)^{-2}
\end{pmatrix}$}.
$$
Therefore this is the only singular point on $\gp_H^0$.
In the case $at+3c=0,\,at+3b \ne 0$, since $\Delta(\gp_H^0)=0$ is equivalent to $A=-\frac{1}{\delta(\mathfrak{g})}$,
 the hyperplane is
$$-\frac{1}{\delta(\mathfrak{g})}(x-w)+y=0.$$
This is the tangent plane to $\gp^0$ at the point $(-t-2,\,-1,\,t+1)$, which is singular on $\gp_H^0$.
These also assure Lemma~\ref{singularity} once again.
\end{remark}

\begin{lemma}\label{identity}
\quad
Assume $H \in \mathcal{L}(\ellunit)$ is not tangent to $\gp^0$.
Then $\theta(\ellunit)=[0,\,1,\,0]$
 where $\theta:\gp_H^0 \to \mathcal{W}(\gp_H^0)$ is the map in Lemma~\ref{embed1} or \ref{embed2}.
Especially $\theta$ is a group isomorphism.
\end{lemma}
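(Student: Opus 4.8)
The plan is to establish $\theta(\ellunit)=[0,1,0]$; the last assertion of the lemma is then formal. Under the non-tangency hypothesis the space cubic $\gp_H^0/F$ is nonsingular by Lemma~\ref{singularity}, and the plane cubic $\mathcal{W}(\gp_H^0)/F$ is nonsingular as well, since by Corollary~\ref{basic_inv} and Remark~\ref{tangent} the equality $\Delta(\gp_H^0)=0$ would force $H$ to be one of the exceptional tangent planes. Hence the birational map $\theta$ of Lemma~\ref{embed1} (resp.\ Lemma~\ref{embed2}) is an isomorphism of smooth projective curves of genus one, and an isomorphism of elliptic curves carrying the chosen origin to the chosen origin is automatically a group isomorphism (cf.\ \cite{AEC}); so it only remains to locate $\theta(\ellunit)$.

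Evaluating $\theta$ at $\ellunit=[1,0,0,1]$ directly runs into a cancellation of leading terms, so I would instead compute $\theta^{(-1)}([0,1,0])$ from the explicit formulas of Lemma~\ref{embed1} — which are a priori indeterminate at $[0,1,0]$ — and check that the resulting limit is $[1,0,0,1]=\ellunit$. The point $[0,1,0]$ is the point at infinity of $\mathcal{W}(\gp_H^0)$; in the affine chart $y=1$ with coordinates $(x,z)$ the Weierstrass equation gives $z=x^3(1+O(x))$ on the curve near $[0,1,0]$, so $x$ is a local parameter there. Substituting $z=x^3(1+O(x))$ into $h_1(x,z)$ and $h_2(x,y,z)$ and keeping only the top-degree coefficients $c_1\delta(\mathfrak{g})$ of $g_3[X]$ and $6(B^2+B+1)\delta(\mathfrak{g})$ of $g_2[X]$, one gets $h_1\sim(u_0u_1^2x^2)^{-1}$, a pole of order $2$, and $h_2\sim-c_0u_0u_1^2x^2$, a zero of order $2$; the identity $c_0=3(B^2+B+1)/c_1$ recorded in Lemma~\ref{embed1} is exactly what makes the product satisfy $h_1h_2\to-c_0$ as $x\to0$ along the curve.

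Feeding these asymptotics into the four coordinates of $\theta^{(-1)}$, one checks: $\theta^{(-1)}_X$ vanishes to order $\ge3$ with $x^3$-coefficient $-t(1-B)c_0+(1+tc_0(1-B))=1$; $\theta^{(-1)}_Y=3(h_1h_2+c_0)z$ and $\theta^{(-1)}_Z=-3B(h_1h_2+c_0)z-3Ah_2z$ vanish to order at least $4$ because $h_1h_2+c_0\to0$; and $\theta^{(-1)}_W=z$ vanishes to order exactly $3$. Dividing the four coordinates by $z$ and letting $x\to0$ then gives $[\theta^{(-1)}_X:\theta^{(-1)}_Y:\theta^{(-1)}_Z:\theta^{(-1)}_W]\to[1:0:0:1]=\ellunit$, whence $\theta(\ellunit)=[0,1,0]$. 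The case of Lemma~\ref{embed2} goes through verbatim with the constants redefined there in place of those of Lemma~\ref{embed1}: one still has $c_0=3/c_1$ and $g_2[X]=6\delta(\mathfrak{g})X^2+\cdots$, hence again $h_1h_2\to-c_0$, and the same linear cancellation in $\theta^{(-1)}_X$.

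The hard part is the order bookkeeping: one must be sure that the terms which naively appear at order $x^{-4}$ inside $h_2$ and at order $x^3$ inside $\theta^{(-1)}_X$ really collapse to the claimed orders, and this rests precisely on the closed-form identities among $c_0,c_1,A,B$ of Lemma~\ref{embed1} together with $c_1\delta(\mathfrak{g})(at+3c)\neq0$ — that is, $\mathfrak{g}$ is irreducible over $F$ so $g_3[X]$ is genuinely of degree $3$, and $at+3c\neq0$ by hypothesis of Lemma~\ref{embed1}, so $u_0u_1\neq0$. These cancellations are mechanical but must be carried out with the explicit expressions, so, as with the adjacent lemmas, I would record them without the full computation.
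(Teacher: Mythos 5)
Your argument is correct, but it takes a genuinely different path from the paper's. The paper exploits the factorization $\theta = l \circ \Theta \circ i$ set up in the proof of Lemma~\ref{embed1} (resp.\ \ref{embed2}): since the linear substitution $i$ fixes $\ellunit$ and the affine change of coordinates $l$ fixes $[0,1,0]$, the claim reduces to $\Theta(\ellunit)=[0,1,0]$. The paper then observes that the $x$-coordinate of $\Theta$, namely $\dfrac{y-c_0}{x-1}$ (resp.\ $\dfrac{z-c_0}{x-1}$), has a pole at $\ellunit=(1,0)$ in the local ring $F[i(\gp_H^0)]_{\ellunit}$, and clearing that pole in the projective coordinate of $\Theta$ forces the image to be the point at infinity. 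That is a short, coordinate-light argument that sidesteps the unwieldy formulas for $\theta$ and its inverse.

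Your approach instead works directly with the assembled formulas for $\theta^{(-1)}$, expanding at $[0,1,0]$ in the chart $y=1$ with $z = x^3(1+O(x))$, and tracking orders: $h_1$ has a pole of order $2$, $h_2$ a zero of order $2$, the product $h_1h_2\to -c_0$ thanks to the identity $c_0=3(B^2+B+1)/c_1$ (resp.\ $c_0=3/c_1$), and the cancellation in $\theta^{(-1)}_X$ produces a leading $x^3$-coefficient equal to $1$ while $\theta^{(-1)}_Y,\theta^{(-1)}_Z$ vanish to order $\geq 4$ and $\theta^{(-1)}_W$ to order exactly $3$. Dividing by $z$ and letting $x\to 0$ gives $[1:0:0:1]=\ellunit$. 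This is longer but self-contained relative to the printed formulas and does not require reopening the proof of Lemma~\ref{embed1} to recall the intermediate map $\Theta$. What it buys is independence from the internal decomposition $l\circ\Theta\circ i$; what it costs is the explicit order bookkeeping you mention, which the paper's argument avoids almost entirely. Both proofs leave a comparable amount of mechanical checking implicit, and your preliminary remark that smoothness of $\mathcal{W}(\gp_H^0)$ (from $\Delta(\gp_H^0)\ne 0$ via Corollary~\ref{basic_inv} and Remark~\ref{tangent}) promotes the birational $\theta$ to an isomorphism, hence a group isomorphism once the origin is fixed, is the standard justification for the final sentence of the lemma, which the paper takes for granted.
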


\vsp

\begin{proof}
\quad
It suffices to prove the lemma for the map $\Theta$ defined in the proof of Lemma~\ref{embed1} or \ref{embed2}
 since $\theta=l \circ \Theta \circ i$ and $i(\ellunit)=\ellunit,\,l([0,\,1,\,0])=[0,\,1,\,0]$.
First assume $at+3c \ne 0$.
Under the assumption, the ring $F[i(\gp_H^0)]_{\ellunit}$ of regular functions on $i(\gp_H^0)$ at $\ellunit$
 is a discrete valuation ring.
Since the $x$-coordinate of the map $\Theta$ is the function $\frac{y-c_0}{x-1}$ which has a pole at $\ellunit$,
 multiplying an appropriate power of a uniformizer for $F[i(\gp_H^0)]_{\ellunit}$
 to the projective coordinate of $\Theta$ yields $\Theta(\ellunit)=[0,\,1,\,0]$.
In the case $at+3c=0$ and $at+3b \ne 0$, the $x$-coordinate of the map $\Theta$ is the function $\frac{z-c_0}{x-1}$.
We also have $\Theta(\ellunit)=[0,\,1,\,0]$ similarly. The statement is now proven.
\end{proof}

\vsp

\begin{proof}[Proof of Theorem~\ref{WE}]
\quad
Fix arbitrary cyclic cubic extension $\gext/F$.
Let $\gp$ be a cubic hypersurface associated to $\gext/F$ with a $F$-basis $\{1,\alpha_1,\alpha_2\}$ of $\gext$
 defined by the polynomial \eqref{defpoly} and $\gp^0$ as above.
Then the transformation matrix $M \in \GL_3(F)$ with $(1,\,\alpha,\,\alpha^{\sigma})M=(1,\,\alpha_1,\,\alpha_2)$
 defines a regular transformation $\widetilde{M}$ on $\mathbb{P}^3$ given by $[x,\,y,\,z,\,w] \mapsto [x,\,y,\,z,\,w]
\left(\begin{smallmatrix}
^t\!M & 0\\
0 & 1
\end{smallmatrix}\right)$,
whose restriction induces an isomorphism $\gp_H \stackrel{\sim}{\to} \gp_{H'}^0$
where $H'=\widetilde{M}(H)$ has the form $a'(x-w)+b'y+c'z=0$ with $(a',\,b',\,c')M=(a,\,b,\,c)$.
We thus obtain $\delta(\gp^0)\,|M|=\delta(\gp),\,\gamma(\gp_{H'}^0)\,|M|=\gamma(\gp_H)$ and $a'=a$.
From this and Lemma~\ref{embed1}, \ref{embed2}, the composed map
\begin{align}\label{vartheta}
\vartheta:\gp_H \stackrel{\widetilde{M}}{\longrightarrow} \gp_{H'}^0 \stackrel{\theta}{\longrightarrow}
 \mathcal{W}(\gp_{H'}^0) \stackrel{m}{\longrightarrow} \mathcal{W}(\gp_H)
\end{align}
is the required one. Here $m$ denotes the map $(x,\,y) \mapsto \bigl(|M|^2x,\,|M|^3y\bigr)$.
It follows from Lemma~\ref{identity} that $\vartheta(\ellunit)=[0,\,1,\,0]$.
\end{proof}

\begin{proposition}\label{T}
\quad
$\vartheta(\ellunit'^{\sigma}-\ellunit')=\pm(0,\,0)$.
\end{proposition}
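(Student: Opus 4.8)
The goal is to show that the explicit isomorphism $\vartheta$ constructed in Theorem~\ref{WE} carries the $3$-torsion point $\ellunit'^{\sigma}-\ellunit'$ to $\pm(0,0)$, where $(0,0)$ is the rational $3$-torsion point on $\mathcal{W}(\gp_H)$ identified in Corollary~\ref{tors}. Since $\vartheta$ is a group isomorphism sending $\ellunit$ to $[0,1,0]$, it maps $\gp_H(F)[3]\setminus\{\ellunit\}$ bijectively to $\mathcal{W}(\gp_H)(F)[3]\setminus\{[0,1,0]\}$. By Lemma~\ref{F-point} the left-hand set contains $\ellunit'^{\sigma}-\ellunit'$ and its inverse $\ellunit'^{\sigma^2}-\ellunit'$, while the right-hand set is exactly $\{(0,0),\,(0,\Norm{\gext/F}(\gamma(\gp_H)))\}$, the two nontrivial $F$-rational $3$-torsion points. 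So the plan is to pin down which of these two points is the image of $\ellunit'^{\sigma}-\ellunit'$, and the assertion $\vartheta(\ellunit'^{\sigma}-\ellunit') = \pm(0,0)$ records exactly the fact that, whichever it is, it lies in the subgroup $\langle(0,0)\rangle$ (and since $(0,\Norm{\gext/F}(\gamma(\gp_H))) = -(0,0)$ by Corollary~\ref{tors}, both candidates are $\pm(0,0)$ anyway).

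The cleanest route is therefore not to evaluate $\vartheta$ on the messy point $\ellunit'$ directly, but to observe that the set $\{(0,0),\,(0,-(0,0))\}$ is precisely $\mathcal{W}(\gp_H)(F)[3]\setminus\{[0,1,0]\}$ and that $\vartheta$ induces a bijection of this set with $\gp_H(F)[3]\setminus\{\ellunit\}$. First I would reduce to the model $\gp^0$ via the transformation $\widetilde M$ from the proof of Theorem~\ref{WE}: since $\widetilde M$ is linear over $F$ and respects the group structure on the affine piece $w\ne 0$, it sends $\ellunit'$ of $\gp_H$ to the corresponding point $\ellunit'$ of $\gp_{H'}^0$ (both given by formula \eqref{new} with the appropriate coefficients), and $m$ fixes $[0,1,0]$ and scales $(0,0)$ to $(0,0)$. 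So it suffices to check $\theta(\ellunit'^{\sigma}-\ellunit') = \pm(0,0)$ for the map $\theta$ of Lemma~\ref{embed1} or \ref{embed2}. Then, because $\ellunit'^{\sigma}-\ellunit'$ is a nontrivial $3$-torsion point in $\gp_H^0(F)$ (Lemma~\ref{F-point}) and $\theta$ is a group isomorphism over $F$ (Lemma~\ref{identity}), its image is one of the two nontrivial $F$-rational $3$-torsion points $(0,0)$ or $-(0,0)$ of the Weierstrass model, hence equals $\pm(0,0)$.

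The one genuine computation needed is to rule out the possibility that $\theta(\ellunit'^{\sigma}-\ellunit')$ is some other $3$-torsion point not of the form $\pm(0,0)$ — but over $F$ there are no others, since $(0,0)$ generates a subgroup of order $3$ which exhausts $\mathcal{W}(\gp_H)(F)[3]$ unless the full $3$-torsion is rational, in which case there could be additional rational $3$-torsion points. To handle this cleanly I would argue that the subgroup $\langle \ellunit'^{\sigma}-\ellunit'\rangle$ of $\gp_H(F)[3]$ is Galois-stable and that $\theta$ carries it to the unique subgroup $\langle(0,0)\rangle$ corresponding to the $F$-rational $3$-isogeny $\phi$; alternatively, and more concretely, one evaluates the $x$-coordinate of $\theta$ (which is the rational function $\tfrac{y-c_0}{x-1}$ pulled back through $i$, up to the affine scaling $l$) at the explicit point $\ellunit'^{\sigma}-\ellunit'$ computed from \eqref{new} by the chord-and-tangent law, and checks it equals $0$. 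The expected main obstacle is purely the bookkeeping: assembling the coordinates of $\ellunit'^{\sigma}$ and $\ellunit'$ in the $i$-transformed, $H$-sliced affine chart, running the group law once to get their difference, and substituting into the formulas of Lemma~\ref{embed1}/\ref{embed2} — each step elementary but notation-heavy. Since all of it is a routine verification in the spirit of the earlier lemmas, I would state the result and indicate that the check is a direct computation with the explicit formulas, mirroring the phrasing used for Lemmas~\ref{embed1}--\ref{identity}.
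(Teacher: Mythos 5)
Your proposal's core step — that $\vartheta$ carries $\gp_H(F)[3]\setminus\{\ellunit\}$ bijectively onto $\mathcal{W}(\gp_H)(F)[3]\setminus\{[0,1,0]\}$, and that this target set equals $\{(0,0),\,-(0,0)\}$ — is only valid when $\zeta_3\notin F$. For a general number field $F$ (which is the setting of Theorem~\ref{WE}, and hence of Proposition~\ref{T}), one may have $\zeta_3\in F$ and $\gp_H[3]\subset\gp_H(F)$, in which case $\mathcal{W}(\gp_H)(F)[3]$ has order $9$ and there are four distinct order-$3$ subgroups, so your counting argument gives no information. You notice this problem, but neither of your proposed remedies closes the gap. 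Route (a) — Galois-stability of $\langle\ellunit'^\sigma-\ellunit'\rangle$ and matching it to $\langle(0,0)\rangle$ — is vacuous precisely in the problematic case, since when $E[3]\subset E(F)$ every order-$3$ subgroup is $G_F$-stable; moreover, the paper uses Proposition~\ref{T} in \S\ref{sec_descent} to justify the normalization $\vartheta(T)=(0,0)$ and only afterwards defines $\phi$ with kernel $\langle T\rangle$, so appealing to "the $F$-rational $3$-isogeny $\phi$" here would be circular. Route (b) is just a promise of a direct computation that is not performed.

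The paper's proof avoids the computation by a genuinely different idea: it first treats the case $F=\rat$, where $\zeta_3\notin\rat$ makes exactly your counting argument go through (this is where Lemma~\ref{F-point}, Corollary~\ref{tors} and the Weil pairing enter). It then treats the coordinates of $\theta(\ellunit'^\sigma-\ellunit')$ as rational functions of the parameters $(a,b,c,\alpha)$; since these take values in a finite set on a Zariski-dense set of $\rat$-points (away from the discriminant locus), they must be constants in $\overline{\rat}(x_1,x_2,x_3,x_4)$, and therefore give the same answer $\pm(0,0)$ upon specialization over any number field $F$, with or without $\zeta_3$. This "prove it over $\rat$, then propagate by constancy of rational functions" step is the key idea your proposal is missing; without it, your argument only establishes the proposition for fields $F$ not containing $\zeta_3$.
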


\vsp

\begin{proof}
\quad
From \eqref{vartheta},
 it suffices to prove the statement for the map $\theta:\gp_H^0 \to \mathcal{W}(\gp_H^0)$
 since $\widetilde{M}(c\alpha_1-b\alpha_2,\,a\alpha_2-c,\,-a\alpha_1+b,\,0)
=[c'\alpha-b'\frac{1}{1-\alpha},\,a'\frac{1}{1-\alpha}-c',\,-a'\alpha+b',\,0]$.
Let
 $V_F:=\bigl\{\alpha \in \closure{F} \setminus F \;|\; \text{$\mathfrak{g}(\alpha;t)=0$ for some $t \in F$}\bigr\}$,
 and for any $\alpha \in V_F$ let $S_{\alpha} \subset \mathbb{A}^3$ be a hypersurface in variables
 $a,\,b,\,c$ defined by the equation (cf. Remark~\ref{tangent})
$$\lt(-\Norm{\gext/F}\bigl(\gamma(\gp_H^0)\bigr)^{-3}\Delta(\gp_H^0)=\rt)
a^3\delta(\gp^0)^3+3^3\Norm{\gext/F}\bigl(\gamma(\gp_H^0)\bigr)=0$$
where $\gp^0$ is associated to $F(\alpha)/F$ with the $F$-basis $\{1,\,\alpha,\,\frac{1}{1-\alpha}\}$ of $F(\alpha)$
 and $H$ is a hyperplane $a(x-w)+by+cz=0$.
Let $\ellunit'':=[c\frac{1}{1-\alpha}-b(1-\frac{1}{\alpha}),\,a(1-\frac{1}{\alpha})-c,\,-a\frac{1}{1-\alpha}+b,\,0]$
 on $\gp_H^0$ and $P:=\ellunit''-\ellunit'$.
Note that $\ellunit''=\ellunit'^{\sigma}$ or $\ellunit'^{\sigma^2}$
 since $\alpha^{\sigma}=\frac{1}{1-\alpha}$ or $1-\frac{1}{\alpha}$ from \S\ref{genpoly}.
Thus $P \in \gp_H^0(F) \setminus \{\ellunit\}$ by Lemma~\ref{F-point}.
For the map $\theta=[\theta_X,\,\theta_Y,\,\theta_Z]$ as in Lemma~\ref{embed1} or \ref{embed2},
 we can regard $(\theta_x,\,\theta_y):=\bigl(\frac{\theta_X}{\theta_Z}(P),\,\frac{\theta_Y}{\theta_Z}(P)\bigr)$
 as functions in variables $a,\,b,\,c,\,\alpha$,
 namely $\theta_x,\,\theta_y \in \rat(x_1,\,x_2,\,x_3,\,x_4)$.
This makes the rational map
\begin{align*}
\varphi:\mathbb{A}^4 \quad &\longrightarrow \quad \mathbb{A}^2\\
(a,\,b,\,c,\,\alpha) \quad &\longmapsto \quad
 \bigl(\theta_x(a,\,b,\,c,\,\alpha),\,\theta_y(a,\,b,\,c,\,\alpha)\bigl).
\end{align*}
For any $\alpha_0 \in V_{\rat}$,
 let $\varphi|_{\alpha_0}:\mathbb{A}^3 \to \mathbb{A}^2$ be the restriction map to $\alpha=\alpha_0$.
Then the Weil pairing tells us $\varphi|_{\alpha_0}(a,\,b,\,c)=\pm(0,\,0)$
 on $\mathbb{A}^3(\rat) \setminus S_{\alpha_0}(\rat)$
 by using Lemma~\ref{F-point}, Corollary~\ref{tors} and $\zeta_3 \notin \rat$.
Thus the rational functions $\theta_x|_{\alpha_0},\,\theta_y|_{\alpha_0}$ are constant
 for infinitely many values $a,\,b,\,c$ respectively, and hence
 $\theta_x|_{\alpha_0},\,\theta_y|_{\alpha_0}$ must be constants in $\closure{\rat}(x_1,\,x_2,\,x_3)$.
It turns out that the map $\varphi$ depends only on the variable $x_4$.
However $\varphi$ also takes $\pm(0,\,0)$ for infinitely many $\alpha \in V_{\rat}$,
 which implies that $\varphi$ is a constant map taking the image either $(0,\,0)$ or $-(0,\,0)$.
Especially, for any number field $F$,
 the evaluation of $\varphi$ for $\alpha \in V_F$ and $(a,\,b,\,c) \in \mathbb{A}^3(F) \setminus S_{\alpha}(F)$
 yields the required result.
\end{proof}

\section{Descent theory for $\gp_H$ via $3$-isogeny}\label{sec_descent}

We shall now discuss descent theory for the elliptic curve $\gp_H$ over $F$.
Fix $\ellunit$ as a base point of $\gp_H$.
Then the set of $F$-rational points $\gp_H(F)$, often called Mordell-Weil group,
 is a finitely generated abelian group with group unit $\ellunit$,
 whose structure does not depend on the choice of a base point.
It is one of the main interest to know the rank of $\gp_H(F)$.
If one could calculate the quotient group $\gp_H(F)/m\gp_H(F)$ for some integer $m>1$,
 one could find the rank of $\gp_H(F)$.
Descent theory gives an upper bound for the size of the quotient group $\gp_H(F)/m\gp_H(F)$,
 which often gives the exact size.
At the beginning, we make notation used in the present paper
 and recall some basic objects on descent theory in the case $m=3$, which we call $3$-descent.

From Proposition~\ref{T}, we may assume $\vartheta(\ellunit'^{\sigma}-\ellunit')=(0,\,0)$ without loss of generality
 by the proper substitution either
 $\vartheta \leftrightarrow [-1] \circ \vartheta$ or $\sigma \leftrightarrow \sigma^2$.

Let $\phi:\gp_H \to \dual{\gp_H}$ be a $3$-isogeny defined over $F$
 with the dual elliptic curve $\dual{\gp_H}:=\gp_H/\langle T \rangle$ where $T:=\ellunit'^{\sigma}-\ellunit'$,
 and let $\dual{\phi}:\dual{\gp_H} \to \gp_H$ be its dual isogeny which is also defined over $F$.
For the short exact sequence
 $0 \to \gp_H[\phi] \to \gp_H(\closure{F}) \stackrel{\phi}{\to} \dual{\gp_H}(\closure{F}) \to 0$
 as $G_F$-modules where $G_F$ denotes an absolute Galois group of $F$,
 taking Galois cohomology yields the exact sequence
$$0 \longrightarrow \dual{\gp_H}(F)/\phi\bigl(\gp_H(F)\bigr) \longrightarrow H^1(F,\,\gp_H[\phi])
 \longrightarrow H^1(F,\,\gp_H)[\phi] \longrightarrow 0.$$
Considering the above locally again, we also have a similar localized exact sequence.
For the completion $F_{\prim{p}}$ at each prime $\prim{p}$ of $F$,
 since an embedding $\closure{F} \hookrightarrow \closure{F_{\prim{p}}}$, which may be fixed once and for all,
 induces restriction maps of Galois cohomology, we obtain the following commutative diagram.
$$
\begin{CD}
0 @>>> \dual{\gp_H}(F)/\phi\bigl(\gp_H(F)\bigr)	@>{\delta}>>	H^1(F,\,\gp_H[\phi])
  @>>> H^1(F,\,\gp_H)[\phi]			@>>>		0\\
@.     @VV{\res_{\prim{p}}}V					@VV{\res_{\prim{p}}}V
       @VV{\res_{\prim{p}}}V					@.\\
0 @>>> \dual{\gp_H}(F_{\prim{p}})/\phi\bigl(\gp_H(F_{\prim{p}})\bigr)
  @>{\delta_{\prim{p}}}>> H^1(F_{\prim{p}},\,\gp_H[\phi])	@>>> H^1(F_{\prim{p}},\,\gp_H)[\phi]	@>>> 0
\end{CD}
$$
where $\delta,\,\delta_{\prim{p}}$ stand for connecting homomorphisms of Galois cohomology.
The $\phi$-Selmer group of the elliptic curve $\gp_H$ is a finite subgroup of $H^1(F,\,\gp_H[\phi])$ defined by
$$S^{(\phi)}(\gp_H/F)=\text{\small $\lt\{\eqcls{\xi} \in H^1(F,\,\gp_H[\phi]) \;\Big|\;
 \res_{\prim{p}}(\eqcls{\xi}) \in \Image \delta_{\prim{p}}
 \text{ for any finite/infinite prime $\prim{p}$ of $F$}\rt\}$}.$$
It is clear by the above diagram that the group $S^{(\phi)}(\gp_H/F)$ contains
 $\dual{\gp_H}(F)/\phi\bigl(\gp_H(F)\bigr)$ as subgroup.
The gap between these groups is represented by
 the $\phi$-kernel of the Shafarevich-Tate group, which sits in the exact sequence
$$0 \longrightarrow \dual{\gp_H}(F)/\phi\bigl(\gp_H(F)\bigr) \longrightarrow S^{(\phi)}(\gp_H/F)
 \longrightarrow \text{\cyr X}(\gp_H/F)[\phi] \longrightarrow 0.$$
Oppositely, one can define the $\dual{\phi}$-Selmer group $S^{(\dual{\phi})}(\dual{\gp_H}/F)$ and
 the $\dual{\phi}$-kernel of the Shafarevich-Tate group $\text{\cyr X}(\dual{\gp_H}/F)[\dual{\phi}]$
 on interchanging the role of the isogenies $\phi,\,\dual{\phi}$.

A relation between the finite groups $\dual{\gp_H}(F)/\phi\bigl(\gp_H(F)\bigr)$
 and $\gp_H(F)/\dual{\phi}\bigl(\dual{\gp_H}(F)\bigr)$ is described by the exact sequence
$$0 \longrightarrow \frac{\dual{\gp_H}(F)[\dual{\phi}]}{\phi\bigl(\gp_H(F)[3]\bigr)} \longrightarrow
 \frac{\dual{\gp_H}(F)}{\phi\bigl(\gp_H(F)\bigr)} \stackrel{\dual{\phi}}{\longrightarrow}
 \frac{\gp_H(F)}{3\gp_H(F)} \longrightarrow \frac{\gp_H(F)}{\dual{\phi}\bigl(\dual{\gp_H}(F)\bigr)}
 \longrightarrow 0.$$
It is easily seen that $\frac{\dual{\gp_H}(F)[\dual{\phi}]}{\phi(\gp_H(F)[3])} \simeq \rint/3\rint$
 if and only if $\zeta_3 \in F$ and $\gp_H(F)[3]=\gp_H(F)[\phi] \simeq \rint/3\rint$.
From this we have the formula
\begin{align}\label{rank}
\rank_{\rint} \gp_H(F)=\dim_{\mathbb{F}_3} \frac{\gp_H(F)}{\dual{\phi}\bigl(\dual{\gp_H}(F)\bigr)}
+\dim_{\mathbb{F}_3} \frac{\dual{\gp_H}(F)}{\phi\bigl(\gp_H(F)\bigr)}-\dim_{\mathbb{F}_3} \mu_3(F)-1.
\end{align}
Thus the Selmer groups give an upper bound for the rank of the Mordell-Weil group.
From now on, to estimate the size of the rank, we consider the Selmer groups.

\subsection{On the Selmer group $S^{(\phi)}(\gp_H/F)$}

Since $\gp_H[\phi] \subset \gp_H(F)$, one sees that
$$H^1(F,\,\gp_H [\phi]) \simeq H^1(F,\,C_3)=\Hom\bigl(G_F,\,C_3\bigr).$$
Here the Galois action on the cyclic group $C_3 \simeq \rint/3\rint$ is regarded as trivial one.
Define the set of all cyclic cubic extensions over $F$ together with $F$ by
$$\mathfrak{Gal}(C_3/F):=
\lt\{L/F\text{: Galois extension} \;\Big|\; \Gal(L/F) \hookrightarrow C_3\rt\}\big/
\bigl\{\text{isomorphisms}\bigr\}.$$
Then there is the surjection
 $\Hom(G_F,\,C_3) \to \mathfrak{Gal}(C_3/F)\,;\,\xi \mapsto \closure{F}^{\Ker \xi}$.
We thus obtain the composed map
$$\delta_F:\dual{\gp_H}(F)/\phi\bigl(\gp_H(F)\bigr) \stackrel{\delta}{\longrightarrow}
 H^1(F,\,\gp_H[\phi]) \simeq \Hom(G_F,\,C_3) \longrightarrow \mathfrak{Gal}(C_3/F).$$
Considering locally again, we have the commutative diagram
$$
\begin{CD}
\dual{\gp_H}(F)/\phi\bigl(\gp_H(F)\bigr)	@>{\delta_F}>> \mathfrak{Gal}(C_3/F)\\
@VV{\res_{\prim{p}}}V				@VV{\res_{\prim{p}}}V\\
\dual{\gp_H}(F_{\prim{p}})/\phi\bigl(\gp_H(F_{\prim{p}})\bigr)
  @>{\delta_{F_{\prim{p}}}}>> \mathfrak{Gal}(C_3/F_{\prim{p}}).
\end{CD}
$$
The following gives a slightly different description for the $\phi$-Selmer group.

\begin{proposition}\label{phiSelmer}
\quad
The map $\delta_{F_{\bullet}}$ is explicitly written as
\begin{align*}
\delta_{F_{\bullet}}:\dual{\gp_H}(F_{\bullet})/\phi\bigl(\gp_H(F_{\bullet})\bigr) \quad &\longrightarrow \quad
 \mathfrak{Gal}(C_3/F_{\bullet})\\
P \quad &\longmapsto \quad F_{\bullet}\bigl(\phi^{-1}(P)\bigr).
\end{align*}
Here $\bullet$ stands for $\prim{p}$ or null space.
Moreover it holds that
$$\sharp S^{(\phi)}(\gp_H/F)=2\cdot\sharp\!\lt\{L \in \mathfrak{Gal}(C_3/F) \;\Big|\; \mbox{\parbox{27ex}{\small
 $L/F$ is unramified outside $S_0$\\
$L\!\cdot\!F_{\prim{p}} \in \Image \delta_{F_{\prim{p}}}$ for any $\prim{p} \in S_0$}}\rt\}-1$$
where $S_0$ denotes the set of finite primes of $F$ at which $\gp_H$ has bad reduction or dividing $3$.
\end{proposition}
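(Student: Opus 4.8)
The plan is to unwind the definitions and use the description of $H^1(F,\gp_H[\phi])$ as $\Hom(G_F,C_3)$ established just above the proposition. First I would prove the explicit formula for $\delta_{F_\bullet}$. Since $\gp_H[\phi]=\langle T\rangle\subset\gp_H(F)$ is $G_F$-trivially acted, the connecting map $\delta$ sends $P\in\dual{\gp_H}(F)$ to the cocycle $\sigma\mapsto Q^\sigma-Q$ where $Q\in\gp_H(\closure F)$ is any point with $\phi(Q)=P$; under the isomorphism $H^1(F,\langle T\rangle)\simeq\Hom(G_F,C_3)$ this cocycle corresponds to the homomorphism whose fixed field is exactly the field generated over $F$ by the $\phi$-preimage of $P$. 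So $\delta_F(P)$, viewed in $\mathfrak{Gal}(C_3/F)$, is $F(\phi^{-1}(P))$, and likewise locally. This is essentially the standard ``Kummer theory for isogenies'' computation, so I would present it briefly rather than in full detail.

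Next I would translate the Selmer condition. By definition $\eqcls\xi\in S^{(\phi)}(\gp_H/F)$ iff $\res_{\prim p}(\eqcls\xi)\in\Image\delta_{\prim p}$ for every prime $\prim p$. Under $H^1(F,\gp_H[\phi])\simeq\Hom(G_F,C_3)$, an element $\xi$ corresponds to a pair (homomorphism up to the automorphism $C_3\to C_3$, i.e. a cyclic cubic or trivial extension $L=\closure F^{\Ker\xi}$, \emph{plus} the choice of one of the two nontrivial characters cutting out $L$ — this is the source of the factor $2$ and the $-1$). The key geometric input is that $\delta_{F_{\prim p}}$ is the zero map — equivalently $\Image\delta_{F_{\prim p}}$ is trivial in $\mathfrak{Gal}(C_3/F_{\prim p})$ — for all $\prim p\notin S_0$: this holds because $\gp_H$ has good reduction at such $\prim p$ (and $\prim p\nmid 3$), so $\dual{\gp_H}(F_{\prim p})/\phi(\gp_H(F_{\prim p}))$ is killed by the unramified-cohomology comparison, forcing the corresponding local extension $L\cdot F_{\prim p}$ to be unramified; running this the other way, the classes landing in $\Image\delta_{F_{\prim p}}$ for $\prim p\notin S_0$ are exactly the ones unramified at $\prim p$. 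Hence the Selmer condition becomes: $L/F$ unramified outside $S_0$, and $L\cdot F_{\prim p}\in\Image\delta_{F_{\prim p}}$ for each $\prim p\in S_0$.

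Then I would count. The map $S^{(\phi)}(\gp_H/F)\to\mathfrak{Gal}(C_3/F)$ induced by $\xi\mapsto\closure F^{\Ker\xi}$ has the trivial extension as the image of $0$ alone, and every nontrivial $L$ in the image is hit by exactly two elements of $\Hom(G_F,C_3)$ (the two surjections $G_F\to\Gal(L/F)\hookrightarrow C_3$); moreover the two conditions cutting out the Selmer subgroup depend only on $L$ as a field, not on the choice of character, because $\Image\delta_{F_{\prim p}}$ is a \emph{subgroup} of $\mathfrak{Gal}(C_3/F_{\prim p})$ stable under this ambiguity. Therefore $\sharp S^{(\phi)}(\gp_H/F)=1+2\cdot(\text{number of nontrivial such }L)=2\cdot\sharp\{L\}-1$, where $\{L\}$ ranges over all $L\in\mathfrak{Gal}(C_3/F)$ (including the trivial extension, which always satisfies both conditions) with $L/F$ unramified outside $S_0$ and $L\cdot F_{\prim p}\in\Image\delta_{F_{\prim p}}$ for every $\prim p\in S_0$. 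This is exactly the claimed formula.

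The main obstacle I expect is the triviality of $\Image\delta_{F_{\prim p}}$ at primes $\prim p\notin S_0$, i.e. that good reduction away from $3$ forces the local class to be unramified: one must check that $\dual{\gp_H}(F_{\prim p})/\phi(\gp_H(F_{\prim p}))$ injects into the unramified part $H^1_{\mathrm{nr}}(F_{\prim p},\gp_H[\phi])$, which uses that $\gp_H[\phi]$ is unramified at $\prim p$ and that the reduction maps on formal groups are surjective there — a standard but slightly delicate local argument. The bookkeeping of the factor $2$ and the $-1$ is routine once one observes that the two relevant local conditions are genuinely conditions on $L$ as an extension and not on the auxiliary character, so that the fibers of $S^{(\phi)}(\gp_H/F)\to\mathfrak{Gal}(C_3/F)$ over nontrivial points all have size $2$.
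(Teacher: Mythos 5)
The paper's own ``proof'' simply defers to another paper of the author ([RK], \S2), so a direct line-by-line comparison is not possible, but your reconstruction follows the natural route: explicit connecting map, translation of the local Selmer condition, and a $2$-to-$1$ count $\Hom(G_F,C_3)\to\mathfrak{Gal}(C_3/F)$ away from $0$. The counting argument and the observation that $\Image\delta_{F_{\prim p}}$ is closed under $\chi\mapsto-\chi$ (so the local condition depends only on the cut-out field $L$) are exactly right, and the final formula drops out.

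There is, however, one passage that is objectively wrong and internally inconsistent with the rest of your own paragraph. You write that ``$\delta_{F_{\prim p}}$ is the zero map --- equivalently $\Image\delta_{F_{\prim p}}$ is trivial in $\mathfrak{Gal}(C_3/F_{\prim p})$ --- for all $\prim p\notin S_0$'' and that $\dual{\gp_H}(F_{\prim p})/\phi\bigl(\gp_H(F_{\prim p})\bigr)$ is ``killed''. Neither is correct. Since $\gp_H[\phi]\subset\gp_H(F)$, for $\prim p\nmid3$ of good reduction one has $\sharp\bigl(\dual{\gp_H}(F_{\prim p})/\phi(\gp_H(F_{\prim p}))\bigr)=\sharp\gp_H(F_{\prim p})[\phi]=3$, and the image of the local Kummer map is precisely the unramified subgroup $H^1_{\mathrm{ur}}(F_{\prim p},\gp_H[\phi])\simeq\rint/3\rint$, not zero. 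The correct statement --- which you reach a sentence later --- is that $\Image\delta_{F_{\prim p}}$ is \emph{exactly} the unramified classes, so the Selmer condition at $\prim p\notin S_0$ becomes ``$L/F$ unramified at $\prim p$''. Replace the false opening claim by this and the argument is sound. A smaller slip: $\Image\delta_{F_{\prim p}}$ is a subgroup of $H^1(F_{\prim p},\gp_H[\phi])\simeq\Hom(G_{F_{\prim p}},C_3)$, not of $\mathfrak{Gal}(C_3/F_{\prim p})$, which as defined in the paper is a pointed set with no group structure; the subgroup property should be invoked at the cohomological level, and only then pushed down to the set $\mathfrak{Gal}(C_3/F_{\prim p})$.
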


\vsp

\begin{proof}
\quad
By tracing the definitions and consequences of descent theory.
For details, see \cite{RK}, \S2.
\end{proof}

\begin{remark}\label{K}
\quad
Since $\gp_H(F)[\phi]=\langle \ellunit'^{\sigma}-\ellunit' \rangle$ by the definition,
 the point $P=\phi(\ellunit') \in \dual{\gp_H}$ is $F$-rational.
For any cyclic cubic extension $\gext/F$ and any hyperplane $H \in \mathcal{L}(\ellunit)$ not tangent to $\gp$,
 taking such $P \in \dual{\gp_H}(F)$ yields $\delta_F(P)=F(\ellunit')=\gext$.
Thus all cyclic cubic extensions over $F$ are subject to
 $F$-rational points on the elliptic curves $\bigl\{\dual{\gp_H}/F\bigr\}$.
Furthermore it is also observed that the elliptic curve $\gp_H/F$ has bad reduction
 at each prime $\prim{p} \nmid 3$ which ramifies in $\gext$.
\end{remark}

\subsection{On the Selmer group $S^{(\dual{\phi})}(\dual{\gp_H}/F)$}

As for the dual side,
 the Weil pairing $e_{\dual{\phi}}$ for the isogeny $\dual{\phi}$ constructs the commutative diagram
$$
\begin{CD}
\gp_H(F)/\dual{\phi}\bigl(\dual{\gp_H}(F)\bigr)	@>{\dual{\delta}}>> H^1(F,\,\dual{\gp_H}[\dual{\phi}])
@>{e_{\dual{\phi}}(\,\cdot\,,\,T)}>>	H^1(F,\,\mu_3) \simeq F^{*}/F^{*3}\\
@VVV						@VVV
@VVV\\
\gp_H(\gext)/\dual{\phi}\bigl(\dual{\gp_H}(\gext)\bigr)	@>{\dual{\delta}}>> H^1(\gext,\,\dual{\gp_H}[\dual{\phi}])
@>{e_{\dual{\phi}}(\,\cdot\,,\,T)}>>	H^1(\gext,\,\mu_3) \simeq \gext^{*}/\gext^{*3}.
\end{CD}
$$
We denote by $\dual{\delta}_F,\,\dual{\delta}_F^{\gext}$ the homomorphisms from the group
 $\gp_H(F)/\dual{\phi}\bigl(\dual{\gp_H}(F)\bigr)$ to $F^{*}/F^{*3}$, $\gext^{*}/\gext^{*3}$ respectively.
For each finite prime $\prim{p}$ of $F$, it may be defined similarly the maps
 $\dual{\delta}_{F_\prim{p}}:\gp_H(F_{\prim{p}})/\dual{\phi}\bigl(\dual{\gp_H}(F_{\prim{p}})\bigr)
 \to F_{\prim{p}}^{*}/F_{\prim{p}}^{*3}$ and
 $\dual{\delta}_{F_\prim{p}}^{\gext_{\prim{P}}}:\gp_H(F_{\prim{p}})/\dual{\phi}\bigl(\dual{\gp_H}(F_{\prim{p}})\bigr)
 \to \gext_{\prim{P}}^{*}/\gext_{\prim{P}}^{*3}$ where $\prim{P}$ is a prime of $\gext$ above $\prim{p}$.
There is no need to consider infinite primes since $H^1(F_{\infty},\,\dual{\gp_H}[\dual{\phi}])$ vanishes.
Thus the $\dual{\phi}$-Selmer group can be rewritten as
$$S^{(\dual{\phi})}(\dual{\gp_H}/F)=\lt\{\eqcls{d} \in F^{*}/F^{*3} \;\Big|\;
 \eqcls{d} \in \Image \dual{\delta}_{F_{\prim{p}}} \text{ for any finite prime $\prim{p}$ of $F$}\rt\}.$$

Let $\mathcal{K}_{\prim{p}}$ denote the {\'e}tale algebra $F_{\prim{p}}[x]/\bigl(\mathfrak{g}(x;t)\bigr)$,
 where $\mathfrak{g}$ is the generic polynomial in \S\ref{genpoly},
 which is isomorphic to the direct sum of the completions $\gext_{\prim{P}}$ of $\gext$ at the primes $\prim{P}$
 lying above $\prim{p}$, namely $\mathcal{K}_{\prim{p}} \simeq \prod_{\prim{P}|\prim{p}} \gext_{\prim{P}}
 \simeq \gext \otimes_{F} F_{\prim{p}}$,
 and let $\mathcal{K}_{\prim{p}}^{*}$ denote the multiplicative group
 $\prod_{\prim{P}|\prim{p}} \gext_{\prim{P}}^{*}$.
We frequently use the identification $\mathcal{K}_{\prim{p}}^{*}/\mathcal{K}_{\prim{p}}^{*3}
 \simeq \prod_{\prim{P} \mid \prim{p}} \gext_{\prim{P}}^{*}/\gext_{\prim{P}}^{*3}$.
The natural inclusion $F \subset F_{\prim{p}}$ induces an embedding $\gext \hookrightarrow \mathcal{K}_{\prim{p}}$
 by the identification $\gext=F[x]/\bigl(\mathfrak{g}(x;t)\bigr)$.
We define a map $\dual{\delta}_{F_\prim{p}}^{\mathcal{K}_{\prim{p}}}
:\gp_H(F_{\prim{p}})/\dual{\phi}\bigl(\dual{\gp_H}(F_{\prim{p}})\bigr)
 \to \mathcal{K}_{\prim{p}}^{*}/\mathcal{K}_{\prim{p}}^{*3}$
 by $P \mapsto \prod_{\prim{P}|\prim{p}} \dual{\delta}_{F_\prim{p}}^{\gext_{\prim{P}}}(P)$
 which commutes the diagram
$$
\begin{CD}
\dual{\delta}_F^{\gext}:\gp_H(F)/\dual{\phi}\bigl(\dual{\gp_H}(F)\bigr) @>{\dual{\delta}_F}>> F^{*}/F^{*3}
@>>>	\gext^{*}/\gext^{*3}\\
@VVV				@VVV				@VVV\\
\dual{\delta}_{F_\prim{p}}^{\mathcal{K}_{\prim{p}}}:
\gp_H(F_{\prim{p}})/\dual{\phi}\bigl(\dual{\gp_H}(F_{\prim{p}})\bigr)	@>{\dual{\delta}_{F_\prim{p}}}>>
F_{\prim{p}}^{*}/F_{\prim{p}}^{*3}	@>>>	\mathcal{K}_{\prim{p}}^{*}/\mathcal{K}_{\prim{p}}^{*3}.
\end{CD}
$$
As for the maps $\dual{\delta}_F^{\gext},\,\dual{\delta}_{F_\prim{p}}^{\mathcal{K}_{\prim{p}}}$,
 we will give a certain description in Proposition~\ref{connect} which is essential in \S\ref{sec_SI}.
Note that the homomorphisms $\dual{\delta}_F,\,\dual{\delta}_{F_\prim{p}}$ are injective
 from the non-degeneracy of the Weil pairing $e_{\dual{\phi}}$
 but $\dual{\delta}_F^{\gext},\,\dual{\delta}_{F_\prim{p}}^{\mathcal{K}_{\prim{p}}}$ are not necessarily,
 because the natural maps $F^{*}/F^{*3} \to \gext^{*}/\gext^{*3}$
 and $F_{\prim{p}}^{*}/F_{\prim{p}}^{*3} \to \gext_{\prim{P}}^{*}/\gext_{\prim{P}}^{*3}$
 are not injective in general. See also Lemma~\ref{kernel}.

The following lemma is proven via local consideration by applying the Hasse principle for norm varieties,
 which was originally proven in \cite{Hasse}.

\begin{lemma}\label{bound_norm}
\quad
$S^{(\dual{\phi})}(\dual{\gp_H}/F) \subset \Norm{\gext/F}(\gext^{*})/F^{*3}$.
\end{lemma}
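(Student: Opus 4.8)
The plan is to work through the local-to-global description of the $\dual{\phi}$-Selmer group already set up above, and reduce the statement to a local condition at each prime $\prim{p}$ of $F$, then invoke the Hasse norm principle. First I would recall that, by the commutative diagram defining $\dual{\delta}_F^{\gext}$ and $\dual{\delta}_{F_\prim{p}}^{\mathcal{K}_{\prim{p}}}$, any class $\eqcls{d} \in S^{(\dual{\phi})}(\dual{\gp_H}/F) \subset F^{*}/F^{*3}$ lies in $\Image \dual{\delta}_{F_{\prim{p}}}$ for every finite prime $\prim{p}$. The key step is then to show that for each such $\prim{p}$, the image of $\dual{\delta}_{F_{\prim{p}}}$ is contained in the subgroup $\Norm{\gext_{\prim{P}}/F_{\prim{p}}}(\gext_{\prim{P}}^{*}) \cdot F_{\prim{p}}^{*3}/F_{\prim{p}}^{*3}$ of $F_{\prim{p}}^{*}/F_{\prim{p}}^{*3}$ (interpreting the product over $\prim{P} \mid \prim{p}$ appropriately via $\mathcal{K}_{\prim{p}}$; when $\prim{p}$ splits completely the norm condition is vacuous, and when $\prim{p}$ is inert or ramified it is a genuine restriction). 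This local containment should follow from the explicit shape of the Weierstrass equation \eqref{we}: the coefficient $\Norm{\gext/F}(\gamma(\gp_H))$ is visibly a norm from $\gext$, and the standard formula for the connecting map $\dual{\delta}$ of a $3$-isogeny with kernel generated by an $F$-rational point expresses $\dual{\delta}_{F_{\prim{p}}}(P)$ in terms of this coefficient and the coordinates of $P$, so one can check directly that the resulting class is a local norm. This is essentially the content alluded to by Proposition~\ref{connect}, which I may assume.

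Granting the local statement, I would finish as follows. A class $\eqcls{d} \in F^{*}/F^{*3}$ is in $S^{(\dual{\phi})}(\dual{\gp_H}/F)$ only if, for every finite prime $\prim{p}$ of $F$, the image of $d$ in $F_{\prim{p}}^{*}/F_{\prim{p}}^{*3}$ is a norm from $\mathcal{K}_{\prim{p}}^{*}$, equivalently $d \in \Norm{\gext_{\prim{P}}/F_{\prim{p}}}(\gext_{\prim{P}}^{*})$ for the primes $\prim{P}$ above $\prim{p}$ (up to cubes, but since being a norm is a condition detected on the cokernel of the norm map and $\gext/F$ is cyclic of degree $3$, the cube ambiguity causes no trouble: $F_{\prim{p}}^{*3} \subset \Norm(\gext_{\prim{P}}^{*})$ because every cube is $x \cdot x \cdot x$, a product of three conjugate values when $\prim{p}$ splits, and is $\Norm(x)$ for $x \in F_{\prim{p}}^{*}$ otherwise). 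Thus $d$ is everywhere locally a norm from $\gext$. Since $\gext/F$ is a cyclic extension, the Hasse norm theorem (as in \cite{Hasse}) applies: an element of $F^{*}$ that is a norm locally everywhere is a global norm, i.e. $d \in \Norm{\gext/F}(\gext^{*})$. Hence $\eqcls{d} \in \Norm{\gext/F}(\gext^{*})/F^{*3}$, which is the claim. (One must also dispose of the archimedean places: for $\gext/F$ cyclic of odd degree $3$, every real or complex place of $F$ splits in $\gext$ up to the fact that cubic extensions of $\real$ and $\mathbb{C}$ are trivial, so the archimedean local norm conditions are automatic and do not obstruct applying Hasse.)

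The main obstacle I expect is the precise bookkeeping in the local step: one needs a clean, conceptual reason why $\Image \dual{\delta}_{F_{\prim{p}}}$ lands in the local norms, rather than a brute-force coordinate computation on \eqref{we}. The cleanest route is probably functoriality of the Weil pairing together with the observation that the dual isogeny $\dual{\phi}$ has kernel $\dual{\gp_H}[\dual{\phi}] = \langle \phi(\ellunit') \rangle$ with $\ellunit'$ defined over $\gext$ (Lemma~\ref{F-point}) and conjugate under $G = \Gal(\gext/F)$ to $\ellunit'^{\sigma}$, $\ellunit'^{\sigma^2}$; this $G$-structure on the kernel is exactly what forces the Kummer class attached to any $F$-rational point to be a norm from $\gext$ up to cubes. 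A secondary, minor subtlety is making sure the identification $\mathcal{K}_{\prim{p}}^{*}/\mathcal{K}_{\prim{p}}^{*3} \simeq \prod_{\prim{P}\mid\prim{p}} \gext_{\prim{P}}^{*}/\gext_{\prim{P}}^{*3}$ is compatible with the two maps $F_{\prim{p}}^{*}/F_{\prim{p}}^{*3} \to \mathcal{K}_{\prim{p}}^{*}/\mathcal{K}_{\prim{p}}^{*3}$ and the component-wise norm, so that "image is a local norm" genuinely translates into "$d$ is a norm from $\gext$ locally"; this is routine but should be stated carefully.
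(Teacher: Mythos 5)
Your overall strategy is exactly the paper's: reduce the global norm condition to a local one at each prime, and invoke the Hasse norm theorem for the cyclic extension $\gext/F$. Your handling of the easy local cases is correct (archimedean places are vacuous since a cubic extension of $\real$ or $\mathbb{C}$ is trivial; split primes give $\gext_{\prim{P}}=F_{\prim{p}}$ so there is nothing to check), and your point that $F_{\prim{p}}^{*3}\subset\Norm_{\gext_{\prim{P}}/F_{\prim{p}}}(\gext_{\prim{P}}^{*})$ is correct and correctly disposes of the cube ambiguity.

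The gap is precisely where you flag it: the local containment at a non-split finite prime $\prim{p}$ is not actually established, and the ``visibly a norm'' heuristic about the coefficient $\Norm_{\gext/F}(\gamma(\gp_H))$ is not sufficient. Granting Proposition~\ref{connect}, which is fair, what you obtain is that any $d\in F^{*}$ representing a class in $\Image\dual{\delta}_{F_{\prim{p}}}$ satisfies $d=\eta(P)\beta^{3}$ for some $P=[x,y,z,1]\in\gp_{H}(F_{\prim{p}})$ and $\beta\in\gext_{\prim{P}}^{*}$, where $\eta(P)=\alpha^{\sigma-1}$ with $\alpha=x+y\alpha_{1}+z\alpha_{2}\in\gext_{\prim{P}}^{*}$. (Here one must use that $\gp_{H}(F_{\prim{p}})\cap\{w=0\}=\emptyset$ precisely when $F_{\prim{p}}\ne\gext_{\prim{P}}$, so every local point has $w\ne 0$ and the formula for $\eta$ applies without exceptional cases; this is a detail you omit.) The defining equation of $\gp_{H}$ then gives $\Norm_{\gext_{\prim{P}}/F_{\prim{p}}}(\alpha)=w^{3}=1\in F_{\prim{p}}^{*3}$. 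The step that actually converts ``$d=\alpha^{\sigma-1}\beta^{3}$ with $\Norm(\alpha)$ a cube'' into ``$d=\Norm_{\gext_{\prim{P}}/F_{\prim{p}}}(\beta)$'' is Lemma~\ref{norm}, a Hilbert-$90$-style computation in the cyclic extension $\gext_{\prim{P}}/F_{\prim{p}}$; without it (or an equivalent argument), the local norm condition is asserted but not proved. Your appeal to ``the $G$-structure of the kernel forcing the Kummer class to be a norm up to cubes'' is the right intuition, but it is exactly the content of Lemma~\ref{norm}, which must be stated and used explicitly rather than folded into the invocation of the Weil pairing.
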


\vsp

\begin{proof}
\quad
For arbitrary $\eqcls{d} \in S^{(\dual{\phi})}(\dual{\gp_H}/F)$,
 it suffices to show $d \in \Norm{\gext_{\prim{P}}/F_{\prim{p}}}(\gext_{\prim{P}}^{*})$ for every prime of $F$
 by Hasse norm theorem.
This is immediate for infinite primes or finite primes which split completely in $\gext$.
We thus assume a finite prime $\prim{p}$ does not split completely in $\gext$.
Then $\Gal(\gext_{\prim{P}}/F_{\prim{p}}) \simeq \Gal(\gext/F)$.
Since $d$ represents an element in $S^{(\dual{\phi})}(\dual{\gp_H}/F)$,
 there is a point $[x,\,y,\,z,\,1] \in \gp_H(F_{\prim{p}})$ and some element $\beta \in \gext_{\prim{P}}^{*}$
 such that $d=\frac{x+y\alpha_1^{\sigma}+z\alpha_2^{\sigma}}{x+y\alpha_1+z\alpha_2}\beta^3$
 from Proposition~\ref{connect}.
Note here that $\gp_H(F_{\prim{p}}) \cap \{w=0\}=\emptyset$ if and only if $F_{\prim{p}}\ne\gext_{\prim{P}}$.
From Lemma~\ref{norm} we have $\Norm{\gext_{\prim{P}}/F_{\prim{p}}}(\beta)=d$.
Gathering all the pieces, we can conclude $d \in \Norm{\gext/F}(\gext^{*})$.
\end{proof}

\begin{remark}
\quad
From the above we have $\Image \dual{\delta}_F \subset \Norm{\gext/F}(\gext^{*})/F^{*3}$.
\end{remark}

\subsection{Combining the Selmer groups $S^{(\phi)}(\gp_H/F)$ and $S^{(\dual{\phi})}(\dual{\gp_H}/F)$}

Given an isogeny and its dual isogeny, it is known Cassels's duality formula in \cite{Cas1}
 which connects order of a Selmer group with that of its dual Selmer group.
Applying the result in \cite{RK} yields the following connection formula.
However, for simplicity, we consider only the case $a=0$ which we will use later, namely the hyperplane $H:by+cz=0$
 with coefficients $[b,\,c] \in \mathbb{P}^1(\ringint{F})$.

\begin{proposition}\label{duality}
\quad
Assume $F$ has class number $1$ in which $3$ splits completely.
Let $v_{\prim{p}}:=\nu_{\prim{p}}\bigl(\Norm{\gext/F}(\gamma(\gp_H))\bigr) \in \rint$ for $\prim{p} \in \primes_F$
 where $\primes_F$ stands for the set of all finite primes of $F$.
For a hyperplane $H:by+cz=0$ which is not tangent to $\gp$, it holds that
$$\dim_{\mathbb{F}_3} S^{(\phi)}(\gp_H/F)=\dim_{\mathbb{F}_3} S^{(\dual{\phi})}(\dual{\gp_H}/F)
+d_{+}-d_{-}+d_{\infty}-r$$
where $r$ denotes the rank of the group $\ringint{F}^*$ of units and
\begin{align*}
d_{+}&:=\sharp\!\lt\{\prim{p} \in \primes_F \;\Big|\; \prim{p} \mid 3 \text{ and }
 \Norm{\gext/F}\bigl(\gamma(\gp_H)\bigr) \in F_{\prim{p}}^{*3}\rt\},\\
d_{-}&:=\sharp\!\lt\{\prim{p} \in \primes_F \;\Big|\; 3 \nmid v_{\prim{p}} \text{ and }
 \bigl\{\prim{p} \mid 3 \text{ or } \zeta_3 \notin F_{\prim{p}}\bigr\}\rt\},\\
d_{\infty}&:=\sharp\!\lt\{\prim{p} \in \primes_F \;\Big|\; v_{\prim{p}} \equiv 2 \!\!\!\!\pmod{3}
 \text{ and } \prim{p} \mid 3\rt\}.
\end{align*}
\end{proposition}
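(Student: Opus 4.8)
The plan is to invoke Cassels's duality formula relating the $\mathbb{F}_3$-dimensions of the Selmer group $S^{(\phi)}(\gp_H/F)$ and its dual $S^{(\dual{\phi})}(\dual{\gp_H}/F)$. In the form adapted to a $3$-isogeny (as set up in \cite{RK}, building on \cite{Cas1}), this formula reads
$$\dim_{\mathbb{F}_3} S^{(\phi)}(\gp_H/F) - \dim_{\mathbb{F}_3} S^{(\dual{\phi})}(\dual{\gp_H}/F)
= \sum_{v} \Bigl(\dim_{\mathbb{F}_3} \frac{\gp_H(F_v)}{\dual{\phi}\bigl(\dual{\gp_H}(F_v)\bigr)} - \dim_{\mathbb{F}_3} H^0(F_v,\,\gp_H[\phi])\Bigr) - \dim_{\mathbb{F}_3} \frac{\gp_H(F)}{3\gp_H(F)}_{\mathrm{tors}} + (\text{global correction}),$$
where $v$ runs over all places of $F$. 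First I would reduce the global correction terms: since $F$ has class number $1$, the global units $\ringint{F}^{*}$ contribute a term $\dim_{\mathbb{F}_3}\ringint{F}^{*}/\ringint{F}^{*3} = r + \dim_{\mathbb{F}_3}\mu_3(F)$, and the $3$ splits completely hypothesis pins down exactly which completions $F_{\prim{p}}$ contain $\zeta_3$. The torsion and $H^0$ bookkeeping for the unramified places and the places not dividing $3$ will largely cancel, leaving the local discrepancies concentrated at $\prim{p}\mid 3$ and at the bad primes, which by Remark~\ref{K} and Corollary~\ref{basic_inv} are precisely the primes where $v_{\prim{p}} = \nu_{\prim{p}}(\Norm{\gext/F}(\gamma(\gp_H)))$ is relevant (here $a=0$, so $\Delta(\gp_H) = -\Norm{\gext/F}(\gamma(\gp_H))^3 \cdot 3^3\Norm{\gext/F}(\gamma(\gp_H))$ simplifies the bad-reduction analysis dramatically).

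The core computation is the local term $\dim_{\mathbb{F}_3} \gp_H(F_{\prim{p}})/\dual{\phi}(\dual{\gp_H}(F_{\prim{p}}))$ at each finite prime. By the standard local formula for the image of the connecting map under an isogeny of degree $3$, this equals $\dim_{\mathbb{F}_3} H^0(F_{\prim{p}}, \dual{\gp_H}[\dual{\phi}]) + \nu_{\prim{p}}(3) \cdot [\text{something}]$ — more precisely, for $\prim{p}\nmid 3$ it is $\dim_{\mathbb{F}_3} \dual{\gp_H}(F_{\prim{p}})[\dual{\phi}]$, while for $\prim{p}\mid 3$ there is an extra contribution of $\dim_{\mathbb{F}_3} F_{\prim{p}}^{*}/F_{\prim{p}}^{*3}$, which under "$3$ splits completely" is $1 + \dim_{\mathbb{F}_3}\mu_3(F_{\prim{p}})$. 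I would then match these against the three counting quantities: $d_{+}$ counts the primes above $3$ where $\Norm{\gext/F}(\gamma(\gp_H))$ becomes a cube (so the Kummer class is trivial and the $\dual{\phi}$-image picks up an extra dimension), $d_{-}$ counts the primes — above $3$ or with $\zeta_3\notin F_{\prim{p}}$ — where $3\nmid v_{\prim{p}}$ forces nontrivial ramification behavior in the local isogeny image, and $d_{\infty}$ handles the residual $v_{\prim{p}}\equiv 2\pmod 3$ correction at $\prim{p}\mid 3$ coming from the asymmetry between $\phi$ and $\dual{\phi}$ (the dual curve's Weierstrass equation has the $3^2$ factor, shifting valuations by $2$).

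I expect the main obstacle to be keeping the local terms at the primes dividing $3$ entirely explicit: at such primes the formula for $\dim_{\mathbb{F}_3} \gp_H(F_{\prim{p}})/\dual{\phi}(\dual{\gp_H}(F_{\prim{p}}))$ depends delicately on the reduction type and on whether $\Norm{\gext/F}(\gamma(\gp_H))$ and $a\delta(\gp)$ (here $a=0$) are cubes locally, and one must carefully separate the contributions that belong in $d_{+}$, $d_{-}$, and $d_{\infty}$ without double-counting. The cleanest route is to apply the general connection formula from \cite{RK}, \S2 verbatim (which already packages the Cassels duality for this family of $3$-isogenies) and then verify by a case-by-case valuation check — using that $3$ splits completely so every $F_{\prim{p}}$ with $\prim{p}\mid 3$ is a copy of $\rat_3$ — that its local defect terms reorganize into exactly $d_{+}-d_{-}+d_{\infty}-r$. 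The split hypothesis and class number $1$ are precisely what make this reorganization finite and unambiguous, so the proof ultimately reduces to a bounded local computation rather than any genuinely new idea.
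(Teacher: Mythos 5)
Your proposal takes the same route as the paper: the paper's own proof is a single sentence, ``This is a direct consequence from \cite{RK} combining with Cassels's duality formula,'' and you likewise reduce the statement to Cassels's duality as packaged by the author's companion paper \cite{RK}, then describe the bookkeeping at places over $3$ and bad primes that would turn the general duality formula into the stated $d_{+}-d_{-}+d_{\infty}-r$. Your sketch correctly uses $a=0$ to simplify the discriminant to $-3^3\Norm{\gext/F}(\gamma(\gp_H))^4$ and correctly notes that the ``$3$ splits completely'' hypothesis forces $\zeta_3\notin F$ so that $\dim_{\mathbb{F}_3}\ringint{F}^{*}/\ringint{F}^{*3}=r$, which is the right way to see where the $-r$ comes from.
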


\vsp

\begin{proof}
\quad
This is a direct consequence from \cite{RK} combining with Cassels's duality formula.
\end{proof}

\begin{remark}\label{assumption}
\quad
Assume $3$ is unramified in $F$.
Let $\prim{p}$ be a prime of $F$ above $3$.
Then, for an element $d \in F_{\prim{p}}^{*}$, the condition $d \notin F_{\prim{p}}^{*3}$ is equivalent to
 the condition either $3 \nmid \nu_{\prim{p}}(d)$ or
 $3 \mid \nu_{\prim{p}}(d),\,u_{\pi} \not\equiv k_{\pi}^3 \pmod{\pi^2}$
 for the constants $u_{\pi},\,k_{\pi} \in \ringint{F_{\prim{p}}}^{*}$ which are defined to be
 $u_{\pi} \equiv k_{\pi}^3 \pmod{\pi}$
 with $u_{\pi}:=\pi^{-\nu_{\prim{p}}(d)}d$ where $\pi$ stands for a prime element in $F_{\prim{p}}$.
Further we assume $3$ splits completely in $F$.
Then $u_{\pi}$ is equivalent to $\pm1$ modulo $\pi$ and hence we can choose $k_{\pi}=\pm1$ respectively.
\end{remark}

\section{Unit groups, Class groups and Selmer groups}\label{sec_SI}

As for relations between class groups and Selmer groups, it has been investigated by many authors.
For example, L. C. Washington \cite{Was} discussed the connection of Selmer groups to class groups of
 cyclic cubic fields over $\rat$, N. Aoki \cite{Aoki} of cubic extensions over arbitrary number field
 containing a primitive cube root of unity.
E. F. Schaefer \cite{Sch} also investigated the case when those groups coincide
 for the minimal field of definition of torsion points on abelian varieties.
This section is intended to study the family of elliptic curves $\{\gp_H\}_H$ in such direction.
However, in addition to class groups, here we also take unit groups into account
 in view of the analogy between Mordell-Weil groups and unit groups.

For any set $S$ of finite primes of $F$, and any finite extension $L$ over $F$, define
\begin{align*}
\ringint{L,S}&:=\bigl\{\alpha \in L \;\big|\; \nu_{\prim{P}}(\alpha) \geq 0
 \text{ for any $\prim{P} \in \primes_{L,S}$}\bigr\},\\
\mathfrak{R}_{L,S}&:=\lt\{\eqcls{\alpha} \in L^{*}/L^{*3} \;\Big|\; \nu_{\prim{P}}(\alpha) \equiv 0\!\!\!\!\pmod{3}
 \text{ for any $\prim{P} \in \primes_{L,S}$}\rt\},\\
\iclsgp_{L,S}&:=\bigl\{\text{ non-zero fractional ideals of $\ringint{L,S}$ }\bigr\}/L^{*}\ringint{L,S}.
\end{align*}
Here $\primes_{L,S}$ stands for the set of all finite primes of $L$ outside $S$.
When $S=\emptyset$, we omit the letter $S$ as $\primes_L,\,\ringint{L},\,\mathfrak{R}_{L},\,\iclsgp_{L}$ and so on.
In this section, we show some connection between the Selmer group $S^{(\dual{\phi})}(\dual{\gp_H}/F)$
 and unit groups $\ringint{L,S}^{*}$, class groups $\iclsgp_{L,S}$ in the case $L=F$ or $\gext$.
To begin with, we quote \cite{RK}, Proposition~4.5, which implies the following.

\begin{lemma}\label{loceq}
\quad
The following equivalences hold.
\begin{align*}
\Image \dual{\delta}_{F_{\prim{p}}} \hookrightarrow \ringint{F_{\prim{p}}}^{*}/\ringint{F_{\prim{p}}}^{*3} \quad
 &\Longleftrightarrow \quad \prim{p} \in \primes_{F,S},\\
\Image \dual{\delta}_{F_{\prim{p}}}^{\mathcal{K}_{\prim{p}}} \hookrightarrow
 \ringint{\mathcal{K}_{\prim{p}}}^{*}/\ringint{\mathcal{K}_{\prim{p}}}^{*3} \quad
 &\Longleftrightarrow \quad \prim{p} \in \primes_{F,S'},
\end{align*}
where $\ringint{\mathcal{K}_{\prim{p}}}^{*}/\ringint{\mathcal{K}_{\prim{p}}}^{*3}
=\prod_{\prim{P}|\prim{p}} \ringint{\gext_{\prim{P}}}^{*}/\ringint{\gext_{\prim{P}}}^{*3}$
 and $S,\,S'$ are finite sets defined by
\begin{align*}
S&:=\lt\{\prim{p} \in \primes_F \;\Big|\; \mbox{\parbox{35ex}{\small
 $3\nu_{\prim{p}}\bigl(a\delta(\gp)\bigr)<\nu_{\prim{p}}\bigl(\Norm{\gext/F}(\gamma(\gp_H))\bigr)$\\
 \quad or \; $\nu_{\prim{p}}\bigl(\Norm{\gext/F}(\gamma(\gp_H))\bigr) \not\equiv 0\!\!\!\!\pmod{3}$}}\rt\},\\
S'&:=\lt\{\prim{p} \in S \;\Big|\; \text{$\prim{p}$ splits completely in $\gext$}\rt\}.
\end{align*}
\end{lemma}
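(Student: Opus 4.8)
The plan is to reduce the computation of $\Image\dual{\delta}_{F_{\prim p}}$ and $\Image\dual{\delta}_{F_{\prim p}}^{\mathcal K_{\prim p}}$ to the explicit description of these connecting maps furnished by \cite{RK}, Proposition~4.5, and then to carry out a local analysis at each prime $\prim p$. Recall that by Proposition~\ref{connect} (and the discussion preceding Lemma~\ref{loceq}) the map $\dual{\delta}_{F_{\prim p}}$ sends a point of $\gp_H(F_{\prim p})$ to (the class of) a value of an explicit rational function on $\gp_H$, built from $\iota=x+y\alpha_1+z\alpha_2$, $a\delta(\gp)$ and $\Norm{\gext/F}(\gamma(\gp_H))$. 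The image landing in $\ringint{F_{\prim p}}^{*}/\ringint{F_{\prim p}}^{*3}$ is equivalent to the valuations $\nu_{\prim p}$ of all such function values being divisible by $3$. So the first step is to write down, following \cite{RK}, the precise generator(s) of $\Image\dual{\delta}_{F_{\prim p}}$ in terms of the Weierstrass coefficients $a\delta(\gp)$ and $N:=\Norm{\gext/F}(\gamma(\gp_H))$, and likewise for the $\mathcal K_{\prim p}$-valued map.

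The second step is a case division at $\prim p$ according to the reduction type of $\gp_H$, governed by the discriminant $\Delta(\gp_H)=-N^3\bigl(a^3\delta(\gp)^3+3^3N\bigr)$ from Corollary~\ref{basic_inv}. When $\gp_H$ has good reduction at $\prim p$ (equivalently $\prim p\nmid N(a^3\delta(\gp)^3+27N)$, and $\prim p\nmid3$), the image of the local connecting map is contained in the units, so $\prim p\notin S$; this is the easy direction. The content is in the bad-reduction primes: there the local image is generated by something whose valuation is controlled by $\nu_{\prim p}(a\delta(\gp))$ and $v_{\prim p}=\nu_{\prim p}(N)$, and one reads off that the image fails to be integral exactly when $3\nu_{\prim p}(a\delta(\gp))<v_{\prim p}$ or $3\nmid v_{\prim p}$ — which is precisely the defining condition for $\prim p\in S$. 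The third step treats the $\mathcal K_{\prim p}$-valued map: here the extra subtlety is that $\mathcal K_{\prim p}\simeq\prod_{\prim P\mid\prim p}\gext_{\prim P}$, and when $\prim p$ does \emph{not} split completely in $\gext$ the single factor $\gext_{\prim P}$ is a ramified or inert cubic local extension, in which $\nu_{\prim P}$ is a nontrivial multiple (or refinement) of $\nu_{\prim p}$; one checks that in every such non-split case the relevant function values automatically have $\nu_{\prim P}$-valuations divisible by $3$ (using that $3$ itself becomes a cube, or that the residue extension absorbs the obstruction), so the only primes contributing a non-integral image are those that split completely — giving $S'=\{\prim p\in S\mid\prim p\text{ splits completely in }\gext\}$.

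The main obstacle I anticipate is the non-split local analysis for the $\mathcal K_{\prim p}$ map: one has to be careful that the rational-function description of $\dual{\delta}_{F_{\prim p}}^{\gext_{\prim P}}$ from \cite{RK} is correctly normalized, and that the valuation bookkeeping in a ramified cubic extension (where $\nu_{\prim P}(3)$ or $\nu_{\prim P}$ of a uniformizer introduces factors of $3$) genuinely forces integrality of the image. A secondary technical point is to confirm, in the bad-reduction split case, that the generator of $\Image\dual{\delta}_{F_{\prim p}}$ really has the valuation claimed rather than a cube-adjusted variant — i.e. that no cancellation among the factors of the rational function lowers the $3$-adic obstruction. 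Once the explicit local formulas of \cite{RK} are in hand, each of these is a finite valuation computation; I would organize the proof as: (i) recall the formula, (ii) dispose of good-reduction and non-split primes, (iii) pin down the bad split primes, and (iv) assemble the two equivalences.
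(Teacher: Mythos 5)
Your scaffolding is the same as the paper's: the first equivalence is a direct application of \cite{RK}, Proposition~4.5; the second is deduced from the first by a case split into non-split and split primes, with the split case reducing to the first equivalence because $F_{\prim{p}}=\gext_{\prim{P}}$. That much is sound. The gap is in the mechanism you offer for the non-split case. For $\prim{p}$ \emph{ramified} in $\gext$, your ``$3$ itself becomes a cube'' does work: the ramification index is $3$, so $\nu_{\prim{P}}(d)=3\nu_{\prim{p}}(d)\equiv 0\pmod{3}$ for every $d\in F_{\prim{p}}^{*}$, and the image of the natural map $F_{\prim{p}}^{*}/F_{\prim{p}}^{*3}\to\gext_{\prim{P}}^{*}/\gext_{\prim{P}}^{*3}$ lands in $\ringint{\gext_{\prim{P}}}^{*}/\ringint{\gext_{\prim{P}}}^{*3}$ automatically. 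But for $\prim{p}$ \emph{inert} in $\gext$ the ramification index is $1$, so $\nu_{\prim{P}}$ restricted to $F_{\prim{p}}$ is $\nu_{\prim{p}}$, and a non-unit of $F_{\prim{p}}$ stays a non-unit of $\gext_{\prim{P}}$; ``the residue extension absorbs the obstruction'' does not explain why the image becomes integral in this case.

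What the paper actually invokes (through Proposition~\ref{connect}) is different: for non-split $\prim{p}$ one has $\gp_H(F_{\prim{p}})\cap\{w=0\}=\emptyset$, and for any $P=[x,y,z,1]\in\gp_H(F_{\prim{p}})$ the three Galois conjugates $\iota(P),\iota^{\sigma}(P),\iota^{\sigma^2}(P)$ of $\iota(P)=x+y\alpha_1+z\alpha_2$ all have the \emph{same} $\prim{P}$-valuation, while the defining equation of $\gp_H$ forces their product to be $1$; hence each has valuation $0$, so $\eta(P)=\iota^{\sigma}(P)/\iota(P)$ is a $\prim{P}$-adic unit (this is exactly the computation in Lemma~\ref{loc_ram}). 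Equivalently, for inert $\prim{p}$ one may invoke Lemma~\ref{bound_norm}: $\Image\dual{\delta}_{F_{\prim{p}}}\subset\Norm_{\gext_{\prim{P}}/F_{\prim{p}}}(\gext_{\prim{P}}^{*})/F_{\prim{p}}^{*3}$, and in the unramified cubic extension all norms have $\nu_{\prim{p}}$-valuation divisible by $3$, so $\Image\dual{\delta}_{F_{\prim{p}}}$ is already contained in $\ringint{F_{\prim{p}}}^{*}/\ringint{F_{\prim{p}}}^{*3}$ (in particular, inert primes never belong to $S$). You should also note that you conflate $\dual{\delta}_{F_{\prim{p}}}$ with $\dual{\delta}_{F_{\prim{p}}}^{\gext_{\prim{P}}}$: the rational function built from $\iota$ (Proposition~\ref{connect}) computes the latter, valued in $\gext_{\prim{P}}^{*}/\gext_{\prim{P}}^{*3}$, whereas \cite{RK}, Proposition~4.5 describes $\dual{\delta}_{F_{\prim{p}}}$ in Weierstrass coordinates, valued in $F_{\prim{p}}^{*}/F_{\prim{p}}^{*3}$; keeping these two descriptions distinct is what makes the non-split analysis go through cleanly.
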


\vsp

\begin{proof}
\quad
The first equivalence follows directly from \cite{RK}, Proposition~4.5.
As for the second, for any $\prim{p} \in \primes_{F,S'}$ if $\prim{p}$ is inert or ramifies in $\gext$ then 
 $\Image \dual{\delta}_{F_{\prim{p}}}^{\mathcal{K}_{\prim{p}}}
 \hookrightarrow \ringint{\mathcal{K}_{\prim{p}}}^{*}/\ringint{\mathcal{K}_{\prim{p}}}^{*3}$
 by using Proposition~\ref{connect}.
If $\prim{p}$ splits completely in $\gext$ then $\prim{p} \in \primes_{F,S}$,
 and hence the statement follows from the first equivalence
 since $F_{\prim{p}}=\gext_{\prim{P}}$ as valuation fields for each $\prim{P} \mid \prim{p}$.
Conversely, for any $\prim{p} \in S'$ the first equivalence implies
$\Image \dual{\delta}_{F_{\prim{p}}}^{\mathcal{K}_{\prim{p}}} \not\hookrightarrow
 \ringint{\mathcal{K}_{\prim{p}}}^{*}/\ringint{\mathcal{K}_{\prim{p}}}^{*3}$.
\end{proof}

By the above lemma, it turns out that each element in $S^{(\dual{\phi})}(\dual{\gp_H}/F)$
 is of $\prim{P}$-adic valuation divided by $3$ outside $S'$, namely $S^{(\dual{\phi})}(\dual{\gp_H}/F)$
 relates to the $3$-torsion subgroup of the $S'$-ideal class group $\iclsgp_{\gext,S'}$.
We will also consider the relation to that of the $S$-ideal class group $\iclsgp_{F,S}$.

We make notation as
\begin{align*}
{_\mathrm{N}\Phi_{F,S}}&:=\frac{\Norm{\gext/F}(\gext^{*}) \cap \ringint{F,S}^{*}}{\ringint{F,S}^{*3}}
\;\bigl(\subset \ringint{F,S}^{*}/\ringint{F,S}^{*3}\bigr),\\
{_\mathrm{N}\iclsgp_{F,S}[3]}
&:=\lt\{\eqcls{\ideal{a}} \in \iclsgp_{F,S}[3] \;\Big|\; \ideal{a}^3=d\ringint{F,S},\,
d \in \Norm{\gext/F}(\gext^{*})\rt\}\;\bigl(\subset \iclsgp_{F,S}[3]\bigr),\\
{_\mathrm{N}\mathfrak{U}_{\gext,S'}}
&:=\lt\{\eqcls{\alpha^{\sigma-1}} \in \ringint{\gext,S'}^{*}\gext^{*3}/\gext^{*3}
 \;\Big|\; \alpha \in \gext^{*},\,\Norm{\gext/F}(\alpha)=1\rt\}
\;\bigl(\subset \ringint{\gext,S'}^{*}/\ringint{\gext,S'}^{*3}\bigr),\\
{_\mathrm{N}\iclsgp_{\gext,S'}[3]}
&:=\lt\{\eqcls{\ideal{A}} \in \iclsgp_{\gext,S'}[3] \;\Big|\; \ideal{A}^3=\alpha\ringint{\gext,S'},\,
\Norm{\gext/F}(\alpha)=1\rt\}\;\bigl(\subset \iclsgp_{\gext,S'}[3]\bigr).
\end{align*}
As is indicated, these groups are finite and subject to certain norm condition.
Note that the Galois group $G=\Gal(\gext/F)$ acts canonically on the quotient $\gext^{*}/\gext^{*3}$
 and the $S'$-ideal class group $\iclsgp_{\gext,S'}$ by $\sigma(\eqcls{\alpha})=\eqcls{\alpha^{\sigma}}$,
 $\sigma(\eqcls{\ideal{A}})=\eqcls{\ideal{A}^{\sigma}}$ respectively.
The purpose of this section is to prove the following results.

\begin{theorem}\label{bound}
\quad
For any hyperplane $H/F \in \mathcal{L}(\ellunit)$ which is not tangent to $\gp$,
 let $S,\,S'$ be the finite sets in Lemma~\ref{loceq}. Then there is a finite bound
\begin{align*}
\dim_{\mathbb{F}_3} S^{(\dual{\phi})}(\dual{\gp_H}/F)
&\leq \dim_{\mathbb{F}_3} {_\mathrm{N}\Phi_{F,S}}+\dim_{\mathbb{F}_3} {_\mathrm{N}\iclsgp_{F,S}[3]}\\
&\leq \dim_{\mathbb{F}_3} {_\mathrm{N}\mathfrak{U}_{\gext,S'}} \oplus \mu_3(F)
+\dim_{\mathbb{F}_3} {_\mathrm{N}\iclsgp_{\gext,S'}^G[3]}-\dim_{\mathbb{F}_3} \text{\footnotesize
 $\!\frac{\Norm{\gext/F}(\gext_{S'}) \cap \ringint{F,S'}^{*}}{\Norm{\gext/F}(\ringint{\gext,S'}^{*})}$}
\end{align*}
where $\gext_{S'}:=\lt\{\beta \in \gext^{*} \;|\; \nu_{\prim{P}}(\beta^{\sigma-1}) \equiv 0 \pmod{3}
 \text{ for any $\prim{P} \in \primes_{\gext,S'}$}\rt\}$.\\
Furthermore if $\iclsgp_{\gext,S'}$ is $G$-invariant then
 $\gp_H(F)/\dual{\phi}\bigl(\dual{\gp_H}(F)\bigr) \hookrightarrow
 {_\mathrm{N}\mathfrak{U}_{\gext,S'}} \oplus \mu_3(F)$.
\end{theorem}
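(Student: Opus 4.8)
The plan is to study the connecting maps $\dual{\delta}_F$, $\dual{\delta}_F^{\gext}$ and their local analogues, feeding the local conditions of Lemma~\ref{loceq} into the standard exact sequences
$$1 \longrightarrow \ringint{L,S}^{*}/\ringint{L,S}^{*3} \longrightarrow \mathfrak{R}_{L,S}
 \longrightarrow \iclsgp_{L,S}[3] \longrightarrow 1$$
(for $L=F$ and $L=\gext$), where the right-hand map sends $\eqcls{d}$ to the class of the fractional $S$-ideal $\ideal{a}$ with $d\ringint{L,S}=\ideal{a}^{3}$ and the left-hand map is induced by inclusion. Every group in sight is killed by $3$, so all short exact sequences below split as $\mathbb{F}_3$-vector spaces; in particular dimensions add and a middle term embeds into the direct sum of its sub and quotient.

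\textbf{First inequality.} By Lemma~\ref{bound_norm} and the first equivalence of Lemma~\ref{loceq}, every class in $S^{(\dual{\phi})}(\dual{\gp_H}/F)$ lies in $\Norm{\gext/F}(\gext^{*})/F^{*3}$ and has valuation $\equiv 0 \bmod 3$ at each $\prim{p}\in\primes_{F,S}$, hence lies in $\mathfrak{R}_{F,S}$. Composing with $\mathfrak{R}_{F,S}\to\iclsgp_{F,S}[3]$ lands in ${_\mathrm{N}\iclsgp_{F,S}[3]}$, since the chosen $\ideal{a}$ satisfies $\ideal{a}^{3}=d\ringint{F,S}$ with $d$ a global norm; and if $\eqcls{\ideal{a}}=\eqcls{1}$, writing $\ideal{a}=e\ringint{F,S}$ gives $d/e^{3}\in\ringint{F,S}^{*}$, still a norm from $\gext$ because $e^{3}=\Norm{\gext/F}(e)$, so the kernel embeds into ${_\mathrm{N}\Phi_{F,S}}$. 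Adding dimensions gives $\dim_{\mathbb{F}_3}S^{(\dual{\phi})}(\dual{\gp_H}/F)\le\dim_{\mathbb{F}_3}{_\mathrm{N}\Phi_{F,S}}+\dim_{\mathbb{F}_3}{_\mathrm{N}\iclsgp_{F,S}[3]}$.

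\textbf{Second inequality.} This transfers the two $F$-side quantities to the $\gext$-side. First, the primes in $S\setminus S'$ (which do not split completely in $\gext$) contribute nothing extra: at such a prime the local norm surjects onto the elements of valuation divisible by $3$, so it imposes no $\gext$-side condition and collapses in $\iclsgp_{\gext,S'}$. It remains to compare $S'$-units and $S'$-ideal classes over $F$ with those over $\gext$. For this I would run the long exact $G$-cohomology sequences attached to $\gext/F$ for the modules $\ringint{\gext,S'}^{*}$, $\gext^{*}/\ringint{\gext,S'}^{*}$ and the group of $S'$-ideals (the last decomposing over primes of $F$ via Shapiro's lemma, so its Tate cohomology is read off from decomposition groups), together with the ambiguous $S'$-class number exact sequence governing the extension-of-ideals map $\iclsgp_{F,S'}\to\iclsgp_{\gext,S'}^{G}$ through $\hat{H}^{0}(G,\ringint{\gext,S'}^{*})$ and the local norm indices. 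Tracking the norm conditions through these sequences, one identifies ${_\mathrm{N}\iclsgp_{F,S'}[3]}$ with a subquotient of ${_\mathrm{N}\iclsgp_{\gext,S'}^{G}[3]}$ and ${_\mathrm{N}\Phi_{F,S'}}$ with ${_\mathrm{N}\mathfrak{U}_{\gext,S'}}\oplus\mu_3(F)$ cut down by the subgroup $\bigl(\Norm{\gext/F}(\gext_{S'})\cap\ringint{F,S'}^{*}\bigr)\big/\Norm{\gext/F}(\ringint{\gext,S'}^{*})$, using Hilbert's Theorem~90 to present norm-$1$ elements as $\alpha^{\sigma-1}$ and Lemma~\ref{kernel} (Kummer theory for $\gext/F$) for the $\mu_3(F)$ summand. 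Summing the two estimates yields the asserted bound. This cohomological bookkeeping, with the exact index correction, is where I expect the real work to lie.

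\textbf{The $G$-invariant case.} Assume $\iclsgp_{\gext,S'}$ is $G$-invariant. The crucial point is that then ${_\mathrm{N}\iclsgp_{\gext,S'}[3]}=0$: if $\ideal{A}^{3}=\alpha\ringint{\gext,S'}$ with $\Norm{\gext/F}(\alpha)=1$, Hilbert's Theorem~90 gives $\alpha=\beta^{\sigma-1}$; taking ideal norms, $\Norm{\gext/F}(\ideal{A})^{3}=\bigl(\beta^{(\sigma-1)(1+\sigma+\sigma^{2})}\bigr)=(1)$, so $\Norm{\gext/F}(\ideal{A})=(1)$. An $S'$-ideal of norm $1$ is supported on primes of $F$ that split completely in $\gext$ and, prime by prime, lies in the augmentation submodule $(\sigma-1)$ of the local ideal group; hence $\ideal{A}=\ideal{d}^{\sigma-1}$ for some $S'$-ideal $\ideal{d}$, and $G$-invariance forces $\eqcls{\ideal{A}}=\eqcls{\ideal{d}}^{\sigma-1}=\eqcls{1}$. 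Now apply $\dual{\delta}_F^{\gext}$ to $\gp_H(F)/\dual{\phi}\bigl(\dual{\gp_H}(F)\bigr)$. Every $F$-rational point of $\gp_H$ has $w\ne0$ (otherwise it would be a $\sigma$-fixed point among $\ellunit',\ellunit'^{\sigma},\ellunit'^{\sigma^{2}}$, contradicting $\ellunit'^{\sigma}-\ellunit'\ne\ellunit$ from Lemma~\ref{F-point}), so by Proposition~\ref{connect} the image of $P$ is $\eqcls{\iota(P)^{\sigma-1}}$ with $\Norm{\gext/F}(\iota(P))=1$ (the defining equation of $\gp$ gives this when $w=1$), and by the second equivalence of Lemma~\ref{loceq} it lies in $\mathfrak{R}_{\gext,S'}$; its image in ${_\mathrm{N}\iclsgp_{\gext,S'}[3]}$ therefore vanishes, which forces $\eqcls{\iota(P)^{\sigma-1}}\in{_\mathrm{N}\mathfrak{U}_{\gext,S'}}$. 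Finally $\dual{\delta}_F^{\gext}$ factors as $\dual{\delta}_F$ (injective, by non-degeneracy of $e_{\dual{\phi}}$) followed by $F^{*}/F^{*3}\to\gext^{*}/\gext^{*3}$, whose kernel has $\mathbb{F}_3$-dimension $\dim_{\mathbb{F}_3}\mu_3(F)$ by Lemma~\ref{kernel}; hence $\Ker\dual{\delta}_F^{\gext}$ embeds into $\mu_3(F)$, and splitting the short exact sequence $0\to\Ker\dual{\delta}_F^{\gext}\to\gp_H(F)/\dual{\phi}\bigl(\dual{\gp_H}(F)\bigr)\to\Image\dual{\delta}_F^{\gext}\to0$ of $\mathbb{F}_3$-vector spaces gives the embedding $\gp_H(F)/\dual{\phi}\bigl(\dual{\gp_H}(F)\bigr)\hookrightarrow{_\mathrm{N}\mathfrak{U}_{\gext,S'}}\oplus\mu_3(F)$.
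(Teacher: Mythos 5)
Your argument for the first inequality is essentially the paper's: Lemma~\ref{bound_norm} and the first equivalence of Lemma~\ref{loceq} place $S^{(\dual{\phi})}(\dual{\gp_H}/F)$ inside ${_\mathrm{N}\mathfrak{R}_{F,S}}$, and the short exact sequence~\eqref{seq_F} then gives $\dim {_\mathrm{N}\mathfrak{R}_{F,S}}=\dim {_\mathrm{N}\Phi_{F,S}}+\dim{_\mathrm{N}\iclsgp_{F,S}[3]}$. Your handling of the kernel of ${_\mathrm{N}\mathfrak{R}_{F,S}}\to{_\mathrm{N}\iclsgp_{F,S}[3]}$ (writing $d/e^3\in\ringint{F,S}^*$ and noting $e^3=\Norm{\gext/F}(e)$) is correct.

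The genuine gap is the second inequality. You describe, in the conditional mood, running the long exact $G$-cohomology sequences for $\ringint{\gext,S'}^*$, $\gext^*/\ringint{\gext,S'}^*$ and the $S'$-ideal group, invoking Shapiro's lemma and the ambiguous class number formula, and you then write ``This cohomological bookkeeping, with the exact index correction, is where I expect the real work to lie.'' That is an accurate diagnosis, but it is not a proof. In particular, the exact error term $\dim_{\mathbb{F}_3}\bigl(\Norm{\gext/F}(\gext_{S'})\cap\ringint{F,S'}^*\bigr)\big/\Norm{\gext/F}(\ringint{\gext,S'}^*)$ is the cokernel of a specific four-term exact sequence (Proposition~\ref{exact}); identifying that cokernel requires the explicit cocycle computation carried out there, not merely an appeal to general cohomological machinery. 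The paper's route is: (a) the embedding ${_\mathrm{N}\mathfrak{R}_{F,S}}\oplus\eqcls{\mu_3}(F)\hookrightarrow{_\mathrm{N}\mathfrak{R}_{\gext,S'}^{G}}\oplus(F^*\cap\gext^{*3})/F^{*3}$, obtained from the valuation comparison in the display preceding Proposition~\ref{bound_R} and the definitions of $S$, $S'$; (b) the exact sequence of Proposition~\ref{exact} for ${_\mathrm{N}\mathfrak{R}_{\gext,S'}^{G}}$; and (c) Lemma~\ref{kerG} identifying $\Psi^{G}\simeq{_\mathrm{N}\mathfrak{U}_{\gext,S'}}\oplus\eqcls{\mu_3}(F)$. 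You have outlined the shape of the argument but none of (a), (b), (c) is actually established, and your sketch also silently slides between $S$ and $S'$ on the $F$-side (the first inequality involves ${_\mathrm{N}\Phi_{F,S}}$ and ${_\mathrm{N}\iclsgp_{F,S}[3]}$, not their $S'$-versions) without justification beyond a passing remark.

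By contrast, your treatment of the $G$-invariant case is both correct and genuinely different from the paper's. The paper's Lemma~\ref{embed} shows directly that the $\prim{p}$-part of $\iota(P)^{\sigma-1}$ at each split $\prim{p}\notin S'$ is $\bigl(\lambda_{\prim{p}}^{r+r'(\sigma+1)}\bigr)^3$ for a generator $\lambda_{\prim{p}}$ of the principal ideal $\prim{P}^{\sigma-1}$, and assembles a global $\beta$ with $\iota(P)^{\sigma-1}\beta^3\in\ringint{\gext,S'}^*$. You instead observe that $G$-invariance of $\iclsgp_{\gext,S'}$ forces ${_\mathrm{N}\iclsgp_{\gext,S'}[3]}=0$ (the ideal norm kills $\ideal{A}$, so $\ideal{A}$ lies in $(\sigma-1)$ times the $S'$-ideal group, hence $\eqcls{\ideal{A}}=\eqcls{\ideal{d}}^{\sigma-1}=\eqcls{1}$), then read membership of $\eqcls{\iota(P)^{\sigma-1}}$ in $\Psi$ off the exact sequence $0\to\Psi\to{_\mathrm{N}\mathfrak{R}_{\gext,S'}}\to{_\mathrm{N}\iclsgp_{\gext,S'}[3]}\to0$, and finish with Lemma~\ref{kernel}. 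This is cleaner than the paper's hands-on construction, and it exposes the pleasant fact that $G$-invariance of the $S'$-class group already trivializes ${_\mathrm{N}\iclsgp_{\gext,S'}[3]}$, a fact the paper never states. Your justification that $F$-rational points have $w\ne0$ (else one of $\ellunit',\ellunit'^{\sigma},\ellunit'^{\sigma^2}$ would be $\sigma$-fixed, contradicting Lemma~\ref{F-point}) is also sound. In summary: the first inequality and the $G$-invariant embedding are proved; the second inequality is not, and that is where you would need to reproduce (or replace) Propositions~\ref{bound_R} and \ref{exact} and Lemma~\ref{kerG}.
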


\begin{theorem}\label{equality'}
\quad
Let $F$ be a number field of class number $1$ in which $3$ splits completely.
For a cyclic cubic extension $\gext/F$, let $\{1,\,\alpha,\,\alpha^2\}$ be a $F$-basis of $\gext$
 that $\gp$ is defined by the polynomial \eqref{defpoly} with.
Here $\alpha \in \gext$ denotes a root of the generic polynomial $\mathfrak{g}(x;t)$
 in \S\ref{genpoly} for some $t \in F$.
Let $H$ be the hyperplane $z=0$.
Assume $t \in \ringint{F}$, $\delta(\mathfrak{g}) \notin F_{\prim{p}}^{*3}$
 for each $\prim{p} \mid 3$.
Then the set $S$ in Lemma~\ref{loceq} consists of all the finite primes of $F$ which ramifies in $\gext$ and
$$\dim_{\mathbb{F}_3} S^{(\phi)}(\gp_H/F)+\rank_{\rint} \ringint{F}^{*}
=\dim_{\mathbb{F}_3} S^{(\dual{\phi})}(\dual{\gp_H}/F)=\dim_{\mathbb{F}_3} {_\mathrm{N}\Phi_{F,S}},$$
$$1 \leq \rank_{\rint} \gp_H(F) \leq 2\dim_{\mathbb{F}_3} {_\mathrm{N}\Phi_{F,S}}-\rank_{\rint} \ringint{F}^{*}-1.$$
\end{theorem}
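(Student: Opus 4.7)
First I make the data of Theorem~\ref{WE} explicit for $H:z=0$. With the basis $\{1,\alpha,\alpha^2\}$, $\delta(\gp)$ is the Vandermonde $\prod_{0\le i<j\le 2}(\alpha^{\sigma^j}-\alpha^{\sigma^i})$, so $\delta(\gp)^2=\delta(\mathfrak{g})=t^2+3t+9$; expanding the defining determinant with $(a,b,c)=(0,0,1)$ gives $\gamma(\gp_H)=\alpha^\sigma-\alpha$, whence $\Norm{\gext/F}(\gamma(\gp_H))=-\delta(\gp)$ and $\Norm{\gext/F}(\gamma(\gp_H))^2=\delta(\mathfrak{g})$. Because $a=0$ the first clause in the definition of $S$ in Lemma~\ref{loceq} is vacuous, so $\prim{p}\in S$ iff $3\nmid\nu_{\prim{p}}(\delta(\mathfrak{g}))$, and a standard local analysis of $\mathfrak{g}$ identifies this set with the primes ramifying in $\gext/F$; since ramified primes do not split completely, $S'=\emptyset$.

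Next I estimate the two Selmer groups. Class number $1$ makes $\iclsgp_{F,S}$ trivial, so Theorem~\ref{bound} immediately yields $\dim_{\mathbb{F}_3} S^{(\dual{\phi})}(\dual{\gp_H}/F)\le \dim_{\mathbb{F}_3} {_\mathrm{N}\Phi_{F,S}}$. For the reverse inequality I attach to each $\eqcls{d}\in {_\mathrm{N}\Phi_{F,S}}$ a Selmer class with image $\eqcls{d}$: by Hasse's norm theorem $d\in\Norm{\gext/F}(\gext^{*})$ is a local norm at every prime, which via the description of $\dual{\delta}_{F_{\prim{p}}}^{\mathcal{K}_{\prim{p}}}$ in Proposition~\ref{connect} matches the local Selmer condition at each $\prim{p}\in S$; at $\prim{p}\notin S$ the $S$-unit hypothesis gives $\nu_{\prim{p}}(d)\equiv 0\pmod 3$, placing $\eqcls{d}$ in $\ringint{F_{\prim{p}}}^{*}/\ringint{F_{\prim{p}}}^{*3}$, which is the local image by Lemma~\ref{loceq}. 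To transfer to $S^{(\phi)}(\gp_H/F)$ I apply Proposition~\ref{duality}: the hypothesis $\delta(\mathfrak{g})\notin F_{\prim{p}}^{*3}$ for $\prim{p}\mid 3$ gives $d_{+}=0$; a short case analysis on $\nu_{\prim{p}}(t)$ shows $\nu_{\prim{p}}(\delta(\mathfrak{g}))\in\{0\}\cup\{k\ge 2\}$ at primes above $3$, so there $3\nmid v_{\prim{p}}$ is equivalent to $v_{\prim{p}}\equiv 2\pmod 3$; and for $\prim{p}\nmid 3$ in $S$ the residue characteristic must be $\equiv 1\pmod 3$ (since $\prim{p}$ ramifies in a cyclic cubic), so $\zeta_3\in F_{\prim{p}}$ and such $\prim{p}$ drop out of $d_{-}$. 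Hence $d_{+}-d_{-}+d_{\infty}=0$, yielding $\dim S^{(\phi)}+r=\dim S^{(\dual{\phi})}=\dim {_\mathrm{N}\Phi_{F,S}}$ with $r=\rank_{\rint}\ringint{F}^{*}$.

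Finally, since $3$ splits completely in $F$ we must have $\zeta_3\notin F$ (otherwise $F\supset F(\zeta_3)$ would force $3$ to ramify), so $\dim\mu_3(F)=0$ and formula \eqref{rank} gives the upper bound $\rank_{\rint}\gp_H(F)\le \dim S^{(\phi)}+\dim S^{(\dual{\phi})}-1=2\dim {_\mathrm{N}\Phi_{F,S}}-r-1$. For the lower bound $\rank\ge 1$, Remark~\ref{K} supplies $\phi(\ellunit')\in\dual{\gp_H}(F)$ with $\delta_F(\phi(\ellunit'))=\gext\neq F$, so $\dim_{\mathbb{F}_3}\dual{\gp_H}(F)/\phi(\gp_H(F))\ge 1$; and since $\mathfrak{g}(0)=1$ gives $\Norm{\gext/F}(-\alpha)=1$, the $F$-rational affine point $P=[0,-1,0,1]\in\gp_H(F)$ exists, and a torsion analysis of the $j=0$ Weierstrass model $y^2-\Norm{\gext/F}(\gamma(\gp_H))y=x^3$ (using $\zeta_3\notin F$ to pin down $\gp_H(F)_{\mathrm{tors}}=\langle T\rangle$) confirms $P$ is non-torsion, so $\dim_{\mathbb{F}_3}\gp_H(F)/\dual{\phi}(\dual{\gp_H}(F))\ge 1$ and the rank formula delivers $\rank\ge 1$. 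The main obstacles I anticipate are (i) the prime-by-prime matching of Hasse's norm theorem with the local Selmer images needed to establish the reverse inclusion ${_\mathrm{N}\Phi_{F,S}}\hookrightarrow S^{(\dual{\phi})}(\dual{\gp_H}/F)$, and (ii) the torsion analysis underlying the lower rank bound, which requires controlling whether $-\delta(\mathfrak{g})/4$ can be an $F$-cube under the given hypotheses.
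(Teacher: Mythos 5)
Your outline follows the same overall skeleton as the paper (identify $S$ with the ramified primes, bound $S^{(\dual\phi)}$ by ${_\mathrm{N}\Phi_{F,S}}$ using class number one, invoke Proposition~\ref{duality} for the $\phi$-side, then feed everything into formula~\eqref{rank}), but three steps are genuinely incomplete, and you partly acknowledge the first and third.

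First, the reverse inclusion ${_\mathrm{N}\Phi_{F,S}}\hookrightarrow S^{(\dual\phi)}(\dual{\gp_H}/F)$ does not follow merely from Hasse's norm theorem plus Proposition~\ref{connect}. Being a local norm at $\prim{p}$ is a necessary condition for lying in $\Image\dual\delta_{F_{\prim{p}}}$, not a sufficient one: at primes $\prim{p}\notin S$, Lemma~\ref{loceq} only gives $\Image\dual\delta_{F_{\prim{p}}}\subset \ringint{F_{\prim{p}}}^{*}/\ringint{F_{\prim{p}}}^{*3}$, and one must separately show this is an equality (and at $\prim{p}\mid 3$ with $3\mid v_{\prim{p}}$ there is an additional congruence condition mod $\pi^2$). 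The paper does precisely this in the second half of Proposition~\ref{bound_R} via the explicit local image description from the reference [RK, Theorem 4.1] and Lemma~\ref{coincide}; your proposal asserts the matching without supplying those ingredients.

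Second, your derivation of $d_-=d_\infty$ is not valid as written. From $v_{\prim{p}}\in\{0\}\cup\{k\geq 2\}$ at $\prim{p}\mid 3$ one cannot conclude that $3\nmid v_{\prim{p}}$ forces $v_{\prim{p}}\equiv 2\pmod 3$; e.g.\ $v_{\prim{p}}=4$ would contradict it. The conclusion is in fact true, but one needs either the sharper computation showing $v_{\prim{p}}\in\{0,2,3\}$ (writing $t=3t'$, $\delta(\mathfrak g)=9(t'^2+t'+1)$, and then $t'\equiv 1\pmod 3$ forces $\nu_{\prim{p}}(t'^2+t'+1)=1$), or Remark~\ref{val} as in the paper, which excludes $v_{\prim{p}}\equiv 1\pmod 3$ for wildly ramified $\prim{p}$. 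You must also observe, as the paper does, that any $\prim{p}\nmid 3$ in $S$ ramifies tamely so $\zeta_3\in F_{\prim{p}}$, removing those primes from $d_-$.

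Third, the lower bound $\rank_{\rint}\gp_H(F)\geq 1$. You propose to exhibit the $F$-point $[0,-1,0,1]$ and prove it non-torsion on the $j=0$ model, which you correctly flag as a nontrivial torsion computation you have not done. The paper's route (Lemma~\ref{MW}) avoids this entirely: it shows the torsion point $T$ has $\dual\delta_F(T)=\delta(\mathfrak g)^{-1}F^{*3}$, which is nontrivial because $F$ has no proper unramified extension (class number one) so by Corollary~\ref{disc} some $\prim{p}$ satisfies $3\nmid\nu_{\prim{p}}(\delta(\mathfrak g))$ and hence $\delta(\mathfrak g)\notin F^{*3}$. This gives $\dim\gp_H(F)/\dual\phi(\dual{\gp_H}(F))\geq 1$ directly, which combined with Remark~\ref{K} and \eqref{rank} delivers $\rank\geq 1$. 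I would strongly recommend this over a torsion analysis of $[0,-1,0,1]$.

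Finally, a small slip: $\delta(\gp)^2$ equals the polynomial discriminant, which is $\delta(\mathfrak g)^2$, not $\delta(\mathfrak g)$; the relation you want is $\delta(\gp)=\pm\delta(\mathfrak g)$ and $\Norm{\gext/F}(\gamma(\gp_H))=-\delta(\gp)=\pm\delta(\mathfrak g)$. This does not affect the rest of your argument since you only use $v_{\prim{p}}=\nu_{\prim{p}}(\delta(\mathfrak g))$, but it should be corrected.
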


Combining Proposition~\ref{phiSelmer} with Theorem~\ref{equality} and \ref{equality'}
 yields a certain description for ${_\mathrm{N}\iclsgp_{\gext}^G[3]}$.

\begin{corollary}\label{cl}
\quad
Under the first assumption in Theorem~\ref{equality}, there is the equality
\begin{align*}
3\cdot\sharp{_\mathrm{N}\iclsgp_{\gext}^G[3]}
&=2\cdot\sharp\!\lt\{L \in \mathfrak{Gal}(C_3/\rat) \;\Big|\; \mbox{\parbox{27ex}{\small
 $L/\rat$ is unramified outside $S_0$\\
$L\!\cdot\!\rat_p \in \Image \delta_{\rat_p}$ for any $p \in S_0$}}\rt\}-1\\
&=\sharp{_\mathrm{N}\Phi_{\rat,S}}
\end{align*}
where $S$ is the set of all the finite primes of $F$ which ramifies in $\gext$
 and the other notation is the same as in Proposition~\ref{phiSelmer}.
\end{corollary}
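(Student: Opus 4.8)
The plan is to deduce Corollary~\ref{cl} as a purely formal consequence of Theorem~\ref{equality}, Theorem~\ref{equality'} and Proposition~\ref{phiSelmer}; no new geometry is needed. First I would specialize everything to $F=\rat$ and record the facts that $\rat$ has class number $1$, that $3$ ``splits completely'' in $\rat$ vacuously, and that $\rank_{\rint}\ringint{\rat}^{*}=0$, so that the class-number and splitting hypotheses of Theorem~\ref{equality'} hold automatically over $\rat$. Next I would check that the remaining hypothesis of Theorem~\ref{equality'}, namely $t\in\rint$ together with $\delta(\mathfrak{g})=t^{2}+3t+9=\pm\Norm{\gext/\rat}(\alpha-\alpha^{\sigma})\notin\rat_{3}^{*3}$, is literally the ``first assumption in Theorem~\ref{equality}'', since $\rat$ has a single prime above $3$; thus the two theorems apply to the same elliptic curve $\gp_H$ with $H:z=0$. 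Theorem~\ref{equality'} moreover identifies the finite set $S$ of Lemma~\ref{loceq} with the set of rational primes ramifying in $\gext$, which is exactly the set $S$ in the statement of the corollary, so that $\sharp{_\mathrm{N}\Phi_{\rat,S}}$ is unambiguous.

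With this set-up, the next step is to read off the two numerical identities. Theorem~\ref{equality'} (using $\rank_{\rint}\ringint{\rat}^{*}=0$) gives $\dim_{\mathbb{F}_3}S^{(\phi)}(\gp_H/\rat)=\dim_{\mathbb{F}_3}S^{(\dual{\phi})}(\dual{\gp_H}/\rat)=\dim_{\mathbb{F}_3}{_\mathrm{N}\Phi_{\rat,S}}$, while Theorem~\ref{equality} gives $\dim_{\mathbb{F}_3}S^{(\phi)}(\gp_H/\rat)=1+\dim_{\mathbb{F}_3}{_\mathrm{N}\iclsgp_{\gext}^{G}[3]}$. Equating the right-hand sides and passing to cardinalities (all the groups in sight are $\mathbb{F}_3$-vector spaces, so $\sharp=3^{\dim}$) yields $\sharp{_\mathrm{N}\Phi_{\rat,S}}=3\cdot\sharp{_\mathrm{N}\iclsgp_{\gext}^{G}[3]}$, which is the equality between the first and the third quantity of the corollary.

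Finally I would invoke Proposition~\ref{phiSelmer} with $F=\rat$, which expresses $\sharp S^{(\phi)}(\gp_H/\rat)$ as $2m-1$, where $m$ counts those $L\in\mathfrak{Gal}(C_3/\rat)$ that are unramified outside $S_0$ and satisfy $L\!\cdot\!\rat_p\in\Image\delta_{\rat_p}$ for every $p\in S_0$, with $S_0$ the set of primes of bad reduction of $\gp_H$ together with the primes above $3$; substituting $\sharp S^{(\phi)}(\gp_H/\rat)=\sharp{_\mathrm{N}\Phi_{\rat,S}}=3\cdot\sharp{_\mathrm{N}\iclsgp_{\gext}^{G}[3]}$ from the previous step then closes the chain of equalities. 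I do not expect a genuine obstacle here: the argument is arithmetic of $3$-power orders once the inputs are in place, so the only point requiring care is the bookkeeping of hypotheses --- verifying that the single assumption of the corollary simultaneously feeds Theorem~\ref{equality}, Theorem~\ref{equality'} and Proposition~\ref{phiSelmer}, and in particular that the set $S$ appearing in ${_\mathrm{N}\Phi_{\rat,S}}$ is the set of ramified primes, as asserted in the statement.
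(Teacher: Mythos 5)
Your proposal is correct and follows precisely the route the paper takes: the paper's own proof of Corollary~\ref{cl} is simply the observation (stated in one line) that combining Proposition~\ref{phiSelmer}, Theorem~\ref{equality}, and Theorem~\ref{equality'} gives the chain of equalities, and you have carried out exactly the required specialization to $F=\rat$ and the bookkeeping of $3$-power orders. The only points worth double-checking --- that $\rank_{\rint}\ringint{\rat}^{*}=0$, that the hypothesis of Theorem~\ref{equality'} at $F=\rat$ matches the first assumption of Theorem~\ref{equality}, and that Theorem~\ref{equality'} identifies $S$ with the set of ramified primes --- are all handled correctly.
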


\begin{example}
\quad
Here we give an example for ${_\mathrm{N}\iclsgp_{\gext}^G[3]}$ being non-trivial in the case $F=\rat$.
Let $t=-3^3$. Then the conductor of the minimal splitting field of $\mathfrak{g}(x;t)$ is $657=3^2 \cdot 73$,
 and the class number is $9$ (see \cite{Gras}).
Since both $3,\,73$ lies in the image of the norm map,
 Using Corollary~\ref{cl} yields ${_\mathrm{N}\iclsgp_{\gext}^G[3]} \simeq \rint/3\rint$.
This example also satisfy $\delta(\mathfrak{g})=657 \notin \rat_3^{*3}$ (cf. Theorem~\ref{equality}).
\end{example}

The rest of this section is devoted to the proof of the above theorems.
First of all, in view of Lemma~\ref{bound_norm}, \ref{loceq},
 we shall introduce finite groups for the sets $S,\,S'$ in Lemma~\ref{loceq} defined by
\begin{align*}
{_\mathrm{N}\mathfrak{R}_{F,S}}:=\lt\{\eqcls{d} \in \mathfrak{R}_{F,S} \;\Big|\;
 d \in \Norm{\gext/F}(\gext^{*})\rt\}, \quad
{_\mathrm{N}\mathfrak{R}_{\gext,S'}}:=\lt\{\eqcls{\alpha} \in \mathfrak{R}_{\gext,S'} \;\Big|\;
 \Norm{\gext/F}(\alpha)=1\rt\}.
\end{align*}
From Lemma~\ref{G-inv}, the $G$-invariant subgroup ${_\mathrm{N}\mathfrak{R}_{\gext,S'}^G}$ may be written by
$${_\mathrm{N}\mathfrak{R}_{\gext,S'}^G}=\lt\{\eqcls{d} \in \mu_3(F)\Norm{\gext/F}(\gext^{*})\gext^{*3}/\gext^{*3}
 \;\Big|\; \nu_{\prim{P}}(d) \equiv 0\!\!\!\!\pmod{3} \text{ for any $\prim{P} \in \primes_{\gext,S'}$}\rt\}.$$
When $\zeta_3 \in F$, the cyclic cubic extension $\gext$ is generated over $F$ by $\sqrt[3]{D}$
 for some $D \in F^{*} \setminus F^{*3}$ from Kummer theory,
 which implies $D=\Norm{\gext/F}(\sqrt[3]{D})$ and $\nu_{\prim{P}}(D) \equiv 0 \pmod{3}$ for any
 $\prim{P} \in \primes_{\gext}$ since each prime $\prim{p} \in \primes_F$ with $3 \nmid \nu_{\prim{p}}(D)$
 ramifies in $\gext$. Combining this with Lemma~\ref{kernel} yields
\begin{align*}
{_\mathrm{N}\mathfrak{R}_{\gext,S'}^G} &\oplus (F^{*} \cap \gext^{*3})/F^{*3}\\
&\simeq \lt\{\eqcls{d} \in \mu_3(F)\Norm{\gext/F}(\gext^{*})/F^{*3} \;\Big|\; \nu_{\prim{P}}(d) \equiv
 0\!\!\!\!\pmod{3} \text{ for any $\prim{P} \in \primes_{\gext,S'}$}\rt\}\\
&\simeq \lt\{\eqcls{d} \in \Norm{\gext/F}(\gext^{*})/F^{*3} \;\Big|\; \nu_{\prim{P}}(d) \equiv
 0\!\!\!\!\pmod{3} \text{ for any $\prim{P} \in \primes_{\gext,S'}$}\rt\} \oplus \eqcls{\mu_3}(F)\\
&=\lt\{\eqcls{d} \in \Norm{\gext/F}(\gext^{*})/F^{*3} \;\Big|\; \nu_{\prim{p}}(d)
 \equiv 0\!\!\!\!\pmod{3} \text{ for any $\prim{p} \in \primes_{F,S \cup R}$}\rt\} \oplus \eqcls{\mu_3}(F)
\end{align*}
where $\eqcls{\mu_3}(F):=\frac{\mu_3(F)\Norm{\gext/F}(\gext^{*})}{\Norm{\gext/F}(\gext^{*})}$ and
 $R$ stands for the set of all the finite primes of $F$ which ramifies in $\gext$.
We understand $\oplus$ as the direct sum of $\mathbb{F}_3$-vector spaces.
The last equality follows from the definition of $S,\,S'$.
It thus turns out that ${_\mathrm{N}\mathfrak{R}_{F,S}} \oplus \eqcls{\mu_3}(F) \hookrightarrow
 {_\mathrm{N}\mathfrak{R}_{\gext,S'}^G} \oplus (F^{*} \cap \gext^{*3})/F^{*3}$.
Then these groups contain the $\dual{\phi}$-Selmer group as follows.

\begin{proposition}\label{bound_R}
\quad
There are inclusions as $\mathbb{F}_3$-vector spaces$:$
$$S^{(\dual{\phi})}(\dual{\gp_H}/F) \oplus \eqcls{\mu_3}(F) \;\hookrightarrow\;
 {_\mathrm{N}\mathfrak{R}_{F,S}} \oplus \eqcls{\mu_3}(F) \;\hookrightarrow\;
 {_\mathrm{N}\mathfrak{R}_{\gext,S'}^G} \oplus (F^{*} \cap \gext^{*3})/F^{*3}.$$
Furthermore, if we assume the following conditions then all those groups coincide and $S=R,\,S'=\emptyset$.
Let $v_{\prim{p}}:=\nu_{\prim{p}}\bigl(\Norm{\gext/F}(\gamma(\gp_H))\bigr) \in \rint$ for $\prim{p} \in \primes_F$.
\begin{list}{$\diamond$}{}
\item $3$ splits completely in $F$,
\item The hyperplane is $H:z=0$,
\item Each finite prime $\prim{p} \in \primes_F$
 satisfying $3 \nmid v_{\prim{p}}$ ramifies in $\gext$,
\item For each $\prim{p} \mid 3$, if $3 \mid v_{\prim{p}}$ then
 $\prim{p}$ is unramified in $\gext$ and $\Norm{\gext/F}\bigl(\gamma(\gp_H)\bigr) \notin F_{\prim{p}}^{*3}$.
\end{list}
\end{proposition}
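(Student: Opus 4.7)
The plan is to establish the two inclusions separately and then upgrade both to equalities under the further hypotheses. For the first inclusion $S^{(\dual{\phi})}(\dual{\gp_H}/F) \hookrightarrow {_\mathrm{N}\mathfrak{R}_{F,S}}$, I take any Selmer class $\eqcls{d} \in F^*/F^{*3}$ and verify both defining conditions of $_\mathrm{N}\mathfrak{R}_{F,S}$: the norm condition $d \in \Norm{\gext/F}(\gext^*)$ comes from Lemma~\ref{bound_norm}, and Lemma~\ref{loceq} forces $\nu_\prim{p}(d) \equiv 0 \pmod 3$ off $S$ via the local containment $\Image \dual{\delta}_{F_\prim{p}} \hookrightarrow \ringint{F_\prim{p}}^*/\ringint{F_\prim{p}}^{*3}$. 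Direct summing with $\eqcls{\mu_3}(F)$ is formal. The second inclusion ${_\mathrm{N}\mathfrak{R}_{F,S}} \oplus \eqcls{\mu_3}(F) \hookrightarrow {_\mathrm{N}\mathfrak{R}_{\gext,S'}^G} \oplus (F^* \cap \gext^{*3})/F^{*3}$ is essentially assembled in the paragraph preceding the proposition, combining Lemma~\ref{G-inv}, Lemma~\ref{kernel}, and the identification $\primes_{F, S \cup R} \leftrightarrow \primes_{\gext, S'}$ induced by splitting behavior in $\gext$.

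For the furthermore part: with $H:z=0$ one has $(a,b,c) = (0,0,1)$, so $a\delta(\gp) = 0$, the first clause defining $S$ becomes vacuous, and $S = \{\prim{p} : v_\prim{p} \not\equiv 0 \pmod 3\}$. Hypothesis~(3) gives $S \subseteq R$, while the reverse $R \subseteq S$ follows by direct inspection: $\gamma(\gp_H)$ reduces to a conjugate-difference of $\alpha$, so at a tamely ramified $\prim{p}$ with ramification index $3$ its norm has $\prim{p}$-adic valuation $\not\equiv 0 \pmod 3$, and hypothesis~(4) takes care of the primes above $3$ in $R$. Since no prime of $R$ splits completely in $\gext$, we also get $S' = \emptyset$, and the valuation condition defining $_\mathrm{N}\mathfrak{R}_{\gext,S'}^G$ then ranges over all primes of $\gext$; the identification preceding the proposition statement collapses to an equality, forcing the second inclusion to be an equality.

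To upgrade the first inclusion to an equality, I must show every $\eqcls{d} \in {_\mathrm{N}\mathfrak{R}_{F,R}}$ lies in $\Image \dual{\delta}_{F_\prim{p}}$ at each $\prim{p}$. At $\prim{p} \notin S$ with $\prim{p} \nmid 3$, the converse direction of Lemma~\ref{loceq} exhibits $\Image \dual{\delta}_{F_\prim{p}}$ as the full group of unit classes, so $\eqcls{d}$ is automatically attained. At ramified $\prim{p} \in R$ prime to $3$, a local Hasse-norm/Kummer computation in the cyclic cubic extension $\gext_\prim{P}/F_\prim{p}$ identifies $\Image \dual{\delta}_{F_\prim{p}}$ with $\Norm(\gext_\prim{P}^*)/F_\prim{p}^{*3}$, which contains $d$. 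At $\prim{p} \mid 3$, one exploits that $3$ splits completely in $F$, together with Proposition~\ref{connect} and a direct cube-class computation in the spirit of Remark~\ref{assumption}. The main obstacle is precisely this last step at primes above $3$: the hypothesis $\Norm(\gamma(\gp_H)) \notin F_\prim{p}^{*3}$ is doing essential work there by ruling out the extra local obstruction counted by $d_+$ in Proposition~\ref{duality}, without which cube classes could satisfy the global conditions defining $_\mathrm{N}\mathfrak{R}_{F,S}$ yet fall outside the local Selmer image.
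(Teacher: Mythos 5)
Your outline follows the paper's own route exactly: first inclusion from Lemma~\ref{bound_norm} plus Lemma~\ref{loceq}, second inclusion from the preparatory paragraph with Lemmas~\ref{G-inv} and \ref{kernel}, then $S=R,\,S'=\emptyset$ via hypothesis~(3), Lemma~\ref{tame} and the contrapositive of hypothesis~(4), and finally a prime-by-prime verification that every class in ${_\mathrm{N}\mathfrak{R}_{F,S}}$ passes the local Selmer condition. The broad structure is correct and there is no genuinely different idea here.

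However, there is one step you cannot close with the tools you actually name. At an unramified $\prim{p}\nmid 3$ with $3\mid v_{\prim{p}}$, you say ``the converse direction of Lemma~\ref{loceq} exhibits $\Image\dual{\delta}_{F_{\prim{p}}}$ as the full group of unit classes.'' But Lemma~\ref{loceq} is an equivalence only about the \emph{inclusion} $\Image\dual{\delta}_{F_{\prim{p}}}\hookrightarrow\ringint{F_{\prim{p}}}^{*}/\ringint{F_{\prim{p}}}^{*3}$; neither direction of the biconditional asserts surjectivity, so knowing $\prim{p}\in\primes_{F,S}$ gives you containment of the image in the unit classes, not the reverse containment you need to conclude $\eqcls{d}\in\Image\dual{\delta}_{F_{\prim{p}}}$. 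The paper supplies the missing equality $\Image\dual{\delta}_{F_{\prim{p}}}=\ringint{F_{\prim{p}}}^{*}/\ringint{F_{\prim{p}}}^{*3}$ by invoking \cite{RK}, Theorem~4.1, which is also what ultimately drives Lemma~\ref{coincide} at the ramified primes and the Legendre-symbol criterion (via Remark~\ref{assumption}) at $\prim{p}\mid 3$. Your sketch of the $\prim{p}\mid 3$ case is the right idea (the hypothesis $\Norm{\gext/F}(\gamma(\gp_H))\notin F_{\prim{p}}^{*3}$ is exactly what forces the local image to be large enough, as you observe in connection with $d_{+}$), but that verification too is a computation sitting inside \cite{RK}, Theorem~4.1 rather than something that follows from Proposition~\ref{connect} alone. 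So the fix is simply to replace the appeal to Lemma~\ref{loceq} (resp.\ Proposition~\ref{connect}) by the explicit local image computation from \cite{RK}, Theorem~4.1, as the paper does.
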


\vsp

\begin{proof}
\quad
Using Lemma~\ref{bound_norm} and \ref{loceq} yields
 $S^{(\dual{\phi})}(\dual{\gp_H}/F) \subset {_\mathrm{N}\mathfrak{R}_{F,S}}$.
Now we shall prove the latter statement.
Since $H:z=0$, a direct calculation shows us
 $\Norm{\gext/F}\bigl(\gamma(\gp_H)\bigr)=-\Norm{\gext/F}(\alpha_1-\alpha_1^{\sigma})$
 where $\{1,\,\alpha_1,\,\alpha_2\}$ denotes the $F$-basis of $\gext$ for $\gp$.
It thus follows from Lemma~\ref{tame} that $S=R$, $S'=\emptyset$ under the conditions,
 and in particular ${_\mathrm{N}\mathfrak{R}_{F,S}} \oplus \eqcls{\mu_3}(F) \simeq
 {_\mathrm{N}\mathfrak{R}_{\gext,S'}^G} \oplus (F^{*} \cap \gext^{*3})/F^{*3}$.
Finally we show the inclusion $S^{(\dual{\phi})}(\dual{\gp_H}/F) \supset {_\mathrm{N}\mathfrak{R}_{F,S}}$.
Let $\eqcls{d}$ be arbitrary element in ${_\mathrm{N}\mathfrak{R}_{F,S}}$.
For any $\prim{p} \in \primes_F$, the following chart can be verified from \cite{RK}, Theorem~4.1.
$$
\begin{cases}
3 \mid v_{\prim{p}} \;\rightarrow\;
\begin{cases}
\prim{p} \mid 3 \;\rightarrow\; \text{See below $\cdots (*)$}.\\
\prim{p} \nmid 3 \;\rightarrow\; \prim{p} \in \primes_{F,S} \text{ and hence
 $\eqcls{d} \in \Image \dual{\delta}_{F_{\prim{p}}}=\ringint{F_{\prim{p}}}^{*}/\ringint{F_{\prim{p}}}^{*3}$}.
\end{cases}\\
3 \nmid v_{\prim{p}} \;\rightarrow\;
 \text{From Lemma~\ref{coincide}, $\eqcls{d} \in \Image \dual{\delta}_{F_{\prim{p}}}
=\Norm{\gext_{\prim{P}}/F_{\prim{p}}}(\gext_{\prim{P}}^{*})/F_{\prim{p}}^{*3}$}.
\end{cases}
$$
We consider the lack of the above chart indicated $(*)$.
By using the equivalence and the notation in Remark~\ref{assumption},
 the condition $3 \mid v_{\prim{p}},\,\Norm{\gext/F}\bigl(\gamma(\gp_H)\bigr) \notin F_{\prim{p}}^{*3}$
 can be read into $u_{\pi} \not\equiv k_{\pi}^3 \pmod{9}$.
Here we may choose $3$ as a prime element $\pi \in F_{\prim{p}}$.
Let $\mathscr{D}':=u_{\pi}\bigl(u_{\pi}+\frac{u_{\pi}-k_{\pi}^3}{3}\bigr)$.
Since $u_{\pi} \equiv \pm1 \pmod{3}$ from the condition $3$ splits completely in $F$,
 it can be verified that $u_{\pi} \not\equiv k_{\pi}^3 \pmod{9}$
 if and only if $\bigl(\frac{\mathscr{D}'}{\prim{p}}\bigr) \ne 1$, which denotes the Legendre symbol.
From \cite{RK}, Theorem~4.1, this implies $\eqcls{d} \in \Image \dual{\delta}_{F_{\prim{p}}}$.
Consequently, we have $\eqcls{d} \in S^{(\dual{\phi})}(\dual{\gp_H}/F)$.
This completes the proof.
\end{proof}

\begin{remark}
\quad
We consider the assumption in the above proposition more explicitly.
Let $\alpha \in \closure{F}$ be a root of the generic polynomial $\mathfrak{g}(x;t)$\,$($see \S\ref{genpoly}$)$
 with a fixed $t \in F$ so as to be $\gext=F(\alpha)$.
We choose $\{1,\,\alpha,\,\alpha^2\}$ as the $F$-basis of $\gext$ for $\gp$.
For any $\prim{p} \in \primes_F$ with $\nu_{\prim{p}}(t)<0$ we assume $3 \mid \nu_{\prim{p}}(t)$.
Let $H$ be the hyperplane $z=0$.
Then $\pm\Norm{\gext/F}\bigl(\gamma(\gp_H)\bigr)=\delta(\mathfrak{g})=t^2+3t+9$
 satisfies the third condition by Proposition~\ref{conductor}.
Under the assumption that $3$ splits completely in $F$,
 we can describe the condition
 $3 \mid v_{\prim{p}},\,\delta(\mathfrak{g}) \notin F_{\prim{p}}^{*3}$ for $\prim{p} \mid 3$
 which is equivalent to $u_{\pi} \not\equiv k_{\pi}^3 \pmod{9}$ by using words of $t$ as follows.
 $\bigl(\text{By Proposition~\ref{conductor}, the condition implies that $\prim{p}$ is unramified in $\gext$.}\bigr)$
$$\text{\small
$\begin{cases}
\begin{array}{@{\hspace{0mm}}l@{\hspace{1mm}}l@{\hspace{1mm}}l}
\nu_{\prim{p}}(t)<0 &\rightarrow v_{\prim{p}}=2\nu_{\prim{p}}(t) &\rightarrow
 \text{\rm The condition is equal to $3^{-\nu_{\prim{p}}(t)}t \equiv \pm2,\,\pm4 \!\!\!\!\pmod{9}$}.\\
\nu_{\prim{p}}(t)=0 &\rightarrow v_{\prim{p}}=0 &\rightarrow
 \text{\rm The condition is equal to $t \equiv 1,\,5,\,7,\,8 \!\!\!\!\pmod{9}$}.\\
t \equiv 0,\,6 \!\!\!\!\pmod{9} &\rightarrow v_{\prim{p}}=2 &\rightarrow
 \text{This does not conform to $3 \mid v_{\prim{p}}$, which is ignored}.\\
t \equiv 3 \!\!\!\!\pmod{9} &\rightarrow v_{\prim{p}}=3 &\rightarrow
 \text{\rm The condition is equal to $t \equiv 12 \!\!\!\!\pmod{27}$}.
\end{array}
\end{cases}$}
$$
This is often convenient for an explicit calculation.
\end{remark}

The group $\mathfrak{R}_{F,S}$ relates to the $3$-torsion subgroup of the $S$-ideal class group
 $\iclsgp_{F,S}$ and the group of $S$-units $\ringint{F,S}^{*}$.
There is an exact sequence
$$0 \longrightarrow \ringint{F,S}^{*}F^{*3}/F^{*3} \longrightarrow \mathfrak{R}_{F,S}
 \longrightarrow \iclsgp_{F,S}[3] \longrightarrow 0$$
which is induced by the surjective homomorphism
\begin{align*}
\mathfrak{R}_{F,S} \quad &\longrightarrow \quad \iclsgp_{F,S}[3]\\
\eqcls{d} \quad &\longmapsto \quad \eqcls{\prod_{\prim{p} \in \primes_{F,S}}
 \ideal{p}^{\nu_{\prim{p}}(d)/3}\ringint{F,S}}.
\end{align*}
Since ${_\mathrm{N}\mathfrak{R}_{F,S}}$ is a subgroup of $\mathfrak{R}_{F,S}$,
 one can obtain a restricted map from ${_\mathrm{N}\mathfrak{R}_{F,S}}$ to $\iclsgp_{F,S}[3]$.
According to this,
 we would like to restrict the group $\iclsgp_{F,S}[3]$ so as to hold the surjectivity of this map.
This is the reason why we introduced the subgroup ${_\mathrm{N}\iclsgp_{F,S}[3]}$.
It is easily verified that the group ${_\mathrm{N}\iclsgp_{F,S}[3]}$
 is the exact image of ${_\mathrm{N}\mathfrak{R}_{F,S}}$.
We thus obtain the exact sequence
\begin{align}\label{seq_F}
0 \longrightarrow {_\mathrm{N}\Phi_{F,S}} \longrightarrow {_\mathrm{N}\mathfrak{R}_{F,S}}
 \longrightarrow {_\mathrm{N}\iclsgp_{F,S}[3]} \longrightarrow 0
\end{align}
where ${_\mathrm{N}\Phi_{F,S}}=\bigl\{\eqcls{d} \in \ringint{F,S}^{*}F^{*3}/F^{*3} \;\big|\;
 d \in F^{*},\,d \in \Norm{\gext/F}(\gext^{*})\bigr\}$ is the kernel of the map.

Similarly, the group $\mathfrak{R}_{\gext,S'}$ also relates to the $3$-torsion subgroup of the $S'$-ideal class group
 $\iclsgp_{\gext,S'}$ and the group of $S'$-units $\ringint{\gext,S'}^{*}$.
The restriction to the subgroup ${_\mathrm{N}\mathfrak{R}_{\gext,S'}}$ of the surjective homomorphism as $G$-modules
\begin{align*}
\mathfrak{R}_{\gext,S'} \quad &\longrightarrow \quad \iclsgp_{\gext,S'}[3]\\
\eqcls{\alpha} \quad &\longmapsto \quad \eqcls{\prod_{\prim{P} \in \primes_{\gext,S'}}
 \ideal{P}^{\nu_{\prim{P}}(\alpha)/3}\ringint{\gext,S'}}
\end{align*}
induces the exact sequence
$$0 \longrightarrow \Psi \longrightarrow {_\mathrm{N}\mathfrak{R}_{\gext,S'}}
 \longrightarrow {_\mathrm{N}\iclsgp_{\gext,S'}[3]} \longrightarrow 0$$
with the kernel $\Psi:=\bigl\{\eqcls{\alpha} \in \ringint{\gext,S'}^{*}\gext^{*3}/\gext^{*3} \;\big|\;
 \alpha \in \gext^{*},\,\Norm{\gext/F}(\alpha)=1\bigr\}$, which can be rewritten as
$$\Psi=\lt\{\eqcls{\epsilon} \in \ringint{\gext,S'}^{*}\gext^{*3}/\gext^{*3}
 \;\Big|\; \epsilon \in \ringint{\gext,S'}^{*},\,\Norm{\gext/F}(\epsilon) \in \Norm{\gext/F}(\gext^{*3})\rt\}.$$
Taking Galois cohomology yields the long exact sequence
$$0 \longrightarrow \Psi^G \longrightarrow {_\mathrm{N}\mathfrak{R}_{\gext,S'}^G} \longrightarrow
 {_\mathrm{N}\iclsgp_{\gext,S'}^G[3]} \longrightarrow H^1(\gext/F,\,\Psi) \longrightarrow
 H^1(\gext/F,\,{_\mathrm{N}\mathfrak{R}_{\gext,S'}}) \longrightarrow \cdots.$$
We can describe the image of ${_\mathrm{N}\iclsgp_{\gext,S'}^G[3]}$ in $H^1(\gext/F,\,\Psi)$ as follows.

\begin{proposition}\label{exact}
\quad
The following sequence is exact.
$$0 \longrightarrow \Psi^G \longrightarrow {_\mathrm{N}\mathfrak{R}_{\gext,S'}^G} \longrightarrow
 {_\mathrm{N}\iclsgp_{\gext,S'}^G[3]} \longrightarrow
 \frac{\Norm{\gext/F}(\gext_{S'}) \cap \ringint{F,S'}^{*}}{\Norm{\gext/F}(\ringint{\gext,S'}^{*})}
 \longrightarrow 0.$$
Here $\gext_{S'}:=\lt\{\beta \in \gext^{*} \;|\; \nu_{\prim{P}}(\beta^{\sigma-1}) \equiv 0 \pmod{3}
 \text{ for any $\prim{P} \in \primes_{\gext,S'}$}\rt\}$.
\end{proposition}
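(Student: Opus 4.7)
\vsp

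\begin{proof}[Plan of proof]
\quad
The plan is to derive the claimed sequence by applying Galois cohomology with $G=\Gal(\gext/F) \simeq \rint/3\rint$ to the short exact sequence $0 \to \Psi \to {_\mathrm{N}\mathfrak{R}_{\gext,S'}} \to {_\mathrm{N}\iclsgp_{\gext,S'}[3]} \to 0$ established just above. The first four terms of the resulting long exact sequence read
$$0 \longrightarrow \Psi^G \longrightarrow {_\mathrm{N}\mathfrak{R}_{\gext,S'}^G} \longrightarrow {_\mathrm{N}\iclsgp_{\gext,S'}^G[3]} \stackrel{\delta}{\longrightarrow} H^1(G,\,\Psi),$$
so the task reduces to identifying $\Image\,\delta$ naturally with the quotient $Q:=\bigl(\Norm{\gext/F}(\gext_{S'}) \cap \ringint{F,S'}^{*}\bigr)/\Norm{\gext/F}(\ringint{\gext,S'}^{*})$.

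I would construct this identification via an explicit map $\Xi:{_\mathrm{N}\iclsgp_{\gext,S'}^G[3]} \to Q$ defined by $[\ideal{A}] \mapsto \Norm{\gext/F}(\beta) \bmod \Norm{\gext/F}(\ringint{\gext,S'}^{*})$, where $\beta \in \gext^{*}$ is any element with $\ideal{A}^{\sigma}=(\beta)\ideal{A}$, which exists by $G$-invariance of $[\ideal{A}]$. Writing $\ideal{A}^3=(\alpha)$ with $\Norm{\gext/F}(\alpha)=1$ gives $\Norm{\gext/F}(\ideal{A})^3=(1)$, hence $\Norm{\gext/F}(\ideal{A})=\ringint{F,S'}$; taking norms of $\ideal{A}^{\sigma}=(\beta)\ideal{A}$ then shows $\Norm{\gext/F}(\beta) \in \ringint{F,S'}^{*}$. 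That $\beta \in \gext_{S'}$ is the key cohomological input: using the identity $(\sigma-1)^2 \equiv -3\sigma \pmod{1+\sigma+\sigma^2}$ in the integral group ring $\rint[G]$ together with $(1+\sigma+\sigma^2) \cdot \mathrm{div}(\ideal{A})=\mathrm{div}(\Norm{\gext/F}(\ideal{A}))=0$ on $\primes_{\gext,S'}$, one finds $\mathrm{div}(\beta^{\sigma-1})=(\sigma-1)^2\,\mathrm{div}(\ideal{A})=-3\sigma\,\mathrm{div}(\ideal{A})$, which is visibly divisible by three.

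The remaining verifications proceed in three steps: well-definedness of $\Xi$ modulo $\Norm{\gext/F}(\ringint{\gext,S'}^{*})$ under the allowable modifications of $\ideal{A}$ and $\beta$ (a change $\ideal{A} \to \ideal{A}(\tau)$ alters $\beta$ by $\tau^{\sigma-1}$, which has trivial norm); surjectivity, achieved by sending $d=\Norm{\gext/F}(\beta_0)$ to the class $[\ideal{A}]$ with $\mathrm{div}(\ideal{A})=\tfrac{1}{3}\mathrm{div}(\beta_0^{\sigma-1})$, for which the same cohomological identity forces $\ideal{A}^{\sigma}/\ideal{A}=(\beta_0^{-\sigma})$ and hence $\Xi([\ideal{A}])=d^{-1}$; and the identification $\Ker\,\Xi = \Image\bigl({_\mathrm{N}\mathfrak{R}_{\gext,S'}^G} \to {_\mathrm{N}\iclsgp_{\gext,S'}^G[3]}\bigr)$. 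The inclusion ``$\supset$'' is a routine calculation from $\alpha^{\sigma-1} \in \gext^{*3}$ and $\Norm{\gext/F}(\alpha^{\sigma-1})=1$; for ``$\subset$'' the hypothesis $\Norm{\gext/F}(\beta)=\Norm{\gext/F}(v)$ for some unit $v$ combined with Hilbert 90 yields $w \in \gext^{*}$ with $\beta = v\,w^{\sigma-1}$, so that $\ideal{B}:=\ideal{A}(w)^{-1}$ is $G$-invariant and represents the same class as $\ideal{A}$, from which a lift in ${_\mathrm{N}\mathfrak{R}_{\gext,S'}^G}$ is extracted. Finally, comparing $\Xi$ with the cocycle $\eqcls{\alpha^{\sigma-1}} \in H^1(G,\Psi)$ via the representative identity $\alpha^{\sigma-1}=\beta^3\,\epsilon$ (for some $\epsilon \in \ringint{\gext,S'}^{*}$) confirms that $\Xi$ is induced by the connecting map $\delta$.

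The main obstacle I anticipate is the converse kernel direction. Starting from the $G$-invariant representative $\ideal{B}$, one must produce $\alpha \in \gext^{*}$ satisfying $(\alpha)=\ideal{B}^3$, $\Norm{\gext/F}(\alpha)=1$, and $\alpha^{\sigma-1} \in \gext^{*3}$ \emph{simultaneously}; a naive choice yields $\Norm{\gext/F}(\beta)\Norm{\gext/F}(w) \in \mu_3(F)$ rather than outright trivial, so the correction must be performed using the identity $(\sigma-1)^2+3\sigma = 1+\sigma+\sigma^2$ applied to $w$, which converts $w^{(\sigma-1)^2}$ into $\Norm{\gext/F}(w)/w^{3\sigma}$ and permits an appropriate cube-adjustment of $\alpha$. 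This careful tracking of $\mu_3(F)$-obstructions and cube classes is the technical heart of the argument.
\end{proof}
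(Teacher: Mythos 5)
Your plan follows the same underlying strategy as the paper: both take the long exact $G$-cohomology sequence of $0 \to \Psi \to {_\mathrm{N}\mathfrak{R}_{\gext,S'}} \to {_\mathrm{N}\iclsgp_{\gext,S'}[3]} \to 0$ and identify the image of the connecting map inside $H^1(\gext/F,\Psi)$ with the quotient $Q$. The difference is one of direction. You build an explicit map $\Xi\colon {_\mathrm{N}\iclsgp_{\gext,S'}^G[3]} \to Q$ from the class-group side via $\ideal{A}^{\sigma}=(\beta)\ideal{A}$; the paper instead maps $\Norm{\gext/F}(\gext_{S'}) \cap \ringint{F,S'}^{*}$ into $\Ker\bigl[H^1(\gext/F,\Psi) \to H^1(\gext/F,\,{_\mathrm{N}\mathfrak{R}_{\gext,S'}})\bigr]$ by sending $\Norm{\gext/F}(\beta)$ to the constant cocycle $\sigma \mapsto \eqcls{\Norm{\gext/F}(\beta)}$, then checks well-definedness, kernel, and surjectivity. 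The two are inverse to each other, but the paper's direction is noticeably cleaner: since the target is a cohomology group, the verifications are cocycle manipulations, and the kernel computation reduces to a single application of Lemma~\ref{norm} from the identity $\Norm{\gext/F}(\beta)=\epsilon^{\sigma-1}\epsilon_0^{3}$.

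Two points in your outline deserve more care. The step you label routine — that $\Xi$ kills the image of ${_\mathrm{N}\mathfrak{R}_{\gext,S'}^G}$ — is itself an application of Lemma~\ref{norm}: from $\alpha^{\sigma-1}=\gamma^3$ with $\Norm{\gext/F}(\alpha)=1$ you only get $\Norm{\gext/F}(\gamma)^3=1$ directly, and it is Lemma~\ref{norm} applied to $1=\alpha^{\sigma-1}\gamma^{-3}$ that upgrades this to $\Norm{\gext/F}(\gamma)=1$ rather than merely a root of unity; without that lemma the $\mu_3(F)$ ambiguity survives. In the converse inclusion, after Hilbert $90$ gives $\beta=v\,w^{\sigma-1}$ and you pass to $\ideal{B}:=\ideal{A}(w)^{-1}$, the obstruction is worse than a $\mu_3(F)$-bookkeeping issue: the natural generator $\alpha_0 w^{-3}$ of $\ideal{B}^3$ has norm $\Norm{\gext/F}(w)^{-3}$, and since $w$ is only determined up to $F^{*}$ there is no reason for $\Norm{\gext/F}(w)$ to lie in $\ringint{F,S'}^{*}$; producing $\alpha$ with $(\alpha)=\ideal{B}^3$, $\Norm{\gext/F}(\alpha)=1$, and $\alpha^{\sigma-1}\in\gext^{*3}$ simultaneously therefore needs an additional normalization that your sketch does not supply. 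The paper sidesteps all of this by interpreting the kernel cohomologically and computing it with Lemma~\ref{norm}; I would either switch to that direction or carry Lemma~\ref{norm} explicitly through both kernel inclusions.
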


\vsp

\begin{proof}
\quad
We have only to prove the following map is surjective with the kernel $\Norm{\gext/F}(\ringint{\gext,S'}^{*})$.
\begin{align*}
\Norm{\gext/F}(\gext_{S'}) \cap \ringint{F,S'}^{*} \quad &\longrightarrow \quad
 \Ker \lt[H^1(\gext/F,\,\Psi) \to H^1(\gext/F,\,{_\mathrm{N}\mathfrak{R}_{\gext,S'}})\rt]\\
\Norm{\gext/F}(\beta) \quad &\longmapsto \quad \eqcls{\xi}
\end{align*}
where $\xi$ denotes a cocycle determined by letting $\xi_{\sigma}:=\eqcls{\Norm{\gext/F}(\beta)}$.
First of all, we show the map is well-defined. If so, this is a homomorphism by definition.
Let $\Norm{\gext/F}(\beta)$ be arbitrary element in $\Norm{\gext/F}(\gext_{S'}) \cap \ringint{F,S'}^{*}$.
Then $\Norm{\gext/F}(\beta)=(\beta^{\sigma-1})^{\sigma-1}\beta^{3\sigma} \in \ringint{F,S'}^{*}$
 and $\Norm{\gext/F}\bigl(\Norm{\gext/F}(\beta)\bigr)=\Norm{\gext/F}(\beta^3) \in \Norm{\gext/F}(\gext^{*3})$,
 therefore $\eqcls{\Norm{\gext/F}(\beta)} \in \Psi$.
Moreover, $\eqcls{\beta^{\sigma-1}} \in {_\mathrm{N}\mathfrak{R}_{\gext,S'}}$ since $\beta \in \gext_{S'}$.
These imply that $\eqcls{\xi}$ lies in the right-hand side.\\
{\bf [Kernel]}
Assume the image $\eqcls{\xi}$ of $\Norm{\gext/F}(\beta) \in \Norm{\gext/F}(\gext_{S'}) \cap \ringint{F,S'}^{*}$
 is trivial, namely $\xi_{\sigma}=\eqcls{\Norm{\gext/F}(\beta)}=\eqcls{\epsilon^{\sigma-1}}$
 for some $\eqcls{\epsilon} \in \Psi$ where $\epsilon \in \ringint{\gext,S'}^{*}$.
Then there is some $\epsilon_0 \in \ringint{\gext,S'}^{*}$ so that
 $\Norm{\gext/F}(\beta)=\epsilon^{\sigma-1}\epsilon_0^3$
 and hence $\Norm{\gext/F}(\beta) \in \Norm{\gext/F}(\ringint{\gext,S'}^{*})$ by Lemma~\ref{norm}.
Conversely, for any $\Norm{\gext/F}(\epsilon) \in \Norm{\gext/F}(\ringint{\gext,S'}^{*})$,
 we find that $\eqcls{\Norm{\gext/F}(\epsilon)}=\eqcls{(\epsilon^{\sigma-1})^{\sigma-1}}$
 corresponds to a coboundary class in $H^1(\gext/F,\,\Psi)$.
Thus the kernel is $\Norm{\gext/F}(\ringint{\gext,S'}^{*})$.\\
{\bf [Surjectivity]} Let $\eqcls{\xi}$ be any element in the right-hand side.
This can be written as $\xi_{\sigma}=\eqcls{\alpha^{\sigma-1}} \in \Psi$ with
 $\eqcls{\alpha} \in {_\mathrm{N}\mathfrak{R}_{\gext,S'}},\,\Norm{\gext/F}(\alpha)=1$ by definition.
From this, we observe $\epsilon:=\alpha^{\sigma-1}\beta^3 \in \ringint{\gext,S'}^{*}$ for some $\beta \in \gext^{*}$,
 and $\epsilon^{\sigma-1}=(\alpha^{-\sigma}\beta^{\sigma-1})^3 \in \ringint{\gext,S'}^{*3}$.
These imply $\Norm{\gext/F}(\beta)^3=\Norm{\gext/F}(\epsilon) \in \ringint{F,S'}^{*}$
 and $\beta \in \gext_{S'}$, namely $\Norm{\gext/F}(\beta) \in \Norm{\gext/F}(\gext_{S'}) \cap \ringint{F,S'}^{*}$.
We claim that this $\Norm{\gext/F}(\beta)$ is mapped to $\eqcls{\xi}$.
Since $\epsilon=\Norm{\gext/F}(\beta)(\alpha^{-\sigma}\beta^{\sigma-1})^{\sigma^2-1}$,
 it is seen that $\eqcls{\Norm{\gext/F}(\beta)}$ is equivalent to $\eqcls{\alpha^{\sigma-1}}$
 modulo $\Psi^{\sigma^2-1}$, which implies the image of $\Norm{\gext/F}(\beta)$ is $\eqcls{\xi}$.
This completes the proof.
\end{proof}

\begin{remark}\label{vanish}
\quad
Assume $F=\rat$ and each prime $\prim{P}$ of $\gext$ lying above any $p \in S'$ is principal.
Since all the primes in $S'$ split completely in $\gext$, there is a prime element $\Pi \in \ringint{\gext,S'}^{*}$
 such that $p=\Norm{\gext/F}(\Pi) \in \ringint{\rat,S'}^{*}$.
We thus have $\ringint{\rat,S'}^{*}=\rint^{*} \oplus \langle p \mid p \in S' \rangle
 \subset \Norm{\gext/\rat}(\ringint{\gext,S'}^{*})$.
In addition, $\iclsgp_{\gext,S'} \simeq \iclsgp_{\gext}/\langle \eqcls{\prim{P}} \rangle_{\prim{P} \mid p \in S'}
 \simeq \iclsgp_{\gext}$ from the property of $S'$-ideal class groups.
Applying these to Proposition~\ref{exact} yields the short exact sequence
$$0 \longrightarrow \Psi^G \longrightarrow {_\mathrm{N}\mathfrak{R}_{\gext,S'}^G} \longrightarrow
 {_\mathrm{N}\iclsgp_{\gext}^G[3]} \longrightarrow 0.$$
\end{remark}

\begin{lemma}\label{kerG}
\quad
$\Psi^G \simeq {_\mathrm{N}\mathfrak{U}_{\gext,S'}} \oplus \eqcls{\mu_3}(F).$
\end{lemma}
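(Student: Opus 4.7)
The plan is to parametrize $\Psi$ via Hilbert~90 and then exploit the identity $(\sigma-1)^2 = N_G - 3\sigma$ in the integral group ring $\rint[G]$, where $N_G = 1+\sigma+\sigma^2$. For any $\alpha \in \gext^{*}$ with $\Norm{\gext/F}(\alpha)=1$, Hilbert~90 supplies $\beta \in \gext^{*}$ with $\alpha = \beta^{\sigma-1}$, unique up to a factor in $F^{*}$. The $G$-invariance of $\eqcls{\alpha} \in \Psi$ is the condition $\alpha^{\sigma-1} \in \gext^{*3}$, and the computation $\alpha^{\sigma-1} = \beta^{(\sigma-1)^2} = \Norm{\gext/F}(\beta)\cdot\beta^{-3\sigma}$ translates this into $\Norm{\gext/F}(\beta) \in F^{*} \cap \gext^{*3}$.

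First I would define a homomorphism
$$\kappa \colon \Psi^G \longrightarrow (F^{*} \cap \gext^{*3})/F^{*3}, \qquad \eqcls{\alpha} \longmapsto \eqcls{\Norm{\gext/F}(\beta)}.$$
Well-definedness with respect to the choice of $\beta$ is immediate since $\Norm{\gext/F}(F^{*}) = F^{*3}$, and a direct manipulation shows that replacing $\alpha$ by a cube in $\Ker\Norm{\gext/F} \cap \gext^{*3}$ alters $\Norm{\gext/F}(\beta)$ only by an element of $F^{*} \cap \gext^{*3}$, so $\kappa$ descends to the quotient. For the kernel, if $\Norm{\gext/F}(\beta) = c^3$ with $c \in F^{*}$, then $\beta c^{-1}$ has norm one and a second application of Hilbert~90 yields $\beta c^{-1} = \delta^{\sigma-1}$; then $\alpha = (\delta^{\sigma-1})^{\sigma-1}$ and $\alpha_0 := \delta^{\sigma-1}$ has norm one, realizing $\eqcls{\alpha}$ as an element of ${_\mathrm{N}\mathfrak{U}_{\gext,S'}}$. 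Conversely, any $\eqcls{\alpha_0^{\sigma-1}} \in {_\mathrm{N}\mathfrak{U}_{\gext,S'}}$ lies in $\ker\kappa$ since taking $\beta = \alpha_0$ gives $\Norm{\gext/F}(\beta) = 1 \in F^{*3}$. Next I would invoke Lemma~\ref{kernel} to identify $(F^{*} \cap \gext^{*3})/F^{*3}$ with $\eqcls{\mu_3}(F)$, and verify surjectivity of $\kappa$ by constructing explicit preimages via the Kummer description $\gext = F(\sqrt[3]{D})$ in the case $\zeta_3 \in F$ (the target being trivial otherwise).

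Both $\Psi^G$ and $\eqcls{\mu_3}(F)$ are annihilated by $3$ and hence are $\mathbb{F}_3$-vector spaces, so the resulting short exact sequence
$$0 \longrightarrow {_\mathrm{N}\mathfrak{U}_{\gext,S'}} \longrightarrow \Psi^G \stackrel{\kappa}{\longrightarrow} \eqcls{\mu_3}(F) \longrightarrow 0$$
splits automatically, yielding the claimed isomorphism. The main obstacle I foresee is the bookkeeping of $S'$-integrality: the Hilbert~90 witnesses $\beta$ and $\delta$ are only determined up to $F^{*}$, and one must use the fact that a representative of $\eqcls{\alpha} \in \Psi$ lies in $\ringint{\gext,S'}^{*}\gext^{*3}$ to adjust these witnesses so that $\delta^{\sigma-1}$ again has a representative in $\ringint{\gext,S'}^{*}\gext^{*3}$. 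This amounts to a valuation-by-valuation check relying on the definition of ${_\mathrm{N}\mathfrak{U}_{\gext,S'}}$ and the behaviour of primes in $\primes_{\gext,S'}$ under the Galois action.
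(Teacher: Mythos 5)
Your map $\kappa\colon \Psi^G \to (F^{*} \cap \gext^{*3})/F^{*3}$ is not well-defined, and the problem is visible inside your own well-definedness argument: you observe that replacing a representative $\alpha$ of a class in $\Psi^G$ by an element of $\gext^{*3}\cap\Ker\Norm{\gext/F}$ alters $\Norm{\gext/F}(\beta)$ by an element of $F^{*}\cap\gext^{*3}$, but that is exactly the full target of $\kappa$, so the map does not descend. Concretely, suppose $\zeta_3 \in F$ and $\zeta_9 \in \gext$ (so $\gext = F(\zeta_9)$ with $D=\zeta_3$ as Kummer generator). Then $\eqcls{\zeta_3}=\eqcls{1}$ in $\Psi^G$ because $\zeta_3=\zeta_9^3 \in \gext^{*3}$; yet writing $\zeta_3 = \zeta_9^{\sigma-1}$ gives $\Norm{\gext/F}(\zeta_9)=\zeta_3$, a nontrivial element of $(F^{*}\cap\gext^{*3})/F^{*3}$, while the representative $1 = 1^{\sigma-1}$ gives $\Norm{\gext/F}(1)=1$. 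So $\kappa$ assigns two different values to the same class.

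A second, related gap is the claimed identification of $(F^{*}\cap\gext^{*3})/F^{*3}$ with $\eqcls{\mu_3}(F)=\mu_3(F)\Norm{\gext/F}(\gext^{*})/\Norm{\gext/F}(\gext^{*})$. Lemma~\ref{kernel} describes the former group but says nothing about the latter, and the two are not isomorphic in general: in the example above one has $\zeta_3=\Norm{\gext/F}(\zeta_9)\in\Norm{\gext/F}(\gext^{*})$, so $\eqcls{\mu_3}(F)$ is trivial, whereas $(F^{*}\cap\gext^{*3})/F^{*3}=\langle \zeta_3 F^{*3}\rangle$ has order~$3$. Both failures of your approach occur precisely in this case.

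The paper avoids both traps by first proving the equality of subgroups $\Psi^G=\mu_3(F){_\mathrm{N}\mathfrak{U}_{\gext,S'}}$ inside $\gext^{*}/\gext^{*3}$, and then arguing the direct-sum decomposition by a case split on whether $\mu_3(F)\subset\Norm{\gext/F}(\gext^{*})$: if it is, then $\eqcls{\mu_3}(F)$ is trivial and Lemma~\ref{norm} forces $\mu_3(F)\subset{_\mathrm{N}\mathfrak{U}_{\gext,S'}}$, so the product collapses; if not, the map $\eqcls{r\beta^{\sigma-1}}\mapsto(\eqcls{\beta^{\sigma-1}},\eqcls{r})$ to ${_\mathrm{N}\mathfrak{U}_{\gext,S'}}\oplus\eqcls{\mu_3}(F)$ is shown to be well-defined using Lemma~\ref{norm} together with $\mu_3(F)\cap\Norm{\gext/F}(\gext^{*})=\{1\}$. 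Your Hilbert-90 computation identifying the kernel of the would-be $\kappa$ with ${_\mathrm{N}\mathfrak{U}_{\gext,S'}}$ is structurally parallel to the two inclusions the paper proves, so that portion of your work is salvageable; but the cokernel half of your argument must be rebuilt with $\eqcls{\mu_3}(F)$ as the target, not $(F^{*}\cap\gext^{*3})/F^{*3}$, and with the well-definedness of the map to that quotient actually verified.
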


\vsp

\begin{proof}
\quad
First, for any $\eqcls{\alpha^{\sigma-1}} \in {_\mathrm{N}\mathfrak{U}_{\gext,S'}}$
 it is seen that $(\alpha^{\sigma-1})^{\sigma-1}=\Norm{\gext/F}(\alpha)\alpha^{-3\sigma} \in \gext^{*3}$
 and $\mu_3(F)\gext^{*3}/\gext^{*3} \subset \Psi^G$.
Thus the inclusion $\mu_3(F){_\mathrm{N}\mathfrak{U}_{\gext,S'}} \subset \Psi^G$ holds.
Conversely, for any $\eqcls{\alpha} \in \Psi^G$ we may assume $\Norm{\gext/F}(\alpha)=1$.
Since $\eqcls{\alpha}$ is $G$-invariant,
 there is some $\beta \in \gext^{*}$ so that $\alpha^{\sigma-1}=\beta^{-3}$,
 hence $\Norm{\gext/F}(\beta)=1$ by using Lemma~\ref{norm}.
From the equality $1=\alpha^{1+\sigma+\sigma^2}=(\alpha^{\sigma}\beta^{1-\sigma})^3$,
 we have $r:=\alpha^{\sigma}\beta^{1-\sigma} \in \mu_3(\gext)=\mu_3(F)$
 and $\eqcls{\alpha}=\eqcls{r\beta^{\sigma-1}} \in \mu_3(F){_\mathrm{N}\mathfrak{U}_{\gext,S'}}$,
 thus $\Psi^G \subset \mu_3(F){_\mathrm{N}\mathfrak{U}_{\gext,S'}}$.
Therefore $\Psi^G=\mu_3(F){_\mathrm{N}\mathfrak{U}_{\gext,S'}}$.
If $\mu_3(F) \subset \Norm{\gext/F}(\gext^{*})$ then $\mu_3(F) \subset {_\mathrm{N}\mathfrak{U}_{\gext,S'}}$
 from Lemma~\ref{norm}, and hence $\Psi^G={_\mathrm{N}\mathfrak{U}_{\gext,S'}}$.
Assume $\mu_3(F) \not\subset \Norm{\gext/F}(\gext^{*})$,
 namely $\mu_3(F)=\langle \zeta_3 \rangle$ and $\mu_3(F) \cap \Norm{\gext/F}(\gext^{*})=\{1\}$.
We claim that the $\mathbb{F}_3$-vector space $\mu_3(F){_\mathrm{N}\mathfrak{U}_{\gext,S'}}$ is isomorphic to
 ${_\mathrm{N}\mathfrak{U}_{\gext,S'}} \oplus \eqcls{\mu_3}(F)$,
 which is given by the map $\eqcls{r\beta^{\sigma-1}} \mapsto (\eqcls{\beta^{\sigma-1}},\,\eqcls{r})$
 where $r \in \mu_3(F),\,\beta^{\sigma-1} \in {_\mathrm{N}\mathfrak{U}_{\gext,S'}}$.
If the map is well-defined then this is an injective homomorphism. The surjectivity is also clear.
For any representative $r'\beta'^{\sigma-1} \in \eqcls{r\beta^{\sigma-1}}$,
 since $r'r^{-1}(\beta'\beta^{-1})^{\sigma-1} \in \gext^{*3}$ and $\Norm{\gext/F}(\beta'\beta^{-1})=1$,
 using Lemma~\ref{norm} yields $r'r^{-1} \in \mu_3(F) \cap \Norm{\gext/F}(\gext^{*})$.
Consequently it must be $r'=r$ and $(\beta'\beta^{-1})^{\sigma-1} \in \gext^{*3}$.
Therefore the map is well-defined.
\end{proof}

Roughly speaking, the following lemma implies that the half of the Mordell-Weil group is bounded by a unit group.

\begin{lemma}\label{embed}
\quad
If $\iclsgp_{\gext,S'}$ is $G$-invariant then $\gp_H(F)/\dual{\phi}\bigl(\dual{\gp_H}(F)\bigr) \hookrightarrow
 {_\mathrm{N}\mathfrak{U}_{\gext,S'}} \oplus (F^{*} \cap \gext^{*3})/F^{*3}$.
\end{lemma}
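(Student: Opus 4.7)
The plan is to factor the descent embedding through $\gext^{*}/\gext^{*3}$ and, using Hilbert~90 together with the $G$-invariance hypothesis, pin the image inside ${_\mathrm{N}\mathfrak{U}_{\gext,S'}}$. Write $\iota:F^{*}/F^{*3}\to\gext^{*}/\gext^{*3}$ for the natural map, so $\ker\iota=(F^{*}\cap\gext^{*3})/F^{*3}$, and set $\dual{\delta}_F^{\gext}:=\iota\circ\dual{\delta}_F$. Since $\dual{\delta}_F$ is injective, the short exact sequence
\[
0 \longrightarrow K \longrightarrow \gp_H(F)/\dual{\phi}\bigl(\dual{\gp_H}(F)\bigr) \longrightarrow \Image\dual{\delta}_F^{\gext} \longrightarrow 0,
\]
with $K\hookrightarrow (F^{*}\cap\gext^{*3})/F^{*3}$, is a sequence of $\mathbb{F}_3$-vector spaces and therefore splits. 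The claimed embedding thus reduces to proving $\Image\dual{\delta}_F^{\gext}\subset{_\mathrm{N}\mathfrak{U}_{\gext,S'}}$.

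For the explicit shape of $\dual{\delta}_F^{\gext}$: given a lift $P=[x,y,z,1]$, set $\gamma:=x+y\alpha_1+z\alpha_2\in\gext^{*}$; then $\Norm{\gext/F}(\gamma)=1$ by definition of $\gp$, and the formula used in Lemma~\ref{bound_norm} (Proposition~\ref{connect}) gives $\dual{\delta}_F^{\gext}(P)=\eqcls{\gamma^{\sigma-1}}$. Lemma~\ref{loceq} already places this class in ${_\mathrm{N}\mathfrak{R}_{\gext,S'}}$, and since $\dual{\delta}_F^{\gext}$ factors through $F^{*}/F^{*3}$ the image is $G$-invariant, hence lies in ${_\mathrm{N}\mathfrak{R}_{\gext,S'}^G}$. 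Applying Hilbert~90 to the norm-one element $\gamma$, pick $\beta\in\gext^{*}$ with $\gamma=\beta^{\sigma-1}$; the identity $(\sigma-1)^{2}=N-3\sigma$ in $\rint[G]$ (where $N=1+\sigma+\sigma^{2}$) then yields
\[
\gamma^{\sigma-1}=\beta^{(\sigma-1)^{2}}=\Norm{\gext/F}(\beta)\cdot(\beta^{\sigma})^{-3},
\]
so $\eqcls{\gamma^{\sigma-1}}=\eqcls{\Norm{\gext/F}(\beta)}$ in $\gext^{*}/\gext^{*3}$.

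To put the class into $\Psi^{G}$, apply Proposition~\ref{exact}: the only obstruction is the image of $\eqcls{\gamma^{\sigma-1}}$ in ${_\mathrm{N}\iclsgp_{\gext,S'}^{G}[3]}$, namely the class $[\ideal{B}]$ of the unique fractional $S'$-ideal with $\ideal{B}^{3}=(\gamma)^{\sigma-1}$. The Hilbert~90 factorization above shows $(\ideal{B}\cdot(\beta^{\sigma}))^{3}=\Norm{\gext/F}(\beta)\ringint{\gext,S'}$, an ideal extended from $F$. Under $G$-invariance every class of $\iclsgp_{\gext,S'}$ is $\sigma$-fixed, so a diagram chase through the cokernel $\Norm{\gext/F}(\gext_{S'})\cap\ringint{F,S'}^{*}/\Norm{\gext/F}(\ringint{\gext,S'}^{*})$ in Proposition~\ref{exact}, combined with the fact that the cube root $\ideal{B}\cdot(\beta^{\sigma})$ arises from an ideal already extended from $F$, trivializes $[\ideal{B}]$. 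Hence $\eqcls{\gamma^{\sigma-1}}\in\Psi^{G}\simeq{_\mathrm{N}\mathfrak{U}_{\gext,S'}}\oplus\eqcls{\mu_3}(F)$ by Lemma~\ref{kerG}; finally, since $\gamma^{\sigma-1}$ is by construction a $(\sigma-1)$-th power of the norm-one element $\alpha=\gamma$, the explicit splitting given in the proof of Lemma~\ref{kerG} places the class in the ${_\mathrm{N}\mathfrak{U}_{\gext,S'}}$ summand with vanishing $\eqcls{\mu_3}(F)$-component.

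The main obstacle is the triviality of $[\ideal{B}]$ in ${_\mathrm{N}\iclsgp_{\gext,S'}^{G}[3]}$: $G$-invariance by itself only makes $[\ideal{B}]$ $\sigma$-fixed, and the delicate point is to combine this with the fact that our class is produced by an honest rational point rather than an arbitrary Selmer element, so that the Hilbert~90 representative $\beta$ furnishes a cube root $\ideal{B}\cdot(\beta^{\sigma})$ of an ideal already extended from $F$. This is precisely the structure the cokernel in Proposition~\ref{exact} measures, and the remaining ambiguity in $F^{*}\cap\gext^{*3}$ is exactly what the direct summand $(F^{*}\cap\gext^{*3})/F^{*3}$ in the target of Lemma~\ref{embed} is designed to absorb.
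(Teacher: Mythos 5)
Your overall reduction (split off the kernel $(F^{*}\cap\gext^{*3})/F^{*3}$ of $\iota$, use $\mathbb{F}_3$-vector-space splitting, reduce to $\Image\dual{\delta}_F^{\gext}\subset{_\mathrm{N}\mathfrak{U}_{\gext,S'}}$) matches the paper, and the identification of $\dual{\delta}_F^{\gext}(P)$ with $\eqcls{\gamma^{\sigma-1}}$ where $\gamma=x+y\alpha_1+z\alpha_2$ has norm $1$ is also correct. The equivalent reformulation of the target — that $\gamma^{\sigma-1}$ should lie in $\ringint{\gext,S'}^{*}\gext^{*3}$, i.e., that the ideal $\ideal{B}$ with $\ideal{B}^3=(\gamma^{\sigma-1})\ringint{\gext,S'}$ should be principal — is fine as well. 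But the step where you trivialize $[\ideal{B}]$ has a genuine gap.

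Your argument invokes Hilbert 90 to write $\gamma=\beta^{\sigma-1}$ and deduces $(\ideal{B}\cdot(\beta^{\sigma}))^{3}=\Norm{\gext/F}(\beta)\ringint{\gext,S'}$, and you then assert that because the cube is ``extended from $F$'' and $\iclsgp_{\gext,S'}$ is $G$-invariant, a diagram chase through Proposition~\ref{exact} trivializes $[\ideal{B}]$. This does not work. The fact that $\ideal{C}:=\ideal{B}\cdot(\beta^{\sigma})$ cubes to an ideal extended from $F$ makes $\ideal{C}$ a $\sigma$-invariant ideal, but it does \emph{not} make $\ideal{C}$ itself extended from $F$ (it can pick up $\prim{P}$-exponents at ramified primes that are not multiples of the ramification index), and $\sigma$-invariance of $\ideal{C}$ plus $G$-invariance of $\iclsgp_{\gext,S'}$ says nothing about $[\ideal{C}]$ being trivial — under the hypothesis \emph{every} class is $\sigma$-fixed, so this is vacuous. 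Moreover, using Proposition~\ref{exact} here is circular: the map ${_\mathrm{N}\mathfrak{R}_{\gext,S'}^G}\to{_\mathrm{N}\iclsgp_{\gext,S'}^G[3]}$ has kernel $\Psi^G$, so showing $[\ideal{B}]=0$ is \emph{exactly} the assertion $\eqcls{\gamma^{\sigma-1}}\in\Psi^G$, which is what you set out to prove.

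What your argument misses is the crucial local structure that the paper exploits. Because $\Norm{\gext/F}(\gamma)=1$ and $\prim{P}^{\sigma}=\prim{P}$ at inert or ramified primes, one has $\nu_{\prim{P}}(\gamma^{\sigma-1})=0$ there; so $(\gamma^{\sigma-1})$ is supported only at primes lying over $\prim{p}$ that split completely. For such $\prim{p}\in\primes_{F,S'}$, Lemma~\ref{loceq} forces all exponents of $\gamma^{\sigma-1}$ at $\prim{P},\prim{P}^{\sigma},\prim{P}^{\sigma^2}$ to be $\equiv0\pmod 3$, so the $\prim{p}$-part of $(\gamma^{\sigma-1})$ is of the form $(\prim{P}^{\sigma-1})^{3r}(\prim{P}^{\sigma^2-1})^{3r'}$. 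Now $G$-invariance of $\iclsgp_{\gext,S'}$ enters precisely to say $\prim{P}^{\sigma-1}\ringint{\gext,S'}$ is principal, say $(\lambda_{\prim{p}})$, whence each $\prim{p}$-part is the cube of the principal ideal $(\lambda_{\prim{p}}^{r+r'(\sigma+1)})$. Collecting these over the finitely many relevant $\prim{p}$ produces $\beta\in\gext^*$ with $\gamma^{\sigma-1}\beta^{3}\in\ringint{\gext,S'}^{*}$. Without this prime-by-prime reduction to split primes and the explicit principal cube roots it yields, the argument does not close.
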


\vsp

\begin{proof}
\quad
It follows from Lemma~\ref{kernel} that
 $\Image \dual{\delta}_F \hookrightarrow \Image \dual{\delta}_F^{\gext} \oplus (F^{*} \cap \gext^{*3})/F^{*3}$.
Now we shall prove the inclusion
 $\Image \dual{\delta}_F^{\gext} \hookrightarrow {_\mathrm{N}\mathfrak{U}_{\gext,S'}}$.
From Proposition~\ref{connect}, the image of $\dual{\delta}_F^{\gext}$ can be written by
 $\frac{(x+y\alpha_1+z\alpha_2)^{\sigma}}{x+y\alpha_1+z\alpha_2}\gext^{*3}$.
Let $\alpha:=x+y\alpha_1+z\alpha_2$ for short.
Then $\Norm{\gext/F}(\alpha)=1$ by the defining equation of $\gp_H$.
We must show that $\alpha^{\sigma-1}\gext^{*3}$ lies in $\ringint{\gext,S'}^{*}\gext^{*3}/\gext^{*3}$
 under the assumption.
For any $\prim{P} \in \primes_{\gext,S'}$, let $\prim{p}:=\prim{P} \cap F$.
 if $\prim{p}$ is inert or ramifies in $\gext$ then $\nu_{\prim{P}}(\alpha^{\sigma-1})=0$.
Assume $\prim{p}$ splits completely in $\gext$.
The $\prim{p}$-part of $\alpha$ may be written as $\prim{P}^{v_0}\,\prim{P}^{v_1\sigma}\,\prim{P}^{v_2\sigma^2}$
 with integers $v_0,\,v_1,\,v_2 \in \rint$.
Since $\prim{p} \in \primes_{F,S'}$ and the $\prim{p}$-part of $\alpha^{\sigma-1}$ is
 $\prim{P}^{v_2-v_0}\,\prim{P}^{(v_0-v_1)\sigma}\,\prim{P}^{(v_1-v_2)\sigma^2}$,
 by using Lemma~\ref{loceq} there are some integers $r,\,r' \in \rint$ such that $v_0-v_1=3r,\,v_1-v_2=3r'$.
It thus turns out that the $\prim{p}$-part of $\alpha^{\sigma-1}$ is equal to
 $(\prim{P}^{\sigma-1})^{3r}(\prim{P}^{\sigma^2-1})^{3r'}$.
Using the assumption $\iclsgp_{\gext,S'}^G=\iclsgp_{\gext,S'}$,
 there is some $\lambda_{\prim{p}} \in \gext^{*}$ such that
 $\prim{P}^{\sigma-1}\ringint{\gext,S'}=\lambda_{\prim{p}}\ringint{\gext,S'}$,
 and hence the $\prim{p}$-part is the principal ideal
 $\bigl(\lambda_{\prim{p}}^{r+r'(\sigma+1)}\bigr)^3\ringint{\gext,S'}$.
We can take such $\lambda_{\prim{p}} \in \gext^{*}$ for each $\prim{p} \in \primes_{F,S'}$
 so as to split completely in $\gext$, and let $\beta^{-1}$ be a product of all $\lambda_{\prim{p}}^{r+r'(\sigma+1)}$
 which varies through those $\prim{p}$,
 namely $\beta:=\prod \bigl(\lambda_{\prim{p}}^{r+r'(\sigma+1)}\bigr)^{-1} \in \gext^{*}$.
Here the integers $r,\,r'$ are both $0$ for almost all primes in $\primes_{F,S'}$.
Then $\alpha^{\sigma-1}\beta^3 \in \ringint{\gext,S'}^{*}$ by the definition.
This yields the lemma.
\end{proof}

We are now ready to prove the theorems.

\vsp

\begin{proof}[Proof of Theorem~\ref{bound}.]
\quad
This is a direct consequence deduced from Proposition~\ref{bound_R}, \ref{exact} and Lemma~\ref{kerG}, \ref{embed}.
\end{proof}

\vsp

\begin{proof}[Proof of Theorem~\ref{equality'}.]
\quad
Since $\Norm{\gext/F}\bigl(\gamma(\gp_H)\bigr)=-\delta(\gp)=\pm\delta(\mathfrak{g})$,
 under the assumption, a finite prime $\prim{p}$ of $F$ ramifies in $\gext$ if and only if
 $3 \nmid \nu_{\prim{p}}\bigl(\Norm{\gext/F}(\gamma(\gp_H))\bigr)$ by using Corollary~\ref{disc}.
From Proposition~\ref{bound_R} and the sequence \eqref{seq_F}, we have $S^{(\dual{\phi})}(\dual{\gp_H}/F)
={_\mathrm{N}\mathfrak{R}_{\gext}^G}={_\mathrm{N}\mathfrak{R}_{F,S}}={_\mathrm{N}\Phi_{F,S}}$.
Since $\zeta_3 \in F_{\prim{p}}$ for $\prim{p}$ which ramifies tamely in $\gext$,
 applying Proposition~\ref{duality} with the assumption $\delta(\gp) \notin F_{\prim{p}}^{*3}$ for $\prim{p} \mid 3$
 and Remark~\ref{val} yields $d_{+}=0,\,d_{-}-d_{\infty}=0$ and hence the equality
\begin{align}\label{SS}
\dim_{\mathbb{F}_3} S^{(\phi)}(\gp_H/F)+\rank_{\rint} \ringint{F}^{*}
=\dim_{\mathbb{F}_3} S^{(\dual{\phi})}(\dual{\gp_H}/F).
\end{align}
The estimate
 $1 \leq \rank_{\rint} \gp_H(F) \leq 2\dim_{\mathbb{F}_3} {_\mathrm{N}\Phi_{F,S}}-\rank_{\rint} \ringint{F}^{*}-1$
 now follows from the formula~\eqref{rank} with Lemma~\ref{MW}.
\end{proof}

\vsp

\begin{proof}[Proof of Theorem~\ref{equality}.]
\quad
The assumption is equivalent to that of Theorem~\ref{equality'} applied with $F=\rat$.
The first statement follows from Remark~\ref{vanish}, Lemma~\ref{kerG}, \ref{G-units}, the equality~\eqref{SS}
 and the formula \eqref{rank}.
Further assume $\iclsgp_{\gext}$ is $G$-invariant.
Then Lemma~\ref{kerG}, \ref{embed}, \ref{MW} imply
 $\gp_H(\rat)/\dual{\phi}\bigl(\dual{\gp_H}(\rat)\bigr) \simeq {_\mathrm{N}\mathfrak{U}_{\gext}} \simeq \Psi^G
 \simeq \rint/3\rint$.
Summarizing them with Proposition~\ref{exact} and Remark~\ref{vanish},
 we have the following equivalence of exact sequences.
$$
\begin{CD}
0 @>>>	{_\mathrm{N}\mathfrak{U}_{\gext}} @>>>	{_\mathrm{N}\mathfrak{R}_{\gext}^G} @>>>
 {_\mathrm{N}\iclsgp_{\gext}^G[3]} @>>> 0\\
@.	@|					@|			   @|					@.\\
0 @>>>	\gp_H(\rat)/\dual{\phi}\bigl(\dual{\gp_H}(\rat)\bigr) @>>> S^{(\dual{\phi})}(\dual{\gp_H}/\rat) @>>>
 \text{\cyr X}(\dual{\gp_H}/\rat)[\dual{\phi}]	@>>> 0.
\end{CD}
$$
This completes the proof.
\end{proof}

\section{Construction of the homomorphism $\dual{\delta}_{F_{\bullet}}^{\gext_{\bullet}}$}\label{sec_const}

This section is a key part to carry out our preceding descent procedure.
We continue to use the previous notation.
The idea is an interchange of the specified base point $\ellunit$ into another point on the elliptic curve $\gp_H$.
As a consequence, we obtain an available description for the image of the map
 $\dual{\delta}_{F_{\bullet}}^{\gext_{\bullet}}$.
We begin by taking such a point.
From the equation defining $\gp_H$, there are $\gext$-rational points on $\gp_H \cap \{w=0\}$ which consists of
\begin{align*}
&\ellunit'=[c\alpha_1-b\alpha_2,\,a\alpha_2-c,\,-a\alpha_1+b,\,0],\\
&T':=[c\alpha_1^{\sigma}-b\alpha_2^{\sigma},\,a\alpha_2^{\sigma}-c,\,-a\alpha_1^{\sigma}+b,\,0]
\;\bigl(=\ellunit'^{\sigma}\bigr),\\
&T'^{\sigma}\!\!=[c\alpha_1^{\sigma^2}-b\alpha_2^{\sigma^2},\,a\alpha_2^{\sigma^2}-c,\,-a\alpha_1^{\sigma^2}+b,\,0]
\;\bigl(=\ellunit'^{\sigma^2}\bigr).
\end{align*}
We adopt the point $\ellunit'$ as a new base point on the elliptic curve $\gp_H$ over $\gext$,
 and denote the new elliptic curve by $\gp_H':=(\gp_H,\,\ellunit')$.
Note that all the points $\ellunit',\,T',\,T'^{\sigma}$ lie on the same line $H \cap \{w=0\}$
 and hence $T'+'T'^{\sigma}=\ellunit'$ where the symbol $+'$ stands for the group law of $\gp_H'$.
The point $T'$ turns out to be a $3$-torsion element in the Mordell-Weil group $\gp_H'(\gext)$
 as a consequence of the following lemma.

\begin{lemma}\label{div}
\quad
A rational function $\eta \in \gext(\gp_H)^{*}$ defined by
$$\eta(x,\,y,\,z,\,w)=\frac{x+y\alpha_1^{\sigma}+z\alpha_2^{\sigma}}{x+y\alpha_1+z\alpha_2}$$
has the divisor $3(T')-3(\ellunit')$.
\end{lemma}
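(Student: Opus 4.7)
The plan is to compute the zero divisors of the numerator $\iota^{\sigma}$ and the denominator $\iota$ of $\eta$ separately as sections of a line bundle on $\gp_H$, and then subtract to read off the divisor of the rational function $\eta$.

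First I would record the ambient geometry: since $\gp_H \subset H$ and $H \cong \mathbb{P}^2$, the curve $\gp_H$ is a plane cubic in $H$, smooth by Lemma~\ref{singularity} under the non-tangency hypothesis. Consequently any nonzero linear form on $H$ restricts to a global section of $\mathcal{O}_{\gp_H}(1)$, a line bundle of degree $3$ by Bezout. In particular, taking $\iota = x+y\alpha_1+z\alpha_2$ and $\iota^{\sigma} = x+y\alpha_1^{\sigma}+z\alpha_2^{\sigma}$ as linear forms in $\gext[x,y,z,w]$ and restricting to $\gp_H$, one has
\[
\deg \mathrm{div}_0(\iota|_{\gp_H}) \;=\; \deg \mathrm{div}_0(\iota^{\sigma}|_{\gp_H}) \;=\; 3.
\]

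Second, I would invoke the set-theoretic computation already carried out in the proof of Lemma~\ref{singularity}: the unique zero of $\iota$ on $\gp_H$ is $\ellunit'$ (this is exactly how \eqref{new} was derived), hence by Galois conjugation the unique zero of $\iota^{\sigma}$ on $\gp_H$ is $T' = \ellunit'^{\sigma}$; moreover, both points were shown to be smooth on $\gp_H$ in the same lemma. A degree-$3$ effective divisor whose support is a single smooth point is automatically three times that point, so
\[
\mathrm{div}_0(\iota|_{\gp_H}) = 3(\ellunit'), \qquad \mathrm{div}_0(\iota^{\sigma}|_{\gp_H}) = 3(T').
\]
Since $\eta = \iota^{\sigma}/\iota$ is the ratio of two sections of the same line bundle $\mathcal{O}_{\gp_H}(1)$, subtracting yields $\mathrm{div}(\eta) = 3(T') - 3(\ellunit')$, which is the claim.

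There is no genuinely hard step: the proof reduces to combining Bezout's theorem with the set-theoretic information already in Lemma~\ref{singularity}, avoiding any explicit local-uniformizer calculation. The one point worth double-checking is that the non-tangency of $H$ is precisely what guarantees the smoothness of $\ellunit'$ (and hence of $T'$) on $\gp_H$, so that the collapse of the multiplicity-$3$ intersection onto a single point is legitimate; this was verified by the partial-derivative computation carried out inside the proof of Lemma~\ref{singularity}.
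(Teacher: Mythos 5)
Your proof is correct, but it takes a genuinely different route from the paper. The paper first identifies $\ellunit'$ as the unique zero of $\iota$ on $\gp_H$ (which you also do), and then pins down the multiplicity by an explicit local computation: it observes that the maximal ideal of $\gext[\gp_H]_{\ellunit'}$ reduces to $(w)$ via the defining equation $\iota\,\iota^{\sigma}\iota^{\sigma^2}=w^3$, so that $w$ is a uniformizer, and then reads off $\ord_{\ellunit'}(\eta)=-3$ directly from that same equation. You instead bypass the uniformizer entirely: since $\iota$ and $\iota^{\sigma}$ are both restrictions of linear forms on $H\cong\mathbb{P}^2$ to the smooth plane cubic $\gp_H$, their zero divisors are effective of degree $3$ by B\'ezout, and a degree-$3$ effective divisor supported at a single smooth point must be $3$ times that point; Galois conjugation carries the conclusion from $\iota$ at $\ellunit'$ to $\iota^{\sigma}$ at $T'$. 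Your argument buys you a shorter, more conceptual proof that avoids any hands-on local-ring manipulation, at the cost of appealing to smoothness of $\gp_H$ and its presentation as a plane cubic; the paper's approach is more elementary (pure commutative algebra at a point) and has the side benefit of exhibiting $w$ as a uniformizer at $\ellunit'$, a fact that is reusable elsewhere. One small caveat worth noting in your write-up: you should explicitly observe that neither $\iota$ nor $\iota^{\sigma}$ vanishes identically on $H$ (clear since they have no $w$-term while $H$ does when $a\ne0$, and they have a nonzero $x$-term while $H$ does not when $a=0$), so that their restrictions really are nonzero sections of $\mathcal{O}_{\gp_H}(1)$.
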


\vsp

\begin{proof}
\quad
It is easy to check that $x(P)+y(P)\alpha_1+z(P)\alpha_2=0$ if and only if $P=\ellunit'$.
From this, the divisor of the rational function $\eta$ has the form $n(T')-n(\ellunit')$
 with some positive integer $n$.
In the next place, we calculate the order of $\eta \in \gext(\gp_H)$ at the point $\ellunit'$.
The regular local ring $\gext[\gp_H]_{\ellunit'}$ at $\ellunit'$ is the localization of $\gext[\gp_H]$
 with respect to the maximal ideal
$$\text{\small $\bigl((a\alpha_2-c)x-(c\alpha_1-b\alpha_2)y,\,(-a\alpha_1+b)y
-(a\alpha_2-c)z,\,(c\alpha_1-b\alpha_2)z-(-a\alpha_1+b)x,\,w\bigr)$},$$
which turns out to be $(x+y\alpha_1+z\alpha_2,\,w)=(w)$ by using the defining equation of $\gp_H$.
Therefore the function $w$ defines a uniformizer in $\gext[\gp_H]_{\ellunit'}$.
This yields $\ord_{\ellunit'}(\eta)=-3$ by using the defining equation of $\gp_H$ once again.
Therefore $n$ must be equal to $3$ and we find $T'$ is a $3$-torsion point on $\gp_H'$
 by the well-known Abel-Jacobi theorem.
\end{proof}

Let $\tau:\gp_H \to \gp_H'$ be a translation map over $\gext$ defined by $P \mapsto P+\ellunit'$.
Then $\tau(T)=T'$ and the composed maps $\phi \circ \tau^{-1}:\gp_H' \to \dual{\gp_H}$,
 $\tau \circ \dual{\phi}:\dual{\gp_H} \to \gp_H'$ are $3$-isogenies defined over $\gext$
 since these maps send a base point to a base point.

The Weil pairing plays an important role
 for constructing a homomorphism from a Mordell-Weil group to a quotient multiplicative group of the ground field.
The constructed map is compatible with connecting homomorphisms of Galois cohomology
 and can be always described by a rational function $f_{P}$ on $\gp_H$
 if ever the support of the divisor of $f_P$ is $\{P,\,\ellunit'\}$
 where $P \ne \ellunit'$ is some torsion point defined over the ground field.
As for details, we refer to \cite{AEC}. We now apply these to our case.

By the notation $\gext_{\bullet}/F_{\bullet}$, we shall indicate the cyclic cubic extension $\gext/F$ or
 an extension $\gext_{\prim{P}}/F_{\prim{p}}$ of local fields.
The Weil pairing leads to the composed injection
$$\dual{\delta'}:\gp_H'(\gext_{\bullet})/\tau \circ \dual{\phi}\bigl(\dual{\gp_H}(\gext_{\bullet})\bigr)
 \longrightarrow H^1(\gext_{\bullet},\,\dual{\gp_H}[\dual{\phi}])
 \stackrel{e_{\tau \circ \dual{\phi}}(\,\cdot\,,\,T')}{\longrightarrow}
 H^1(\gext_{\bullet},\,\mu_3) \longrightarrow \gext_{\bullet}^{*}/\gext_{\bullet}^{*3}$$
which maps an element $P$ to $f_{T'}(P)\gext_{\bullet}^{*3}$
 for any $P \in \gp_H'(\gext_{\bullet}) \setminus \{\ellunit',\,T'\}$.
Here $f_{T'}$ is a rational function on $\gp_H'$ satisfying
$$\mathrm{div}(f_{T'})=3(T')-3(\ellunit'), \quad f_{T'} \circ \tau \circ \dual{\phi}=g_{T'}^3
 \text{ for some $g_{T'} \in \gext(\dual{\gp_H})$}.$$
By the Abel-Jacobi theorem, such functions $f_{T'} \in \gext(\gp_H'),\,g_{T'} \in \gext(\dual{\gp_H})$ always exist.

From Lemma~\ref{div}, since $\mathrm{div}(\eta)=3(T')-3(\ellunit')$,
 there is an element $\lambda \in \gext^{*}$ so that $f_{T'}=\lambda \cdot \eta$.
Thus the image of the composed map
 $\dual{\delta'} \circ \tau:\gp_H(\gext_{\bullet})/\dual{\phi}\bigl(\dual{\gp_H}(\gext_{\bullet})\bigr)
 \to \gext_{\bullet}^{*}/\gext_{\bullet}^{*3}$ is calculated as
\begin{align}
\begin{split}\label{eta}
\dual{\delta'} \circ \tau(P)
&=\dual{\delta'} \circ \tau(P-\ellunit')\bigl(\dual{\delta'} \circ \tau(-\ellunit')\bigr)^{-1}\\
&=\dual{\delta'}(P)\dual{\delta'}(\ellunit)^{-1}\\
&=\bigl(\lambda \cdot \eta(P)\bigr)\bigl(\lambda \cdot \eta(\ellunit)\bigr)^{-1}\gext_{\bullet}^{*3}\\
&=\eta(P)\gext_{\bullet}^{*3} \quad\quad \text{for any $P \in \gp_H(\gext_{\bullet}) \setminus \{\ellunit',\,T'\}$.}
\end{split}
\end{align}
It is easily seen that $\dual{\delta'} \circ \tau=\dual{\delta}_{\gext_{\bullet}}$
 by the equality $e_{\tau \circ \dual{\phi}}(P,\,T')=e_{\dual{\phi}}(P,\,T)$
 for any $P \in \dual{\gp_H}[\dual{\phi}]$.

Though we does not need to clarify the constant $\lambda \in \gext^{*}$ so far,
 it will be required to prove the proposition below.
We next give the description for $\lambda$.

\begin{lemma}\label{constant}
\quad
$\lambda \gext^{*3}=\frac{\gamma(\gp_H)}{\gamma(\gp_H)^{\sigma^2}} \gext^{*3}$.
\end{lemma}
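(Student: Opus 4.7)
The plan is to identify $\lambda$ modulo cubes by transferring the problem to the Weierstrass model $\mathcal{W}(\gp_H)$ of Theorem~\ref{WE} and then evaluating at the $F$-rational base point $\ellunit$. The point $\vartheta(T) = (0,0)$ is a flex on $\mathcal{W}(\gp_H)$ --- its tangent $y = 0$ cuts with multiplicity three by~\eqref{we} --- so $\mathrm{div}(y) = 3((0,0)) - 3([0{:}1{:}0])$, and pulling back via $\vartheta \circ \tau^{-1} : \gp_H' \stackrel{\sim}{\to} \mathcal{W}(\gp_H)$ produces $F := (\vartheta \circ \tau^{-1})^{*}y \in \gext(\gp_H')$ with $\mathrm{div}(F) = 3(T') - 3(\ellunit')$. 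Since $\vartheta$ carries $\phi$ to the quotient isogeny $\mathcal{W}(\gp_H) \to \mathcal{W}(\gp_H)/\langle(0,0)\rangle$, the standard $3$-descent formalism for Weierstrass curves with a flex $3$-torsion point (see \cite{AEC}, X\S 4, or \cite{LEC}) ensures that $y$ pulls back to a cube via the dual isogeny; consequently $F$ is a valid choice of $f_{T'}$ modulo cubes and $\lambda \equiv F/\eta \pmod{\gext^{*3}}$.

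Now evaluate at $\ellunit$: since $\iota(\ellunit) = 1 = \iota^{\sigma}(\ellunit)$ we have $\eta(\ellunit) = 1$, and since $\tau^{-1}(\ellunit) = -\ellunit'$ in the $\gp_H$ group law, $\lambda \equiv y(\vartheta(-\ellunit')) \pmod{\gext^{*3}}$. Writing $(X_0,Y_0) := \vartheta(\ellunit')$, the Weierstrass negation formula combined with the defining relation $Y_0\bigl(Y_0 + a\,\delta(\gp)X_0 - \Norm{\gext/F}(\gamma(\gp_H))\bigr) = X_0^{3}$ yields $y(\vartheta(-\ellunit')) = -X_0^{3}/Y_0$, which equals $Y_0^{-1}$ modulo cubes since both $-1$ and $X_0^{3}$ are cubes in $\gext^{*}$. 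Hence the claim reduces to $Y_0 \equiv \gamma(\gp_H)^{\sigma^{2}}/\gamma(\gp_H) \pmod{\gext^{*3}}$. To establish this I would combine two inputs: (i) the direct determinantal evaluations
\[
\iota^{\sigma}(\ellunit') = -\gamma(\gp_H), \qquad \iota^{\sigma^{2}}(\ellunit') = \gamma(\gp_H)^{\sigma^{2}},
\]
obtained by expanding along the bottom row of the defining determinant; and (ii) the Galois equivariance $\vartheta(\ellunit')^{\sigma} = \vartheta(T') = \vartheta(\ellunit') + (0,0)$ in the Weierstrass group law, whose explicit chord-and-tangent expansion supplies $X_0^{\sigma} = \Norm{\gext/F}(\gamma(\gp_H))\, Y_0/X_0^{2}$ and $Y_0^{\sigma} = -\Norm{\gext/F}(\gamma(\gp_H))^{2}\, Y_0/X_0^{3}$, thereby determining $Y_0^{\sigma}/Y_0$ and $\Norm{\gext/F}(Y_0)$ modulo cubes. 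Finally, a leading-order analysis at $\ellunit'$ using $w$ as a uniformizer --- so that $\iota \cdot \iota^{\sigma} \cdot \iota^{\sigma^{2}} = w^{3}$ forces $\eta \sim \gamma(\gp_H)^{2}\gamma(\gp_H)^{\sigma^{2}}/w^{3}$ near $\ellunit'$ --- allows matching leading coefficients of $F$ and $\eta$ to fix the residual $F^{*}$-ambiguity and pin down the cube class of $Y_0$.

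The main obstacle is this last step of extracting $Y_0$ modulo cubes: knowing $Y_0^{\sigma}/Y_0$ and $\Norm{\gext/F}(Y_0)$ modulo cubes only determines $Y_0$ up to an element of $F^{*}\gext^{*3}/\gext^{*3}$, so removing the residual $F^{*}$-ambiguity requires either the leading-order calculation at $\ellunit'$ to be carried out carefully, or (more laboriously) a direct substitution of $\ellunit'$ into the explicit formulas of Lemmas~\ref{embed1} and \ref{embed2}, handling the removable singularities that arise there.
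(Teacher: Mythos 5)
Your proposal takes a genuinely different route from the paper. The paper evaluates the descent map at the $3$-torsion point $T$: since $\tau(T)=T'$ lies in the support of $\mathrm{div}(f_{T'})$, it first uses the group-law identity $T'+'T'^{\sigma}=\ellunit'$ to rewrite $\dual{\delta'}\circ\tau(T)=\dual{\delta'}(T'^{\sigma})^{-1}=\bigl(\lambda\,\eta(T'^{\sigma})\bigr)^{-1}$, and then matches this against the value $\dual{\delta}_{\mathcal{W}}\bigl((0,0)\bigr)=\Norm{\gext/F}\bigl(\gamma(\gp_H)\bigr)^{-1}$ quoted from \cite{RK}. Because $T'^{\sigma}=\ellunit'^{\sigma^{2}}$ has explicit coordinates, the remaining computation $\eta(T'^{\sigma})=-\gamma(\gp_H)^{\sigma}/\gamma(\gp_H)^{\sigma^{2}}$ is a one-line determinant expansion, and $\lambda$ drops out. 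You instead evaluate $f_{T'}/\eta$ at $\ellunit$, which pushes the whole problem onto the cube class of $Y_0=y\bigl(\vartheta(\ellunit')\bigr)$ --- a value for which there is no ready-made input analogous to the citation to \cite{RK}.

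Your step (ii) is correct but, as you observe, only pins down $Y_0$ modulo the $G$-invariant classes, so there is a real residual ambiguity. The good news is that the leading-order analysis you gesture at in your last sentence does close the argument, and more cleanly than you seem to expect; in fact it makes your items (i) and (ii) and the entire detour through $Y_0$ superfluous. The key point, which you leave implicit, is that on the Weierstrass curve one has $Z=X^{3}+O(X^{4})$ near $[0{:}1{:}0]$, so $y=(x/y)^{-3}\bigl(1+O(x/y)\bigr)$ with leading coefficient \emph{exactly} $1$. Pulling back through the pointed isomorphism $\vartheta\circ\tau^{-1}$, the function $F=(\vartheta\circ\tau^{-1})^{*}y$ therefore satisfies $F=u^{-3}\bigl(1+O(u)\bigr)$, where $u=(\vartheta\circ\tau^{-1})^{*}(x/y)$ is a uniformizer at $\ellunit'$. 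Writing $u=c\,w+O(w^{2})$ for some $c\in\gext^{*}$, the leading coefficient of $F$ at $\ellunit'$ with respect to $w$ is $c^{-3}$, automatically a cube. Dividing by your (correct) expansion $\eta\sim\gamma(\gp_H)^{2}\gamma(\gp_H)^{\sigma^{2}}/w^{3}$ gives $\lambda=c^{-3}/\bigl(\gamma(\gp_H)^{2}\gamma(\gp_H)^{\sigma^{2}}\bigr)\equiv\gamma(\gp_H)/\gamma(\gp_H)^{\sigma^{2}}\pmod{\gext^{*3}}$, which is the assertion. So your approach is viable, but only once one notices this automatic cube-ness; without that observation the $F^{*}$-ambiguity you flag would force the very laborious direct substitution into Lemmas~\ref{embed1}--\ref{embed2}. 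The paper's choice of evaluation point, together with the cited value of $\dual{\delta}_{\mathcal{W}}$ at $(0,0)$, simply avoids the issue.
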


\vsp

\begin{proof}
\quad
From the paper \cite{RK}, \S3,
 there is a homomorphism $\dual{\delta}_{\mathcal{W}}:\mathcal{W}(\gp_H)(\gext) \to \gext^{*}/\gext^{*3}$
 which sends the point $\vartheta(T)=(0,\,0)$ to $\Norm{\gext/F}\bigl(\gamma(\gp_H)\bigr)^{-1}\gext^{*3}$.
Taking account of the Weil pairing with $\tau(T)=T',\,\vartheta(T)=(0,\,0)$ yields
 $\dual{\delta'} \circ \tau=\dual{\delta}_{\mathcal{W}} \circ \vartheta$.
On the other hand, $\dual{\delta'} \circ \tau(T)=\dual{\delta'}(T')=\dual{\delta'}(T'^{\sigma})^{-1}
=\lambda^{-1} \eta(T'^{\sigma})^{-1} \gext^{*3}$.
Here we used the relation $T'+'T'^{\sigma}=\ellunit'$.
Moreover it is directly verified that $\eta(T'^{\sigma})=-\frac{\gamma(\gp_H)^{\sigma}}{\gamma(\gp_H)^{\sigma^2}}$.
We thus have $\Norm{\gext/F}\bigl(\gamma(\gp_H)\bigr)^{-1}\gext^{*3}=\lambda^{-1} \eta(T'^{\sigma})^{-1} \gext^{*3}$,
 namely $\lambda \gext^{*3}=\frac{\gamma(\gp_H)}{\gamma(\gp_H)^{\sigma^2}} \gext^{*3}$.
\end{proof}

Summarizing the above arguments yields the following proposition.

\begin{proposition}\label{connect}
\quad
The rational function $\eta \in \gext(\gp_H)^{*}$ in Lemma~\ref{div} describes the map
\begin{align*}
\dual{\delta}_{F_{\bullet}}^{\gext_{\bullet}}:
\gp_H(F_{\bullet})/\dual{\phi}\bigl(\dual{\gp_H}(F_{\bullet})\bigr) \quad &\longrightarrow
 \quad \gext_{\bullet}^{*}/\gext_{\bullet}^{*3}\\
[x,\,y,\,z,\,w] \quad &\longmapsto
 \quad \frac{x+y\alpha_1^{\sigma}+z\alpha_2^{\sigma}}{x+y\alpha_1+z\alpha_2} \gext_{\bullet}^{*3},\\
\ellunit' \quad &\longmapsto \quad \frac{\gamma(\gp_H)^{\sigma^2}}{\gamma(\gp_H)} \gext_{\bullet}^{*3},\\
T' \quad &\longmapsto \quad \frac{\gamma(\gp_H)}{\gamma(\gp_H)^{\sigma}} \gext_{\bullet}^{*3}.
\end{align*}
This homomorphism is injective if $\zeta_3 \notin F_{\bullet}$ or $F_{\bullet}=\gext_{\bullet}$.
\end{proposition}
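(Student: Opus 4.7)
The plan is to assemble the three claimed formulas from ingredients already prepared in the text: the expression of the descent map via $\eta$ obtained in equation~\eqref{eta}, the constant $\lambda$ identified in Lemma~\ref{constant}, and the group-homomorphism property of $\dual{\delta}_{\gext_\bullet}$. First, I would observe that equation~\eqref{eta} combined with the evaluation $\eta(\ellunit)=1$ (immediate from $\ellunit=[1,0,0,1]$, since numerator and denominator both reduce to $1$) yields
$$
\dual{\delta}_{F_\bullet}^{\gext_\bullet}(P)=\eta(P)\,\gext_\bullet^{*3}=\frac{x+y\alpha_1^{\sigma}+z\alpha_2^{\sigma}}{x+y\alpha_1+z\alpha_2}\,\gext_\bullet^{*3}
$$
for every $P=[x,y,z,w]\in\gp_H(\gext_\bullet)\setminus\{\ellunit',T'\}$, which is the first listed formula.

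For the two boundary points, where $\eta$ has a zero and a pole respectively, I would bootstrap via the homomorphism property. The point $T'^{\sigma}=\ellunit'^{\sigma^2}$ lies outside $\{\ellunit',T'\}$, and a direct determinant expansion---parallel to the one already performed in the proof of Lemma~\ref{constant}---gives $\eta(T'^{\sigma})=-\gamma(\gp_H)^{\sigma}/\gamma(\gp_H)^{\sigma^2}$. Lemma~\ref{F-point} supplies the relation $\ellunit'=T'^{\sigma}+T$ in $\gp_H$ (since $\ellunit'^{\sigma}+\ellunit'^{\sigma^2}=2\ellunit'$ forces $\ellunit'^{\sigma^2}=\ellunit'-T$), so applying $\dual{\delta}_{\gext_\bullet}$ to this relation together with $\dual{\delta}_{\gext_\bullet}(T)=\Norm{\gext/F}(\gamma(\gp_H))^{-1}\,\gext_\bullet^{*3}$ (exactly what emerges from the proof of Lemma~\ref{constant}) and the trivial congruence $\gamma^{-2}\equiv\gamma$ modulo cubes produces $\dual{\delta}_{\gext_\bullet}(\ellunit')=\gamma(\gp_H)^{\sigma^2}/\gamma(\gp_H)\,\gext_\bullet^{*3}$. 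A further application of additivity, $\dual{\delta}_{\gext_\bullet}(T')=\dual{\delta}_{\gext_\bullet}(\ellunit')\cdot\dual{\delta}_{\gext_\bullet}(T)$, yields $\gamma(\gp_H)/\gamma(\gp_H)^{\sigma}\,\gext_\bullet^{*3}$ at $T'$ after the same sort of mod-cube reduction.

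For injectivity, the map $\dual{\delta}_{F_\bullet}:\gp_H(F_\bullet)/\dual{\phi}\bigl(\dual{\gp_H}(F_\bullet)\bigr)\to F_\bullet^{*}/F_\bullet^{*3}$ is itself injective by non-degeneracy of the Weil pairing $e_{\dual{\phi}}(\,\cdot\,,T)$, and $\dual{\delta}_{F_\bullet}^{\gext_\bullet}$ factors through the natural change-of-field map $F_\bullet^{*}/F_\bullet^{*3}\to\gext_\bullet^{*}/\gext_\bullet^{*3}$. This latter map is the identity when $F_\bullet=\gext_\bullet$; and when $\zeta_3\notin F_\bullet$, its injectivity is controlled by Lemma~\ref{kernel}, since any $f\in F_\bullet^{*}\cap\gext_\bullet^{*3}$ gives $f=g^3$ with $g\in\gext_\bullet$, and the degree-count $[F_\bullet(\zeta_3):F_\bullet]=2\nmid 3$ combined with Galois-ness of $\gext_\bullet/F_\bullet$ forces $g\in F_\bullet$, whence $f\in F_\bullet^{*3}$. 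The principal obstacle I anticipate is the sign and mod-cube bookkeeping in the boundary step: the Galois-equivariance relation $\dual{\delta}_{\gext_\bullet}(\ellunit')^{\sigma-1}\equiv\dual{\delta}_{\gext_\bullet}(T)$ alone only determines $\dual{\delta}_{\gext_\bullet}(\ellunit')$ up to a $\sigma$-invariant correction factor, so the precise representative $\gamma(\gp_H)^{\sigma^2}/\gamma(\gp_H)$ must be extracted using the carefully-signed evaluation $\eta(T'^{\sigma})=-\gamma(\gp_H)^{\sigma}/\gamma(\gp_H)^{\sigma^2}$ rather than being inferable from cohomological formalism alone.
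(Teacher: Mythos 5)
Your proposal is correct and follows essentially the same route as the paper: the generic-point formula comes from equation~\eqref{eta}, the boundary values come from the same two evaluations $\eta(T'^{\sigma})=-\gamma(\gp_H)^{\sigma}/\gamma(\gp_H)^{\sigma^2}$ and $\dual{\delta}_{\gext_{\bullet}}(T)=\Norm{\gext/F}(\gamma(\gp_H))^{-1}$ that underlie Lemma~\ref{constant}, and injectivity reduces to Lemma~\ref{kernel}. The only cosmetic difference is that where the paper cites Lemma~\ref{constant} for $\lambda$ and uses $\tau(-\ellunit')=\ellunit$, you instead decompose $\ellunit'=T'^{\sigma}+T$ and $T'=\ellunit'+T$ and apply the homomorphism property --- an equivalent rearrangement of the same computation, and your mod-cube bookkeeping checks out.
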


\vsp

\begin{proof}
\quad
There is the commutative diagram
$$
\begin{CD}
\gp_H(F_{\bullet})/\dual{\phi}\bigl(\dual{\gp_H}(F_{\bullet})\bigr)		@>{\dual{\delta}_{F_{\bullet}}}>>
F_{\bullet}^{*}/F_{\bullet}^{*3}\\
@VVV								@VVV\\
\gp_H(\gext_{\bullet})/\dual{\phi}\bigl(\dual{\gp_H}(\gext_{\bullet})\bigr)	@>{\dual{\delta}_{\gext_{\bullet}}}>>
\gext_{\bullet}^{*}/\gext_{\bullet}^{*3}.
\end{CD}
$$
Here $\dual{\delta}_{F_{\bullet}},\,\dual{\delta}_{\gext_{\bullet}}$
 are injections and the vertical arrows are natural maps.
For any $P \in \gp_H(F_{\bullet}) \setminus \{\ellunit',\,T'\}$,
 we have $\dual{\delta}_{F_{\bullet}}^{\gext_{\bullet}}(P)=\dual{\delta}_{\gext_{\bullet}}(P)
=\dual{\delta'} \circ \tau(P)=\eta(P)\gext_{\bullet}^{*3}$ by \eqref{eta}.
If $P=\ellunit'$ then we can calculate as
 $\dual{\delta}_{F_{\bullet}}^{\gext_{\bullet}}(\ellunit')
=\bigl(\dual{\delta'} \circ \tau(-\ellunit')\bigr)^{-1}=\dual{\delta'}(\ellunit)^{-1}
=\lambda^{-1} \gext_{\bullet}^{*3}=\frac{\gamma(\gp_H)^{\sigma^2}}{\gamma(\gp_H)} \gext_{\bullet}^{*3}$
 by Lemma~\ref{constant}.
Similarly, we have $\dual{\delta}_{F_{\bullet}}^{\gext_{\bullet}}(T')
=\frac{\gamma(\gp_H)}{\gamma(\gp_H)^{\sigma}} \gext_{\bullet}^{*3}$.
The remaining statement follows from Lemma~\ref{kernel}.
\end{proof}

\section{Local calculations and some lemmas}

In this section we assemble some lemmas frequently used in the preceding sections.
 
\begin{lemma}\label{kernel}
\quad
We denote by $\gext_{\bullet}/F_{\bullet}$ either $\gext/F$ or $\gext_{\prim{P}}/F_{\prim{p}}$
 for finite primes $\prim{P}/\prim{p}$.
The natural homomorphism
 $F_{\bullet}^{*}/F_{\bullet}^{*3} \to \gext_{\bullet}^{*}/\gext_{\bullet}^{*3}$
 induced by the inclusion $F_{\bullet} \subset \gext_{\bullet}$ is injective if and only if
 $\zeta_3 \notin F_{\bullet}$ or $F_{\bullet}=\gext_{\bullet}$.
More precisely, for an extension $\gext_{\bullet}/F_{\bullet}$ the kernel of the natural homomorphism is
\begin{align*}
(F^{*} \cap \gext^{*3})/F^{*3}&=
\begin{cases}
\langle D F^{*3} \rangle \quad &\text{if $\zeta_3 \in F$},\\
\{F^{*3}\} \quad &\text{if $\zeta_3 \notin F$},
\end{cases}\\
(F_{\prim{p}}^{*} \cap \gext_{\prim{P}}^{*3})/F_{\prim{p}}^{*3}&=
\begin{cases}
\langle D_{\prim{p}} F_{\prim{p}}^{*3} \rangle
 \quad &\text{if $\zeta_3 \in F_{\prim{p}}$ and $\prim{p}$ does not split completely in $\gext$},\\
\{F_{\prim{p}}^{*3}\} \quad &\text{otherwise},
\end{cases}
\end{align*}
where $D_{\bullet}$ stands for some element in $F_{\bullet}^{*} \setminus F_{\bullet}^{*3}$
 satisfying $\gext_{\bullet}=F_{\bullet}(\sqrt[3]{D_{\bullet}})$.
Note that if $\zeta_3 \in F_{\prim{p}}$ and $F_{\prim{p}} \ne \gext_{\prim{P}}$ then
 $\langle D_{\prim{p}} F_{\prim{p}}^{*3} \rangle
=\Norm{\gext_{\prim{P}}/F_{\prim{p}}}(\gext_{\prim{P}}^{*})/F_{\prim{p}}^{*3}$.
Further if $\prim{p}$ ramifies tamely in $\gext$ then automatically $\zeta_3 \in F_{\prim{p}}$ and
 $\langle D_{\prim{p}} F_{\prim{p}}^{*3} \rangle=\langle \pi F_{\prim{p}}^{*3} \rangle$
 with some prime element $\pi \in F_{\prim{p}}$.
\end{lemma}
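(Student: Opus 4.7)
The plan is to treat the global case $\gext/F$ and the local case $\gext_\prim{P}/F_\prim{p}$ uniformly, by reducing the kernel computation to a Kummer-theoretic subfield-containment problem. Since in either case $\gext_\bullet/F_\bullet$ is cyclic of degree $1$ or $3$, the condition $a \in F_\bullet^* \cap \gext_\bullet^{*3}$ is equivalent to $F_\bullet(\sqrt[3]{a}) \subset \gext_\bullet$, and only two possibilities can occur: $[F_\bullet(\sqrt[3]{a}):F_\bullet]=1$, giving $a \in F_\bullet^{*3}$, or $[F_\bullet(\sqrt[3]{a}):F_\bullet]=3$, in which case the absence of proper intermediate fields in the cyclic extension $\gext_\bullet$ forces $F_\bullet(\sqrt[3]{a}) = \gext_\bullet$.

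I would then analyze when the second possibility can occur. Since $\gext_\bullet/F_\bullet$ is Galois, the cubic subfield $F_\bullet(\sqrt[3]{a})=\gext_\bullet$ must itself be Galois over $F_\bullet$. But a cubic Kummer extension is Galois iff it contains $\zeta_3$, and $[F_\bullet(\zeta_3):F_\bullet] \le 2$ cannot embed into a cubic field, so this compels $\zeta_3 \in F_\bullet$. This dichotomy proves at once the ``iff'' injectivity statement and shows the kernel is trivial whenever $\zeta_3 \notin F_\bullet$. In the remaining case $\zeta_3 \in F_\bullet$ and $\gext_\bullet \ne F_\bullet$, classical Kummer theory writes $\gext_\bullet = F_\bullet(\sqrt[3]{D_\bullet})$ and supplies the bijection between order-$3$ subgroups of $F_\bullet^*/F_\bullet^{*3}$ and cyclic cubic extensions; the equality $F_\bullet(\sqrt[3]{a}) = F_\bullet(\sqrt[3]{D_\bullet})$ then precisely identifies the kernel with $\langle D_\bullet F_\bullet^{*3} \rangle$. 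The ``otherwise'' clause in the local statement is immediate: $\prim{p}$ splitting completely collapses $\gext_\prim{P}$ to $F_\prim{p}$ and makes the natural map the identity, while $\zeta_3 \notin F_\prim{p}$ is already settled.

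For the supplementary note, the inclusion $\langle D_\prim{p} F_\prim{p}^{*3} \rangle \subset \Norm{\gext_\prim{P}/F_\prim{p}}(\gext_\prim{P}^*)/F_\prim{p}^{*3}$ follows from $\Norm{\gext_\prim{P}/F_\prim{p}}(\sqrt[3]{D_\prim{p}})=D_\prim{p}$ (using $\zeta_3 \in F_\prim{p}$) together with the trivial inclusion of cubes in the norm group. For equality I would combine the local class field theoretic identity $|F_\prim{p}^*/\Norm{\gext_\prim{P}/F_\prim{p}}(\gext_\prim{P}^*)|=3$ with the size formula $|F_\prim{p}^*/F_\prim{p}^{*3}| = 3 \cdot |\mu_3(F_\prim{p})| \cdot |3|_\prim{p}^{-1}$, which under the implicit tame hypothesis $\prim{p} \nmid 3$ equals $9$, forcing both sides to have order $3$. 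The final assertion on tame ramification is standard: tame ramification of the cubic $\gext/F$ at $\prim{p}$ forces residue characteristic $\ne 3$ and ramification index exactly $3$, Hensel's lemma applied to $x^2+x+1$ places $\zeta_3$ in $F_\prim{p}$, and an Eisenstein analysis of $x^3-D_\prim{p}$ gives $\nu_\prim{p}(D_\prim{p}) \not\equiv 0 \pmod 3$, so one may replace $D_\prim{p}$ by a uniformizer $\pi$ modulo cubes.

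The conceptual content is standard cubic Kummer theory and local class field theory; the only delicate point I expect is the norm-group identification, which implicitly requires $\prim{p} \nmid 3$ for the orders to match (at wildly ramified primes $|F_\prim{p}^*/F_\prim{p}^{*3}|$ exceeds $9$ and the norm group is strictly larger than $\langle D_\prim{p}\rangle F_\prim{p}^{*3}$). The remaining case distinctions and checks are routine bookkeeping across the global/local settings.
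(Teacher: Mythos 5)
Your proof is correct, and while the paper reaches the same conclusions via the inflation--restriction exact sequence (identifying the kernel with $H^1(\gext_{\bullet}/F_{\bullet},\,\mu_3^{\Gal(\closure{F_{\bullet}}/\gext_{\bullet})})$), you take a more elementary field-theoretic route: for $a \in F_{\bullet}^{*}$, having $a \in \gext_{\bullet}^{*3}$ forces a cube root of $a$ to lie in $\gext_{\bullet}$, so in the nontrivial case $F_{\bullet}(\sqrt[3]{a}) = \gext_{\bullet}$ by degree counting, and the requirement that this degree-$3$ Kummer extension be Galois over $F_{\bullet}$ (equivalently, contain $\zeta_3$) forces $\zeta_3 \in F_{\bullet}$ because $[F_{\bullet}(\zeta_3):F_{\bullet}] \leq 2$ cannot sit inside a degree-$3$ extension. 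The two approaches are essentially the same theorem packaged differently --- Kummer theory being the cohomological statement in concrete form --- but yours avoids an explicit appeal to inflation--restriction and is more self-contained. The identification of the kernel with $\langle D_{\bullet} F_{\bullet}^{*3}\rangle$ via the Kummer correspondence, the computation $\Norm{\gext_{\prim{P}}/F_{\prim{p}}}(\sqrt[3]{D_{\prim{p}}}) = D_{\prim{p}}$, the order-counting against $[F_{\prim{p}}^{*}:\Norm{\gext_{\prim{P}}/F_{\prim{p}}}(\gext_{\prim{P}}^{*})]=3$, and the tame-ramification bookkeeping ($e=3$ with residue characteristic prime to $3$ gives $\zeta_3 \in F_{\prim{p}}$ via Hensel, and ramification forces $\nu_{\prim{p}}(D_{\prim{p}}) \not\equiv 0 \pmod 3$) all check out.

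One point is worth making explicit. You correctly flag that the equality $\langle D_{\prim{p}} F_{\prim{p}}^{*3}\rangle = \Norm{\gext_{\prim{P}}/F_{\prim{p}}}(\gext_{\prim{P}}^{*})/F_{\prim{p}}^{*3}$ needs $\prim{p} \nmid 3$ for the cardinality matching to close the argument. As written, the paper's note imposes only $\zeta_3 \in F_{\prim{p}}$ and $F_{\prim{p}} \neq \gext_{\prim{P}}$, which does not exclude $\prim{p} \mid 3$ (e.g.\ $F_{\prim{p}} = \rat_3(\zeta_3)$). At such primes $[F_{\prim{p}}^{*}:F_{\prim{p}}^{*3}]$ exceeds $9$, so $[\Norm{\gext_{\prim{P}}/F_{\prim{p}}}(\gext_{\prim{P}}^{*}):F_{\prim{p}}^{*3}] > 3$ and only the inclusion $\langle D_{\prim{p}} F_{\prim{p}}^{*3}\rangle \subset \Norm{\gext_{\prim{P}}/F_{\prim{p}}}(\gext_{\prim{P}}^{*})/F_{\prim{p}}^{*3}$ survives, not the equality. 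The paper's proof gives no justification for this sentence (``follows from theory of local fields and Kummer''), and in practice the note is only ever invoked in the tame situation (Lemma~\ref{loc_ram}), so no downstream result is compromised --- but your caution about the implicit restriction $\prim{p} \nmid 3$ is warranted and sharpens the statement.
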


\vsp

\begin{proof}
\quad
There is equivalent exact sequences
$$
\begin{CD}
0	@>>>		(F_{\bullet}^{*} \cap \gext_{\bullet}^{*3})/F_{\bullet}^{*3}
	@>>>		F_{\bullet}^{*}/F_{\bullet}^{*3}
	@>>>		\gext_{\bullet}^{*}/\gext_{\bullet}^{*3}\\
@.			@|				@|				@|\\
0	@>>>		H^1(\gext_{\bullet}/F_{\bullet},\,\mu_3^{\Gal(\closure{F_{\bullet}}/\gext_{\bullet})})
	@>{\inf}>>	H^1(F_{\bullet},\,\mu_3)	@>{\res}>>	H^1(\gext_{\bullet},\,\mu_3).
\end{CD}
$$
Since the degree $[\gext_{\bullet}:F_{\bullet}]$ must divide $3$,
 the condition $\zeta_3 \notin F_{\bullet}$ is equivalent to $\zeta_3 \notin \gext_{\bullet}$.
Hence the cohomology group
 $H^1(\gext_{\bullet}/F_{\bullet},\,\mu_3^{\Gal(\closure{F_{\bullet}}/\gext_{\bullet})})$
 vanishes when $\zeta_3 \notin F_{\bullet}$ or $\prim{p}$ splits completely in $\gext$.
Otherwise $H^1(\gext_{\bullet}/F_{\bullet},\,\mu_3^{\Gal(\closure{F_{\bullet}}/\gext_{\bullet})})$
 is isomorphic to the group $\rint/3\rint$. This proves the first assertion.
The latter follows from theory of local fields and Kummer.
\end{proof}

\begin{lemma}\label{norm}
\quad
Let $\gext_{\bullet}/F_{\bullet}$ denote $\gext/F$ or $\gext_{\prim{P}}/F_{\prim{p}}$ for finite primes
 $\prim{P}/\prim{p}$ which does not split completely.
For an element $d \in F_{\bullet}^{*}$, the following conditions are equivalent.
\begin{list}{$\diamond$}{}
\item $d=\alpha^{\sigma-1}\beta^3$ with some $\alpha,\,\beta \in \gext_{\bullet}^{*}$ satisfying
 $\Norm{\gext_{\bullet}/F_{\bullet}}(\alpha) \in F_{\bullet}^{*3}$.
\item $d=\Norm{\gext_{\bullet}/F_{\bullet}}(\beta)$.
\end{list}
\end{lemma}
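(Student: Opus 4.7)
The plan is to prove the two implications separately: the direction $(2)\Rightarrow(1)$ is a direct algebraic identity in the group ring $\rint[G]$ with $G=\langle\sigma\rangle$, while $(1)\Rightarrow(2)$ rests on Hilbert's Theorem~90 for the cyclic extension $\gext_\bullet/F_\bullet$, together with a short case analysis on the cubic root of a certain element of $F_\bullet^{*}$. Set $N:=1+\sigma+\sigma^2$, so that $\Norm{\gext_\bullet/F_\bullet}(x)=x^N$. The two key identities in $\rint[G]$ that drive the argument are
\[
(2+\sigma)(\sigma-1)+3=N \quad\text{and}\quad (\sigma-1)^2=N-3\sigma.
\]

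For $(2)\Rightarrow(1)$: given $d=\Norm{\gext_\bullet/F_\bullet}(\beta)=\beta^N$, I would simply set $\alpha:=\beta^{2+\sigma}$. The first identity yields $\alpha^{\sigma-1}\beta^3=\beta^{(2+\sigma)(\sigma-1)+3}=\beta^N=d$, and since $\Norm{\gext_\bullet/F_\bullet}(\beta)\in F_\bullet^{*}$ is $\sigma$-fixed, $\Norm{\gext_\bullet/F_\bullet}(\alpha)=\Norm{\gext_\bullet/F_\bullet}(\beta)^{2+\sigma}=\Norm{\gext_\bullet/F_\bullet}(\beta)^{3}=d^{3}\in F_\bullet^{*3}$, as required.

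For $(1)\Rightarrow(2)$: writing $\Norm{\gext_\bullet/F_\bullet}(\alpha)=\gamma^3$ with $\gamma\in F_\bullet^{*}$, the element $\alpha/\gamma$ has norm $1$, so Hilbert~90 supplies $\eta\in\gext_\bullet^{*}$ with $\alpha=\gamma\,\eta^{\sigma-1}$. Since $\gamma$ is $\sigma$-fixed, $\alpha^{\sigma-1}=\eta^{(\sigma-1)^2}=\Norm{\gext_\bullet/F_\bullet}(\eta)\cdot\eta^{-3\sigma}$ by the second identity. Substituting into $d=\alpha^{\sigma-1}\beta^3$ turns the hypothesis into
\[
d=\Norm{\gext_\bullet/F_\bullet}(\eta)\cdot\mu^3,\qquad \mu:=\beta/\eta^{\sigma}\in\gext_\bullet^{*},
\]
and since $d$ and $\Norm{\gext_\bullet/F_\bullet}(\eta)$ both lie in $F_\bullet^{*}$, this forces $\mu^{3}\in F_\bullet^{*}$. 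It then suffices to show $\Norm{\gext_\bullet/F_\bullet}(\mu)=\mu^{3}$, for then $d=\Norm{\gext_\bullet/F_\bullet}(\eta\mu)$.

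The only delicate step is the verification that $\Norm{\gext_\bullet/F_\bullet}(\mu)=\mu^{3}$. If $\mu\in F_\bullet$ it is trivial. If $\mu\notin F_\bullet$, then $\gext_\bullet=F_\bullet(\mu)$ because $[\gext_\bullet:F_\bullet]=3$; since $\gext_\bullet/F_\bullet$ is Galois and $\mu$ is a root of $X^{3}-\mu^{3}\in F_\bullet[X]$, the three roots $\mu,\zeta_3\mu,\zeta_3^{2}\mu$ must all lie in $\gext_\bullet$, so $\zeta_3\in\gext_\bullet$. But $[F_\bullet(\zeta_3):F_\bullet]\in\{1,2\}$ has to divide $3$, hence $\zeta_3\in F_\bullet$. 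Then $\sigma(\mu)\in\{\zeta_3\mu,\zeta_3^{2}\mu\}$, and $\Norm{\gext_\bullet/F_\bullet}(\mu)=\mu\cdot\zeta_3\mu\cdot\zeta_3^{2}\mu=\mu^{3}$ as required. This ruling out of ``$\mu\notin F_\bullet$ with $\zeta_3\notin F_\bullet$'' via the Galois structure of $\gext_\bullet/F_\bullet$ is the main obstacle; once it is settled, both implications close immediately.
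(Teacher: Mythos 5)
Your proof is correct, but it takes a route that differs from the paper's in both directions, most notably in the use of Hilbert~90. For $(2)\Rightarrow(1)$ you take $\alpha=\beta^{2+\sigma}$ whereas the paper takes $\alpha=\beta^{1-\sigma^2}$; since $(2+\sigma)-(1-\sigma^2)=1+\sigma+\sigma^2$ acts on $\beta$ as the norm $d\in F_\bullet^{*}$, the two choices differ by an element of $F_\bullet^{*}$ and are essentially interchangeable. For $(1)\Rightarrow(2)$ the paper never invokes Hilbert~90: it applies $1-\sigma$ to $d=\alpha^{\sigma-1}\beta^{3}$, isolates a cube root $r=d_0^{-1}\alpha^{\sigma}\beta^{1-\sigma}$ lying in $\mu_3(\gext_\bullet)=\mu_3(F_\bullet)$ (since $[\gext_\bullet:F_\bullet]=3$), and then applies $\sigma^2-1$ to $r$. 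You instead normalize $\alpha=\gamma\eta^{\sigma-1}$ via Hilbert~90 and reduce to showing $\Norm{\gext_\bullet/F_\bullet}(\mu)=\mu^{3}$ for $\mu$ with $\mu^3\in F_\bullet^{*}$, which relies on the same degree constraint to force $\zeta_3\in F_\bullet$. Both arguments hinge on the same key fact, that the degree-$3$ hypothesis collapses $\mu_3(\gext_\bullet)$ into $\mu_3(F_\bullet)$; yours makes the Galois-cohomological structure more visible, while the paper's is shorter and more elementary.

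One caveat is worth a line. Everywhere Lemma~\ref{norm} is invoked later (Lemma~\ref{bound_norm}, Lemma~\ref{kerG}, Proposition~\ref{exact}), the conclusion $d=\Norm{\gext_\bullet/F_\bullet}(\beta)$ is read with the \emph{same} $\beta$ appearing in condition (1), and the paper's proof produces exactly that. Your argument instead produces $d=\Norm{\gext_\bullet/F_\bullet}(\eta\mu)$ with $\eta\mu=\beta\,\eta^{1-\sigma}$, a priori a different element. The fix is immediate: $\Norm{\gext_\bullet/F_\bullet}(\eta^{1-\sigma})=\Norm{\gext_\bullet/F_\bullet}(\eta)^{1-\sigma}=1$ because $\Norm{\gext_\bullet/F_\bullet}(\eta)\in F_\bullet^{*}$ is $\sigma$-fixed, so $\Norm{\gext_\bullet/F_\bullet}(\eta\mu)=\Norm{\gext_\bullet/F_\bullet}(\beta)$ and the same $\beta$ does work. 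You should add this observation explicitly, since the downstream applications rely on it.
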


\vsp

\begin{proof}
\quad
Assume the first condition and let $\Norm{\gext_{\bullet}/F_{\bullet}}(\alpha)=d_0^3$
 for some $d_0 \in F_{\bullet}^{*}$ where $\Gal(\gext_{\bullet}/F_{\bullet}) \simeq \langle \sigma \rangle$.
Then $1=(\alpha^{\sigma-1})^{1-\sigma}\beta^{3(1-\sigma)}=r^3$.
Here $r:=d_0^{-1}\alpha^{\sigma}\beta^{1-\sigma} \in \mu_3(\gext_{\bullet})=\mu_3(F_{\bullet})$
 since the extension degree $[\gext_{\bullet}:F_{\bullet}]$ is equal to $3$.
We thus have $1=r^{\sigma^2-1}=\alpha^{1-\sigma}\beta^{-3}\Norm{\gext_{\bullet}/F_{\bullet}}(\beta)$,
 and hence $d=\Norm{\gext_{\bullet}/F_{\bullet}}(\beta)$.
The converse is immediate from
 $d=\Norm{\gext_{\bullet}/F_{\bullet}}(\beta)=(\beta^{1-\sigma^2})^{\sigma-1}\beta^3$.
\end{proof}

\begin{lemma}\label{G-inv}
\quad
$\bigl((\Ker \Norm{\gext/F})\gext^{*3}/\gext^{*3}\bigr)^G=\mu_3(F)\Norm{\gext/F}(\gext^{*})\gext^{*3}/\gext^{*3}$.
\end{lemma}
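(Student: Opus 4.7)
The plan is to work inside the $G$-module $M:=\gext^{*}/\gext^{*3}$, where $G=\langle\sigma\rangle$ is cyclic of order~$3$. First I would invoke Hilbert~$90$ for the cyclic extension $\gext/F$ to rewrite $\Ker \Norm{\gext/F}=\bigl\{\delta^{\sigma-1}\mid\delta\in\gext^{*}\bigr\}$, so that the left-hand side
$$
K:=(\Ker \Norm{\gext/F})\gext^{*3}/\gext^{*3}
$$
coincides with the image $(\sigma-1)M$ of the endomorphism $\sigma-1$ of $M$. The lemma then amounts to computing the intersection $K^{G}=(\sigma-1)M\cap M^{G}$.

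The key algebraic input is the identity $(\sigma-1)^{2}=N-3\sigma$ in $\rint[G]/(\sigma^{3}-1)$, where $N=1+\sigma+\sigma^{2}$. Applied to $\bar\beta\in M$ and using that cubes vanish in $M$, this gives $(\sigma-1)^{2}\bar\beta=\overline{\Norm{\gext/F}(\beta)}$; hence $\overline{\beta^{\sigma-1}}\in M^{G}$ exactly when $\Norm{\gext/F}(\beta)\in\gext^{*3}$, yielding
$$
K^{G}=\bigl\{\overline{\beta^{\sigma-1}}\;\bigm|\;\beta\in\gext^{*},\,\Norm{\gext/F}(\beta)\in\gext^{*3}\bigr\}.
$$
The inclusion $R:=\mu_{3}(F)\Norm{\gext/F}(\gext^{*})\gext^{*3}/\gext^{*3}\subset K^{G}$ is then immediate: any $\zeta\in\mu_{3}(F)$ satisfies $\Norm{\gext/F}(\zeta)=\zeta^{3}=1$, while each $\Norm{\gext/F}(\gamma)\in F^{*}$ is $G$-fixed and satisfies $\Norm{\gext/F}\bigl(\Norm{\gext/F}(\gamma)\bigr)=\Norm{\gext/F}(\gamma)^{3}\in\gext^{*3}$.

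The main step is the reverse inclusion. Given $\beta\in\gext^{*}$ with $\Norm{\gext/F}(\beta)=\xi^{3}$ for some $\xi\in\gext^{*}$, I would set $\zeta:=\xi^{\sigma-1}$; since $\zeta^{3}=(\xi^{3})^{\sigma-1}=1$, we have $\zeta\in\mu_{3}(\gext)$, and in fact $\mu_{3}(\gext)=\mu_{3}(F)$ because $[F(\zeta_{3}):F]\mid 2$ is coprime to $[\gext:F]=3$. A direct computation gives $\Norm{\gext/F}(\xi)=\xi\cdot\xi\zeta\cdot\xi\zeta^{2}=\xi^{3}\zeta^{3}=\xi^{3}$, so $\Norm{\gext/F}(\beta/\xi)=1$. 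A second application of Hilbert~$90$ produces $\eta\in\gext^{*}$ with $\beta=\xi\,\eta^{\sigma-1}$, and then
$$
\beta^{\sigma-1}=\xi^{\sigma-1}\cdot\eta^{(\sigma-1)^{2}}=\zeta\cdot\Norm{\gext/F}(\eta)\cdot(\eta^{\sigma})^{-3}
$$
places $\overline{\beta^{\sigma-1}}$ inside $R$. The delicate point is the case $\zeta_{3}\in F$: here $\xi$ need not lie in $F^{*}$ and $\zeta$ may be a nontrivial cube root of unity, and this is precisely the source of the $\mu_{3}(F)$ factor on the right. Handling both cases uniformly via the computation $\Norm{\gext/F}(\xi)=\xi^{3}$ (which holds regardless of whether $\zeta$ is trivial) is the key technical step.
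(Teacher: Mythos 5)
Your proof is correct. Both you and the paper rely on the same three ingredients: Hilbert's Theorem 90 for the cyclic extension $\gext/F$, the group‑ring identity $(\sigma-1)^2=(1+\sigma+\sigma^2)-3\sigma$ in $\rint[G]$, and the fact that $\mu_3(\gext)=\mu_3(F)$ (since $[\gext:F]=3$ is odd). The organization, however, is genuinely different. You apply Hilbert 90 at the outset to replace $\Ker\Norm{\gext/F}$ by $(\sigma-1)\gext^{*}$, recasting the left‑hand side as $(\sigma-1)M\cap M^G$ inside the $G$‑module $M=\gext^{*}/\gext^{*3}$; the identity $(\sigma-1)^2=N-3\sigma$ then gives the clean intermediate description $K^G=\{\overline{\beta^{\sigma-1}}\mid \Norm{\gext/F}(\beta)\in\gext^{*3}\}$, and a second application of Hilbert 90 (to $\beta/\xi$ after extracting a cube root $\xi$ of $\Norm{\gext/F}(\beta)$) finishes. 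The paper instead starts from $\alpha\in\Ker\Norm{\gext/F}$ with $G$‑fixed class, writes $\alpha^{\sigma-1}=\beta^3$, invokes Lemma~\ref{norm} (which packages the $(\sigma-1)^2$ computation) to conclude $\Norm{\gext/F}(\beta)=1$, applies Hilbert 90 to $\beta$ to get $\beta_0$, and then shows $d:=\alpha\beta_0^3\in F^{*}$ satisfies $d^3=\Norm{\gext/F}(\beta_0)^3$, so $d\in\mu_3(F)\Norm{\gext/F}(\gext^{*})$. Your module‑theoretic framing makes the origin of the $\mu_3(F)$ factor transparent and avoids citing an auxiliary lemma; the paper's version is terser because it reuses Lemma~\ref{norm}, which it needs elsewhere anyway. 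One small presentational point: in the easy inclusion you should make explicit that $\overline{\Norm{\gext/F}(\gamma)}$ lies in $K$ because $\Norm{\gext/F}(\gamma)\gamma^{-3}\in\Ker\Norm{\gext/F}$ represents the same class modulo cubes — the property $\Norm{\gext/F}(\Norm{\gext/F}(\gamma))\in\gext^{*3}$ you cite is the relevant fact, but the translation into a norm‑one representative deserves a word.
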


\vsp

\begin{proof}
\quad
Since each element in the right-hand side has a representative of the form
 $d=r\Norm{\gext/F}(\beta) \in F^{*}$ with $r \in \mu_3(F),\,\beta \in \gext^{*}$,
 the element $d\beta^{-3} \in \Ker \Norm{\gext/F}$ represents an element in the left-hand side.
Conversely, let $\alpha \in \Ker \Norm{\gext/F}$ represent an element in the left-hand side.
Then there is an element $\beta \in \gext^{*}$ such that $\alpha^{\sigma-1}=\beta^3$,
 which implies $\Norm{\gext/F}(\beta)=1$ from Lemma~\ref{norm}.
By using Hilbert's ``Satz $90$", the element $\beta$ is of the form $\beta_0^{1-\sigma}$
 with some $\beta_0 \in \gext^{*}$ and hence $(\alpha \beta_0^3)^{\sigma}=\alpha \beta_0^3 \in F^{*}$.
Letting $d:=\alpha \beta_0^3 \in F^{*}$, we have $d^3=\Norm{\gext/F}(\beta_0)^3$.
Therefore $\eqcls{\alpha}=\eqcls{d}$ lies in the right-hand side.
\end{proof}

\begin{lemma}\label{loc_ram}
\quad
For any finite prime $\prim{p}$ of $F$ which ramifies tamely in $\gext$,
The image of the map $\dual{\delta}_{F_{\prim{p}}}^{\gext_{\prim{P}}}$ is trivial.
Especially $\Image \dual{\delta}_{F_{\prim{p}}} \subset \langle \pi F_{\prim{p}}^{*3} \rangle$
 for some prime element $\pi \in F_{\prim{p}}$.
\end{lemma}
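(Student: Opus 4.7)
The plan is to use the description in Proposition~\ref{connect}: for $P=[x,y,z,w]\in\gp_H(F_{\prim{p}})$, the map $\dual{\delta}_{F_{\prim{p}}}^{\gext_{\prim{P}}}$ sends $P$ to $\eta(P)\gext_{\prim{P}}^{*3}$, where $\eta(P)=\sigma(\alpha)/\alpha$ with $\alpha=x+y\alpha_{1}+z\alpha_{2}$. Since $\prim{p}$ ramifies tamely in $\gext$, there is a unique prime $\prim{P}\mid\prim{p}$, the extension $\gext_{\prim{P}}/F_{\prim{p}}$ is totally tamely ramified of degree $3$, the residue characteristic is coprime to $3$, the residue fields coincide, and by Kummer theory (using $\mu_{3}\subset F_{\prim{p}}$) one can choose a uniformizer $\Pi$ of $\gext_{\prim{P}}$ with $\Pi^{3}=\pi$ for a uniformizer $\pi$ of $F_{\prim{p}}$. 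The three $w=0$ points on $\gp_{H}$ form a single Galois orbit of size three, and $\sigma$ generates the decomposition group at $\prim{P}$; so none of them are $F_{\prim{p}}$-rational, and for any $P\in\gp_{H}(F_{\prim{p}})$ both $\alpha$ and $w$ are nonzero.

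Next, I will reduce $\eta(P)$ modulo cubes via Hilbert~$90$. The defining equation of $\gp$ gives $\Norm{\gext_{\prim{P}}/F_{\prim{p}}}(\alpha)=w^{3}$, so $\alpha/w$ has norm one and $\alpha/w=\beta^{\sigma-1}$ for some $\beta\in\gext_{\prim{P}}^{*}$. Since $w\in F_{\prim{p}}^{*}$ is $\sigma$-fixed, applying $\sigma-1$ once more yields $\eta(P)=\alpha^{\sigma-1}=\beta^{(\sigma-1)^{2}}$; and the identity $(\sigma-1)^{2}=\mathrm{N}-3\sigma$ in the group ring $\rint[G]$ gives
\[
\eta(P)=\frac{\Norm{\gext_{\prim{P}}/F_{\prim{p}}}(\beta)}{\sigma(\beta)^{3}}\equiv\Norm{\gext_{\prim{P}}/F_{\prim{p}}}(\beta)\pmod{\gext_{\prim{P}}^{*3}}.
\]

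It then suffices to prove $\Norm{\gext_{\prim{P}}/F_{\prim{p}}}(\beta)\in\gext_{\prim{P}}^{*3}$. Writing $\Norm{\gext_{\prim{P}}/F_{\prim{p}}}(\beta)=\pi^{m}u$ with $u\in\ringint{F_{\prim{p}}}^{*}$, the factor $\pi^{m}=\Pi^{3m}$ is already a cube in $\gext_{\prim{P}}$. For the unit $u$, set $\beta_{0}=\beta/\Pi^{\nu_{\prim{P}}(\beta)}\in\ringint{\gext_{\prim{P}}}^{*}$; because $\sigma$ acts trivially on the common residue field $\kappa$, one has $\bar{u}=\overline{\Norm{\gext_{\prim{P}}/F_{\prim{p}}}(\beta_{0})}=\overline{\beta_{0}}^{\,3}$, so $u$ becomes a cube in $\ringint{F_{\prim{p}}}^{*}$ modulo the principal units $U^{(1)}_{F_{\prim{p}}}$. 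As the residue characteristic is coprime to $3$, cubing is bijective on the pro-$p$ group $U^{(1)}_{F_{\prim{p}}}$, whence $U^{(1)}_{F_{\prim{p}}}\subset\ringint{F_{\prim{p}}}^{*3}$ and $u\in\ringint{F_{\prim{p}}}^{*3}\subset\gext_{\prim{P}}^{*3}$. This shows $\eta(P)\in\gext_{\prim{P}}^{*3}$ and proves the first assertion.

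The ``especially'' clause follows formally: the commutative factorization $\dual{\delta}_{F_{\prim{p}}}^{\gext_{\prim{P}}}=(\text{inclusion})\circ\dual{\delta}_{F_{\prim{p}}}$ and the triviality of the former force $\Image\dual{\delta}_{F_{\prim{p}}}$ to lie in the kernel of $F_{\prim{p}}^{*}/F_{\prim{p}}^{*3}\to\gext_{\prim{P}}^{*}/\gext_{\prim{P}}^{*3}$, which by Lemma~\ref{kernel} equals $\langle\pi F_{\prim{p}}^{*3}\rangle$. The main obstacle I anticipate is the group-ring manipulation that identifies $\eta(P)$ with a norm modulo cubes and the residue-field computation ensuring that this norm itself is cube; the verifications that the $w=0$ points drop out and that the principal units are $3$-divisible are routine consequences of tameness.
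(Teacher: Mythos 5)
Your proof is correct, but it takes a somewhat more circuitous route than the paper. Both arguments exploit the same two consequences of tame ramification --- the residue characteristic is prime to $3$ and the residue extension of $\gext_{\prim{P}}/F_{\prim{p}}$ is trivial --- but they organize the reduction differently. The paper argues directly: for $P=[x,y,z,1]$, the defining equation $\Norm{\gext/F}(\alpha)=1$ together with the fact that $\sigma$ fixes $\nu_{\prim{P}}$ forces $\nu_{\prim{P}}(\alpha)=\nu_{\prim{P}}(\alpha^{\sigma})=\nu_{\prim{P}}(\alpha^{\sigma^2})=0$; since $\sigma$ is trivial on the residue field this gives $\eta(P)=\alpha^{\sigma-1}\equiv 1\pmod{\prim{P}}$, and Hensel's lemma (applied to $x^3-\eta(P)$, with $3$ a unit in the residue field) finishes. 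You instead apply Hilbert~$90$ to $\alpha/w$, use the group-ring identity $(\sigma-1)^2=\mathrm{N}-3\sigma$ to recast $\eta(P)$ as $\Norm{\gext_{\prim{P}}/F_{\prim{p}}}(\beta)$ modulo cubes, and then show that norm is a cube via a residue-field computation plus $3$-divisibility of the principal units. Your detour through $\beta$ replaces the paper's direct observation that $\alpha$ is a unit, and the two routes end in essentially the same residue/Hensel step; the paper's version is shorter, yours makes the underlying norm-one structure more visible. Both correctly handle the $w=0$ points and both derive the ``especially'' clause from Lemma~\ref{kernel} in the same way. No gaps.
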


\vsp

\begin{proof}
\quad
Since $\gext_{\prim{P}}/F_{\prim{p}}$ is a proper extension,
 there is no $F_{\prim{p}}$-rational point on $\gp_H \cap \{w=0\}$.
For any point $P=[x,\,y,\,z,\,1] \in \gp_H(F_{\prim{p}})$ it is seen from the equation defining $\gp_H$ that
 $\nu_{\prim{P}}(x+y\alpha_1+z\alpha_2)=\nu_{\prim{P}}(x+y\alpha_1^{\sigma}+z\alpha_2^{\sigma})
=\nu_{\prim{P}}(x+y\alpha_1^{\sigma^2}+z\alpha_2^{\sigma^2})=0$,
 and hence $\eta(P) \equiv 1 \pmod{\prim{P}}$ where $\eta$ has defined in Lemma~\ref{div}.
Therefore $\eta(P)$ lies in $\gext_{\prim{P}}^{*3}$ by Hensel's Lemma, which implies the desired result
 from Proposition~\ref{connect} and Lemma~\ref{kernel}.
\end{proof}

\begin{lemma}\label{tame}
\quad
For arbitrary $\alpha \in \gext \setminus F$,
 if a finite prime $\prim{p}$ of $F$ ramifies tamely in $\gext$ then
 $3 \nmid \nu_{\prim{p}}\bigl(\Norm{\gext/F}(\alpha-\alpha^{\sigma})\bigr)$.
\end{lemma}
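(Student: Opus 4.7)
The plan is to localize at $\prim{p}$ and exploit the Kummer description of tamely totally ramified cyclic cubic extensions of local fields. Since $\gext/F$ is cyclic of prime degree $3$, any ramified $\prim{p}$ is automatically totally ramified, so there is a unique prime $\prim{P} \mid \prim{p}$ with ramification index $e=3$ and residue degree $f=1$; tameness of $\prim{p}$ forces the residue characteristic to be different from $3$. With $f=1$ the norm behaves as $\nu_{\prim{p}}\bigl(\Norm{\gext/F}(\beta)\bigr)=\nu_{\prim{P}}(\beta)$ for $\beta \in \gext^{*}$, so it is enough to prove that $\nu_{\prim{P}}(\alpha-\alpha^{\sigma})\not\equiv 0 \pmod{3}$.

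The first structural input I would use is that, since $\gext_{\prim{P}}/F_{\prim{p}}$ is a Galois totally tamely ramified extension of degree $3$ with residue characteristic $\ne 3$, one must have $\zeta_3 \in F_{\prim{p}}$ and $\gext_{\prim{P}}=F_{\prim{p}}(\pi^{1/3})$ for some uniformizer $\pi$ of $F_{\prim{p}}$, with $\sigma(\pi^{1/3}) = \zeta_3\,\pi^{1/3}$ (possibly after swapping $\sigma$ with $\sigma^2$). The second input is that $\gext \cap F_{\prim{p}} = F$ inside $\gext_{\prim{P}}$, which is automatic because $[\gext:F]=3$ is prime and $F_{\prim{p}}\ne \gext_{\prim{P}}$; hence any $\alpha \in \gext \setminus F$ lies in $\gext_{\prim{P}} \setminus F_{\prim{p}}$ and $\sigma$ really does move it.

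The core calculation is then immediate. Expand $\alpha = a + b\,\pi^{1/3} + c\,\pi^{2/3}$ with $a,b,c \in F_{\prim{p}}$, where $(b,c)\ne (0,0)$ by the previous paragraph. Then
$$\alpha-\alpha^{\sigma}=b(1-\zeta_3)\,\pi^{1/3}+c(1-\zeta_3^2)\,\pi^{2/3}.$$
Because $p \ne 3$ gives $\nu_{\prim{P}}(1-\zeta_3)=\nu_{\prim{P}}(1-\zeta_3^2)=0$, and $\nu_{\prim{P}}(b),\,\nu_{\prim{P}}(c) \in 3\rint \cup \{+\infty\}$, the two summands have $\prim{P}$-adic valuations lying in the distinct residue classes $1$ and $2$ modulo $3$. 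The minimum is therefore attained strictly, so $\nu_{\prim{P}}(\alpha-\alpha^{\sigma}) \equiv 1$ or $2\pmod{3}$, which is the claim.

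The only step that needs genuine care, rather than bookkeeping, is the structural assertion that a Galois tame totally ramified cubic forces $\zeta_3$ into the base field and admits a Kummer-type uniformizer. Once that is granted, everything else is a routine expansion in the basis $\{1,\pi^{1/3},\pi^{2/3}\}$, and the incongruence $1 \not\equiv 2 \pmod{3}$ automatically prevents the leading terms from cancelling.
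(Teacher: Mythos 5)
Your proof is correct, and it takes a genuinely different route from the paper's. The paper proves Lemma~\ref{tame} indirectly through the elliptic-curve descent machinery: it specializes to the hyperplane $H\colon z=0$, identifies $\Norm{\gext/F}(\gamma(\gp_H))=-\Norm{\gext/F}(\alpha-\alpha^{\sigma})$, and argues by contradiction, combining Lemma~\ref{loc_ram} (which forces $\Image\dual{\delta}_{F_{\prim{p}}}$ into a rank-one subgroup generated by a uniformizer) with the computation of $\Image\dual{\delta}_{F_{\prim{p}}}$ from \cite{RK}, Theorem~4.1 (which would give the full $\ringint{F_{\prim{p}}}^{*}/\ringint{F_{\prim{p}}}^{*3}\simeq\rint/3\rint$ if $3\mid\nu_{\prim{p}}$). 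The remark following the lemma even flags this as a deliberately ``elliptic'' proof of an elementary-looking statement. Your argument instead works entirely inside local field theory: for a cyclic cubic $\gext/F$, a ramified prime is totally ramified with $e=3$, $f=1$, so $\nu_{\prim{p}}(\Norm{\gext/F}(\beta))=\nu_{\prim{P}}(\beta)$; tameness forces $p\ne 3$, hence $\zeta_3\in F_{\prim{p}}$ and $\gext_{\prim{P}}=F_{\prim{p}}(\pi^{1/3})$; expanding $\alpha=a+b\pi^{1/3}+c\pi^{2/3}$ (with $(b,c)\ne(0,0)$ because $\gext\cap F_{\prim{p}}=F$) gives $\alpha-\alpha^{\sigma}=b(1-\zeta_3)\pi^{1/3}+c(1-\zeta_3^2)\pi^{2/3}$, whose two summands have $\nu_{\prim{P}}$-valuations in $1+3\rint$ and $2+3\rint$ respectively (the Kummer factors being units since $p\ne 3$), so the minimum is strict and $\nu_{\prim{P}}(\alpha-\alpha^{\sigma})\not\equiv 0\pmod{3}$. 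This is shorter, self-contained, and does not rely on \cite{RK}; the paper's version, by contrast, illustrates the interplay with descent that is the theme of the article. Either stands on its own.
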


\vsp

\begin{proof}
\quad
Let $\gp$ be a cubic surface defined by the polynomial \eqref{defpoly}
 with the $F$-basis $\{1,\,\alpha,\,\alpha^2\}$ of $\gext$.
Consider the elliptic curve $\gp_H/F$ for the hyperplane $H:z=0$.
Then it is verified that
 $\Norm{\gext/F}\bigl(\gamma(\gp_H)\bigr)=-\delta(\gp)=-\Norm{\gext/F}(\alpha-\alpha^{\sigma})$.
If $3 \mid \nu_{\prim{p}}\bigl(\Norm{\gext/F}(\gamma(\gp_H))\bigr)$ then
 $\Image \dual{\delta}_{F_{\prim{p}}}=\ringint{F_{\prim{p}}}^{*}/\ringint{F_{\prim{p}}}^{*3}$
 by using \cite{RK}, Theorem~4.1,
 which is isomorphic to $\rint/3\rint$ since $\zeta_3 \in F_{\prim{p}}$ when $\prim{p}$ ramifies tamely.
However this contradicts Lemma~\ref{loc_ram}. Therefore we have the desired result.
\end{proof}

\begin{remark}
\quad
The statement of Lemma~\ref{tame} seems to separate from the arithmetic of elliptic curves.
Nevertheless it can be proven via certain elliptic curves.
\end{remark}

\begin{lemma}\label{coincide}
\quad
Let $\prim{p}$ be a finite prime of $F$ which ramifies in $\gext$.
Assume $3 \nmid \nu_{\prim{p}}\bigl(\Norm{\gext/F}(\gamma(\gp_H))\bigr)$.
If $\prim{p}$ ramifies wildly then we also assume $3$ splits completely in $F$. Then
 $\Image \dual{\delta}_{F_{\prim{p}}}=\Norm{\gext_{\prim{P}}/F_{\prim{p}}}(\gext_{\prim{P}}^{*})/F_{\prim{p}}^{*3}$.
\end{lemma}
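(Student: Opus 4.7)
The plan is to establish the easy inclusion $\Image \dual{\delta}_{F_{\prim{p}}} \subset \Norm{\gext_{\prim{P}}/F_{\prim{p}}}(\gext_{\prim{P}}^{*})/F_{\prim{p}}^{*3}$ and then upgrade it to an equality by an order count. The forward inclusion is essentially the local version of Lemma~\ref{bound_norm}: Proposition~\ref{connect} gives $\dual{\delta}_{F_{\prim{p}}}^{\gext_{\prim{P}}}(P) = \eta(P)\gext_{\prim{P}}^{*3}$ with $\Norm{\gext_{\prim{P}}/F_{\prim{p}}}(\eta(P)) = 1$ (since $\Norm{\gext/F}(x+y\alpha_1+z\alpha_2) = 1$ on $\gp_H$), so a preimage $d \in F_{\prim{p}}^{*}$ can be written $d = \alpha^{\sigma-1}\beta^3$ with $\Norm{\gext_{\prim{P}}/F_{\prim{p}}}(\alpha)=1$, which Lemma~\ref{norm} rewrites as $d = \Norm{\gext_{\prim{P}}/F_{\prim{p}}}(\beta)$. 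Since $\dual{\delta}_{F_{\prim{p}}}$ is injective (Kummer map composed with the nondegenerate Weil pairing $e_{\dual{\phi}}(\,\cdot\,,T)$, where $T$ generates $\gp_H[\phi]$), one has $|\Image \dual{\delta}_{F_{\prim{p}}}| = |\gp_H(F_{\prim{p}})/\dual{\phi}(\dual{\gp_H}(F_{\prim{p}}))|$, so the lemma reduces to a comparison of two finite numbers.

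For the target, local class field theory gives $[F_{\prim{p}}^{*} : \Norm{\gext_{\prim{P}}/F_{\prim{p}}}(\gext_{\prim{P}}^{*})] = [\gext_{\prim{P}}:F_{\prim{p}}] = 3$ because $\prim{p}$ is ramified, and since $F_{\prim{p}}^{*3}$ lies inside the norm subgroup, $\Norm{\gext_{\prim{P}}/F_{\prim{p}}}(\gext_{\prim{P}}^{*})/F_{\prim{p}}^{*3}$ has index $3$ in $F_{\prim{p}}^{*}/F_{\prim{p}}^{*3}$. The Kummer identity $|F_{\prim{p}}^{*}/F_{\prim{p}}^{*3}| = 3\,|\mu_3(F_{\prim{p}})|\cdot|3|_{F_{\prim{p}}}^{-1}$ evaluates to $9$ in the tame subcase (a ramified cyclic cubic forces $\zeta_3 \in F_{\prim{p}}$ and $|3|_{F_{\prim{p}}}=1$) and also to $9$ in the wild subcase (the hypothesis $F_{\prim{p}}=\rat_3$ gives $|\mu_3(F_{\prim{p}})|=1$ and $|3|_{F_{\prim{p}}}=1/3$). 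Hence $|\Norm{\gext_{\prim{P}}/F_{\prim{p}}}(\gext_{\prim{P}}^{*})/F_{\prim{p}}^{*3}| = 3$ in both cases.

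For the source order, I would invoke the standard local formula for a $3$-isogeny of elliptic curves over $F_{\prim{p}}$, which expresses $|\gp_H(F_{\prim{p}})/\dual{\phi}(\dual{\gp_H}(F_{\prim{p}}))|$ as $|\dual{\gp_H}(F_{\prim{p}})[\dual{\phi}]|\cdot|3|_{F_{\prim{p}}}^{-1}$ times a ratio of Tamagawa numbers of $\gp_H$ and $\dual{\gp_H}$ at $\prim{p}$. The hypothesis $3\nmid v_{\prim{p}}$ together with the discriminant formula in Corollary~\ref{basic_inv} pins down the reduction type of $\gp_H/F_{\prim{p}}$, and a direct case analysis yields $3$ in both subcases, matching the target and completing the proof. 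The main obstacle is the wild case $\prim{p}\mid 3$: the factor $|3|_{F_{\prim{p}}}^{-1}=3$ contributes nontrivially and the Tamagawa ratio requires careful reduction-type analysis. This is precisely where the hypothesis $F_{\prim{p}}=\rat_3$ is essential, and it allows one to appeal to the explicit local image computations in \cite{RK}, Theorem~4.1 that the paper relies on throughout.
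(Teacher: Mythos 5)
Your proposal follows essentially the same route as the paper: establish the containment $\Image \dual{\delta}_{F_{\prim{p}}} \subset \Norm{\gext_{\prim{P}}/F_{\prim{p}}}(\gext_{\prim{P}}^{*})/F_{\prim{p}}^{*3}$ via Lemma~\ref{bound_norm}, reduce to an order count, compute $\bigl[\Norm{\gext_{\prim{P}}/F_{\prim{p}}}(\gext_{\prim{P}}^{*}):F_{\prim{p}}^{*3}\bigr]=3$ from local class field theory and the Kummer identity $[F_{\prim{p}}^{*}:F_{\prim{p}}^{*3}]=9$, and then pin down the order of the source side. For the source, the paper simply cites \cite{RK}, Theorem~4.1, and you ultimately fall back on the very same reference; the Tamagawa-number/formal-group argument you interpose as a potential alternative is never actually carried out (you assert without verification that the ratio of Tamagawa numbers does what it should in both subcases), so it is motivation rather than a genuinely independent route. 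As a complete argument it would need the explicit reduction-type analysis, but since you explicitly retreat to \cite{RK}, Theorem~4.1 for the hard wild case, the proof as written does coincide with the paper's.
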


\vsp

\begin{proof}
\quad
Let $\prim{P}$ be a prime above $\prim{p}$.
First of all, it is a direct consequence from Lemma~\ref{bound_norm} that $\Image \dual{\delta}_{F_{\prim{p}}}
 \hookrightarrow \Norm{\gext_{\prim{P}}/F_{\prim{p}}}(\gext_{\prim{P}}^{*})/F_{\prim{p}}^{*3}$.
It thus suffices to prove $\sharp \Image \dual{\delta}_{F_{\prim{p}}}
=\bigl[\Norm{\gext_{\prim{P}}/F_{\prim{p}}}(\gext_{\prim{P}}^{*}):F_{\prim{p}}^{*3}\bigr]$.
By using the equivalence $\Gal(\gext_{\prim{P}}/F_{\prim{p}}) \simeq
 F_{\prim{p}}^{*}/\Norm{\gext_{\prim{P}}/F_{\prim{p}}}(\gext_{\prim{P}}^{*})$ from local class field theory,
 it turns out that $\Norm{\gext_{\prim{P}}/F_{\prim{p}}}(\gext_{\prim{P}}^{*})/F_{\prim{p}}^{*3}
 \subsetneq F_{\prim{p}}^{*}/F_{\prim{p}}^{*3}$ whose group index is equal to $3$
 because $\prim{p}$ ramifies in $\gext$.
Under the assumption, since $[F_{\prim{p}}^{*}:F_{\prim{p}}^{*3}]=9$,
 it must be $\bigl[\Norm{\gext_{\prim{P}}/F_{\prim{p}}}(\gext_{\prim{P}}^{*}):F_{\prim{p}}^{*3}\bigr]=3$.
Applying \cite{RK}, Theorem~4.1 yields the required result.
\end{proof}

\begin{remark}\label{val}
\quad
Under the assumption of Lemma~\ref{coincide}, if $\prim{p}$ ramifies wildly in $\gext$ then 
 the case $\nu_{\prim{p}}\bigl(\Norm{\gext/F}(\gamma(\gp_H))\bigr) \equiv 1 \pmod{3}$
 cannot occur because this implies $\Image \dual{\delta}_{F_{\prim{p}}}=F_{\prim{p}}^{*}/F_{\prim{p}}^{*3}$
 by \cite{RK}, Theorem~4.1, which is contradiction.
\end{remark}

\begin{lemma}\label{G-units}
\quad
Assume $F=\rat$ then
 ${_\mathrm{N}\mathfrak{U}_{\gext}} \simeq (\ringint{\gext}^{*}/\ringint{\gext}^{*3})^G \simeq \rint/3\rint.$
\end{lemma}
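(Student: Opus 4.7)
The argument splits the stated equivalence into the two isomorphisms ${_\mathrm{N}\mathfrak{U}_{\gext}} \simeq (\ringint{\gext}^{*}/\ringint{\gext}^{*3})^{G}$ and $(\ringint{\gext}^{*}/\ringint{\gext}^{*3})^{G} \simeq \rint/3\rint$, and I would handle them separately.

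For the first isomorphism I plan to specialize Lemma~\ref{kerG} to $F=\rat$ and $S'=\emptyset$. Since $\zeta_{3}\notin\rat$, one has $\mu_{3}(\rat)=\{1\}$, hence $\eqcls{\mu_{3}}(\rat)$ is trivial and the lemma collapses to $\Psi^{G} \simeq {_\mathrm{N}\mathfrak{U}_{\gext}}$. I then identify $\Psi$ with $\ringint{\gext}^{*}/\ringint{\gext}^{*3}$: the defining condition $\Norm{\gext/\rat}(\epsilon) \in \Norm{\gext/\rat}(\gext^{*3})$ is vacuous for a global unit $\epsilon$, since $\Norm{\gext/\rat}(\epsilon)\in\{\pm1\}$ and the equality $\pm1=\Norm{\gext/\rat}(\pm1)^{3}$ exhibits it as a cube of a norm; and valuation considerations give $\ringint{\gext}^{*}\cap\gext^{*3}=\ringint{\gext}^{*3}$, because any $\alpha\in\gext^{*}$ whose cube is a unit must itself have trivial valuation at every prime. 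Hence the natural map $\ringint{\gext}^{*}/\ringint{\gext}^{*3} \xrightarrow{\sim} \Psi$ is an isomorphism, and passing to $G$-invariants yields the first claim.

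For the second isomorphism, set $M=\ringint{\gext}^{*}/\{\pm1\}$ and $U=M/3M$. Since $\gext/\rat$ is a nontrivial cyclic cubic extension it is necessarily totally real (complex conjugation cannot be a nontrivial element of a group of odd order), so Dirichlet's unit theorem gives $M\simeq\rint^{2}$ and thus $U\simeq\ringint{\gext}^{*}/\ringint{\gext}^{*3}\simeq\mathbb{F}_{3}^{2}$. The critical structural input is that the norm element $N=1+\sigma+\sigma^{2}$ acts as zero on $M$, since $\Norm{\gext/\rat}(u)\in\{\pm1\}$ is trivial in $M$ for any unit $u$. This is equivalent to the operator identity $(\sigma-1)^{2}=-3\sigma$ on $M$, which upon reduction modulo $3$ shows that $(\sigma-1)^{2}$ annihilates $U$. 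I then rule out the degenerate case $(\sigma-1)U=0$: if $(\sigma-1)M\subseteq 3M$, then $3M=(\sigma-1)^{2}M\subseteq 3(\sigma-1)M\subseteq 9M$, forcing $M=3M$, a contradiction for a nonzero free $\rint$-module of finite rank. Therefore $\sigma-1$ is a nonzero nilpotent of square zero on the two-dimensional $\mathbb{F}_{3}$-space $U$, and $U^{G}=\ker(\sigma-1)$ has dimension exactly $1$ over $\mathbb{F}_{3}$.

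The main technical point is precisely the nondegeneracy step $\sigma-1\not\equiv 0$ on $U$; once this is in place, the rank--nullity computation is elementary linear algebra on a plane with a square-zero endomorphism. A small care point at the outset is verifying that the auxiliary norm condition in the definition of $\Psi$ cuts nothing out when $F=\rat$, so that Lemma~\ref{kerG} really does express ${_\mathrm{N}\mathfrak{U}_{\gext}}$ as the full $G$-invariant subgroup of $\ringint{\gext}^{*}/\ringint{\gext}^{*3}$.
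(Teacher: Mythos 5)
Your proof is correct, and it departs from the paper's argument in both halves of the statement. For the first isomorphism, the paper argues directly: it shows the inclusion $_\mathrm{N}\mathfrak{U}_{\gext} \hookrightarrow (\ringint{\gext}^{*}/\ringint{\gext}^{*3})^G$ by the explicit computation $(\alpha^{\sigma-1})^{\sigma-1}=\Norm{\gext/\rat}(\alpha)\alpha^{-3\sigma}\in\gext^{*3}$, and then forces equality by pinning both sides down to one dimension. You instead observe that with $F=\rat$ and $S'=\emptyset$ the norm condition defining $\Psi$ is automatically satisfied by units and that $\ringint{\gext}^{*}\cap\gext^{*3}=\ringint{\gext}^{*3}$, so $\Psi \simeq \ringint{\gext}^{*}/\ringint{\gext}^{*3}$ as $G$-modules, after which Lemma~\ref{kerG} with $\mu_3(\rat)=\{1\}$ gives the first isomorphism immediately; this is a clean reuse of a proven lemma rather than a fresh calculation. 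For the second isomorphism, the paper fixes fundamental units $\epsilon_1,\epsilon_2$, writes the action of $\sigma$ as a $2\times 2$ integer matrix, imposes $\sigma^3=1$, and runs a case analysis to exhibit a unit that is not $G$-invariant modulo cubes. Your argument replaces this by the structural observation that $M=\ringint{\gext}^{*}/\{\pm 1\}$ is a $\rint[G]/(1+\sigma+\sigma^2)\simeq\rint[\zeta_3]$-module, so $(\sigma-1)^2=-3\sigma$ on $M$; reducing mod $3$ shows $\sigma-1$ is nilpotent of order $\leq 2$ on $U=M/3M\simeq\mathbb{F}_3^2$, and the descent $(\sigma-1)M\subseteq 3M \Rightarrow 3M\subseteq 9M \Rightarrow M=3M$ (impossible for a nonzero finite-rank free $\rint$-module) rules out $\sigma-1\equiv 0$; hence $U^G=\ker(\sigma-1)$ is exactly one-dimensional. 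Your version is more conceptual and avoids the ad hoc matrix bookkeeping, at the cost of having to carefully identify $M/3M$ with $\ringint{\gext}^{*}/\ringint{\gext}^{*3}$ (fine, since $-1\in\ringint{\gext}^{*3}$) and to verify that passing to $G$-invariants is compatible with the isomorphism $\Psi\simeq\ringint{\gext}^{*}/\ringint{\gext}^{*3}$; both checks go through.
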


\vsp

\begin{proof}
\quad
First, the inclusion ${_\mathrm{N}\mathfrak{U}_{\gext}} \hookrightarrow (\ringint{\gext}^{*}\gext^{*3}/\gext^{*3})^G
=(\ringint{\gext}^{*}/\ringint{\gext}^{*3})^G$ follows from
 $(\alpha^{\sigma-1})^{\sigma-1}=\Norm{\gext/\rat}(\alpha)\alpha^{-3\sigma} \in \gext^{*3}$
 for any $\eqcls{\alpha^{\sigma-1}} \in {_\mathrm{N}\mathfrak{U}_{\gext}}$.
Since $\gext$ is a totally real cubic field over $\rat$, the unit group is of the form
 $\ringint{\gext}^{*}=\langle -1,\,\epsilon_1,\,\epsilon_2 \rangle$ with fundamental units $\epsilon_1,\,\epsilon_2$,
 and then $\ringint{\gext}^{*}/\ringint{\gext}^{*3} \simeq (\rint/3\rint)^{\oplus 2}$.
It suffices to prove that there is some $\epsilon \in \ringint{\gext}^{*}$ so as to be
 $\eqcls{\epsilon} \notin (\ringint{\gext}^{*}/\ringint{\gext}^{*3})^G$.
If this is proven, then $\epsilon^{\sigma-1} \notin \ringint{\gext}^{*3}$, $\Norm{\gext/\rat}(\pm\epsilon)=1$
 and hence $\epsilon^{\sigma-1}$ represents a non-trivial element in ${_\mathrm{N}\mathfrak{U}_{\gext}}$
 because $(\epsilon^{\sigma-1})^{\sigma-1}=\Norm{\gext/\rat}(\epsilon)\epsilon^{-3\sigma} \in \ringint{\gext}^{*3}$.
Thus the lemma follows.
The units $\epsilon_1^{\sigma},\,\epsilon_2^{\sigma}$ can be written by the form
 $\pm \epsilon_1^r \epsilon_2^s,\,\pm \epsilon_1^{r'} \epsilon_2^{s'}$ with integers $r,\,s,\,r',\,s' \in \rint$,
 respectively.
Then the equalities $\epsilon_1^{\sigma^3}=\epsilon_1,\,\epsilon_2^{\sigma^3}=\epsilon_2$
 cause the following equations for $r,\,s,\,r',\,s'$.
\begin{align*}
r(r^2+r's)+sr'(r+s')=1, &\quad s(r^2+r's)+ss'(r+s')=0,\\
rr'(r+s')+r'(r's+s'^2)=0, &\quad sr'(r+s')+s'(r's+s'^2)=1.
\end{align*}
From this we obtain
$$(r,\,s,\,r',\,s')=
\begin{cases}
(r,\,-\frac{r^2+r+1}{r'},\,r',\,-1-r) &\text{if $ss' \ne 0$},\\
(-1,\,\pm1,\,-s,\,0) &\text{if $s \ne 0$ and $s'=0$},\\
(1,\,0,\,0,\,1) &\text{if $s=0$}.
\end{cases}$$
However the last case $s=0$ is impossible since $\epsilon_1,\,\epsilon_2 \in \gext \setminus F$.
Let $\epsilon:=\epsilon_1$ if $3 \nmid s$, while $\epsilon:=\epsilon_2$ if $3 \mid s$.
Then $\epsilon$ is the required element as follows.
Assume $3 \nmid s$ then $\epsilon_1^{\sigma-1} \notin \ringint{\gext}^{*3}$.
When $3 \mid s$ and $s \ne 0$, it must be $r \equiv 1 \pmod{3}$ and hence $9 \nmid r^2+r+1$.
This implies $3 \nmid r'$ because $s \in \rint$.
We thus have $\epsilon_2^{\sigma-1} \notin \ringint{\gext}^{*3}$.
\end{proof}

\begin{lemma}\label{MW}
\quad
Let $F$ be a number field of class number $1$ in which $3$ splits completely.
For a cyclic cubic extension $\gext/F$, let $\{1,\,\alpha,\,\alpha^2\}$ be a $F$-basis of $\gext$
 that $\gp$ is defined by the polynomial \eqref{defpoly} with.
Here $\alpha \in \gext$ denotes a root of the generic polynomial $\mathfrak{g}(x;t)$
 in \S\ref{genpoly} for some $t \in F$.
Let $H$ be the hyperplane $z=0$.
Assume $\delta(\mathfrak{g}) \notin F_{\prim{p}}^{*3}$  for each $\prim{p} \mid 3$. Then
$$\rint/3\rint \hookrightarrow \gp_H(F)/\dual{\phi}\bigl(\dual{\gp_H}(F)\bigr),
 \quad (\rint/3\rint)^{\oplus 2} \hookrightarrow \gp_H(F)/3\gp_H(F).$$
Especially $\rank_{\rint} \gp_H(F) \geq 1$ from the formula \eqref{rank}.
\end{lemma}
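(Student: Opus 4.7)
The plan is to exhibit one nontrivial class in each of $\gp_H(F)/\dual{\phi}(\dual{\gp_H}(F))$ and $\dual{\gp_H}(F)/\phi(\gp_H(F))$, and to combine them via the four-term exact sequence preceding \eqref{rank}. The hypothesis that $3$ splits completely in $F$ forces $F_{\prim{p}}\cong\rat_3$ for every $\prim{p}\mid 3$, hence $\mu_3(F)=\{1\}$; this triviality will be used both to make the map $F^{*}/F^{*3}\to\gext^{*}/\gext^{*3}$ injective (Lemma~\ref{kernel}) and to collapse the four-term sequence.

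For the first class, I take the $F$-rational $3$-torsion point $T=\ellunit'^{\sigma}-\ellunit'$ produced by Lemma~\ref{F-point}. Writing $T=T'-\ellunit'$ and evaluating via the formulas of Proposition~\ref{connect}, a short manipulation using $\gamma\cdot\gamma^{\sigma}\cdot\gamma^{\sigma^{2}}=\Norm{\gext/F}(\gamma)$ shows $\dual{\delta}_{F}^{\gext}(T)=\Norm{\gext/F}(\gamma(\gp_{H}))^{-1}\,\gext^{*3}$; since this class is represented by a norm, it actually lies in the image of $F^{*}/F^{*3}$. For $H\colon z=0$ with basis $\{1,\alpha,\alpha^{2}\}$, expanding $\gamma(\gp_{H})$ along its third row gives $\gamma(\gp_{H})=\alpha^{\sigma}-\alpha$, whence $\Norm{\gext/F}(\gamma(\gp_{H}))=\pm\delta(\mathfrak{g})$ (the sign being inert modulo cubes because $-1$ is a cube in $F$). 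The hypothesis $\delta(\mathfrak{g})\notin F_{\prim{p}}^{*3}$ forces $\delta(\mathfrak{g})\notin F^{*3}$, and Lemma~\ref{kernel} together with injectivity of $\dual{\delta}_{F}$ (Weil pairing non-degeneracy) then yields $T\notin\dual{\phi}(\dual{\gp_{H}}(F))$, which gives the first embedding.

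For the second class and the assembly, I use $\phi(\ellunit')$: the relation $\ellunit'^{\sigma}=\ellunit'+T$ with $T\in\ker\phi$ makes $\phi(\ellunit')$ $\sigma$-invariant, hence $F$-rational, and Remark~\ref{K} identifies $\delta_{F}(\phi(\ellunit'))=\gext\neq F$, so $\phi(\ellunit')$ is nontrivial in $\dual{\gp_H}(F)/\phi(\gp_H(F))$. Because $\zeta_{3}\notin F$, the leftmost term of the four-term sequence preceding \eqref{rank} vanishes, so the sequence reduces to
$$0\to\dual{\gp_H}(F)/\phi(\gp_H(F))\xrightarrow{\dual{\phi}}\gp_H(F)/3\gp_H(F)\to\gp_H(F)/\dual{\phi}(\dual{\gp_H}(F))\to 0.$$
Adding the two flanking dimensions (each at least $1$ by the above) gives $\dim_{\mathbb{F}_{3}}\gp_H(F)/3\gp_H(F)\geq 2$, the second embedding; and formula \eqref{rank} becomes $\rank_{\rint}\gp_H(F)\geq 1+1-0-1=1$. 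The only genuinely technical step is the explicit identification $\dual{\delta}_{F}^{\gext}(T)\equiv\pm\delta(\mathfrak{g})^{-1}\pmod{\gext^{*3}}$; every other ingredient follows immediately from the quoted lemmas.
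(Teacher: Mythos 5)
Your proof is correct and follows the same strategy as the paper: exhibit a nontrivial class in $\dual{\gp_H}(F)/\phi(\gp_H(F))$ via Remark~\ref{K}, show $T$ is nontrivial in $\gp_H(F)/\dual{\phi}(\dual{\gp_H}(F))$ by computing $\dual{\delta}_F(T)\equiv\delta(\mathfrak{g})^{\pm1}$, and combine via the collapsed four-term sequence and formula \eqref{rank}. The one small deviation is cosmetic: you obtain $\delta(\mathfrak{g})\notin F^{*3}$ directly from the stated local hypothesis (a triviality), whereas the paper reaches the same conclusion by the more roundabout route of combining class number $1$ with Corollary~\ref{disc} to produce a prime divisor of $\delta(\mathfrak{g})$ of valuation not divisible by $3$.
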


\vsp

\begin{proof}
\quad
By Remark~\ref{K}, the group $\dual{\gp_H}(F)/\phi\bigl(\gp_H(F)\bigr)$ contains $\rint/3\rint$ as subgroup.
It suffices to show that $\gp_H(F)/\dual{\phi}\bigl(\dual{\gp_H}(F)\bigr)$ also contains $\rint/3\rint$.
Then the statement follows from the formula
 $\gp_H(F)/3\gp_H(F) \simeq \gp_H(F)/\dual{\phi}\bigl(\dual{\gp_H}(F)\bigr) \oplus
 \dual{\gp_H}(F)/\phi\bigl(\gp_H(F)\bigr)$ in \S\ref{sec_descent}.
In the paper \cite{RK}, \S3, there is a homomorphism
 $\dual{\delta}_{\mathcal{W}}:\mathcal{W}(\gp_H)(F) \to F^{*}/F^{*3}$
 which sends the $3$-torsion point $(0,\,0)$ to
 $\Norm{\gext/F}\bigl(\gamma(\gp_H)\bigr)^{-1}F^{*3}=\delta(\mathfrak{g})^{-1}F^{*3}$.
We can easily verify the relation $\dual{\delta}_{F}=\dual{\delta}_{\mathcal{W}} \circ \vartheta$
 by $\vartheta(T)=(0,\,0)$.
Hence the subgroup generated by $\dual{\delta}_{F}(T)$ is equal to
 $\langle \delta(\mathfrak{g})F^{*3} \rangle$.
Since there is no proper unramified extension over $F$,
 $\delta(\mathfrak{g})$ has at least one prime divisor $\prim{p}$
 satisfying $3 \nmid \nu_{\prim{p}}\bigl(\delta(\mathfrak{g})\bigr)$ from Corollary~\ref{disc}.
Thus $\delta(\mathfrak{g})$ is not a perfect cube in $F$.
This implies $\eqcls{T} \in \gp_H(F)/\dual{\phi}\bigl(\dual{\gp_H}(F)\bigr)$ is a non-trivial element.
Therefore the proof is now ended.
\end{proof}

\section{A generic polynomial for cyclic cubic extensions}\label{genpoly}

For an explicit or systematic calculation, the notion of generic polynomials are often useful.
Generic polynomial is a polynomial which parameterizes all Galois extensions
 with a Galois group isomorphic to some fixed finite group $G$.
The existence of generic polynomials for a given finite group $G$ is unsolved problem.
But it is known in the case $G$ is cyclic of order $3$.

A generic polynomial for the cyclic group $C_3 \simeq \rint/3\rint$ over a number field $F$ used in the paper is
$$\mathfrak{g}(x;t):=x^3+t\,x^2-(t+3)x+1 \quad (t \in F)$$
whose discriminant is $(t^2+3t+9)^2$. Let $\delta(\mathfrak{g}):=t^2+3t+9$.
If $\alpha$ is a root of $\mathfrak{g}$ then $\frac{1}{1-\alpha},\,1-\frac{1}{\alpha}$ are all the other roots.
When $\alpha$ does not lie in $F$, the extension $K:=F(\alpha)/F$ is always cyclic cubic.
Then $\delta(\mathfrak{g})$ is also written as
$$\delta(\mathfrak{g})=\pm
\begin{vmatrix}
1 & \alpha & \alpha^2\\
1 & \alpha^{\sigma} & \alpha^{2\sigma}\\
1 & \alpha^{\sigma^2} & \alpha^{2\sigma^2}
\end{vmatrix}
\;\bigl(=\pm\Norm{\gext/F}(\alpha-\alpha^{\sigma})\bigr).
$$
Here $\Gal(\gext/F)=\langle \sigma \rangle \simeq C_3$
 and the sign $\pm$ depends on the choice of a generator $\sigma$.
From the property, for arbitrary cyclic cubic extension $\gext$ over $F$,
 there is some $t \in F$ so that the minimal splitting field of $\mathfrak{g}(x;t) \in F[x]$ is exactly $\gext$.

As for the conductor of a cyclic cubic extension $\gext/F$, we have the following proposition.

\begin{proposition}\label{conductor}
\quad
Assume $3$ splits completely in $F$.
For any finite prime $\prim{p}$ of $F$,
 the $\prim{p}$-conductor $\mathfrak{f}_{\prim{p}}=\prim{p}^{n_{\prim{p}}}$ of the cubic extension $\gext/F$
 is given by the following chart.
\begin{align*}
\nu_{\prim{p}}\bigl(\delta(\mathfrak{g})\bigr) \leq 0 &\rightarrow n_{\prim{p}}=0,\\
\nu_{\prim{p}}\bigl(\delta(\mathfrak{g})\bigr)>0 &\rightarrow
\begin{cases}
3 \nmid \nu_{\prim{p}}\bigl(\delta(\mathfrak{g})\bigr) \rightarrow n_{\prim{p}}=
\begin{cases}
1 &\text{\rm if $\prim{p} \nmid 3$},\\
2 &\text{\rm if $\prim{p} \mid 3$},
\end{cases}\\
3 \mid \nu_{\prim{p}}\bigl(\delta(\mathfrak{g})\bigr) \rightarrow
\begin{cases}
\prim{p} \nmid 3 \rightarrow n_{\prim{p}}=0,\\
\prim{p} \mid 3 \rightarrow n_{\prim{p}}=
\begin{cases}
0 &\text{\rm if $\delta(\mathfrak{g}) \notin F_{\prim{p}}^{*3}$},\\
2 &\text{\rm if $\delta(\mathfrak{g}) \in F_{\prim{p}}^{*3}$}.
\end{cases}
\end{cases}
\end{cases}
\end{align*}
\end{proposition}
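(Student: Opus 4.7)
The plan is to combine the conductor-discriminant formula with Newton-polygon analysis of a rescaled form of $\mathfrak{g}(x;t)$. Since $\Gal(\gext/F)\simeq\rint/3\rint$ has exactly two non-trivial characters, both of conductor $\mathfrak{f}$, the formula gives $\mathfrak{d}(\gext/F)=\mathfrak{f}^2$, reducing the problem to computing $\nu_{\prim{p}}\bigl(\mathfrak{d}(\gext_{\prim{P}}/F_{\prim{p}})\bigr)=2n_{\prim{p}}$ at each finite prime $\prim{p}$ of $F$. The key algebraic tool is the substitution $y=3x+t$, which converts $\mathfrak{g}(x;t)$ into the depressed cubic $y^3-3\delta y+(2t+3)\delta$ (with $\delta:=\delta(\mathfrak{g})$), together with the identity $(2t+3)^2=4\delta-27$.

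For $\prim{p}\nmid 3$, set $n:=\nu_{\prim{p}}(\delta)$. If $n\leq 0$, then either $\mathfrak{g}(x;t)$ already has separable reduction (when $\nu_{\prim{p}}(t)\geq 0$) or its Newton polygon has three distinct slopes (when $\nu_{\prim{p}}(t)<0$); in every sub-case $\mathfrak{g}$ splits into linear factors over $F_{\prim{p}}$ and $n_{\prim{p}}=0$. If $n>0$, the above identity together with $\nu_{\prim{p}}(27)=0$ forces $\nu_{\prim{p}}(2t+3)=0$, so the Newton polygon of the depressed cubic is a single segment from $(0,n)$ to $(3,0)$ of slope $-n/3$: if $3\nmid n$ this yields totally tame ramification and $n_{\prim{p}}=1$, while if $3\mid n$ the substitution $y=\pi^{n/3}z$ (with $\pi$ a uniformizer) produces a polynomial with separable reduction, so $\gext_{\prim{P}}/F_{\prim{p}}$ is unramified and $n_{\prim{p}}=0$.

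For $\prim{p}\mid 3$ the hypothesis gives $F_{\prim{p}}\simeq\rat_3$, and local class field theory restricts the conductor exponent of any cubic extension of $F_{\prim{p}}$ to $\{0,2\}$. The identity $(2t+3)^2=4\delta-27$ together with $\nu_{\prim{p}}(27)=3$ and the parity of valuations of squares in $F_{\prim{p}}^{*}$ further restricts $n>0$ to $\{2,3\}$. In the case $n=2$ one has $\nu_{\prim{p}}(2t+3)=1$; after $y=3z$ the rescaled polynomial reduces modulo $\prim{p}$ to the cube of a linear factor in $\mathbb{F}_{3}[z]$, and shifting to this triple root produces a Newton polygon of slope $-1/3$, so $\gext_{\prim{P}}/F_{\prim{p}}$ is totally wildly ramified and $n_{\prim{p}}=2$.

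The decisive case is $n=3$. Here $\delta=27u$ for a unit $u$ with $(2t+3)^2=27(4u-1)$, and the parity constraint on $\nu_{\prim{p}}(4u-1)$ rules out $u\equiv 4\pmod{9}$ (which would demand $\sqrt{5}\in\rat_3$), leaving only $u\equiv 1\pmod{9}$ or $u\equiv 7\pmod{9}$. The former is exactly the condition $\delta\in F_{\prim{p}}^{*3}$ and forces $\nu_{\prim{p}}(2t+3)=2$; the Newton polygon of the rescaled polynomial (via $y=3z$) then has non-integer slope $-2/3$, giving total ramification and $n_{\prim{p}}=2$. The latter is the condition $\delta\notin F_{\prim{p}}^{*3}$ and forces $\nu_{\prim{p}}(2t+3)\geq 3$; a further substitution $z=3w$ produces the polynomial $w^3-uw+(2t+3)u/27$, whose discriminant simplifies (via $(2t+3)^2/27=4u-1$) to exactly $u^2$, hence a unit, so the reduction is separable and $\gext_{\prim{P}}/F_{\prim{p}}$ is unramified, giving $n_{\prim{p}}=0$. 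Verifying this ``miraculous'' cancellation to a unit discriminant is the key step and the principal obstacle of the argument.
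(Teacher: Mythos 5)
Your argument is correct in outline and, interestingly, more complete than the paper's own proof: the paper treats only the case $\nu_{\prim{p}}(\delta(\mathfrak{g}))<0$ and the case $3\mid\nu_{\prim{p}}(\delta(\mathfrak{g}))>0$, $\prim{p}\mid 3$, deferring the rest to Washington, whereas you handle all cases uniformly. Your route is also genuinely different in method. The paper works directly with $\mathfrak{g}$ via ad hoc substitutions (e.g.\ writing $t=9t'+3$, case-splitting on $t'\bmod 3$, and exhibiting either an integral polynomial with unit discriminant or an Eisenstein polynomial). You instead pass once and for all to the depressed cubic $y^3-3\delta y+(2t+3)\delta$ (where $\delta=\delta(\mathfrak{g})$), exploit the identity $(2t+3)^2=4\delta-27$ to control $\nu_{\prim{p}}(2t+3)$, and read off ramification from Newton polygons. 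Your observation that the twice-rescaled polynomial $w^3-uw+(2t+3)u/27$ (with $\delta=27u$) has discriminant \emph{exactly} $u^2$ is the same phenomenon as the paper's ``discriminant not divisible by $\prim{p}$'' but is cleaner and explains \emph{why} the discriminant is a unit rather than asserting it. In fact the same computation at the earlier stage gives
$$\mathrm{disc}\Bigl(z^3-\tfrac{\delta}{3}z+\tfrac{(2t+3)\delta}{27}\Bigr)=\delta^2,$$
which is worth recording, as it streamlines the $\prim{p}\mid 3$ analysis.

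There is one step that you assert without justification and that does require an argument: in the case $\prim{p}\mid 3$, $n=2$, you claim that ``shifting to this triple root produces a Newton polygon of slope $-1/3$,'' i.e.\ that the constant term $P(z_0)$ of the shifted polynomial has valuation exactly $1$ rather than $\geq 2$. A priori one only knows $\nu_{\prim{p}}(P(z_0))\geq 1$. It is true, but needs to be shown, either by a short computation using $(2t+3)^2=4\delta-27$ (equivalently $s^2=4u-3$ with $\delta=9u$, $2t+3=3s$), or more cleanly from the discriminant identity above: since the reduction of $P$ is a cube, all three roots are congruent modulo the unique prime above $\prim{p}$, so if the extension were unramified one would get $\nu_{\prim{p}}\bigl(\mathrm{disc}(P)\bigr)\geq 6$, contradicting $\nu_{\prim{p}}(\delta^2)=4$; hence $\gext_{\prim{P}}/F_{\prim{p}}$ is (wildly, totally) ramified and $n_{\prim{p}}=2$. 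With that step filled in, the proof is complete and correct.
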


\vsp

\begin{proof}
\quad
The proof for the case $\nu_{\prim{p}}\bigl(\delta(\mathfrak{g})\bigr) \geq 0$,
 but except for the case $3 \mid \nu_{\prim{p}}\bigl(\delta(\mathfrak{g})\bigr)>0$ with $\prim{p} \mid 3$,
 can be done essentially by the same way as Washington's one in \cite{Was}, which is hence omitted.
We assume $-v:=\nu_{\prim{p}}\bigl(\delta(\mathfrak{g})\bigr)<0$ and let $\pi$ be a prime element in $F_{\prim{p}}$.
Then the polynomial $\pi^{3v}\mathfrak{g}(\pi^{-v}x;t)$ has integral coefficients
 and the polynomial modulo $\pi$ has a simple root.
Therefore the root lifts to some $\alpha \in \ringint{F_{\prim{p}}}$
 such that $\pi^{3v}\mathfrak{g}(\pi^{-v}\alpha;t)=0$ by Hensel's lemma.
It thus turns out that all the roots of $\mathfrak{g}(x;t)$ are $\bigl\{\pi^{-v}\alpha,\,\frac{1}{1-\pi^{-r}\alpha},
\,1-\frac{1}{\pi^{-v}\alpha}\bigr\} \subset F_{\prim{p}}$.
This implies that $\prim{p}$ splits completely.
Finally in the case $3 \mid \nu_{\prim{p}}\bigl(\delta(\mathfrak{g})\bigr)>0$ with $\prim{p} \mid 3$,
 we must show that $\prim{p}$ ramifies wildly if and only if $\delta(\mathfrak{g}) \in F_{\prim{p}}^{*3}$.
First, $t$ must be of the form $9t'+3$ with some $t' \in \ringint{F_{\prim{p}}}$
 by $3 \mid \nu_{\prim{p}}\bigl(\delta(\mathfrak{g})\bigr)>0$.
If $\delta(\mathfrak{g}) \notin F_{\prim{p}}^{*3}$ then $t' \equiv 1 \pmod{3}$
 and hence the polynomial $3^{-3}\mathfrak{g}(3x-\frac{t}{3};t)$ has coefficients in $\ringint{F_{\prim{p}}}$
 and discriminant not divisible by $\prim{p}$. Therefore $\prim{p}$ is unramified in $\gext$.
If $\delta(\mathfrak{g}) \in F_{\prim{p}}^{*3}$ then $t' \not\equiv 1 \pmod{3}$
 and hence the polynomial $\frac{3^3x^3}{(2t+3)\delta(\mathfrak{g})}\mathfrak{g}(\frac{3}{x}-\frac{t}{3};t)$
 turns out to be an Eisenstein polynomial for $\prim{p}$. Thus $\prim{p}$ ramifies in $\gext$.
It is a fact that $n_{\prim{p}} \leq 2$ for any $\prim{p}$ when $3$ is unramified in $F$.
Therefore $n_{\prim{p}}$ is equal to either $1$ or $2$ according to $\prim{p}$ ramifies tamely or wildly,
 respectively.
\end{proof}

The following is immediate from the above proposition.

\begin{corollary}\label{disc}
\quad
Assume $3$ splits completely in $F$ and $\delta(\mathfrak{g}) \notin F_{\prim{p}}^{*3}$ for each $\prim{p} \mid 3$.
Then a finite prime $\prim{p}$ of $F$ ramifies in $\gext$ if and only if
 $3 \nmid \nu_{\prim{p}}\bigl(\delta(\mathfrak{g})\bigr)>0$.
\end{corollary}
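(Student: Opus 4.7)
The plan is to read off the corollary directly from the chart in Proposition~\ref{conductor}, together with the extra hypothesis $\delta(\mathfrak{g})\notin F_{\prim{p}}^{*3}$ for $\prim{p}\mid 3$. Recall that a finite prime $\prim{p}$ of $F$ ramifies in $\gext$ precisely when its conductor exponent $n_{\prim{p}}$ is positive, so the task reduces to identifying which branches of the chart yield $n_{\prim{p}}>0$.

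First I would dispose of the trivial branches. When $\nu_{\prim{p}}(\delta(\mathfrak{g}))\leq 0$ the proposition gives $n_{\prim{p}}=0$, so $\prim{p}$ is unramified; in particular such a $\prim{p}$ does not satisfy the right-hand side of the claimed equivalence either. Similarly when $3\mid\nu_{\prim{p}}(\delta(\mathfrak{g}))>0$ and $\prim{p}\nmid 3$, the proposition again gives $n_{\prim{p}}=0$ and the right-hand side $3\nmid\nu_{\prim{p}}(\delta(\mathfrak{g}))$ fails, so both sides agree.

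Next I would treat the only branch where the added hypothesis does real work, namely $3\mid\nu_{\prim{p}}(\delta(\mathfrak{g}))>0$ with $\prim{p}\mid 3$. Here Proposition~\ref{conductor} splits into two sub-cases depending on whether $\delta(\mathfrak{g})\in F_{\prim{p}}^{*3}$. By the standing assumption this second sub-case is excluded, so only $\delta(\mathfrak{g})\notin F_{\prim{p}}^{*3}$ occurs, and then $n_{\prim{p}}=0$. Again both sides of the equivalence fail.

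Finally I would handle the remaining branch $\nu_{\prim{p}}(\delta(\mathfrak{g}))>0$ with $3\nmid\nu_{\prim{p}}(\delta(\mathfrak{g}))$: the proposition gives $n_{\prim{p}}=1$ or $2$, so $\prim{p}$ ramifies, and the right-hand side holds. Assembling the four cases yields the biconditional. There is essentially no obstacle; the only point requiring care is to make sure the exceptional wildly-ramified sub-case at primes above $3$ is suppressed by the hypothesis $\delta(\mathfrak{g})\notin F_{\prim{p}}^{*3}$, which is exactly why that hypothesis is imposed.
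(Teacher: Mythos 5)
Your proof is correct and follows exactly the route the paper intends: the paper says this corollary is ``immediate from the above proposition,'' and your case-by-case reading of the chart in Proposition~\ref{conductor}, using the hypothesis $\delta(\mathfrak{g})\notin F_{\prim{p}}^{*3}$ only to kill the one wildly-ramified sub-branch at $\prim{p}\mid 3$, is precisely the intended unwinding. Nothing is missing.
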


\section*{Acknowledgements}

I would like to thank Professor Masanobu Kaneko for helpful suggestions and supports.

\end{document}